\definecolor{darkgreen}{rgb}{0,0.45,0}
\crefname{algocf}{Algorithm}{Algorithms}
\setlist{leftmargin=7mm}
\newtheorem{theorem}{Theorem}[section]
\newtheorem{lemma}[theorem]{Lemma}
\newtheorem{proposition}[theorem]{Proposition}
\newtheorem{corollary}[theorem]{Corollary}
\crefname{theoremx}{Theorem}{Theorems}
\crefname{corollaryx}{Corollary}{Corollaries}
\newtheorem{questionx}{Question}
\theoremstyle{definition}
\newtheorem{definitionx}{Definition}
\crefname{definitionx}{Definition}{Definitions}
\newtheorem{definition}[theorem]{Definition}
\newtheorem*{definition*}{Definition}
\newtheorem{construction}[theorem]{Construction}
\crefname{construction}{Construction}{Constructions}
\newtheorem{remark}[theorem]{Remark}
\newtheorem*{remark*}{Remark}
\newtheorem{notation}[theorem]{Notation}
\newtheorem*{notation*}{Notation}
\newcommand{\luis}[1]{\noindent{\color{blue}[Luis: #1]}}
\newcommand{\todo}[1]{\noindent{\color{red}[todo: #1]}}
\renewcommand{\to}{\xrightarrow{\;\;\;}}
\renewcommand{\epsilon}{\varepsilon}
\renewcommand{\phi}{\varphi}
\let\ker\undefined
\DeclareMathOperator{\ker}{ker}
\DeclareMathOperator{\coker}{coker}
\DeclareMathOperator{\Hom}{Hom}
\DeclareMathOperator{\End}{End}
\DeclareMathOperator{\add}{add}
\DeclareMathOperator{\proj}{proj}
\newcommand{\indsimpl}{\mathsf{ind.simpl}}
\newcommand{\indproj}{\mathsf{ind.proj}}
\newcommand{\dimhom}{\mathrm{dhom}}
\renewcommand{\lim}{\mathrm{lim}}
\newcommand{\colim}{\mathrm{colim}}
\newcommand{\im}{\mathrm{im}}
\newcommand{\id}{\mathrm{Id}}
\newcommand{\idx}{\mathrm{gr}}
\newcommand{\rk}{\mathsf{rk}}
\newcommand{\rank}{\mathrm{rank}}
\DeclareMathOperator{\dec}{dec}
\newcommand{\births}{{\mathrm{Bth}}}
\newcommand{\deaths}{{\mathrm{Dth}}}
\newcommand{\kerelem}[2]{{#1}^{#2}}
\newcommand{\cokerelem}[2]{{#1}_{#2}}
\newcommand{\conv}{\mathrm{convex}}
\newcommand{\conc}{\mathrm{concave}}
\newcommand{\inconv}{\conv^\vee}
\newcommand{\inconc}{\conc^\vee}
\let\dim\relax
\DeclareMathOperator{\dim}{dim_\kbb}
\newcommand{\uprightcorner}{%
{\begin{tikzpicture}[scale=0.3]
   \draw[-] (1,1) -- (0,1);
   \draw[-] (1,0) -- (1,1);
\end{tikzpicture}}}
\newcommand{\downleftcorner}{%
{\begin{tikzpicture}[scale=0.3]
   \draw[-] (0,0) -- (1,0);
   \draw[-] (0,1) -- (0,0);
\end{tikzpicture}}}
\newcommand{\uprightcornerc}{%
{\begin{tikzpicture}[scale=0.3]
   \draw[-] (1,1) -- (0,1);
   \draw[-] (1.15,1.15) -- (0,1.15);
   \draw[-] (1,0) -- (1,1);
   \draw[-] (1.15,0) -- (1.15,1.15);
\end{tikzpicture}}}
\newcommand{\kerxyc}[1]{\,{#1}\text{\raisebox{1px}{\hspace*{-7px}{$\uprightcornerc$}}}\,}
\newcommand{\kerxc}[1]{\,{{#1}\|}\,}
\newcommand{\keryc}[1]{\,\overline{\overline{#1}}\,}
\newcommand{\kerxy}[1]{\,{#1}\text{\raisebox{1px}{\hspace*{-7px}{$\uprightcorner$}}}\,}
\newcommand{\cokerxy}[1]{\,\text{\raisebox{-2.5px}{$\downleftcorner$}}\hspace*{-7px}{#1}\,}
\newcommand{\topleft}[1]{\,\text{\raisebox{6px}{$\centerdot$}}{#1}\,}
\newcommand{\botright}[1]{\,{#1}\text{\raisebox{-1.5px}{$\centerdot$}}\,}
\newcommand{\kerx}[1]{\,{{#1}|}\,}
\newcommand{\cokerx}[1]{\,{|{#1}}\,}
\newcommand{\kery}[1]{\,\overline{#1}\,}
\newcommand{\cokery}[1]{\,\underline{#1}\,}
\newcommand{\kerxyspread}{\kerxy}
\newcommand{\cokerxyspread}{\cokerxy}
\newcommand{\Top}{\mathrm{Top}}
\newcommand{\vect}{\mathrm{vec}}
\newcommand{\Vect}{\mathrm{Vec}}
\renewcommand{\mod}{\mathrm{mod}}
\newcommand{\Mod}{\mathrm{Mod}}
\newcommand{\grmod}[1]{#1\text{-}\mathrm{gr.Mod}}
\newcommand{\rep}{\mathrm{rep}}
\newcommand{\Rep}{\mathrm{Rep}}
\newcommand{\repf}{\mathrm{rep}_0}
\newcommand{\fintwoparam}{\rep(\Gcal^2)}
\newcommand{\Rk}{\mathrm{Rk}}
\newcommand{\spreads}{\mathsf{spreads}}
\newcommand{\hooks}{\mathsf{hooks}}
\newcommand{\rects}{\mathsf{segments}}
\newcommand{\spreadcurves}{\mathsf{spread.curves}}
\newcommand{\spreadcurvesf}{\spreadcurves_0}
\newcommand{\chigpd}{\mathcal{N}^{\mathrm{gen.pers.diag.}}}
\newcommand{\chirkdec}{\mathcal{N}^{\mathrm{sign.barc.}}}
\newcommand{\chirkexact}{\mathcal{N}^{\mathrm{hook.dec.}}}
\newcommand{\chispreadexact}{\mathcal{N}^{\mathrm{int.Eul.char.}}}
\newcommand{\Ncaltwo}{\Ncal^2}
\newcommand{\Ncaldec}{\Ncal^\mathrm{dec}}
\newcommand{\Ninclexcl}{\Ncal^{\mathrm{incl.excl.}}}
\newcommand{\Ncalone}{\Ncal^1}
\newcommand{\sq}{\mathsf{sq}}
\newcommand{\boundaries}{\mathsf{Bnds}}
\newcommand{\op}{\mathsf{op}}
\DeclareMathAlphabet{\mathpzc}{OT1}{pzc}{m}{it}
\newcommand\DEFINEALPHABETLOOP[3]{%
  \ifx\relax#3\expandafter\@gobble\else\expandafter\@firstofone\fi
  {\expandafter\newcommand\expandafter*\csname#3#1\endcsname{#2{#3}}%
   \DEFINEALPHABETLOOP{#1}{#2}}%
}%
\newcommand\Definealphabet[2]{%
  \DEFINEALPHABETLOOP{#1}{#2}abcdefghijklmnopqrstuvwxyzABCDEFGHIJKLMNOPQRSTUVWXYZ\relax
}%
\author{Thomas Brüstle}
\address{Bishop's University; Université de Sherbrooke; Québec, Canada}
\author{Steve Oudot}
\address{Inria Saclay and \'Ecole polytechnique; Palaiseau, France}
\author{Luis Scoccola}
\address{Centre de Recherches Mathématiques et Institut des sciences mathématiques;
Laboratoire de combinatoire et d'informatique mathématique de l'Université du Québec à Montréal;
Université de Sherbrooke; Québec, Canada}
\author{Hugh Thomas}
\address{D\'epartement de Math\'ematiques, LACIM, Universit\'e du Qu\'ebec \`a Montr\'eal}
\title{Counts and end-curves in two-parameter persistence}
\begin{document}

%
%
%
%

\begin{abstract}
    Given a finite dimensional, bigraded module over the polynomial ring in two variables, we define its two-parameter count, a natural number, and its end-curves, a set of plane curves.
    These are two-dimensional analogues of the notions of bar-count and endpoints of singly-graded modules over the polynomial ring in one variable, from persistence theory.
    We show that our count is the unique one satisfying certain natural conditions; as a consequence, several inclusion-exclusion formulas in two-parameter persistence yield the same positive number, which equals our count, and which in turn equals the number of end-curves, giving geometric meaning to this count.
    We show that the end-curves determine the classical Betti tables by showing that they interpolate between generators, relations, and syzygies.
    Using the band representations of a certain string algebra, we show that the set of end-curves admits a canonical partition, where each part forms a closed curve on the plane; we call this the boundary of the module.
    As an invariant, the boundary is neither weaker nor stronger than the rank invariant, but, in contrast to the rank invariant, it is a complete invariant on the set of spread-decomposable representations.
    Our results connect several lines of work in multiparameter persistence, and their extension to modules over the real-exponent polynomial ring in two variables relates to two-dimensional Morse theory.
\end{abstract}

\maketitle

\section{Introduction}

\subsection{Context}

Any poset $\Pscr$ gives rise to a category, usually also denoted by $\Pscr$, with objects the elements of $\Pscr$ and with exactly one morphism $x \to y$ whenever $x \leq y \in \Pscr$.
A (linear) \emph{representation} of $\Pscr$ is a functor $\Pscr \to \Vect_\kbb$, where $\Vect_\kbb$ is the category of vector spaces over a field~$\kbb$, fixed throughout this paper.
Poset representations arise in several areas of pure and applied mathematics; notably in the representation theory of finite dimensional algebras \cite{simson}, and, most relevant to this paper, in \emph{persistence theory} \cite{oudot,botnan-lesnick}, an area grown out of applied topology and Morse theory.

In this introduction, we motivate the study of representations of products of linear orders from the points of view of applied topology, Morse theory, symplectic topology, and other areas in geometry and analysis.
Briefly, there exist several geometric constructions whose output is a \emph{one parameter representation} (a representation of a linear poset) or, in higher dimensional cases, a \emph{multiparameter representation} (a representation of a product of linear orders).
The case of one-parameter representations is well understood
algebraically~\cite{chazal-et-al},
and also in connection to Morse theory~\cite{bauer-medina-schmahl,buhovsky-et-al}
and for statistical purposes~\cite{fasy-et-al,skraba}.
The two-parameter case is significantly richer, and much less well understood, already at the algebraic level.
Many algebraic invariants of two-parameter representations have been proposed with the goal of interpreting and comparing these representations.
But our understanding of these invariants remains limited, with many of these invariants being incomparable between them in terms of strength,
having large output complexity, and lacking, so far, a clear geometric interpretation.

\subsubsection*{Poset representations in applied topology}
Geometric data can be studied using tools from algebraic topology \cite{frosini-landi,frosini-mulazzani,edelsbrunner-letsher-zomorodian,fasy-et-al} such as homology.
One way in which this is done is by using the input data to construct a nested family of simplicial complexes $K_1 \subseteq \cdots \subseteq K_n$, called a \emph{filtration}, and then taking homology with field coefficients, yielding a representation of the linear order $\{1 < \cdots < n\}$, with the goal of encoding the multiscale topological structure of the data algebraically.
Any finite linear order is of finite representation type, since its indecomposable representations correspond to the intervals, as is familiar from the representation theory of quivers of type~$A$.
Thus, the homology of a one-parameter filtration can be represented as a set of intervals, known as a \emph{barcode}, interpreted as multiscale topological features of the input data \cite{oudot}.
\emph{One-parameter persistence} refers to the study of filtrations and linear representations indexed by linear orders.

Applications to time-dependent, noisy, and otherwise parametrized data, motivate the development of \emph{multiparameter persistence}, a persistence theory for filtrations indexed by non-linear orders~\cite{botnan-lesnick}.
The fundamental challenge is that non-linear orders are typically of wild representation type \cite{nazarova} (i.e., their indecomposable representations cannot be effectively classified), which implies that many techniques and results from one-parameter persistence do not readily generalize~\cite{carlsson-zomorodian,bauer-scoccola}.
Because of this, many works have proposed incomplete invariants of poset representations%
~\cite{cerri-et-al,
biasotti-et-al,
carlsson-zomorodian,
lesnick-wright,
kim-memoli,
mccleary-patel,
scolamiero-et-al,
asashiba-escolar-nakashima-yoshiwaki,
asashiba-escolar-nakashima-yoshiwaki-2,
blanchette-brustle-hanson,
botnan-oppermann-oudot,
miller2,
bjerkevik},
as well as restricted families of representations whose indecomposables can be effectively classified
\cite{
botnan-lesnick-2,
bjerkevik-2,
botnan-lebovici-oudot,
bindua-brustle-scoccola}.

Persistence theory has been successfully applied to a wide range of scientific problems, including in neuroscience \cite{schneider-et-al,gardner-et-al}, biology \cite{rizvi-et-al,benjamin-et-al}, and physics \cite{sale-et-al,gilpin}.


\subsubsection*{Poset representations in Morse theory}
The sublevel sets of a function from a topological space to the real numbers form a nested family of topological spaces indexed by the poset of real numbers $(\Rbb, \leq)$,
and by taking homology with field coefficients, one gets a representation of $\Rbb$, known as the \emph{sublevel set persistence} of the function.
This poset representation encodes the changes in topology as one moves across sublevel sets, and it is at the heart of Morse theory.
Although the interpretation of Morse theory from the point of view of poset representations has only recently been made explicit, it has been implicit since its inception:
critical values of generic Morse functions correspond exactly to barcode endpoints,
several invariants already considered by Morse~\cite{morse} are invariants of the barcode (e.g., the length of intervals),
and the fact that the sublevel set persistence of a Morse function admits a barcode is known since at least the 90's~\cite{barannikov};
see \cite{bauer-medina-schmahl} for a modern account.

Similarly, the sublevel set persistence of a function into the plane is a representation of the product poset $\Rbb^2 = (\Rbb, \leq) \times (\Rbb, \leq)$, connecting two-parameter persistence with two-dimensional Morse theory \cite{whitney,smale,wan,gay-kirby}, and indeed there has already been work exploring this connection
\cite{cerri-ethier-frosini,budney-kaczynski,assif-baryshnikov}
and the connection to Cerf theory \cite{bubenik-catanzaro}.
A fundamental result in two-dimensional Morse theory
\cite{cerri-ethier-frosini,budney-kaczynski,assif-baryshnikov},
implicit in earlier work~\cite{wan},
states that the homologically critical values of the sublevel set persistence of a two-dimensional Morse function (i.e., the points where the sublevel set homology changes) consist of a finite set of plane curves, known as the Pareto grid.
The construction of these curves is analytical, and uses the singular points of the function.
A definition of a suitable Pareto grid for general two-parameter representations, not necessarily coming from Morse functions, is not currently available, which stands in contrast to the one-parameter case, where homologically critical values can be recognized both analytically, as singular values, and purely algebraically, as endpoints of bars.


\subsubsection*{Poset representations in geometry and analysis}
Invariants derived from barcodes also show up in the context of filtered Floer homology, whose main construction, the Floer complex, gives rise to representations of $\Rbb$.
Classical invariants in symplectic topology such as
the spectral invariant~\cite{viterbo,schwarz,oh}, the boundary depth~\cite{usher,usher-2},
and the torsion exponents~\cite{fukaya-1,fukaya-2}
correspond to invariants of the barcode, namely
the endpoints of infinite intervals and the length of the longest finite interval \cite{polterovich-shelukhin}, and the length of the $k$th longest finite interval \cite{usher-zhang}, respectively.
In this context, persistence not only serves to reinterpret known invariants, but also to define new ones and prove new results; see, e.g., \cite{biran-cornea-zhang,shelukhin}.

Beside the endpoints and lengths of intervals, another invariant of the barcode is the number of intervals.
This number can be used to, for example, define a coarse version of the number of zeros of a function, which behaves well with repect to perturbations of the function, enabling coarse generalizations of classical results such as Courant's theorem \cite{buhovsky-et-al} and B\'ezout's theorem \cite{bujovsky-2}.
In the latter, a two-parameter representation encoding the coarse zeros of any map between normed spaces is described \cite[Remark~6.3]{bujovsky-2}; the analysis of this type of representation is left to future work, in part due to the lack of available invariants which would enable such study.

There exist several other applications of barcodes in geometry and analysis
such as in fractal geometry \cite{schweinhart-2}, metric geometry \cite{lim-memoli-okutan},
and quantitative homotopy theory \cite{block-manin-weinberger};
for more examples of these types of interactions, see \cite{polterovich-rosen-samvelyan-zhang}.

\subsubsection*{Signed invariants of poset representations}
With the above motivations, there is now a considerable literature focused on the design and study of invariants of representations of posets of wild representation type using
tools from graded commutative algebra~\cite{carlsson-zomorodian,lesnick-wright},
order theory~\cite{kim-memoli,asashiba-escolar-nakashima-yoshiwaki,mccleary-patel},
sheaf theory~\cite{curry,kashiwara-schapira-2,berkouk-petit},
and the representation theory of finite dimensional algebras~\cite{blanchette-brustle-hanson,asashiba-escolar-nakashima-yoshiwaki-2,
    botnan-oppermann-oudot,
    botnan-oppermann-oudot-scoccola}.
These invariants usually take the form of an isomorphism-invariant function $\alpha$ from the category of representations of the poset in question to some set.

The simplest invariant is arguably the pointwise dimension of the representation.
Two strictly stronger invariants, already discussed in \cite{carlsson-zomorodian}, are the classical Betti tables, which record the grades of generators in a minimal resolution, and the \emph{rank invariant}, which records the rank of all the structure morphisms of the representation.
The rank invariant is particularly interesting since, in the one-parameter case, it is a complete invariant, meaning that it determines the isomorphism type of the representation \cite[Theorem~12]{carlsson-zomorodian}.
The main drawback of the rank invariant is that it does not have, a priori, a clear geometric interpretation like the barcode, and, outside of the one-parameter case, it is hard to derive interpretable invariants from it.

Because of this, the goal of more recent contributions is to mimic the barcode more closely, and to assign, to each representation $M$ of a fixed poset, a multiset~$\alpha(M)$ of spreads%
\footnote{A \emph{spread} of a poset is a subset that is poset-connected and poset-convex (see \cref{section:spreads}).
The spreads of a totally ordered sets are the intervals;
see \cref{figure:boundary-of-spread}(\emph{left}) for a two-dimensional example.
Spreads are also referred to as ``intervals'', but we prefer ``spread'' from \cite{blanchette-brustle-hanson} to avoid confusion with the standard notion of interval of a poset.
}
of the poset.
The main examples of this type of construction include the
\emph{generalized persistence diagram}~\cite{kim-memoli},
the \emph{signed barcode}~\cite{botnan-oppermann-oudot},
the \emph{minimal rank decomposition by hooks}~\cite{botnan-oppermann-oudot-scoccola},
the \emph{interval Euler characteristic}~\cite{escolar-kim},
the \emph{interval-decomposable replacements}~\cite{asashiba-escolar-nakashima-yoshiwaki},
and the \emph{minimal Hilbert decomposition}~\cite{oudot-scoccola};
see \cref{section:counts-from-known-invariants} for a description of these invariants using the language of this paper.
Although these constructions succeed in encoding part of the structure of a representation geometrically using spreads, they are fundamentally different from the one-parameter barcode in one key aspect: the multiset~$\alpha(M)$ is actually a signed multiset, that is, it is a formal linear combination $\alpha(M) = \sum_{j} c_j \cdot \Ical_j$ of spreads~$\Ical_j$, where the constants $c_j \in \Zbb$ can be (and often are) negative.



\subsection{Problem statement and short summary of contributions}
As outlined in this introduction, there are three main invariants of interest that can be derived from the one-parameter barcode:
the number of intervals (known as the \emph{bar-count}),
the endpoints of the intervals (consisting of the \emph{birth-points} and the \emph{death-points}),
and the set of lengths of the intervals (known as the \emph{persistence} of the intervals).
We propose the goal of finding multiparameter analogues of these three invariants, even in the absence of a satisfactory analogue of the barcode.


We propose to associate a count $\Ncal^\alpha$ to each signed invariant $\alpha$ by defining $\Ncal^\alpha(M) = \sum_{i} c_i$, whenever $\alpha(M) = \sum_{i} c_i \cdot \Ical_i$, and ask the following questions:

\smallskip
\emph{Do the counts associated with the different signed invariants in the literature coincide?
Are these counts positive?}
\smallskip

\noindent We give affirmative 
answers to both of these questions 
in the (discrete) two-parameter case and
for the invariants given by the
generalized persistence diagram,
the signed barcode,
the minimal rank decomposition by hooks,
the interval Euler characteristic,
and the interval-decomposable replacements.
We call the equivalent counts induced by these invariants the \emph{two-parameter count}, and give a simple inclusion-exclusion type formula for it in \cref{definition:two-parameter-count}.
The next natural question is:

\smallskip
\emph{
Is the two-parameter count counting a suitable notion of ``end-curves'' for two-parameter representations?}
\smallskip

\noindent We observe that a standard construction which, in the one-parameter case, outputs the sets of birth- and death-points of a representation, in the two-parameter case outputs two sets of plane curves, which we call the birth-curves and the death-curves of the representation (\cref{definition:endcurves}).
We give a positive answer to the second question by showing, also in \cref{corollary:our-count-and-other-counts}, that the two-parameter count equals both the number of birth-curves and the number of death-curves.

Our other main results concern
the monotonicity of the two-parameter count with respect to one-parameter slices (\cref{theorem:slice-monotonicity}),
and the discriminating power and planar geometry of the birth- and death-curves, particularly in connection with classical Betti tables (\cref{theorem:curves-betti-tables}), the rank invariant (\cref{theorem:boundary-tame,corollary:boundary-complete-spread-dec}), and spread-decomposable representations.
We also prove a computational result (\cref{proposition:main-computation-result}), asserting that projective presentations of end-curves can be computed in linear time.


\subsection{Main results}
Any finite dimensional, graded $\kbb[\xbf]$-module $A$ decomposes in an essentially unique way as $A \cong \bigoplus_{i = 1}^k \kbb_{[b_i,d_i)}$, where $\{[ b_i,d_i ) \subseteq \Zbb\}_{i = 1}^k$ is a finite multiset of non-empty intervals of~$\Zbb$, and $\kbb_{[ b,d ) }$ is the graded $\kbb[\xbf]$-module $\left(\kbb[\xbf] / (\xbf^{d-b})\right)[-b]$.
The \emph{bar-count} of $A$ is $\Ncalone(A) = k$,
and the endpoints of $A$ consist of the \emph{birth-points} $\{b_i \in \Zbb\}_{i = 1}^k$ and the \emph{death-points} $\{d_i \in \Zbb\}_{i = 1}^k$.
The bar-count satisfies $\Ncalone(A) = \dim A - \dim \xbf A$, which motivates the following:

\begin{definitionx}
    \label{definition:two-parameter-count}
    The \emph{two-parameter count} of a finite dimensional, bigraded $\kbb[\xbf,\ybf]$-module $M$ is
    \[
        \Ncaltwo(M) \coloneqq \dim M - \dim \xbf M  - \dim \ybf M + \dim \xbf \ybf M \;\; \in \;\; \Zbb.
    \]
\end{definitionx}

The two-parameter count is always positive.
This follows from \cref{corollary:our-count-and-other-counts}, below, but it can also be proven directly using Sylvester rank inequality; see \cref{proposition:positivity}. 

If $A$ is the $\kbb[\xbf]$-module of the first paragraph, then the birth-points (resp.~death-points) of $A$ can be recovered from the indecomposable decomposition of the kernel (resp.~cokernel) of the morphism $A[-1] \to A$ given by multiplication by $\xbf$.
Indeed, there is an exact sequence
\[
    0 \; \to \; \bigoplus_{i = 1}^k\, \kbb_{d_i} \; \to \; A[-1] \; \xrightarrow{\,\, - \,\cdot\, \xbf\,\, } \; A \; \to \; \bigoplus_{i = 1}^k\, \kbb_{b_i} \; \to \; 0\,,
\]
where $\kbb_a = \kbb_{[a,a+1)}$ is the only indecomposable module (up to isomorphism) with support $\{a\}\subseteq \Zbb$.
With this motivation, we prove
in \cref{corollary:birth-death-curves}
that, given a pointwise finite dimensional, bigraded $\kbb[\xbf,\ybf]$-module $M$, the morphism $M[-1,-1] \to M$ given by multiplication by $\xbf \ybf$ induces an exact sequence
\[
    0 \; \to \; \bigoplus_{D \in \deaths(M)} \kbb_D \; \to \; M[-1,-1] \; \xrightarrow{\,\, - \,\cdot\, \xbf\ybf\,\, } \; M \; \to \; \bigoplus_{B \in \births(M)} \kbb_B \; \to \; 0\, ,
\]
where $\births(M)$ and $\deaths(M)$ are multisets of certain types of subsets of $\Zbb^2$
called spread curves (introduced next)
and $\kbb_I$ for $I$ a spread curve is the only indecomposable (up to isomorphism) with support $I \subseteq \Zbb^2$.
A \emph{spread curve} is a subset $I \subseteq \Zbb^2$ that is connected in the Hasse diagram of $\Zbb^2$, and that is as thin as possible (formally, such that $p \in I$ implies $p+(1,1) \notin I$).
See \cref{figure:main-figure,figure:kxy-gentle} for illustrations of spread curves.

\begin{definitionx}
    \label{definition:endcurves}
    The \emph{birth-curves} of
    a pointwise finite dimensional, bigraded $\kbb[\xbf,\ybf]$-module
    $M$ are the curves in $\births(M)$, and the 
    \emph{death-curves} of $M$ are the curves in $\deaths(M)$.
\end{definitionx}

To see that the kernel and cokernel of multiplication by $\xbf\ybf$ decompose as spread curves,
we observe that both these modules are \emph{ephemeral}, that is, they are annihilated by $\xbf\ybf$, and are thus bigraded $\kbb[\xbf,\ybf]/(\xbf\ybf)$-modules.
The bigraded algebra $\kbb[\xbf,\ybf]/(\xbf\ybf)$ is a string algebra, so ephemeral modules admit a simple classification:
\cref{theorem:decomposition-theorem-ephemeral}
(an easy consequence of the representation theory of string algebras \cite{crawley-boevey})
implies that every pointwise finite dimensional, indecomposable, bigraded $\kbb[\xbf,\ybf]/(\xbf\ybf)$-module is isomorphic to $\kbb_I$ for $I \subseteq \Zbb^2$ a spread curve.

\medskip

To connect these definitions with the literature, which often deals with representations of finite posets, we recall the following:
The category of finite dimensional, bigraded $\kbb[\xbf,\ybf]$-modules is equivalent to $\repf(\Zbb^2)$, the category of pointwise finite dimensional representations of $\Zbb^2$ of finite support.
If $\Gcal^2 = [0,a) \times [0,b) \subseteq \Zbb^2$ is a finite grid, then there is a fully faithful embedding $\rep(\Gcal^2) \hookrightarrow \repf(\Zbb^2)$, given by padding by zeros.
This allows us to see any finite dimensional representation of $\Gcal^2$ as a finite dimensional, bigraded $\kbb[\xbf,\ybf]$-module.



Our first main result says that $\Ncaltwo$ coincides with several counts derived from signed invariants of two-parameter persistence representations, as well as with the number of birth-curves ($\Ncal^{\births}$) and with the number of death-curves ($\Ncal^\deaths$).
This gives geometric meaning to the count $\Ncaltwo$, and in particular implies positivity.
The signed invariants in the statement are
generalized persistence diagram~\cite{kim-memoli},
the signed barcode~\cite{botnan-oppermann-oudot},
the minimal rank decomposition by hooks~\cite{botnan-oppermann-oudot-scoccola},
the interval Euler characteristic~\cite{escolar-kim},
and the interval-decomposable replacements~\cite{asashiba-escolar-nakashima-yoshiwaki};
see \cref{section:counts-from-known-invariants}.
The count derived from the minimal Hilbert decomposition does not agree with the other counts, and is in fact quite uninteresting (\cref{remark:hilbert-decomposition-count}).
\cref{corollary:our-count-and-other-counts} is a consequence of a universality result for the two-parameter count (\cref{theorem:universal-property-count}),
which relies on the fact that, for finite lattices, the count $\chirkdec$ of a spread-representation equals the topological Euler characteristic of the spread (\cref{theorem:count-is-euler}).

\begin{restatable}{theoremx}{ourcountandothercounts}
    \label{corollary:our-count-and-other-counts}
    As invariants $\fintwoparam \to \Zbb$, we have
    \[
        \Ncaltwo
        = \chigpd
        = \chirkdec
        = \chirkexact
        = \chispreadexact
        = \Ncal^{\mathrm{int.dec.repl.}}
        = \Ncal^{\births} = \Ncal^\deaths.
    \]
\end{restatable}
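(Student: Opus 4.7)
The plan is to split the six-way equality into two groups handled by different techniques. The equalities $\Ncaltwo = \Ncal^{\births} = \Ncal^\deaths$ can be extracted directly from the exact sequence in \cref{corollary:birth-death-curves}, while $\Ncaltwo = \chigpd = \chirkdec = \chispreadexact$ follows by invoking the universal property of \cref{theorem:universal-property-count} and checking its axioms for each signed invariant.

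For the first group, I would proceed in three steps. First, check that $\Ncaltwo(\kbb_I) = 1$ for every spread curve $I$: writing $n = |I|$, the dimensions $\dim \xbf \kbb_I$ and $\dim \ybf \kbb_I$ count the horizontal and vertical Hasse edges of $I$ respectively, while $\xbf\ybf\kbb_I = 0$ by thinness; thinness also forbids $2\times 2$ square cycles in the induced Hasse subgraph, so connectedness forces it to be a tree on $n$ vertices with $n-1$ edges, giving $\Ncaltwo(\kbb_I) = n - (n-1) + 0 = 1$. Second, combine this with the classification of indecomposable ephemerals (\cref{theorem:decomposition-theorem-ephemeral}) and additivity of $\Ncaltwo$ on direct sums to conclude $\Ncaltwo(\ker) = |\deaths(M)|$ and $\Ncaltwo(\coker) = |\births(M)|$ for the four-term sequence.

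Third, compute $\Ncaltwo(\ker) = \Ncaltwo(\coker) = \Ncaltwo(M)$ by direct expansion. For $C \coloneqq M/\xbf\ybf M$ one has $\xbf C = \xbf M / \xbf\ybf M$ and $\ybf C = \ybf M / \xbf\ybf M$, so the four terms in the definition of $\Ncaltwo(C)$ telescope to $\dim M - \dim \xbf M - \dim \ybf M + \dim \xbf\ybf M = \Ncaltwo(M)$. The kernel $K$ is the shift by $(-1,-1)$ of $T \coloneqq \{m \in M : \xbf\ybf m = 0\}$, so $\Ncaltwo(K) = \Ncaltwo(T)$; then $\dim T = \dim M - \dim \xbf\ybf M$ by rank-nullity applied to $\xbf\ybf : M \to M$, and $\dim \xbf T = \dim(\xbf M \cap \ker \ybf) = \dim \xbf M - \dim \xbf\ybf M$ by rank-nullity applied to $\ybf : \xbf M \to M$, with the symmetric identity for $\ybf T$, yielding again $\Ncaltwo(T) = \Ncaltwo(M)$.

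For the second group, I would invoke \cref{theorem:universal-property-count} to reduce matters to verifying that each of $\chigpd$, $\chirkdec$, $\chispreadexact$ satisfies the characterizing axioms of the universal count. The normalization axiom $\Ncal^\alpha(\kbb_I) = 1$ for a spread $I$ follows in every case from \cref{theorem:count-is-euler}, since a spread is poset-connected and poset-convex, hence contractible, so its topological Euler characteristic is $1$. The remaining axiom (some form of inclusion-exclusion or additivity) must be established for each signed invariant separately, invoking the definitions collected in \cref{definition:known-counts}.

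The main obstacle I anticipate is this last per-invariant verification: the three signed invariants are built by quite different machinery (Möbius inversion on the lattice of downsets for $\chigpd$, a signed decomposition of the rank invariant into rank-indicator modules for $\chirkdec$, and projection onto the span of spread classes in a Grothendieck group for $\chispreadexact$), so extracting a common inclusion-exclusion property from their definitions requires a short but non-uniform argument for each. A secondary subtlety is the kernel computation in the first group, which is cleaner via the shift identification $K = T[-1,-1]$ than by working with $K$ directly.
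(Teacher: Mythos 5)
Your handling of the first group ($\Ncaltwo = \Ncal^\births = \Ncal^\deaths$) is essentially correct, and your direct computation re-derives \cref{equation:count-same-as-birth-death} cleanly; the paper instead routes these two equalities through the universal property (\cref{theorem:universal-property-count}), so your approach is a more elementary alternative for that part. One small warning: your argument that the Hasse subgraph of a spread curve is a tree because ``thinness forbids $2\times 2$ squares, so connectedness forces a tree'' does not follow as stated --- forbidding $4$-cycles does not forbid longer cycles in a subgraph of the grid graph. The correct argument needs poset-convexity as well (a cycle would enclose some lattice point $q$ with $p \le p+(1,1) \le q$ for a bottom-left vertex $p$ of the cycle, forcing $p + (1,1) \in I$ and contradicting thinness). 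Alternatively you could simply invoke \cref{corollary:two-parameter-normalized}, which gives $\Ncaltwo(\kbb_I) = 1$ for all finite spreads.

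The second group is where the real gap is. You say you will verify ``the characterizing axioms of the universal count,'' naming a ``normalization axiom'' and an unspecified ``inclusion-exclusion or additivity'' axiom, but this misreads \cref{theorem:universal-property-count}. Its two hypotheses are (1) the basis $\Bcal$ consists of spread representations, and (2) $\alpha \succcurlyeq \Ncaltwo$. The normalization $\Ncal^{(\alpha,\Bcal)}(\kbb_I) = 1$ is not a hypothesis to check; it is immediate from the definition of $\Ncal^{(\alpha,\Bcal)}$ whenever $\kbb_I \in \Bcal$, so your appeal to \cref{theorem:count-is-euler} here does no work. The genuinely non-trivial content, which your proposal acknowledges as ``the main obstacle'' but never resolves, is verifying $\alpha \succcurlyeq \Ncaltwo$ for $\alpha \in \{\Rk_\spreads, \Rk, \dimhom_\spreads\}$. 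The paper does this by first proving $\Ncaltwo = \chirkdec$ directly via \cref{proposition:signed-barcode-equals-inclusion-exclusion} (which you never invoke), and then chaining refinements: $\Rk_\spreads \succcurlyeq \Rk \succcurlyeq \chirkdec = \Ncaltwo$ for $\chigpd$, and $\dimhom_\spreads \succcurlyeq \Rk \succcurlyeq \Ncaltwo$ (the first refinement being the cited standard fact) for $\chispreadexact$. Without establishing these refinements and the base equality $\Ncaltwo = \chirkdec$, the second group of equalities is not proved.
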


Based on \cref{corollary:our-count-and-other-counts}, we extend the count $\Ncaltwo$ to the category of finitely generated, bigraded $\kbb[\xbf,\ybf]$-modules by $\Ncaltwo = \Ncal^\births$.


\medskip

Our second main result states that the two-parameter count is monotonic with respect to one-parameter slices and the one-parameter count.
This is a natural property, which is however not satisfied by $\Ncaldec$, the count of indecomposables summands with multiplicity \cite{buchet-escolar}.

\begin{restatable}{theoremx}{slicemonotonicity}
    \label{theorem:slice-monotonicity}
    If $\ell : \Zbb \hookrightarrow \Zbb^2$ is an injective monotonic map, and $M \in \repf(\Zbb^2)$, then
    \[
    \Ncaltwo(M) \geq \Ncalone(\ell^*(M)).
    \]
\end{restatable}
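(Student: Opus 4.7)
I would prove this by reducing to a unit-step staircase and then identifying the bar-count of $\ell^*(M)$ with the number of ``entries'' of $\ell$ into the birth-curves of $M$, using \cref{corollary:birth-death-curves} and \cref{corollary:our-count-and-other-counts}. First, any monotonic injection $\ell$ factors as $\ell = \tilde{\ell} \circ \iota$, where $\iota : \Zbb \hookrightarrow \Zbb$ is an order-preserving injection and $\tilde{\ell}$ has steps in $\{(1,0),(0,1)\}$, obtained by interpolating each non-unit step of $\ell$ by a monotonic lattice path. Since $\ell^*(M) = \iota^*(\tilde{\ell}^*(M))$ and each interval summand of $\tilde{\ell}^*(M)$ pulls back along $\iota$ to either an interval summand or zero, $\Ncalone(\ell^*(M)) \leq \Ncalone(\tilde{\ell}^*(M))$; so I may assume $\ell$ is unit-step.

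For unit-step $\ell$, I would use the exact sequence $M[-1,-1] \xrightarrow{\xbf\ybf} M \to \bigoplus_{B \in \births(M)} \kbb_B \to 0$ from \cref{corollary:birth-death-curves} to identify $M/\xbf\ybf M$ with $\bigoplus_B \kbb_B$. Quotienting further by $\xbf$, and using that the $\xbf$-action on $\kbb_B$ is the identity between $p-(1,0)$ and $p$ when both lie in $B$ and is zero otherwise, I obtain $(M/\xbf M)_p \cong \bigoplus_{B \,\ni\, p,\; p-(1,0) \,\notin\, B} \kbb$, and symmetrically for $\ybf$. Hence
\[
    \Ncalone(\ell^*(M)) \;=\; \sum_i \dim\coker\!\bigl(M_{\ell(i-1)} \to M_{\ell(i)}\bigr) \;=\; \sum_{B\in\births(M)} \#\bigl\{ i : \ell(i) \in B,\; \ell(i-1)\notin B \bigr\}.
\]

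The main obstacle is to show that each $B$ contributes at most one entry, i.e.\ that $\{i : \ell(i) \in B\}$ is an interval of $\Zbb$. This reduces to the structural claim that if $p \leq q$ both lie in a spread curve $B$, then $q-p$ is a nonnegative multiple of $(1,0)$ or $(0,1)$, and all intermediate axis-aligned lattice points also lie in $B$. I would prove this by analysing simple paths in $B$ as sequences of unit steps $R, L, U, D \in \{\pm(1,0), \pm(0,1)\}$: thinness forbids the four two-step turns $RU, UR, LD, DL$ (each places both $r$ and $r+(1,1)$ in $B$), while the reversal pairs $RL, LR, UD, DU$ revisit a vertex and are excluded from simple paths. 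Thus every simple path in $B$ uses only steps in $\{R, D\}$ or only in $\{L, U\}$, whose net displacements lie in the closed fourth and second quadrants respectively, neither containing $(a, b)$ with $a, b \geq 1$. Granting the claim, each $B$ contributes at most one entry, so summing gives $\Ncalone(\ell^*(M)) \leq |\births(M)| = \Ncaltwo(M)$ by \cref{corollary:our-count-and-other-counts}.
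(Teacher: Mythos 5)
Your proposal is correct and runs closely parallel to the paper's own argument, with one dual choice and one over-elaboration. The shared skeleton is: reduce to a contiguous (unit-step) $\ell$; decompose the relevant ephemeral module into spread curve representations; and observe that each spread curve contributes at most one to the one-parameter count, while $\Ncaltwo(M)$ equals the number of such curves. Where the paper passes through the death-curves $\kerxy{M}$ and bounds the one-parameter count via the monomorphism $\kerelem{\ell^*(M)}{\zbf} \hookrightarrow \ell^*(\kerxy{M})$ (\cref{lemma:contiguous-death-relation,lemma:one-parameter-monotonicity-inclusion}), you pass through the birth-curves $\cokerxy{M}$ and do an explicit dimension count of $\cokerelem{\ell^*(M)}{\zbf}$ over the summands $\kbb_B$; both routes work, and your use of \cref{corollary:our-count-and-other-counts} is not circular since it is proved before this theorem. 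The one point to push back on is that you call the interval property of $\{i : \ell(i) \in B\}$ the ``main obstacle'' and reduce it to a structural claim about spread curves that you prove by analysing forbidden two-step turns along simple Hasse paths. This is correct but unnecessarily involved: a spread curve $B$ is in particular poset-convex, and the preimage of any poset-convex set under a monotonic $\ell : \Zbb \to \Zbb^2$ is automatically an interval of $\Zbb$ (if $i_1 < i_2 < i_3$ with $\ell(i_1), \ell(i_3) \in B$, then $\ell(i_1) \leq \ell(i_2) \leq \ell(i_3)$ forces $\ell(i_2) \in B$). This is precisely the content of \cref{lemma:preimage-of-spread-in-linear-order}, which applies to arbitrary spreads. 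Your structural claim is also true and provable in one line from convexity plus thinness (if $p \leq q$ in $B$ had $q-p$ with both coordinates positive, then $p+(1,1) \in [p,q] \subseteq B$ would violate thinness), but it is not what is needed here.
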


\cref{theorem:slice-monotonicity} has several interesting consequences, such as the following.
In the result, a \emph{spread representation} is the indicator representation of a spread (\cref{figure:boundary-of-spread}), and a \emph{spread-decomposable representation} is one that decomposes as a direct sum of spread representations.


\begin{restatable}{corollaryx}{mainpropertiescount}
    \label{corollary:main-properties-count}
    Let $M \in \repf(\Zbb^2)$.
    \begin{enumerate}
        \item $\Ncaltwo(M) \geq \dim M_i$ for every $i \in \Zbb^2$.
        \item $\Ncaltwo(M) = 0$ if and only if $M \cong 0$.
        \item $\Ncaltwo(M) = 1$ if and only if $M$ is a spread representation.
        \item $\Ncaltwo(M) \geq \Ncaldec(M)$, and $\Ncaltwo(M) = \Ncaldec(M)$ if and only if $M$ is spread-decomposable.
    \end{enumerate}
\end{restatable}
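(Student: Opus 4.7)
The plan is to establish the four parts in order, leaning throughout on the additivity of $\Ncaltwo$ on direct sums (immediate from \cref{definition:two-parameter-count}) and its strict positivity on nonzero modules (a consequence of \cref{corollary:our-count-and-other-counts}).

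For (1), I would apply \cref{theorem:slice-monotonicity} to any monotonic injection $\ell : \Zbb \hookrightarrow \Zbb^2$ whose image contains $i$, for instance the diagonal $\ell(n) = i + (n,n)$, and combine it with the elementary one-parameter fact that the bar-count of a finite-dimensional persistence module upper-bounds every pointwise dimension. Part (2) is then immediate: one direction from \cref{definition:two-parameter-count}, and the converse because $\Ncaltwo(M) = 0$ forces $\dim M_p = 0$ at every $p$ by (1).

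For (3), the easy direction is that a spread representation $\kbb_I$ has count $1$: by \cref{corollary:our-count-and-other-counts} the count equals $\chirkdec(\kbb_I)$, and \cref{theorem:count-is-euler} (applied in any finite grid containing $I$) identifies it with the topological Euler characteristic of $I$, which is $1$ since spreads in $\Zbb^2$ are contractible (an order-convex subset cannot enclose a hole). The converse is the crux. Assuming $\Ncaltwo(M) = 1$, additivity together with $\Ncaltwo(N) \geq 1$ for $N \neq 0$ forces $M$ to be indecomposable. Setting $S = \mathrm{supp}(M)$, part (1) gives $\dim M_p \leq 1$ for all $p$. For each $p \leq q$ in $S$, I would pick a monotonic injection $\ell$ through both points; then $\ell^{\ast} M$ is nonzero with $\Ncalone(\ell^{\ast} M) \leq 1$ by \cref{theorem:slice-monotonicity}, hence a single interval module. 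From this one reads off (a) every lattice point of $\ell$ strictly between $p$ and $q$ lies in $S$, and (b) the structure map $M_p \to M_q$ is an isomorphism.

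Varying $\ell$, (a) shows that $S$ is order-convex; combined with indecomposability this forces Hasse-connectedness of $S$, because an order-convex but Hasse-disconnected subset of $\Zbb^2$ must split into pairwise-incomparable parts (any comparable pair would force a staircase in $S$ joining them in the Hasse graph), producing a direct-sum decomposition of $M$. Hence $S$ is a spread, and (b) together with the functoriality of $M$ lets one transport a chosen nonzero element at a single point of $S$ to an isomorphism $\kbb_S \xrightarrow{\,\sim\,} M$. Finally, (4) follows by decomposing $M = \bigoplus_j N_j$ into indecomposables: additivity and $\Ncaltwo(N_j) \geq 1$ give $\Ncaltwo(M) \geq \Ncaldec(M)$, with equality iff each $\Ncaltwo(N_j) = 1$, iff by the converse of (3) each $N_j$ is a spread, iff $M$ is spread-decomposable. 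I expect the main obstacle to be this converse of (3); the delicate step is extracting Hasse-connectedness of $S$ from indecomposability and order-convexity, since the dimension bound, order-convexity, and iso structure maps all fall out cleanly from the slicing argument.
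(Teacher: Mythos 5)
Parts (1), (2), (4), and the forward direction of (3) follow the paper's proof; your one departure is in the converse of (3), where the paper checks the hypotheses of \cref{theorem:indecomposable-think-is-spread} (the Asashiba--Buchet--Escolar--Nakashima--Yoshiwaki classification: an indecomposable representation of $\Zbb^2$ with pointwise dimension at most one is a spread representation) --- indecomposability via $\Ncaltwo \geq \Ncaldec$, thinness via part (1) --- while you attempt to reprove that classification from slicing.

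Your sketch is on the right track, but you mislocate the delicacy, and the step you then assert without justification is precisely where the work lies. Poset-connectedness of the support $S$ is actually the easy part: once slicing gives poset-convexity, any two distinct poset-connected components of $S$ are pairwise incomparable (a comparable pair across components would, by convexity, pull the entire chain between them into $S$, merging the components), so a disconnected $S$ would split $M$ as a direct sum, contradicting indecomposability; and for poset-convex $S$, poset-connectedness coincides with Hasse-connectedness, since any comparable pair in $S$ is joined by a saturated chain within $S$. The genuinely delicate point is the final claim that one can ``transport a chosen nonzero element \ldots\ to an isomorphism $\kbb_S \xrightarrow{\sim} M$'': the structure map $M_p \to M_q$ for comparable $p,q \in S$ is multiplication by some nonzero scalar, and transporting a basis vector around a zigzag in $S$ a priori depends on the zigzag, so one must show the holonomy is trivial. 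That rigidity is exactly what the cited theorem encapsulates, and it relies on $\Zbb^2$-specific geometry --- for instance, that consecutive (in the $y$-order) minimal elements of $S$ have their join in $S$ (\cref{lemma:join-of-consecutive-minima}), which lets one propagate compatible basis vectors across the minimal elements and then push them upward consistently by functoriality; or, equivalently, the simple connectivity of $\Delta(S)$, which follows from \cref{lemma:spread-contractible}. Either supply this gluing argument explicitly or, as the paper does, cite \cref{theorem:indecomposable-think-is-spread}.
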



\medskip

Our second set of results concerns the discriminating power and planar geometry of end-curves.

\begin{definitionx}
    \label{definition:boundary-modules}
Let $M \in \rep(\Zbb^2)$.
Define $\cokerxy{M}, \kerxy{M}, \cokerx{M}, \kerx{M}, \cokery{M}, \kery{M} \in \repf(\Zbb^2)$ using the kernels and cokernels of multiplication by $\xbf, \ybf, \xbf\ybf \in \kbb[\xbf, \ybf]$, as follows:
\begin{alignat*}{5}
    0 & \to \kerxy{M}  &  & \hookrightarrow M[-1,-1]     &  & \xrightarrow{\xbf\ybf} M &  & \twoheadrightarrow \cokerxy{M}  &  & \to 0 \\
    0 & \to \kerx{M}   &  & \hookrightarrow \,\; M[-1,0] &  & \xrightarrow{\;\xbf\;} M &  & \twoheadrightarrow \cokerx{M}   &  & \to 0 \\
    0 & \to \,\kery{M} &  & \hookrightarrow \,\; M[0,-1] &  & \xrightarrow{\;\ybf\;} M &  & \twoheadrightarrow \,\cokery{M} &  & \to 0
\end{alignat*}%
Define also $\topleft{M}, \botright{M} \in \repf(\Zbb^2)$ as the following images:
\begin{align*}
    \topleft{M}  & \coloneqq \im\left(\,\kery{M} \hookrightarrow M[0,-1] \twoheadrightarrow \cokerx{M}[0,-1]\,\,\right)    \\
    \botright{M} & \coloneqq \im\left(\,\kerx{M} \hookrightarrow M [-1,0]\twoheadrightarrow \cokery{M} [-1,0]\,\,\right)\, .
\end{align*}
\end{definitionx}

As we observed above, the modules $\cokerxy{M}$ and $\kerxy{M}$ are ephemeral, and decompose as the birth-curves and the death-curves of $M$.
The other modules in \cref{definition:boundary-modules} are also ephemeral, with $\cokerx{M}$ and $\kerx{M}$ decomposing as vertical spread curves (since they are annihilated by $\xbf$), and $\cokery{M}$ and $\kery{M}$ decomposing as horizontal spread curves (since they are annihilated by $\ybf$).
The modules $\topleft{M}$ and $\botright{M}$ are annihilated by both $\xbf$ and $\ybf$, and are thus semisimple, so that $\dec\topleft{M}$ and $\dec\botright{M}$ are multisets of points of $\Zbb^2$; we refer to these multisets as the \emph{top-left corners} and \emph{bottom-right corners} of $M$, respectively.
See \cref{figure:boundary-of-spread} for an example.
Here, we let $\dec M$ denote the multiset of indecomposable summands of $M$, counted with multiplicity, and we identify a spread representation with its support (see \cref{notation:multisets}), so that, for example, we have $\births(M) = \dec \cokerxy{M}$ and $\deaths(M) = \dec \kerxy{M}$.

\begin{figure}
    \includegraphics[width=1\textwidth]{}
    \caption{\emph{Left.} A spread $I \subseteq \Zbb^2$ and its corresponding spread representation $M = \kbb_I \in \rep(\Zbb^2)$.
        \emph{Center.} Birth-curves, death-curves, top-left corners, and bottom-right corners of $M$.
        \emph{Right.} The boundary of the module $M$.}
    \label{figure:boundary-of-spread}
\end{figure}

The next result implies that the Betti tables of $M$ can be read off from its end-curves; see \cref{section:betti-tables} for a standard description of Betti tables $\beta_i^M$
and \cref{figure:drawing-boundary}(\emph{right}) for an illustration.
Note that a spread curve $I \subseteq \Zbb^2$ can be thought of as a one-dimensional path that interpolates between convex and concave corners; formally, we define $\conv(I)$ to be the set of minima of $I$ and $\conc(I)$ to be the set of maxima of $I$.
There are also the more restrictive notions of inner convex and inner concave corners, denoted $\inconv(I)$ and $\inconc(I)$, which are convex and concave corners, respectively, that are not endpoints of the curve (see \cref{definition:concave-convex-corners} for precise definitions).

\begin{restatable}{theoremx}{curvesbettitables}
    \label{theorem:curves-betti-tables}
    If $M \in \rep(\Zbb^2)$, then
    \begin{align*}
        \beta_0^M \; & =\;
        \conv\left(\dec\cokerxy{M}\right) \\
        \beta_1^M \; & =\;
        \inconv\left( \dec\kerxy{M}\right)
        \,+\, \inconc \left( \dec\cokerxy{M}\right)
        \,+\, \dec\topleft{M}
        \,+\, \dec\botright{M}            \\
        \beta_2^M \; & =\;
        \conc\left( \dec\kerxy{M}\right)
    \end{align*}
    Moreover $\topleft{M} \cong \topleft{\left(\cokerxy{M}\right)}$ and $\botright{M} \cong \botright{\left(\cokerxy{M}\right)}$ so the Betti tables of $M$ only depend on its end-curves.
\end{restatable}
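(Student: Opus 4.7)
The plan is to compute $\beta_i^M$ as the homology of the Koszul complex obtained by tensoring the Koszul resolution of $\kbb$ over $\kbb[\xbf,\ybf]$ with $M$:
\[
    0 \to M[-1,-1] \xrightarrow{d_2} M[-1,0] \oplus M[0,-1] \xrightarrow{d_1} M \to 0,
\]
where $d_2(m) = (\ybf m, -\xbf m)$ and $d_1(a,b) = \xbf a + \ybf b$; at each bidegree $p$, the homologies are $\beta_2^M(p), \beta_1^M(p), \beta_0^M(p)$. For $\beta_0^M$, since $\xbf\ybf M \subseteq \xbf M$, we have $\coker(d_1) = M/(\xbf M + \ybf M) \cong \cokerxy{M}/(\xbf + \ybf)\cokerxy{M}$, so $\beta_0^M = \beta_0^{\cokerxy{M}}$; decomposing $\cokerxy{M} \cong \bigoplus_{B \in \births(M)} \kbb_B$ via \cref{theorem:decomposition-theorem-ephemeral}, the generators of each $\kbb_B$ sit at the minima of $B$, which are $\conv(B)$. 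Dually, $\ker(d_2)$ is the submodule of $\kerxy{M}$ annihilated by both $\xbf$ and $\ybf$; on each summand $\kbb_D$ of $\kerxy{M}$ this submodule is supported at the maxima $\conc(D)$, giving the formula for $\beta_2^M$.

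For $\beta_1^M$, I would combine the Euler identity
\[
    \beta_0^M(p) - \beta_1^M(p) + \beta_2^M(p) = \dim M_p - \dim M_{p-(1,0)} - \dim M_{p-(0,1)} + \dim M_{p-(1,1)}
\]
with the short exact sequence $0 \to \xbf\ybf M[-1,-1] \to M \to \cokerxy{M} \to 0$ (where $\xbf\ybf M[-1,-1] \cong M[-1,-1]/\kerxy{M}$) to rewrite the identity in terms of the spread-curve decompositions of $\cokerxy{M}$ and $\kerxy{M}$. For each spread curve $I$, the combinatorial identity $\conv(I) + \conc(I) - \inconv(I) - \inconc(I) = \text{endpoints}(I)$ will let me isolate the inner-corner contributions from endpoint corrections. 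I would identify these endpoint corrections with $\topleft{M}$ and $\botright{M}$ by analyzing the images of $\kery{M}$ and $\kerx{M}$ in $\cokerx{M}[0,-1]$ and $\cokery{M}[-1,0]$: roughly, ``up-facing'' endpoints of spread curves contribute to $\topleft{M}$ and ``right-facing'' ones to $\botright{M}$.

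\textbf{The main obstacle} will be the bookkeeping in the $\beta_1^M$ argument: precisely tracking how endpoints of spread curves (which may be shared between curves, or may appear on either side of the short exact sequence relating $M$ to $\cokerxy{M}$ and $\kerxy{M}$) produce contributions to $\topleft{M}$ and $\botright{M}$ without over- or under-counting.

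For the ``moreover'' claim, it suffices to prove $\topleft{M} \cong \topleft{\left(\cokerxy{M}\right)}$ (the claim for $\botright{M}$ is symmetric). Since $\xbf\ybf M \subseteq \xbf M$, we have $\cokerx{\left(\cokerxy{M}\right)} = \cokerx{M}$, so both $\topleft{M}$ and $\topleft{\left(\cokerxy{M}\right)}$ embed into $\cokerx{M}[0,-1]$. I would show their images coincide by a diagram chase: a class in $\kery{\left(\cokerxy{M}\right)}$ at bidegree $p$ is represented by some $m \in M_{p-(0,1)}$ with $\ybf m = \xbf\ybf n$ for some $n \in M_{p-(1,1)}$, and the corrected representative $m - \xbf n$ lies in $\kery{M}$ at bidegree $p$ and has the same image in $\cokerx{M}[0,-1]$ as $m$. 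Conversely, any element of $\kery{M}$ projects to a class in $\kery{\left(\cokerxy{M}\right)}$ with the same image in $\cokerx{M}[0,-1]$. Hence the Betti tables of $M$ depend only on $\cokerxy{M}$ and $\kerxy{M}$, and thus only on the end-curves.
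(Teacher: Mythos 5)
Your arguments for $\beta_0^M$ and $\beta_2^M$ are correct, and they take a more direct route than the paper: you identify $\coker(d_1)$ with $\cokerxy{M}/(\xbf,\ybf)\cokerxy{M}$ and $\ker(d_2)$ with the socle of $\kerxy{M}$, then read off the generators/socle of each spread-curve summand. The paper instead proves all three equalities at once by a purely local reduction: \cref{lemma:betti-tables-and-decomposition-on-square} observes that $\beta_\bullet^M(\ell)$ depends only on the restriction $M|_{\sq(\ell)}$ to the $2\times 2$ square at $\ell$, and then uses the classification of indecomposables over the square (\cref{lemma:indecomposables-over-square}) to express each Betti number and each corner/corner-count as an explicit multiplicity $M^a_\ell, \dots, M^j_\ell$ in the local decomposition, reducing the theorem to a finite case check together with \cref{lemma:convex-corner-algebraic}. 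Your ``moreover'' argument is essentially identical to the paper's diagram chase.

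However, your treatment of $\beta_1^M$ has a genuine gap, one you flag yourself as ``the main obstacle.'' The Euler-characteristic identity only pins down the \emph{total} $\beta_1^M(p)$, so the real work is showing that the inclusion-exclusion dimension, rewritten via the short exact sequences, recombines precisely as $\inconv(\dec\kerxy{M})(p)+\inconc(\dec\cokerxy{M})(p)+\dim(\topleft{M})_p+\dim(\botright{M})_p$. The proposed combinatorial identity $\conv(I)+\conc(I)-\inconv(I)-\inconc(I)=\mathrm{endpoints}(I)$ is a reasonable ingredient, but it does not by itself explain how endpoints of the \emph{death} curves and endpoints of the \emph{birth} curves match up with the two separate modules $\topleft{M}$ and $\botright{M}$, nor how endpoints that coincide with grades where both a birth- and a death-curve terminate should be counted. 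Without carrying out that bookkeeping (which the paper avoids entirely via the local decomposition over the square), this step remains a plan, not a proof. I would either complete the Euler-characteristic computation carefully at each grade $p$, or switch to the paper's local strategy, which handles $\beta_1$ uniformly with $\beta_0$ and $\beta_2$ and sidesteps the endpoint bookkeeping.
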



As a consequence, we get that the two-parameter count of a finitely generated bigraded module is bounded above by the number of generators.
This is remarkable, since, for instance, the size of the signed barcode is in general not linear \cite[Proposition~5.28]{botnan-oppermann-oudot-scoccola}, and the size of the generalized persistence diagram is not even polynomial \cite{kim-kim-lee}.

\begin{restatable}{corollaryx}{curveslinearsize}
    \label{corollary:curves-linear-size}
    If $M \in \rep(\Zbb^2)$ is finitely generated, then $\Ncaltwo(M) \leq |\beta^M_0|$.
    If $f : K \to \Zbb^2$ is a finite bifiltered simplicial complex, then $\Ncaltwo(H_i(f)) \leq |K_i|$ for every $i \in \Nbb$.
\end{restatable}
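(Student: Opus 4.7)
The plan is to deduce both inequalities from \cref{theorem:curves-betti-tables}, reducing them to bounds on $|\beta^M_0|$.

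For the first inequality, \cref{theorem:curves-betti-tables} gives $\beta^M_0 = \conv(\dec \cokerxy{M})$, the multiset of convex corners of the birth-curves of $M$, counted with multiplicity. Since every spread curve is a nonempty subset of $\Zbb^2$, it has at least one minimum, so each birth-curve $I \in \births(M)$ contributes at least one element to $\conv(I)$. Summing over birth-curves yields
\[
    |\beta^M_0| \;=\; \sum_{I \in \births(M)} |\conv(I)| \;\geq\; |\births(M)| \;=\; \Ncaltwo(M).
\]

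For the simplicial bound, I would combine the first inequality with the claim $|\beta^{H_i(f)}_0| \leq |K_i|$. The chain complex of $f$ is a complex $C_\bullet$ of bigraded free $\kbb[\xbf,\ybf]$-modules with $C_j = \bigoplus_{\sigma \in K_j} \kbb[\xbf,\ybf][-f(\sigma)]$ of rank $|K_j|$, and $H_i(f) = Z_i/B_i$ where $Z_i = \ker \partial_i$ and $B_i = \im \partial_{i+1}$. The plan is then to show that $Z_i$ is itself bigraded free of rank at most $|K_i|$; since $H_i(f)$ is a quotient of $Z_i$, this would give $|\beta^{H_i(f)}_0| \leq |\beta^{Z_i}_0| = \rank(Z_i) \leq |K_i|$, and the previous paragraph closes the argument.

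The main nontrivial step is the freeness of $Z_i$, which rests on $\kbb[\xbf,\ybf]$ having global dimension $2$. Writing $B_{i-1} = \im \partial_i \subseteq C_{i-1}$, this is a submodule of a free module, so $\pdim B_{i-1} \leq 1$. The long exact sequence in $\Tor^{\kbb[\xbf,\ybf]}_\bullet(-, \kbb)$ associated with the short exact sequence $0 \to Z_i \to C_i \to B_{i-1} \to 0$ then yields $\Tor_1(Z_i, \kbb) = 0$, so $Z_i$ is projective, hence bigraded free over the polynomial ring. The bound $\rank(Z_i) \leq |K_i|$ follows from additivity of generic rank across the same short exact sequence (using $\rank(C_i) = |K_i|$ and $\rank(B_{i-1}) \geq 0$).
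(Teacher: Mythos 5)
Your proposal is correct and follows essentially the same route as the paper: the first bound is the same use of \cref{theorem:curves-betti-tables} together with the fact that every birth-curve has a convex corner, and the second bound reuses the chain complex $C_\bullet$ of bigraded free modules and shows $|\beta_0^{H_i(f)}| \leq |K_i|$ via the chain $|\beta_0^{H_i(f)}| \leq |\beta_0^{Z_i}| \leq |\beta_0^{C_i}|$. The paper packages the two inequalities in that chain into Lemmas~\ref{lemma:inequalities-betti-tables-1} and~\ref{lemma:inequalities-betti-tables-2} (quotient decreases $\beta_0$ via projective covers; $Z_i$ projective of bounded rank via global dimension two), whereas you inline essentially the same arguments using graded Nakayama, the $\Tor$ long exact sequence, and additivity of generic rank — a cosmetic difference only.
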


\medskip

Our last main result (\cref{theorem:boundary-tame}) is about the relationship between the birth-curves and death-curves.
The following notation\footnote{
The definitions of $\kerxy{M}$, $\kerx{M}$ and $\kery{M}$ follow the ``open convention'' in persistence theory, where a pair of integers $a \leq b \in \Zbb$ represents the half open interval $[a,b) \in \Zbb$; using the ``closed convention'', such a pair would represent the closed interval $[a,b]$.
The double-bar notation introduced here follows the closed convention.}
makes the statement simpler:
let
$\kerxyc{M} \coloneqq \kerxy{M}[1,1]$, $\kerxc{M} \coloneqq \kerx{M}[1,0]$, and 
$\keryc{M} \coloneqq \kery{M}[0,1]$.
Note that there is a canonical morphism $\delta M : \kerxyc{M} \to \cokerxy{M}$ given by the composite $\kerxyc{M} \hookrightarrow M \twoheadrightarrow \cokerxy{M}$.
We do not know if $\delta M$, in general, admits a simple geometric description,
but the following closely related and weaker construction does admit such a description, and this is what \cref{theorem:boundary-tame} states.

\begin{definitionx}
    \label{definition:boundary}
    Let $M \in \repf(\Zbb^2)$.
    The \emph{boundary} of $M$, denoted $\partial M$, is given by the following diagram, where the morphisms are the natural ones induced by the definitions of the objects:
    \[
    \begin{tikzpicture}
        \matrix (m) [matrix of math nodes,row sep=1.5em,column sep=1.5em,minimum width=2em,nodes={text height=1.75ex,text depth=0.25ex}]
        {
             & \keryc{M} & \kerxyc{M}                            \\
            \cokerx{M}       &               & \kerxc{M}                             \\
            \cokerxy{M}      & \cokery{M}    & \\};
        \path[line width=0.75pt, ->]
        (m-1-2) edge (m-2-1)
        (m-1-2) edge (m-1-3)
        (m-3-1) edge (m-2-1)
        (m-3-1) edge (m-3-2)
        (m-2-3) edge (m-3-2)
        (m-2-3) edge (m-1-3)
        ;
    \end{tikzpicture}
    \]
\end{definitionx}
Note that $\partial M$ encodes the birth- and death-curves of $M$, and relates them via the corners of $M$, since the corners are (shifts of) the images of the diagonal morphisms.
See \cref{figure:boundary-of-spread} for an example.

The boundary of $M$ belongs to a certain diagram category, which we denote $\boundaries$ for \emph{boundaries} (\cref{definition:ephemeral-pair}).
The indecomposable objects of $\boundaries$ can be classified effectively, thanks to the fact that $\boundaries$ embeds into the category of representations of a string algebra (\cref{lemma:G-fully-faithful}).
The finite dimensional band modules of this string algebra can be classified in terms of boundary components, which essentially consist of an irreducible discrete closed plane curve and a matrix.
Here, an \emph{irreducible discrete closed plane curve} is a path in~$\Zbb^2$ that loops back to where it starts, and that is not the self-concatenation of a shorter path (\cref{definition:closed-curve}).

\begin{definitionx}
    \label{definition:boundary-component}
    A \emph{boundary component} $(\gamma, \Tcal)$ consists of an irreducible discrete closed plane curve~$\gamma$ and an invertible square matrix $\Tcal$ over $\kbb$ that is not similar to a block diagonal matrix with more than one block.
    Two boundary components $(\gamma, \Tcal)$ and $(\gamma', \Tcal')$ are \emph{equivalent} if $\Tcal$ and $\Tcal'$ are similar, and the closed curves $\gamma$ and $\gamma'$ coincide up to possible rotation in their parametrizations.
\end{definitionx}

\cref{remark:boundary-component-module} associates,
to each boundary component $(\gamma, \Tcal)$, a \emph{boundary component representation}~$\kbb_{(\gamma, \Tcal)} \in \boundaries$, in such a way that
equivalent boundary components correspond with isomorphic boundary component representations.




\begin{restatable}{theoremx}{boundarytame}
    \label{theorem:boundary-tame}
    If $M \in \repf(\Zbb^2)$, then the boundary $\partial M \in \boundaries$ decomposes in an essentially unique way as a finite direct sum of boundary component representations, which are indecomposable.
\end{restatable}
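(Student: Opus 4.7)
The plan is to leverage \cref{lemma:G-fully-faithful}, which embeds $\boundaries$ fully faithfully into the module category of a certain string algebra, and then to apply the Butler--Ringel/Crawley-Boevey classification of finite dimensional representations of string algebras (\cite{crawley-boevey}, already cited in the excerpt for $\kbb[\xbf,\ybf]/(\xbf\ybf)$). Under this classification, every finite dimensional module decomposes essentially uniquely as a direct sum of string modules and band modules, with each band module determined by a band (a primitive cyclic string) together with an indecomposable automorphism of a finite dimensional vector space, i.e.\ a matrix not similar to a nontrivial block diagonal matrix. Essential uniqueness is just Krull--Schmidt applied to the resulting full subcategory, so the job reduces to identifying which strings and bands actually arise from objects of the form $\partial M$ and matching them with the data defining boundary components.

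First I would describe the string algebra explicitly: its quiver is obtained from the hexagonal shape of the diagram in \cref{definition:boundary} by noting that the spread-curve decompositions of $\cokerxy{M}, \kerxyc{M}, \cokerx{M}, \kerxc{M}, \cokery{M}, \keryc{M}$ turn each of these six ephemeral modules into a module over an appropriate string subalgebra of $\kbb[\xbf,\ybf]/(\xbf\ybf)$, and the six morphisms of the boundary diagram glue these into one ambient string algebra. The six spread-curve types (horizontal/vertical, growing/decaying) correspond to the four elementary segments, and at each lattice point the relations $\xbf\ybf=0$ (or its analogues) ensure the algebra is string. With this description in hand, a band is literally a combinatorial cyclic walk in the underlying quiver, and unwinding what such a walk encodes at the level of the plane, one sees that it traces out a closed piecewise up/down path that alternates between birth- and death-curve segments glued at the top-left and bottom-right corner modules --- precisely an irreducible discrete closed plane curve in the sense of \cref{definition:closed-curve}. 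The matrix data attached to the band becomes $\Tcal$ from \cref{definition:boundary-component}, and the equivalence relation on bands (rotation of parametrization, inversion, matrix similarity) matches the equivalence of boundary components, so that band modules and boundary component representations correspond under \cref{remark:boundary-component-module}.

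The main obstacle, and the heart of the proof, is to show that no string module summands occur in $\partial M$ --- in other words, that all indecomposable summands are band modules. A string module summand would correspond to a walk in the quiver with two free endpoints; at each such endpoint one finds an extremal simple in one of the six constituent ephemeral modules that is not in the image of the corresponding morphism of the diagram, contradicting either exactness of the defining sequences of $\kerxy{M}, \cokerxy{M}, \kerx{M}, \cokerx{M}, \kery{M}, \cokery{M}$ or the definition of $\topleft{M}$ and $\botright{M}$ as images. Concretely, I would argue that the six morphisms of the boundary diagram are jointly ``surjective on ends'': the kernel of $\cokerx{M} \to \cokerxy{M}$ is accounted for by $\kery{M}$, and symmetrically; each minimal generator of $\cokerxy{M}$ that sits at a non-inner convex corner is hit by the image of $\topleft{M}$ or $\botright{M}$; etc. Running this accounting at every lattice point of the support of $\partial M$ forces every string of the string algebra that appears to close up into a band, which then gives the desired decomposition. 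Combining this with the band/boundary-component dictionary and Krull--Schmidt yields the statement.
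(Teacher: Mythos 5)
Your proposal tracks the paper's argument closely: embed $\boundaries$ into $\rep(R)$ for the string algebra $R$ via \cref{lemma:G-fully-faithful}, invoke the Crawley-Boevey classification to decompose $\partial M$ into string and band modules, rule out string summands, and match band modules with boundary component representations. The paper's structure is exactly: rule out string summands (\cref{proposition:partial-is-band}), show that every band representation of $R$ lies in $\boundaries$ (\cref{proposition:ephemeral-pair-contains-bands}), and set up the band/boundary-component dictionary via \cref{proposition:classification-of-bands} and \cref{lemma:closed-curve-to-oriented-cycle,lemma:oriented-cycle-to-periodic-word}. So the route is the same.

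Where your sketch is thin is precisely where the paper expends its effort, namely ruling out string module summands. Your ``surjective on ends'' intuition is correct in spirit, but the precise argument in \cref{proposition:partial-is-band} is a local computation at each vertex type of $R$: it introduces the notion of a nonzero element $z$ at a vertex $c$ \emph{interacting} with an incident arrow $e$ (either $X_e(z)\neq 0$ or $z$ lies in the image of $X_e$), and then verifies, separately for $c = ur_i$, $u_i$, $\ell_i$, $d\ell_i$ (the rest by $\xbf\leftrightarrow\ybf$ symmetry and duality), that every nonzero element at every vertex of $\partial M$ interacts with at least two incident arrows. A finite string module has an endpoint where some basis element interacts with at most one arrow, so none can appear. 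Your accounting ``each minimal generator of $\cokerxy M$ at a non-inner convex corner is hit by $\topleft M$ or $\botright M$'' is exactly the $c=u_i$ / $c=\ell_i$ cases of this check, but you would need to carry it out at all six vertex types, not just the convex corners, to fully rule out strings. One additional point worth making explicit, which you leave implicit: you also need that every band module over $R$ actually lies in the essential image of $\iota:\boundaries\hookrightarrow\rep(R)$, otherwise the summands of $\partial M$ need not be objects of $\boundaries$. This is \cref{proposition:ephemeral-pair-contains-bands} and relies on the fact that, for a band representation $X$, the vertex spaces $X_{u_i}, X_{r_i}, X_{\ell_i}, X_{d_i}$ are forced to be the appropriate kernels and cokernels of the maps between $X_{ur_\bullet}$ and $X_{d\ell_\bullet}$.
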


The upshot is that, to each $M \in \repf(\Zbb^2)$, one can associate a finite collection of boundary components that completely encode $\partial M$; see \cref{figure:main-figure,figure:drawing-boundary} and \cref{section:examples} for examples.
Each boundary component consists of a geometric datum (the closed curve $\gamma$) and an algebraic datum (the matrix $\Tcal$), which is reminiscent of \cite{patel}.
There exist other interesting ways of encoding the geometric datum: as two pairings between the birth- and death-curves of $M$, as a graph structure on the Betti tables of $M$, or as a $\Zbb$-action on the Betti tables of $M$; see \cref{figure:drawing-boundary} for an example.

Our last main result says that the boundary is a complete invariant on spread-decomposable representations.

\begin{restatable}{corollaryx}{boundarycompletespreaddec}
    \label{corollary:boundary-complete-spread-dec}
    Two spread-decomposable representations $M,N \in \repf(\Zbb^2)$ are isomorphic if and only if $\partial{M} \cong \partial{N}$, which occurs if and only if the boundary components of $M$ counted with multiplicity are the same as those of $N$.
\end{restatable}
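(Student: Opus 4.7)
The plan is to prove the three stated conditions equivalent, noting that the second equivalence is essentially \cref{theorem:boundary-tame}, the forward implication of the first equivalence is functoriality, and the remaining implication $\partial M \cong \partial N \Rightarrow M \cong N$ is what requires the spread-decomposability hypothesis.

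For the equivalence $\partial M \cong \partial N$ iff the multisets of boundary components of $M$ and $N$ coincide, I would simply apply \cref{theorem:boundary-tame} to both sides: it gives essentially unique decompositions of $\partial M$ and $\partial N$ as direct sums of indecomposable boundary component representations, so the isomorphism class of $\partial M$ is determined by (and determines) its multiset of boundary components up to the equivalence of \cref{definition:boundary-component}. The implication $M \cong N \Rightarrow \partial M \cong \partial N$ follows from the fact that \cref{definition:boundary} uses only shifts, kernels, cokernels, and images of natural multiplication morphisms, hence is functorial on $\repf(\Zbb^2)$.

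For the substantive implication, I would first observe that $\partial$ is additive: since $\cokerxy{-}, \kerxy{-}, \cokerx{-}, \kerx{-}, \cokery{-}, \kery{-}$ are kernels and cokernels of natural multiplication morphisms $M[-a,-b] \to M$, and shifts, kernels, cokernels and images all commute with direct sums, we have $\partial(M \oplus M') \cong \partial M \oplus \partial M'$. Next, I would compute $\partial \kbb_I$ for a single spread $I \subseteq \Zbb^2$ and identify it with a single boundary component representation $\kbb_{(\gamma_I, 1)}$ where $\gamma_I$ is the discrete closed plane curve traced out by the union of the birth-curves on the lower-left boundary of $I$, the death-curves on its upper-right boundary, and the top-left and bottom-right corners that glue them (as depicted in \cref{figure:boundary-of-spread}), and $1$ denotes the $1\times 1$ identity matrix. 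I would check that $\gamma_I$ is an irreducible discrete closed plane curve, that the morphisms of $\partial \kbb_I$ are exactly the incidence maps of adjacent pieces of $\gamma_I$ (making $\partial \kbb_I$ indecomposable with matrix $1$), and that the spread $I$ can be recovered from $\gamma_I$ as the region it encloses (with orientation fixed by the conventions of \cref{definition:boundary-modules}).

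With these in hand, the conclusion follows by writing $M \cong \bigoplus_i \kbb_{I_i}$ and $N \cong \bigoplus_j \kbb_{J_j}$ using spread-decomposability, so that additivity gives $\partial M \cong \bigoplus_i \kbb_{(\gamma_{I_i}, 1)}$ and $\partial N \cong \bigoplus_j \kbb_{(\gamma_{J_j}, 1)}$. If $\partial M \cong \partial N$, then \cref{theorem:boundary-tame} forces these multisets of components to match up to equivalence; since a $1\times 1$ matrix is similar only to itself, the multisets $\{\gamma_{I_i}\}$ and $\{\gamma_{J_j}\}$ agree up to reparametrization, and by the recovery statement of Step~2 the multisets $\{I_i\}$ and $\{J_j\}$ coincide, yielding $M \cong N$. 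The main obstacle is the explicit computation $\partial \kbb_I \cong \kbb_{(\gamma_I, 1)}$: one must verify that the several ephemeral modules in $\partial \kbb_I$ each decompose precisely as the spread-curve (or corner) pieces of $\gamma_I$, and that the natural diagonal morphisms of \cref{definition:boundary} glue these pieces into a single closed loop with a $1\times 1$ matrix rather than splitting into multiple components or producing a larger indecomposable block.
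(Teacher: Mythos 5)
Your proposal is correct and follows the same route as the paper: compute $\partial\kbb_I \cong \kbb_{(\gamma_I,1)}$ for a single spread $I$, then conclude by additivity of $\partial$ and the uniqueness of decomposition from Theorem~\ref{theorem:boundary-tame}. The paper's proof is terser (it simply asserts the isomorphism for a single spread and leaves the verifications implicit), while you usefully flag the two points that must actually be checked — that $\partial\kbb_I$ glues into a \emph{single} irreducible loop with matrix $1$ rather than splitting, and that $I$ is recoverable from $\gamma_I$ (without the latter, additivity plus uniqueness alone would not yield $M \cong N$) — so your account is, if anything, more careful.
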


This implies that the boundary is not determined by the rank invariant, since the rank invariant is known to not be complete on the category of spread-decomposable representations; conversely, the boundary does also not determine the rank invariant (\cref{section:rank-and-boundary}), but we believe that there is a stronger connection between the two (\cref{section:boundary-and-rank-conjecture}).

%
%

We conclude the paper by computing examples of end-curves and boundaries,
in \cref{section:examples},
and by proving that projective presentations of end-curves can be computed in linear time (\cref{proposition:main-computation-result}).



\begin{figure}[p]
    \includegraphics[width=1\textwidth]{}
    \caption{%
        (\textit{a}) A smooth function $f = (f_1, f_2) : S^2 \to \Rbb^2$ from the $2$-sphere to the plane.
        (\textit{b}) The one-dimensional sublevel set homology $H_1(f) : \Rbb^2 \to \vect$ of $f$ (i.e., the homology of the functor $(\Rbb, \leq)^2 \to \Top$ given by mapping $(x,y) \in \Rbb^2$ to $\{z \in S^2 : f_1(z) \leq x, f_2(z) \leq y\}$).
        Dashed lines indicate the boundary of regions in which the homology remains constant.
        (\textit{c})~The restriction of $H_1(f)$ to $\Zbb^2$ (in gray) resulting in a module $M \in \rep(\Zbb^2)$.
        Lines indicate the boundary of constant regions.
        (\textit{d}) The boundary $\partial M$ of $M$, consisting of birth-curves, death-curves, top-left corners, bottom-right corners.
        We slightly offset two sections of the boundary to avoid overlaps.
        In this case, the two-parameter count is $\Ncaltwo(M) = 3$, and $\partial M$ decomposes as a single boundary component.
        }
    \label{figure:main-figure}
\end{figure}

\subsection{Discussion}
We discuss extensions, connections, and future work.

\subsubsection{Extension to continuous setting and two-dimensional Morse theory}
\label{section:extension-continuous}
Two-parameter representations coming from applied topology and Morse theory are often representations of a continuous poset $\Rbb^n$
(equivalently, graded modules over a real-exponent polynomial ring \cite{geist-miller}).
This prompts the question of whether our counts and end-curves can be extended to the continuous setting of~$\Rbb^2$.
It is not hard to extend the definition of end-curves to the case of finitely presented representations of $\Rbb^2$ (we omit the details for conciseness).
More generally, and more interestingly, we expect that such extensions are possible for particular kinds of tame representations in the sense of Miller~\cite{miller,miller2} and \cite{waas}.
We expect these suitably generalized end-curves to encode algebraically the 
Pareto grid of a two-dimensional Morse function \cite{cerri-ethier-frosini,budney-kaczynski,assif-baryshnikov}
and the Cerf diagram of a generic family of smooth functions
\cite{cerf,hatcher-wagoner,kirby}.

\subsubsection{Extension to higher number of parameters}
It is natural to wonder if there exist extensions of $\Ncalone$ and $\Ncaltwo$ to representations of $\Gcal^n$ or $\Zbb^n$, with $n \geq 3$.
Indeed, several of the equivalent descriptions of~$\Ncaltwo$ generalize readily;
this includes $\chigpd$, $\chirkdec$, and $\chispreadexact$, as well as the formula of \cref{definition:two-parameter-count},
which admits a natural inclusion-exclusion-type generalization $\Ninclexcl$ (\cref{definition:N-inclusion-exclusion}).

Although we have $\chirkdec = \Ninclexcl$ for every $n \geq 1$
(\cref{proposition:signed-barcode-equals-inclusion-exclusion}),
these is not a good candidate for a generalization of $\Ncalone$ and $\Ncaltwo$,
since it can fail to be positive (\cref{section:higher-dimensional-counts}).
This motivates the following questions.

\begin{questionx}
    When $n \geq 3$, are the counts $\chigpd$ and $\chispreadexact$ positive on $\rep(\Gcal^n)$?
\end{questionx}

\begin{questionx}
    When $n \geq 3$,
    is $\chigpd = \chispreadexact$ on $\rep(\Gcal^n)$?
\end{questionx}




\subsubsection{Stability}
A central topic in persistence theory, not addressed in this paper, is stability with respect to perturbations of representations in the interleaving distance $d_I$; see \cite[Chapter~3]{oudot} and \cite[Section~6]{botnan-lesnick}.
Although the standard stability result for one-parameter persistence
\cite{cohen-steiner-et-al,chazal-et-al-2}
is stated in terms of the barcode, it does imply a stability result for the bar-count \cite[Theorem~6.1]{bujovsky-2}.
This result does not generalize readily to the two-parameter count, due to the fact that the two-parameter count is not monotonic with respect to inclusions (see \cref{section:monotonicity-fail}).
This issue is not specific to our count, since, as is shown in \cref{section:monotonicity-fail}, any additive count that takes the value $1$ on spreads must fail to be monotonic with respect to inclusions.
Relatedly, in \cref{section:perturbations} we give an example showing some of the difficulties one needs to overcome when understanding the behavior of end-curves with respect to perturbations.

Understanding the behavior of the two-parameter count, end-curves, and boundary with respect to perturbations is an important open problem, which we expect to be related to the problem of finding a good candidate for the persistence of the features counted by the two-parameter count.
A related problem is that of using end-curves to extend
the coherent matching distance \cite{cerri-ethier-frosini-2,cerri-ethier-frosini} to two-parameter representations that do not necessarily come from filtering functions.

\subsubsection{The boundary and the rank invariant}
\label{section:boundary-and-rank-conjecture}
As shown in \cref{section:rank-and-boundary}, the rank invariant of a two-parameter representation does not determine its boundary (or even its end-curves), nor does the boundary determine the rank invariant.
However, we conjecture that the boundary does determine a non-trivial lower bound for the rank invariant, which can be computed geometrically using the closed curves of the boundary components.
Similarly, it is of interest to relate the boundary to the support of the signed barcode \cite{botnan-oppermann-oudot}, the Betti tables relative to the rank exact structure \cite{botnan-oppermann-oudot-scoccola}, and the curves of \cite{morozov-patel}.

\subsubsection{Algorithmic computation}
In the context of applied topology, it is necessary to have efficient algorithms for computing the invariants of interest.
As we show in \cref{section:algorithm}, projective presentations for end-curves can be computed in cubic time, which allows us to compute end-curves using any decomposition algorithm, such as that in \cite{dey_et_al:LIPIcs.SoCG.2025.41}.
The more difficult problem is computing boundary components, and the existence of algorithms for decomposition of representations of string algebras such as \cite{laubenbacher-sturmfels}
give hope that efficient solutions to this problem exist.
An algorithm for computing the decomposition of the homology of filtrations over a certain string algebra appeared recently in \cite[Section~9]{dey-lapinski-soriano}, developed independently of this work.
However, their context is not multiparameter persistence, but rather bifurcations in time-dependent dynamical systems and Conley--Morse theory, and
their algorithm deals with string representations but not with band representations, since the quivers they consider do not admit band representations.

\subsection{Structure of the paper}
In \cref{section:background}, we give background and notation.
In \cref{section:counts}, we introduce counts derived from signed invariants and end-curves.
In \cref{section:properties-counts}, we prove the universal property of the two-parameter count, \cref{corollary:our-count-and-other-counts}, and \cref{theorem:slice-monotonicity}.
In \cref{section:end-curves-betti}, we prove \cref{theorem:curves-betti-tables}.
In \cref{section:bands}, we prove \cref{theorem:boundary-tame}.
In \cref{section:examples}, we compute examples of end-curves and boundaries.
In \cref{section:algorithm}, we prove a fundamental result concerning the computation of end-curves.

\subsection*{Acknowledgements}
TB was supported by Bishop's University, Universit\'e de Sherbrooke and NSERC Discovery Grant RGPIN/04465-2019.
LS thanks Vadim Lebovici and Vukašin Stojisavljević for clarifying discussions about topics in this paper, and specifically Vukašin for describing connections between persistence theory and symplectic topology.
LS was supported by a Centre de Recherches Mathématiques et Institut des Sciences Mathématiques fellowship. HT acknowledges support from NSERC (RGPIN-2022-03960) and the Canada Research Chairs program (CRC-2021-00120).

\section{Background and notation}
\label{section:background}

We assume familiarity with elementary notions from category theory \cite{maclane} and quiver representations \cite{derksen-weyman}.
We fix a field $\kbb$, we let $\Vect$ be the category of $\kbb$-vector spaces, and $\vect$ be the full subcategory spanned by finite dimensional vector spaces.
If $\Lambda$ is a ring, by \emph{$\Lambda$-module} we mean right $\Lambda$-module.
The category of $\Lambda$-modules is denoted $\Mod_{\Lambda}$, and the full subcategory spanned by finitely generated $\Lambda$-modules is denoted by $\mod_{\Lambda}$.

\subsection{Posets of interest}
We endow the set of integers $\Zbb$ with its standard order.
If $a \geq 1 \in \Nbb$, we consider the finite totally ordered set $\{0, \dots, a-1\} \subseteq \Zbb$.
A finite \emph{$n$-dimensional grid poset} is a product poset of the form $\Gcal^n \coloneqq \prod_{i = 1}^k \{0, \dots, a_k-1\} \subseteq \Zbb^n$,
where $n \geq 1 \in \Nbb$ and $a_1, \dots, a_k \geq 1 \in \Nbb$.
Since the exact values for $a_1, \dots, a_n$ do not matter in this paper, we suppress them from the notation.

\subsection{Poset representations}
Let $\Pscr$ be a poset.
A linear \emph{representation} of $\Pscr$ is a functor $\Pscr \to \Vect$, and such representation is \emph{pointwise finite dimensional} if it takes values in finite dimensional vector spaces.
The category of representations of $\Pscr$ is denoted by $\Rep(\Pscr)$, and that of pointwise finite dimensional presentations by $\rep(\Pscr)$.
Note that, in the literature, poset representations are also called persistence modules.
If $M : \Pscr \to \Vect$, and $p \leq q \in \Pscr$, we let $\phi^M_{p,q} : M_p \to M_q$ denote the corresponding structure morphism of $M$.

If $\Gcal^n \subseteq \Zbb^n$ is an $n$-dimensional grid poset, padding by zeros induces a fully faithful embedding $\rep(\Gcal^n) \to \repf(\Zbb^n)$.

\subsection{Poset representations as modules}
The \emph{poset algebra} $\kbb \Pscr$ of a poset $\Pscr$ is freely generated, as a vector space, by pairs $[i,j]$ where $i \leq j \in \Pscr$, with the multiplication $[i,j] \cdot [k,l]$ being $[i,l]$ if $j=k$ and zero otherwise.
There is a fully faithful embedding $\Rep(\Pscr) \to \Mod_{\kbb \Pscr}$, and when $\Pscr$ is finite, the algebra $\kbb\Pscr$ is finite dimensional, and the embedding restricts to an equivalence of categories
$\rep(\Pscr) \simeq \mod_{\kbb \Pscr}$
(see, e.g., \cite[Lemma~2.1]{botnan-oppermann-oudot-scoccola}).

\medskip

If $1 \leq i \leq n$, we let $\ebf_i \in \Zbb^n$ denote the standard basis vector whose $i$th coordinate is one, with all other coordinates being zero.
An \emph{$n$-graded $\kbb[\xbf_1, \dots, \xbf_n]$-module} is a $\kbb[\xbf_1, \dots, \xbf_n]$-module of the form $M = \bigoplus_{i \in \Zbb^n} M_i$ such that $\xbf_j M_i \subseteq M_{i + \ebf_j}$.
We let $\grmod{n}_{\kbb[\xbf_1, \dots, \xbf_n]}$ denote the category of $n$-graded $\kbb[\xbf_1, \dots, \xbf_n]$-modules.
There is an equivalence of categories $\Rep(\Zbb^n) \simeq \grmod{n}_{\kbb[\xbf_1, \dots, \xbf_n]}$ given by mapping a functor $M : \Zbb^n \to \Vect$ to the graded module $\bigoplus_{i \in \Zbb^n} M_i$, with action $(- \cdot \xbf_j) : M_i \to M_{i+\ebf_j}$ given by $\phi^M_{i,i+\ebf_j}$.
This equivalence restricts to an equivalence of categories between $\repf(\Zbb^n)$
and the category of finite dimensional, $n$-graded $\kbb[\xbf_1, \dots, \xbf_n]$-modules.


\noindent
\begin{center}
    \fbox{%
        \begin{minipage}{0.98\linewidth}
            \begin{notation}
                \label{remark:persistence-modules-as-other-things}
                We often use the following equivalences and fully faithful functors implicitly:
                \begin{align*}
                    \mod_{\kbb \Pscr} &\simeq \rep(\Pscr), \text{ for every finite poset $\Pscr$,}\\
                    \mod_{\kbb \Gcal^n} &\simeq \rep(\Gcal^n) \hookrightarrow \repf(\Zbb^n) \hookrightarrow \rep(\Zbb^n) \hookrightarrow \Rep(\Zbb^n) \simeq \grmod{n}_{\kbb[\xbf_1, \dots, \xbf_n]}, \text{ for every $n \geq 1 \in \Nbb$}.
                \end{align*}
            \end{notation}
        \end{minipage}}
\end{center}



\subsection{Betti tables}
\label{section:betti-tables}
The Betti tables (also known as Betti diagrams) of a finitely generated, $n$-graded $\kbb[\xbf_1, \dots, \xbf_n]$-module have several equivalent definitions.
One definition is as the grades of generators in a minimal projective resolution of the module (e.g., \cite[Definition~1.9]{miller-sturmels}).
We instead use the definition based on the Koszul complex; it is standard that these two definitions coincide; for a proof, see Proposition~1.28 and Lemma~1.32 of \cite{miller-sturmels}, and note that, although they are working with $\Nbb^n$-gradings, the proof is exactly the same for $\Zbb^n$-gradings.
The Koszul complex definition, which we now instantiate to the case $n=2$, also has the advantage that it makes sense for any pointwise finite dimensional module.

The \emph{Betti tables} $\beta_0^M, \beta_1^M, \beta_2^M : \Zbb^2 \to \Nbb$ of a pointwise finite dimensional, bigraded $\kbb[\xbf,\ybf]$-module $M$ are the multisets of elements of $\Zbb^2$ given by the dimension vectors of the homology of the Koszul complex $\beta_i^M(x,y) \coloneqq \dim H_i(K(M))(x,y)$, where
\[
    K(M)\;\; \coloneqq\;\; 0 \to M[-1,-1] \xrightarrow{\,\,\,\,[\ybf, -\xbf]^T\,\,} M[-1,0] \oplus M[0,-1] \xrightarrow{ \,\,[\xbf, \ybf] \,\,} M \to 0\,\,.
\]

\subsection{Spreads and spread representations}
\label{section:spreads}
Let $\Pscr$ be a poset.
If $x,y \in \Pscr$, a \emph{path} from $x$ to $y$ consists of a sequence $x_1, \dots, x_k$ such that $x = x_1$, $y=x_k$, and $x_i$ and $x_{i+1}$ are comparable in~$\Pscr$ for all $1 \leq i < k$.
A set $S \subseteq \Pscr$ is \emph{poset-connected} if it is non-empty, and for every $x,y \in S$ there exist a path from $x$ to $y$ entirely contained in $S$.
A set $S \subseteq \Pscr$ is \emph{poset-convex} if whenever $x \leq y \leq z \in \Pscr$ with $x,z \in S$, then $y \in S$.
A \emph{spread} of $\Pscr$ is a subset $I \subseteq \Pscr$ that is poset-connected and poset-convex.

If $I \subseteq \Pscr$ is a spread, there exists a unique functor $\kbb_I : \Pscr \to \vect$, called a \emph{spread representation}, that takes the value $\kbb$ on $I$ and the value $0$ otherwise, with all morphisms that are not forced to be zero being the identity $\kbb \to \kbb$.

The \emph{segment} corresponding to $x \leq y \in \Pscr$ is the spread $[x,y] \coloneqq \{z \in \Pscr : x \leq z \leq y\}$, and the corresponding \emph{segment representation} is $\kbb_{[x,y]}$.
The \emph{simple} representation at $x \in \Pscr$ is $\kbb_x \coloneqq \kbb_{[x,x]}$.
The \emph{hook} corresponding to $x < y \in \Pscr \cup \{\infty\}$ is the spread $\langle x, y \langle\,\, \coloneqq \{z \in \Pscr : x \leq z \ngeq y\}$, where $\Pscr \cup \{\infty\}$ is the poset obtained by adding an element $\infty$ to $\Pscr$ which is larger than all other elements; the corresponding \emph{hook representation} is $\kbb_{\langle a , b \langle}$.


\subsection{Multisets}
\label{notation:multisets}
A finite \emph{multiset} of elements of a set $X$ consists of a function $c : X \to \Nbb$ of finite support.
Then, the union of finite multisets is simply addition.
An \emph{indexed multiset} of elements of $X$ consists of a set $I$ and a function $m : I \to X$; and such multiset is finite if $I$ is finite.
Two indexed multisets $(I,m)$ and $(I',m')$ are \emph{isomorphic} if there exists a bijection $f : I \to J$ such that $m' \circ f = m$.
It is then clear that there exists a bijection between
finite indexed multisets of elements of $X$ up to isomorphism
and
finite multisets of elements of $X$, given by mapping $(I,m)$ to the function $X \to \Nbb$ which maps $x$ to $|\{i \in I : m(i) = x\}|$.
The \emph{cardinality} of a finite multiset $c : X \to \Nbb$, denoted $|c|$, is $\sum_{x \in X} c(x)$.

By decomposing a spread-decomposable representation $M$ we get a multiset of spread representations $\dec(M)$, which is uniquely characterized by the multiset of spreads given by the support of the representations in $\dec(M)$.
By a standard abuse of notation, we denote this multiset of spreads also by $\dec(M)$.

\subsection{Duality}
\label{section:duality}
Let $\Pscr$ be a poset.
Dualization of vector spaces induces an equivalence of categories $\rep(\Pscr^\op) \simeq \rep(\Pscr)^\op$.
If $\Pscr$ is self-dual (i.e., $\Pscr \cong \Pscr^\op$), then, composing these two equivalence, we get an equivalence $\rep(\Pscr) \simeq \rep(\Pscr)^\op$.
This, coupled with the fact that the notion of spread is self-dual (i.e., if $I \subseteq \Pscr$ is a spread, then the same set is a spread of $\Pscr^\op$), allows us to simplify some proofs by only considering one of two cases, the other one being dual.
Note that both $\Gcal^n$ and $\Zbb^n$ are self-dual.

\subsection{Finite dimensional algebras}
For references, see \cite{auslander-reiten-smalo, assem-simson-skowronski}.
Let $\Lambda$ be a finite dimensional $\kbb$-algebra.
We let $\indproj_\Lambda = \{P_i\}_{i \in I}$ be a complete set of non-isomorphic indecomposable projective $\Lambda$-modules, and $\indsimpl_\Lambda = \{S_i\}_{i \in I}$ with $S_i = P_i/\mathrm{rad}(P_i)$ be the corresponding set of simple $\Lambda$-modules.

\subsection{Modules over quiver algebras.}
Let $\Lambda = \kbb Q / (\rho)$ be the quotient of a quiver algebra.
A right $\Lambda$-module $M$ is \emph{unital} if $M \Lambda = M$.
Equivalently, a unital $\Lambda$-module can be viewed as a representation of $Q$ that satisfies the zero relations encoded by $\rho$, which justifies denoting the category of unital $\Lambda$-modules by $\Rep(Q,\rho)$.
A $\Lambda$-module is \emph{finitely controlled} (resp.~\emph{pointwise finite dimensional}) if for every vertex $v$ of $Q$, the set $e_v M$ is contained in a finitely generated submodule of $M$ (resp.~it is finite dimensional).
Clearly, pointwise finite dimensional implies finitely controlled.
We denote the category of unital, pointwise finite dimensional $\Lambda$-modules by $\rep(Q,\rho)$.

\smallskip

\subsection{String algebras}
\label{section:string-algebras}

We now recall the classification result for indecomposable modules over a string algebra; for this, we follow \cite{crawley-boevey}.

\smallskip
\noindent\textit{String algebras.}
A \emph{string algebra} in the sense of Crawley-Boevey (a generalization of the notion originally introduced by Butler and Ringel \cite{butler-ringel}) is an algebra of the form $\kbb Q / (\rho)$ where $Q$ is a (not necessarily finite) quiver, and $\rho$ is a set of paths of $Q$ of length at least two, such that
\begin{itemize}
    \item every vertex of $Q$ has in-degree at most two, and out-degree at most two;
    \item given an arrow $y \in Q$, there is at most one path $x y \notin \rho$, and at most one path $y z \notin \rho$.
\end{itemize}

See \cref{figure:kxy-gentle} for an example.

\smallskip
\noindent\textit{Words.}
Let $\Lambda = \kbb Q/(\rho)$ be a string algebra.
A \emph{letter} of $Q$ is either an arrow $x$ of $Q$
or the formal inverse $x^{-1}$ of an arrow;
the source and target of $x^{-1}$ are the target and source of $x$, respectively.
If $I$ is one of the four sets $\{0, 1, \dots, n\}$, $\Nbb$, $-\Nbb \coloneqq \{0, -1, -2, \dots\}$, or $\Zbb$, we define an \emph{$I$-word} $C$ as follows:
\begin{itemize}
    \item If $I \neq \{0\}$, then $C$ is a sequence of letters $C_i$ of $Q$, for each $i \in I$ with $i-1 \in I$, such that
          \begin{itemize}
              \item the target of $C_i$ is equal to the source of $C_{i+1}$;
              \item $C_i^{-1} \neq C_{i+1}$;
              \item no path in $\rho$, nor its formal inverse, appears as a sequence of consecutive letters in $C$.
          \end{itemize}
    \item If $I = \{0\}$, there are two
          $I$-words $1_{v,\epsilon}$ for each vertex $v$ of $Q$, where $\epsilon \in \{-1, 1\}$.
\end{itemize}

\smallskip
\noindent\textit{Operations on words.}
The \emph{inverse} $C^{-1}$ of a word $C$ is given by inverting the letters of $C$ (with the convention that $(x^{-1})^{-1} = x$), and reversing their order.
By convention, $(1_{v,\epsilon})^{-1} = 1_{v, -\epsilon}$.
If $C$ is a $\Zbb$-word, and $n \in \Zbb$, its \emph{$n$-shift} $C[n]$ is the $\Zbb$-word mapping $i$ to $C_{i+n}$.
If $C$ is an $I$-word and $i \in I$, the associated vertex $v_i(C) \in Q_0$ is the target of $C_i$ (equivalently the source of $C_{i+1}$) if $I \neq \{0\}$, and it is $v$ if $C = 1_{v,\epsilon}$.

A word is \emph{direct} if it does not contain formal inverses, and it is \emph{inverse} if it only contains formal inverses.
A $\Zbb$-word is \emph{periodic} if $C = C[n]$ for some $n > 0 \in \Nbb$, and the minimal such $n$ is called the \emph{period} of $C$.
By convention, the $n$-shift $C$ of an $I$-word with $I \neq \Zbb$ is $C$ itself.

Define an equivalence relation $\sim$ on the set of all words by $C \sim D$ if and only if either $D$ is a shift of $C$, or $D$ is a shift of the inverse of $C$.

\smallskip
\noindent\textit{String and band modules.}
Given an $I$-word $C$, define a $\Lambda$-module $M(C)$ freely generated, as a vector space, by symbols $\{b_i\}_{i \in I}$, with the action of $\Lambda$ given by%
\footnote{Note that, as opposed to \cite{crawley-boevey}, we work with right $\Lambda$-modules throughout.}:
\begin{itemize}
    \item For every $v$ vertex of $Q$, we have $b_i e_v = b_i$ if $v_i(C) = v$ and $b_i e_v = 0$ otherwise.
    \item For every $x$ arrow of $Q$, we have $b_i x = b_{i-1}$ if $i-1 \in I$ and $C_i = x$;
          $b_i x = b_{i+1}$ if $i+1 \in I$ and $C_{i+1} = x^{-1}$;
          and $b_i x = 0$ otherwise.
\end{itemize}
A \emph{string module} is a module of the form $M(C)$, with $C$ a non-periodic word.

If $C$ is a $\Zbb$-word and $n \in \Nbb$, there is an isomorphism $i_{C,n} : M(C) \to M(C[n])$ given by $t_{C,n}(b_i) = b_{i-n}$.
If $C$ is a periodic word of period $n$, then $M(C)$ is a $\left(\kbb[T,T^{-1}],\Lambda\right)$-bimodule with the action of $T$ given by $t_{C,n}$, so, for any $\kbb[T,T^{-1}]$-module $V$, we define $M(C,V) \coloneqq V \otimes_{\kbb[T,T^{-1}]} M(C)$.
A \emph{band module} is a module of the form $M(C,V)$, with $C$ a periodic word and $V$ an indecomposable $\kbb[T,T^{-1}]$-module.
A \emph{primitive injective} band module is a module of the form $M(C,V)$ with $C$ a direct or inverse periodic word, and $V$ is the injective envelope of a simple $\kbb[T,T^{-1}]$-module.


\begin{theorem}[{\cite[Theorems~1.1,~1.2,~and~1.4]{crawley-boevey}}]
    \label{theorem:string-algebra-module-classification}
    Let $\kbb Q / (\rho)$ be a string algebra.
    \begin{itemize}
        \item
              String modules, finite dimensional band modules, and primitive injective band modules are indecomposable.
              Two of these modules are isomorphic if and only if they belong to the same class, their associated words are equivalent, and, in the case of band modules, their associated $\kbb[T,T^{-1}]$-modules are isomorphic.
        \item Every unital, finitely controlled $\Lambda$-module decomposes in an essentially unique way as a direct sum of string modules, finite dimensional band modules, and primitive injective band modules.
    \end{itemize}
\end{theorem}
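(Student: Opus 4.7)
The plan is to combine the classical Butler--Ringel analysis for finite-dimensional string algebras with Crawley-Boevey's functorial filtrations technique, which is what allows the statement to extend to unital, finitely controlled modules over possibly infinite quivers.

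First, I would establish indecomposability and the isomorphism classification of the three types of modules. For a string module $M(C)$, I would compute $\End_\Lambda(M(C))$ combinatorially: every endomorphism decomposes as a sum of \emph{string morphisms} parametrized by matching occurrences of substrings of $C$ inside itself, and the defining axioms of string algebras (which forbid branching along admissible paths) force the only unit endomorphisms to be scalar multiples of the identity, with all other endomorphisms nilpotent; hence $\End_\Lambda(M(C))$ is local and $M(C)$ is indecomposable. For a band module $M(C,V)$, the analogous combinatorial description of morphisms, intertwined with compatibility with the $T$-action, gives $\End_\Lambda(M(C,V)) \cong \End_{\kbb[T,T^{-1}]}(V)$, transferring indecomposability from $V$; the primitive injective case is analogous. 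The isomorphism classification then follows by recovering $C$ (up to the equivalence $\sim$) and $V$ (up to isomorphism) as invariants of the module, e.g.\ from its socle/top structure and from the induced $T$-action on periodic pieces.

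Second, and this is the main step, I would prove the decomposition theorem via functorial filtrations. For each vertex $v \in Q_0$, I would define two filtrations of $e_v M$ indexed by admissible one-sided words starting at $v$, recording ``how far'' an element can be extended along admissible direct or inverse paths. I would then verify: (i) these filtrations are functorial in $M$; (ii) the associated graded pieces, assembled across vertices, produce sub-quotients of $M$ isomorphic to direct sums of string modules and of (finite-dimensional or primitive injective) band modules; and (iii) the filtrations split compatibly at all vertices, producing a genuine direct-sum decomposition of $M$. Primitive injective band modules appear precisely when the $\kbb[T,T^{-1}]$-modules extracted from the filtration data are infinite-dimensional injective envelopes of simples, corresponding to elements supported on a periodic word but not contained in any finite-dimensional band summand.

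The main obstacle is step (iii) in the infinite-dimensional setting: for finite-dimensional $M$ the splitting is automatic by length arguments, whereas here the finitely controlled hypothesis is essential, guaranteeing that each $e_v M$ is contained in a finitely generated submodule so that the combinatorial filtrations are well-founded and splittings can be chosen coherently vertex-by-vertex without accumulation producing a direct product rather than a direct sum. Once the decomposition is obtained, essential uniqueness follows from a Krull--Schmidt argument applied using the local endomorphism rings computed in the first step, combined with the fact that the word of a string or band summand can be read off from the filtration data at each vertex.
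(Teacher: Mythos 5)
This theorem is not proved in the paper: it is stated as a direct citation of Crawley-Boevey's results (Theorems~1.1, 1.2, and~1.4 of that reference), so there is no ``paper's own proof'' to compare against. Your sketch correctly identifies the technique that Crawley-Boevey actually uses --- functorial filtrations --- and identifies the right pressure points (combinatorial computation of endomorphism rings for indecomposability, one-sided word filtrations at each vertex, and the role of the finitely controlled hypothesis in making the splitting argument go through without degenerating to a direct product). To that extent the outline is faithful to the cited source.

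A few remarks on precision, which matter if this were to be expanded into a real proof. Your claim that ``the only unit endomorphisms \ldots\ are scalar multiples of the identity, with all other endomorphisms nilpotent'' is not quite right as stated: if $n$ is a nonzero nilpotent endomorphism and $c \in \kbb^\times$, then $c\,\mathrm{id} + n$ is a unit but not a scalar multiple of the identity. The correct statement is that every endomorphism of $M(C)$ decomposes as $c\,\mathrm{id} + n$ with $n$ nilpotent, which shows the endomorphism ring is local (the nonunits form a two-sided ideal, namely the set with $c=0$). This is the standard way the Butler--Ringel calculus is packaged. Similarly, step~(iii) of your filtration argument --- that the filtrations ``split compatibly at all vertices'' --- is the genuinely hard part of Crawley-Boevey's proof, and asserting it without indicating the mechanism (careful choices of complements consistent with the refinement structure of the filtrations, iterated over the well-founded poset of words) leaves most of the real work implicit. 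Finally, the claim that the word $C$ can be read off from the socle/top structure needs care, since distinct non-periodic words can produce string modules with isomorphic tops and socles; the recovery of $C$ really goes through the filtration data, not the socle/top alone. None of these are fatal to the plan, but they are exactly the places where the cited reference invests effort, and a full proof would need to fill them in.
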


\begin{remark}
    Given an invertible square matrix $\Tcal \in \kbb^{n \times n}$ that is not similar to a block diagonal matrix with more than one block,
    we obtain a finite dimensional, indecomposable $\kbb[T,T^{-1}]$-module with underlying vector space $\kbb^n$ and action $T \cdot v = \Tcal v$,
    and any finite dimensional, indecomposable $\kbb[T,T^{-1}]$-module is of this form.
    This allows us to use canonical forms, such as the rational canonical form, or the Jordan canonical form (when $\kbb$ is algebraically closed), to classify finite dimensional, indecomposable $\kbb[T,T^{-1}]$-modules up to isomorphism.
\end{remark}

\subsection{Additive invariants and bases}
\label{section:additive-invariants}

The following definitions are based on \cite{blanchette-brustle-hanson,amiot-brustle-hanson}; see \cite{escolar-kim} for an essentially equivalent setup.

\begin{definition}
    Let $\Acal$ be an additive category.
    An \emph{additive invariant} on $\Acal$ is a function $\alpha : \Acal \to G$ mapping objects of $\Acal$ to an Abelian group $G$, which is isomorphism invariant (i.e., $\alpha(X) = \alpha(Y)$ whenever $X \cong Y \in \Acal$) and additive (i.e., $\alpha(X \oplus Y) = \alpha(X) + \alpha(Y)$ for all $X,Y \in \Acal$).
\end{definition}

\begin{definition}
    If $\alpha$ and $\beta$ are additive invariants on $\Acal$, we say that $\alpha$ \emph{is finer} than $\beta$, and write $\alpha \succcurlyeq \beta$, if $\alpha(X) = \alpha(Y)$ implies $\beta(X) = \beta(Y)$ for all $X,Y \in \Acal$.
    If $\alpha \succcurlyeq \beta$ and $\beta \succcurlyeq \alpha$, we say that $\alpha$ and $\beta$ are \emph{equivalent}, and write $\alpha \approx \beta$.
\end{definition}

\begin{definition}
    A \emph{basis} for an additive invariant $\alpha : \Acal \to G$ consists of an indexed collection of objects $\{M_i \in \Acal\}_{i \in I}$, such that for every $M \in \Acal$ there exists a unique indexed collection of integers $\{c^M_i \in \Zbb\}_{i \in I}$ such that $c_i \neq 0$ for finitely many $i \in I$, and such that
    \[
        \alpha(M) = \sum_{i \in I} c^M_i \cdot \alpha(M_i).
    \]
\end{definition}

\section{Counts for poset representations}
\label{section:counts}

By a \emph{count} we mean an additive invariant (\cref{section:additive-invariants}) taking values in $\Zbb$.

\subsection{An inclusion-exclusion type count}
Our first count is a natural generalization of $\Ncaltwo$.

\begin{definition}
    \label{definition:N-inclusion-exclusion}
    Let $n \geq 1 \in \Nbb$.
    The \emph{inclusion-exclusion count} $\Ninclexcl : \repf(\Zbb^n) \to \Zbb$ is
    \[
        \Ninclexcl(M) \coloneqq \sum_{S \subseteq \{1, \dots, n\}} (-1)^{|S|}\, \dim \left(\xbf_S\, M \right)\,,
    \]
    where, if $S = \{s_1, \dots s_\ell\}$, we let $\xbf_S = \xbf_{s_1} \cdots \xbf_{s_\ell}$.
\end{definition}

Note that, when $n = 2$, the count $\Ninclexcl$ specializes to $\Ncaltwo$.

\subsection{Counts from end-curves}
We define end-curves and their associated count.


\begin{definition}
    A representation $M \in \Rep(\Zbb^2)$ is \emph{ephemeral} if $\xbf\ybf M = 0$.
\end{definition}

\begin{definition}
    A \emph{spread curve} is a spread $I \subseteq \Zbb^2$ such that $p \in I$ implies $p+(1,1) \notin I$.
    A \emph{spread curve representation} is a representation of the form $\kbb_I$ with $I$ a spread curve.
\end{definition}

Equivalently, a spread curve is a spread $I$ such that $\kbb_I$ is ephemeral.

\begin{figure}
    \begin{center}
        \includegraphics[width=1\linewidth]{}
    \end{center}
    \caption{\emph{Left.} A quiver $Q$.
    \emph{Center.} The quiver $Q$ with the ideal $I \subseteq \kbb Q$ of all commutativity relations.
    By identifying the vertices of the quiver $Q$ with the elements of $\Zbb^2$, we see that $\rep(Q,I) \simeq \rep(\Zbb^2)$.
    \emph{Right.} The quiver $Q$ with the ideal $J \subseteq \kbb Q$ generated by length-two paths, which contain both a vertical and a horizontal arrow.
    The algebra $\kbb Q/J$ is a string algebra, and using the identification between vertices of $Q$ and elements of $\Zbb^2$, the equivalence classes of words in this algebra correspond to spread curves.
    In gray is a word $C$, whose associated module $M(C)$ is a string module.
    As it can be observed, there are no periodic words in $\kbb Q/J$.
    }
    \label{figure:kxy-gentle}
\end{figure}

\begin{theorem}
    \label{theorem:decomposition-theorem-ephemeral}
    Every ephemeral representation in $\rep(\Zbb^2)$ decomposes in an essentially unique way as a direct sum of spread curve representations, which are indecomposable.
\end{theorem}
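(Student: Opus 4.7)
The plan is to identify the category of ephemeral representations in $\rep(\Zbb^2)$ with $\rep(Q,J)$, where $Q$ is the Hasse diagram of $\Zbb^2$ and $J$ is the ideal generated by the length-two paths that mix a horizontal and a vertical arrow (\cref{figure:kxy-gentle}). The commutativity relations defining $\rep(\Zbb^2)$ inside $\rep(Q)$ are automatic, since $\xbf\ybf$ and $\ybf\xbf$ both lie in $J$. Since $\Lambda = \kbb Q / J$ is a string algebra and pointwise finite dimensional implies finitely controlled, \cref{theorem:string-algebra-module-classification} decomposes every ephemeral $M$ essentially uniquely into string modules, finite dimensional band modules, and primitive injective band modules.

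The crucial step is to rule out all band modules by showing that $\Lambda$ admits no periodic words. I would analyze the allowed letter transitions: the relations $\xbf\ybf,\ybf\xbf \in J$ and their formal inverses, combined with the non-backtracking axiom for words, force any occurrence of $\xbf$ or $\ybf^{-1}$ to be followed by a letter from $\{\xbf, \ybf^{-1}\}$, and symmetrically any occurrence of $\ybf$ or $\xbf^{-1}$ by one from $\{\ybf, \xbf^{-1}\}$. These two alphabets are disjoint and each is closed under successor transitions, so every word consists of letters drawn entirely from one of them. In either alphabet each letter has non-negative displacement in one fixed coordinate and non-positive in the other, with at least one strict inequality, so the cumulative displacement over any non-trivial walk is non-zero; hence no word can satisfy $C = C[n]$ for any $n > 0$.

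Having ruled out band modules, I would identify the string modules with the spread-curve representations $\kbb_I$. Given a word $C$, the vertex sequence $\{v_i(C)\}$ is a simple path in the Hasse diagram of $\Zbb^2$; its image satisfies poset-convexity, poset-connectedness, and the thin condition $p \in I \Rightarrow p+(1,1) \notin I$ (this last condition encoding precisely the forbidden consecutive direct letters of opposite type in $J$), and a direct inspection of the defining relations gives $M(C) \cong \kbb_I$. Conversely, any spread curve $I$ is itself a simple path in the Hasse diagram: a short case analysis shows that any branching at a vertex would force, via poset-convexity, the presence of two points contradicting the thin condition, so each vertex of $I$ has at most two neighbors in $I$; reading off this path as an alternating sequence of direct and inverse arrows yields a word $C$ with $M(C)\cong \kbb_I$. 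The equivalence $C \sim C^{-1}$ (together with shifts for $\Zbb$-words) precisely matches the two possible traversals of $I$.

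The main obstacle is the no-periodic-words argument, which requires correctly enumerating the permitted letter transitions and recognizing that the resulting two alphabets are disjoint and monotone; once this is in place, the spread-curve bijection is a matter of translating \cref{theorem:string-algebra-module-classification} into the geometric language of the paper.
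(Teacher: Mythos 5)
Your proposal is correct and follows essentially the same route as the paper: both identify ephemeral representations with modules over the string algebra $\kbb Q/J$, rule out periodic words (and hence band modules) via the monotonicity of the vertex sequence in a fixed coordinate direction, and then match string modules with spread curve representations. You spell out the letter-transition analysis and the simple-path/branching argument in more detail than the paper does, but the underlying ideas are identical.
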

\begin{proof}
    We use the theory of string algebras recalled in \cref{section:string-algebras}.

    Let $Q$ be the quiver of \cref{figure:kxy-gentle}, and let $I$ and $J$ be the ideals of $\kbb Q$ also in \cref{figure:kxy-gentle}.
    The algebra $\kbb Q/J$ is a string algebra.
    By identifying the vertices of $Q$ with the elements of $\Zbb^2$, we get an equivalence of categories
    $\rep(Q,I) \simeq \rep(\Zbb^2)$.

    Note that $I \subseteq J$, which induces a fully faithful functor $F : \rep(Q,J) \to \rep(Q,I) \simeq \rep(\Zbb^2)$,
    and that $M \in \rep(\Zbb^2)$ is in the essential image of $F$ if and only if $M$ is ephemeral, since the ideal~$J$ consists of all paths in $Q$ that strictly increase both coordinates.

    We can parametrize any word in $\kbb Q/J$ in such a way that the $x$-coordinate is non-decreasing and the $y$-coordinate is non-increasing.
    This implies that the string algebra $\kbb Q/J$ admits no periodic words, so, by \cref{theorem:string-algebra-module-classification},
    all unital, finitely controlled $\kbb Q/J$-modules decompose in an essentially unique way as direct sums of string modules, which are indecomposable, and which in fact belong to $\rep(Q,J)$.
    Finally, using the identification between the vertices of $Q$ and the elements of $\Zbb^2$, we see that equivalence classes of words in $\kbb Q/J$ correspond to spread curves, so that string modules in $\rep(Q,J)$ correspond, under~$F$, to spread curve representations.
    %
    The result follows.
\end{proof}


Recall from the introduction that, given $M \in \repf(\Zbb)$, we define the representations $\cokerxy{M}, \kerxy{M} \in \repf(\Zbb^2)$ using the cokernel and kernel of multiplication by $\xbf\ybf \in \kbb[\xbf, \ybf]$:
\[
    0 \to \kerxy{M}  \to M[-1,-1]  \xrightarrow{\xbf\ybf} M  \to \cokerxy{M}   \to 0\, .
\]


\begin{corollary}
    \label{corollary:birth-death-curves}
    Let $M \in \rep(\Zbb^2)$.
    Then $\cokerxy{M}$ and $\kerxy{M}$ decompose in an essentially unique way as direct sums of spread curve representations.
\end{corollary}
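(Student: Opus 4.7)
The plan is to reduce the statement directly to \cref{theorem:decomposition-theorem-ephemeral}, which classifies ephemeral representations. So the only things to verify are that both $\cokerxy{M}$ and $\kerxy{M}$ belong to $\rep(\Zbb^2)$ and that they are ephemeral.

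First I would check pointwise finite dimensionality. Since $M \in \rep(\Zbb^2)$, the shift $M[-1,-1]$ is also pointwise finite dimensional (it is the same underlying functor reindexed). The subrepresentation $\kerxy{M} \subseteq M[-1,-1]$ and the quotient $\cokerxy{M}$ of $M$ are therefore pointwise finite dimensional at every $p \in \Zbb^2$, so both lie in $\rep(\Zbb^2)$.

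Next I would verify ephemerality. For $\kerxy{M}$, any element lies by definition in the kernel of the multiplication map $\cdot \xbf\ybf : M[-1,-1] \to M$, so $\xbf\ybf$ annihilates $\kerxy{M}$. For $\cokerxy{M} = M / \xbf\ybf \cdot M[-1,-1]$, given any class $[m]$ we have $\xbf\ybf \cdot [m] = [\xbf\ybf \cdot m] \in \xbf\ybf \cdot M[-1,-1]$, which represents $0$ in the quotient. Hence $\xbf\ybf$ annihilates $\cokerxy{M}$ as well, so both modules are ephemeral in the sense of the previous definition.

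Finally I would invoke \cref{theorem:decomposition-theorem-ephemeral} on each of $\cokerxy{M}$ and $\kerxy{M}$ to obtain an essentially unique decomposition into spread curve representations. There is no real obstacle here; the content is entirely in \cref{theorem:decomposition-theorem-ephemeral}, and this corollary is just the observation that the kernel and cokernel of multiplication by $\xbf\ybf$ fit into its hypothesis.
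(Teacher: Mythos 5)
Your proposal is correct and matches the paper's own proof exactly: both argue that $\cokerxy{M}$ and $\kerxy{M}$ are ephemeral by construction and then invoke \cref{theorem:decomposition-theorem-ephemeral}. Your extra verification of pointwise finite dimensionality is a reasonable elaboration but not a different route.
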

\begin{proof}
    By construction, $\cokerxy{M}$ and $\kerxy{M}$ and are ephemeral,
    so the result follows from \cref{theorem:decomposition-theorem-ephemeral}.
\end{proof}

We now restate \cref{definition:endcurves} more formally;
it is well-defined thanks to \cref{corollary:birth-death-curves}.

\begin{definition}
    \label{definition:births-deaths}
    Let $\spreadcurves$ denote the set of spread curves of $\Zbb^2$.
    We define the additive invariants $\births,\deaths : \rep(\Zbb^2) \to \Zbb^{\spreadcurves}$ to be the unique ones satisfying
    \[
        \cokerxy{M} \cong \bigoplus_{I \in \spreadcurves} (\kbb_I)^{\births(M)(I)} \;\;\; \text{and} \;\;\;
        \kerxy{M} \cong \bigoplus_{I \in \spreadcurves} (\kbb_I)^{\deaths(M)(I)},
    \]
    for every $M \in \rep(\Zbb^2)$.
\end{definition}

The following is immediate.

\begin{lemma}
    \label{lemma:spread-curves-basis-for-birth-death}
    Let $\spreadcurvesf$ be the set of finite curves of $\Zbb^2$.
    The set $\{\kbb_I\}_{I \in \spreadcurvesf}$ forms a basis for both additive invariants $\births$ and $\deaths$ on $\repf(\Zbb^2)$.
    \qed
\end{lemma}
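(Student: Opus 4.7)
The plan is to unpack the definitions and observe that the proof reduces to two elementary facts: that $\births(\kbb_I)$ and $\deaths(\kbb_I)$ are (shifted) indicator functions in $\Zbb^{\spreadcurves}$, and that $\repf(\Zbb^2)$-representations produce only finite spread curves in their birth/death decompositions.

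First I would compute the values of the invariants on the candidate basis. Since $I \subseteq \Zbb^2$ is a spread curve, $\kbb_I$ is ephemeral, so $\xbf\ybf$ acts as zero on $\kbb_I$. Hence the defining exact sequence specializes to $\cokerxy{\kbb_I} \cong \kbb_I$ and $\kerxy{\kbb_I} \cong \kbb_I[-1,-1]$, and since $I + (1,1)$ is again a spread curve (finite whenever $I$ is), \cref{definition:births-deaths} gives $\births(\kbb_I) = \mathbf{1}_I$ and $\deaths(\kbb_I) = \mathbf{1}_{I+(1,1)}$ in $\Zbb^{\spreadcurves}$, where $\mathbf{1}_J$ denotes the standard basis vector at $J$. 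In particular, $\{\births(\kbb_I)\}_{I \in \spreadcurvesf}$ is the family of standard basis vectors indexed by $\spreadcurvesf$, and $\{\deaths(\kbb_I)\}_{I \in \spreadcurvesf}$ is the same family reindexed by the bijection $I \mapsto I+(1,1)$ of $\spreadcurvesf$.

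Next I would show that on the source, these expansions capture all of $\births(M)$ and $\deaths(M)$. Given $M \in \repf(\Zbb^2)$, the module $M$ has finite support, hence so do the shifted module $M[-1,-1]$ and the induced modules $\cokerxy{M}$ and $\kerxy{M}$. By \cref{corollary:birth-death-curves}, these ephemeral modules decompose as direct sums of spread curve representations, and any indecomposable summand must be supported inside the (finite) support of the whole module. Consequently, every spread curve appearing in either decomposition is finite, so writing the decomposition $\cokerxy{M} \cong \bigoplus_{I \in \spreadcurvesf} \kbb_I^{n_I}$ with finitely many nonzero $n_I$ yields $\births(M) = \sum_{I \in \spreadcurvesf} n_I \cdot \births(\kbb_I)$, and similarly for $\deaths$.

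Finally, uniqueness is immediate: since $\{\mathbf{1}_I\}_{I \in \spreadcurvesf}$ is $\Zbb$-linearly independent inside $\Zbb^{\spreadcurves}$, the coefficients $c^M_I$ are uniquely determined by $\births(M)$, and the analogous statement for $\deaths$ follows from the fact that $I \mapsto I+(1,1)$ is a bijection of $\spreadcurvesf$. I do not anticipate a real obstacle here — the entire argument is a bookkeeping exercise, and the only subtle point worth stating carefully is that the finite support hypothesis in the definition of $\repf(\Zbb^2)$ is what rules out infinite spread curves from appearing in the decomposition.
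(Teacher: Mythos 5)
Your argument is correct and is precisely what the paper has in mind: the lemma is stated with no proof and the preceding sentence reads ``The following is immediate,'' so the intended reasoning is exactly the bookkeeping you have spelled out — $\births(\kbb_I)$ and $\deaths(\kbb_I)$ are indicator functions (with $\deaths$ reindexed by the shift $I \mapsto I+(1,1)$, a bijection of $\spreadcurvesf$), the finite-support hypothesis in $\repf(\Zbb^2)$ forces the decompositions from \cref{corollary:birth-death-curves} to involve only finitely many finite spread curves, and linear independence of the indicator functions gives uniqueness.
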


We can then define two counts for a representation $M\in \repf(\Zbb^2)$, which simply count the number of birth- and death-curves of $M$.

\begin{definition}
    We define two counts $\repf(\Zbb^2) \to \Zbb$ as
    \[
        \Ncal^{\births} \coloneqq \Ncal^{\left(\births,\, \{\kbb_I\}_{I \in \spreadcurvesf}\right)}
        \;\;\; \text{and} \;\;\;
        \Ncal^{\deaths} \coloneqq \Ncal^{\left(\deaths,\, \{\kbb_I\}_{I \in \spreadcurvesf}\right)}.
    \]
\end{definition}


\subsection{Counts from additive invariants and bases}
The following is a natural way to produce a count from an additive invariant together with a basis; we prove \cref{corollary:standard-way-of-producing-count} that any count satisfying mild assumptions is of this form.

\begin{definition}
    \label{definition:counting-function-from-basis}
    Let $\Acal$ be an additive category, let $\Bcal = \{M_i \in \Acal\}_{i \in I}$ be a basis for an additive invariant $\alpha : \Acal \to G$, and let $f : \Bcal \to \Zbb$ be any function.
    The \emph{count associated} to $\alpha$, $\Bcal$, and $f$ is given by
    \[
        \Ncal^{(\alpha,\Bcal,f)}(M) \coloneqq \sum_{i \in I} c^M_i \cdot f(M_i).
    \]
    If $f$ is constantly $1$, then we simplify notation and write
    $\Ncal^{(\alpha,\Bcal)}(M) \coloneqq \sum_{i \in I} c^M_i$.
\end{definition}

This count satisfies the following useful characterization.



\begin{proposition}
    \label{lemma:basic-results-counts-basis}
    Let $\Acal$ be an additive category, let $\Bcal = \{M_i \in \Acal\}_{i \in I}$ be a basis for an additive invariant $\alpha : \Acal \to G$, and let $f : \Bcal \to \Zbb$.
    Then, the count $\Ncal^{(\alpha,\Bcal,f)}$ is the unique additive invariant $\Ncal  : \Acal \to \Zbb$ satisfying
    \begin{enumerate}
        \item $\alpha \succcurlyeq \Ncal$.
        \item $\Ncal(M_i) = f(M_i)$ for all $i \in I$.
    \end{enumerate}
\end{proposition}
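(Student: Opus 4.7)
The plan is to establish the statement in two parts: first, that $\Ncal^{(\alpha,\Bcal,f)}$ is itself an additive invariant satisfying conditions (1) and (2); second, that any additive invariant satisfying (1) and (2) must agree with it.

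For the first part, the key observation is that the uniqueness clause in the definition of a basis controls everything. For additivity, if $M \cong M' \oplus M''$, then additivity of $\alpha$ gives $\alpha(M) = \sum_{i} (c^{M'}_i + c^{M''}_i)\, \alpha(M_i)$, so by uniqueness $c^M_i = c^{M'}_i + c^{M''}_i$, and summing against $f$ yields additivity of $\Ncal^{(\alpha,\Bcal,f)}$. Isomorphism invariance is immediate because the coefficients $c^M_i$ depend only on $\alpha(M)$, and $\alpha$ is isomorphism invariant. For property (1), if $\alpha(M) = \alpha(N)$ then both $(c^M_i)$ and $(c^N_i)$ express the same element of $G$ as a finitely supported $\Zbb$-linear combination of $\{\alpha(M_i)\}_{i \in I}$, so uniqueness forces them to coincide and hence $\Ncal^{(\alpha,\Bcal,f)}(M) = \Ncal^{(\alpha,\Bcal,f)}(N)$. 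For property (2), the trivial expression $\alpha(M_j) = 1 \cdot \alpha(M_j)$ forces $c^{M_j}_i = \delta_{ij}$ by uniqueness, and therefore $\Ncal^{(\alpha,\Bcal,f)}(M_j) = f(M_j)$.

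For uniqueness, suppose $\Ncal : \Acal \to \Zbb$ is any additive invariant satisfying (1) and (2), and fix $M \in \Acal$ with decomposition $\alpha(M) = \sum_i c^M_i\, \alpha(M_i)$. The main step is a cancellation trick that replaces this $\Zbb$-linear relation in $G$ by an actual equality of $\alpha$-values between two direct sums with non-negative multiplicities. Write $c^M_i = c^+_i - c^-_i$ with $c^+_i, c^-_i \in \Nbb$ of finite support, and set $A \coloneqq M \oplus \bigoplus_i M_i^{c^-_i}$ and $B \coloneqq \bigoplus_i M_i^{c^+_i}$. Additivity of $\alpha$ gives $\alpha(A) = \alpha(B)$, so property (1) yields $\Ncal(A) = \Ncal(B)$; expanding both sides using additivity of $\Ncal$ together with property (2) then gives $\Ncal(M) + \sum_i c^-_i\, f(M_i) = \sum_i c^+_i\, f(M_i)$, hence $\Ncal(M) = \sum_i c^M_i\, f(M_i) = \Ncal^{(\alpha,\Bcal,f)}(M)$.

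The main obstacle is conceptual rather than technical: since the basis coefficients $c^M_i$ can be negative, one cannot hope to realize $M$ itself as a direct sum of objects in $\Bcal$ and apply additivity of $\Ncal$ directly. The cancellation trick above sidesteps this by moving the negative summands across the equality before invoking (1). Beyond this small maneuver, the argument is a direct unpacking of the definitions of basis, additive invariant, and refinement.
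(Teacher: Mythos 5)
Your proof is correct and follows essentially the same route as the paper: the existence/well-definedness part is the paper's "it is clear," spelled out via the uniqueness clause of the basis definition, and your "cancellation trick" for the uniqueness part is precisely the paper's Lemma~\ref{lemma:additivity-finer-invariant} (splitting coefficients into positive and negative parts to turn a signed relation into an honest isomorphism-level equality), just inlined rather than factored out as a separate lemma.
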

\begin{proof}
    It is clear that $\Ncal^{(\alpha,\Bcal,f)}$ satisfies (1) and (2).
    Let $\Ncal$ also satisfy (1) and (2).
    Given $M \in \Acal$, let $\alpha(M) = \sum_{i \in I} c_i^M \cdot \alpha(M_i)$.
    By \cref{lemma:additivity-finer-invariant}, and the fact that $\Ncal$ equals $f$ on the elements of $\Bcal$, we have $\Ncal(M) = \sum_{i \in I} c_i^M \cdot \Ncal(M_i)= \sum_{i \in I} c_i^M \cdot f(M_i)$, which equals $\Ncal^{(\alpha, \Bcal,f)}(M)$, by \cref{definition:counting-function-from-basis}.
\end{proof}


\cref{lemma:basic-results-counts-basis} implies, in particular, that any count $\Ncal$ that can be computed from an invariant admitting a basis must be of the form of \cref{definition:counting-function-from-basis},
in the following sense:

\begin{corollary}
    \label{corollary:standard-way-of-producing-count}
    Let $\Acal$ be an additive category and let $\Ncal : \Acal \to \Zbb$ be a count.
    Suppose that there exists an additive invariant $\alpha$ on $\Acal$ that admits a basis $\Bcal = \{M_i\}_{i \in I}$, and such that $\alpha \succcurlyeq \Ncal$.
    Let $f : \Bcal \to \Zbb$ be defined by $f(M_i) = \Ncal(M_i)$.
    Then $\Ncal = \Ncal^{(\alpha, \Bcal, f)}$.\qed
\end{corollary}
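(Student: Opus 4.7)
The plan is to invoke \cref{lemma:basic-results-counts-basis} directly, since that result has already established a uniqueness statement that essentially contains what is being asked. Specifically, \cref{lemma:basic-results-counts-basis} asserts that $\Ncal^{(\alpha,\Bcal,f)}$ is the unique additive invariant $\Acal \to \Zbb$ satisfying the two conditions (1) $\alpha \succcurlyeq \Ncal^{(\alpha,\Bcal,f)}$, and (2) $\Ncal^{(\alpha,\Bcal,f)}(M_i) = f(M_i)$ for every $i \in I$.

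Hence the strategy is to verify that the given count $\Ncal$ fits the same characterization, and then quote the uniqueness clause. First, $\Ncal$ is an additive invariant with values in $\Zbb$ by hypothesis. Second, condition (1) for $\Ncal$, namely $\alpha \succcurlyeq \Ncal$, is exactly one of the assumptions in the statement. Third, condition (2) for $\Ncal$, namely $\Ncal(M_i) = f(M_i)$ for all $i \in I$, holds tautologically by the definition $f(M_i) \coloneqq \Ncal(M_i)$ provided in the statement.

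Having checked both conditions, uniqueness in \cref{lemma:basic-results-counts-basis} forces $\Ncal = \Ncal^{(\alpha, \Bcal, f)}$, which is the desired conclusion. There is no substantial obstacle here; the whole content of the corollary is a repackaging of the uniqueness clause of \cref{lemma:basic-results-counts-basis}, emphasizing that once a count $\Ncal$ is refined by an invariant $\alpha$ admitting a basis, its values on the basis already determine it everywhere via the expansion formula of \cref{definition:counting-function-from-basis}. The only point worth flagging is that the hypothesis $\alpha \succcurlyeq \Ncal$ is essential: without it, prescribing $\Ncal$ on basis objects need not determine $\Ncal$ in general, since additivity alone only pins the invariant down on the image of the span of the $M_i$ under $\alpha$.
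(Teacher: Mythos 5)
Your proof is correct and matches the paper's intent exactly: the corollary is stated with a terminal \qed precisely because it is an immediate application of the uniqueness clause of \cref{lemma:basic-results-counts-basis}, and you verify the two required conditions for $\Ncal$ in the same way. Nothing to add.
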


\subsection{Counts from invariants in the literature}
\label{section:counts-from-known-invariants}

We first recall the definitions of several additive invariants and basis from persistence theory, and then use \cref{definition:counting-function-from-basis} to derive counts from these.

\begin{definition}
    \label{definition:three-basic-invariants-tda}
    We define three families of additive invariants.
    \begin{itemize}
        \item
              Let $\Ical$ be a set of spreads of a poset $\Pscr$.
              The \emph{$\Ical$-rank invariant} \cite{botnan-oppermann-oudot,clause-kim-memoli}, denoted $\Rk_\Ical : \rep(\Pscr) \to \Zbb^\Ical$, is given by
              \[
                  \Rk_\Ical(M)(I) \coloneqq \mathrm{rank}\big(\lim M|_I \to \colim M|_I\big),
              \]
              for every spread $I \in \Ical$.
              The \emph{rank invariant} \cite{carlsson-zomorodian}, denoted $\Rk : \rep(\Pscr) \to \Zbb^{\rects}$ is the $\Ical$-invariant with $\Ical$ the set $\rects$ of all segments in $\Pscr$.
              Equivalently, the rank invariant is defined by $\Rk(M)([a,b]) = \rk(M_a \to M_b)$, for $a \leq b \in \Pscr$.
        \item Let $\Lambda$ be a finite dimensional $\kbb$-algebra and let $\Xcal$ be a set of pairwise non-isomorphic finite dimensional, indecomposable $\Lambda$-modules.
              The \emph{$\Xcal$-dimhom invariant} \cite{blanchette-brustle-hanson}, denoted $\dimhom_\Xcal : \mod_{\Lambda} \to \Zbb^{\Xcal}$, is given by
              \[
                  \dimhom_{\Xcal}(M)(X) \coloneqq \dim\left(\hom_{\Lambda}(X,M)\right).
              \]
              The \emph{dimension vector} is the additive invariant $\dimhom_{\proj_\Lambda} : \mod_\Lambda \to \Zbb^{\proj_\Lambda}$.
        \item Let $\Lambda$ be a finite dimensional $\kbb$-algebra and let $\Xcal$ be a set of pairwise non-isomorphic finite dimensional, indecomposable $\Lambda$-modules.
              A sequence $A \to B \to C$ in $\mod_\Lambda$ is \emph{$\Xcal$-exact} if $\hom(X,A) \to \hom(X,B) \to \hom(X,C)$ is exact for every $X \in \Xcal$.
              The \emph{$\Xcal$-Grothendieck group invariant} \cite{blanchette-brustle-hanson}, denoted $\Ksf_\Xcal : \mod_\Lambda \to \Ksf(\Lambda, \Xcal)$, is given by
              \[
                  \Ksf_\Xcal(M) \coloneqq [M] \in \Ksf(\Lambda, \Xcal)
              \]
              mapping a module to its class in $\Ksf(\Lambda, \Xcal)$, the Grothendieck group relative to $\Xcal$, that is, the free Abelian group generated by isomorphism classes of $\Lambda$-modules, modulo relations $[B] = [A] + [C]$ for every short $\Xcal$-exact sequence $0 \to A \to B \to C \to 0$.
    \end{itemize}
\end{definition}

The following results give ways of producing bases for additive invariants; proofs are in \cref{section:appendix-additive-invariants-bases} since they are standard.

\begin{proposition}[\cite{botnan-oppermann-oudot,clause-kim-memoli}]
    \label{proposition:I-is-basis-of-rk-I}
    Let $\Pscr$ be a finite poset, and let $\Ical$ be a set of spreads.
    Then, $\{\kbb_{I}\}_{I \in \Ical}$ is a basis for $\Rk_\Ical : \rep(\Pscr) \to \Zbb^{\Ical}$.
\end{proposition}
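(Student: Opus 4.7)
The plan is to compute $\Rk_\Ical(\kbb_I)(J)$ explicitly for all pairs $I, J \in \Ical$ and exploit the resulting triangular structure, leveraging the finiteness of $\Pscr$.

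First I would establish the key pointwise formula: $\Rk_\Ical(\kbb_I)(J) = 1$ if $J \subseteq I$, and $\Rk_\Ical(\kbb_I)(J) = 0$ otherwise. In the first case, the restriction $\kbb_I|_J$ is the constant functor on $J$ with value $\kbb$ and all identity structure morphisms; since $J$ is poset-connected by definition of a spread, both $\lim \kbb_I|_J$ and $\colim \kbb_I|_J$ are canonically isomorphic to $\kbb$, and the canonical comparison map is the identity, giving rank $1$. In the second case, pick any $j_0 \in J \setminus I$; then $(\kbb_I)_{j_0} = 0$, and the canonical map $\lim \kbb_I|_J \to \colim \kbb_I|_J$ factors as the cone projection $\lim \kbb_I|_J \to (\kbb_I)_{j_0}$ followed by the cocone injection $(\kbb_I)_{j_0} \to \colim \kbb_I|_J$, hence vanishes.

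Second, I would package these values into a matrix. Since $\Pscr$ is finite, $\Ical$ is a finite subset of the power set $2^{\Pscr}$. Choose any total order on $\Ical$ refining the cardinality order, so that $|I_k| \leq |I_{k+1}|$ along the ordering. Then the matrix $Z$ with entries $Z_{I, J} \coloneqq \Rk_\Ical(\kbb_I)(J) = [J \subseteq I]$ is triangular with $1$s on the diagonal: indeed, $J \subseteq I$ together with $|J| \geq |I|$ forces $J = I$. Therefore $Z$ is unimodular (invertible over $\Zbb$), and its rows $\{\Rk_\Ical(\kbb_I)\}_{I \in \Ical}$ form a $\Zbb$-basis of the free Abelian group $\Zbb^{\Ical}$.

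Finally, the basis property follows: for every $M \in \rep(\Pscr)$, the vector $\Rk_\Ical(M) \in \Zbb^{\Ical}$ admits a unique expansion $\Rk_\Ical(M) = \sum_{I \in \Ical} c_I^M \cdot \Rk_\Ical(\kbb_I)$ with integer coefficients, automatically finitely supported since $\Ical$ is finite. The main (and only) subtle point is the factorization argument in step one, which rests on the standard categorical fact that the canonical map $\lim \to \colim$ of any diagram factors through each of its objects; this is a direct consequence of the universal properties of the limit cone and colimit cocone combined with the poset-connectedness of the spread $J$.
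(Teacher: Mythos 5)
Your proof is correct and follows essentially the same route as the paper: both establish the key pointwise formula $\Rk_\Ical(\kbb_I)(J) = [J \subseteq I]$ and then observe that the resulting matrix is unitriangular (the paper phrases the invertibility as Möbius inversion on the inclusion order on $\Ical$, which is the same triangularity argument you make explicit by ordering on cardinality). Your verification of the pointwise formula — in particular the observation that for a connected index poset the canonical map $\lim \to \colim$ factors through every object, so vanishing at any one object forces rank zero — is a clean and correct justification of a step the paper leaves implicit.
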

\begin{proof}
    See \cref{section:appendix-additive-invariants-bases}.
\end{proof}

\begin{proposition}[{cf.~\cite{blanchette-brustle-hanson}}]
    \label{proposition:projectives-are-basis}
    Let $\Lambda$ be a finite dimensional $\kbb$-algebra, and let $\Xcal$ be a set of pairwise non-isomorphic finite dimensional, indecomposable $\Lambda$-modules.
    We have $\dimhom_{\Xcal} \approx \Ksf_{\Xcal}$, and if every finite dimensional $\Lambda$-module admits a finite $\Xcal$-projective resolution (\cref{definition:relative-projective-resolution}), then $\Xcal$ is a basis for $\dimhom_{\Xcal}$ and $\Ksf_\Xcal$.
\end{proposition}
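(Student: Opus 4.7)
The plan is to tackle the two claims in turn.

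For $\dimhom_\Xcal \approx \Ksf_\Xcal$, one direction is essentially a tautology. By the very definition of $\Xcal$-exactness, each functor $\hom(X,-)$ with $X \in \Xcal$ sends short $\Xcal$-exact sequences to short exact sequences of vector spaces, so $\dim \hom(X,-)$ is additive on such sequences. Hence $\dimhom_\Xcal$ factors through the canonical surjection $\mod_\Lambda \twoheadrightarrow \Ksf(\Lambda,\Xcal)$, giving a group homomorphism $\overline{\dimhom_\Xcal}: \Ksf(\Lambda,\Xcal) \to \Zbb^\Xcal$ and thus $\Ksf_\Xcal \succcurlyeq \dimhom_\Xcal$. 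The converse $\dimhom_\Xcal \succcurlyeq \Ksf_\Xcal$ amounts to the injectivity of $\overline{\dimhom_\Xcal}$. I would prove this by induction on $\dim M + \dim N$, using minimal right $\Xcal$-approximations together with Krull--Schmidt in $\mod_\Lambda$ to peel off a common $\Xcal$-summand from both $M$ and $N$, at which point the dimhom vectors of the remainders still agree and the inductive hypothesis closes the argument.

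For the basis assertion, suppose every finite dimensional module admits a finite $\Xcal$-projective resolution $0 \to P_n \to \cdots \to P_0 \to M \to 0$. By Krull--Schmidt each $P_i \in \add(\Xcal)$ decomposes uniquely as $P_i \cong \bigoplus_{X \in \Xcal} X^{n_{i,X}}$. Setting $c_X^M \coloneqq \sum_{i=0}^n (-1)^i n_{i,X}$ and splicing the resolution into short $\Xcal$-exact sequences (which they are, since the tails of $\Xcal$-projective resolutions remain $\Xcal$-projective resolutions), the alternating-sum identity in $\Ksf(\Lambda,\Xcal)$ yields
\[
    \Ksf_\Xcal(M) = \sum_{X \in \Xcal} c_X^M \, \Ksf_\Xcal(X) \quad \text{and} \quad \dimhom_\Xcal(M) = \sum_{X \in \Xcal} c_X^M \, \dimhom_\Xcal(X),
\]
which gives the existence half of the basis decomposition. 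For uniqueness, a non-trivial linear relation among the $\Ksf_\Xcal(X)$ can be rewritten, after separating positive and negative parts, as $\Ksf_\Xcal(M^+) = \Ksf_\Xcal(M^-)$ for $M^{\pm} \in \add(\Xcal)$ with distinct $\Xcal$-multisets. Applying the equivalence from the first part then gives $\dimhom_\Xcal(M^+) = \dimhom_\Xcal(M^-)$, and evaluating at each $X' \in \Xcal$ turns this into an identity in $\mod_\Lambda$ which, via the injectivity of $\overline{\dimhom_\Xcal}$ combined with Krull--Schmidt inside $\add(\Xcal)$, forces $M^+ \cong M^-$ and contradicts the distinct multisets.

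The main obstacle is the injectivity of $\overline{\dimhom_\Xcal}$ in the first part: this is the only place where the indecomposability of the elements of $\Xcal$ (and hence the locality of their endomorphism rings) enters in a nontrivial way, and it is the technical backbone that makes both the equivalence of invariants and the uniqueness in the basis decomposition go through. Everything else is essentially formal bookkeeping once it is available.
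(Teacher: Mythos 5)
Your first direction ($\Ksf_\Xcal \succcurlyeq \dimhom_\Xcal$) is fine, but the inductive argument you sketch for the converse does not go through as stated, and it is genuinely different from what the paper does. The hypothesis $\dimhom_\Xcal(M)=\dimhom_\Xcal(N)$ gives no control whatsoever on whether $M$ and $N$ have any $\add(\Xcal)$-direct summand at all, let alone a \emph{common} one, so there is nothing to ``peel off''; and even when a summand does exist, the residual modules need not lose dimension in a way that makes the induction terminate, because nonzero $\dimhom_\Xcal$ is perfectly compatible with having no $\Xcal$-summands. Minimal right $\Xcal$-approximations do not rescue this: they need not be surjective, their kernels need not have smaller total dimension, and most importantly $\dimhom_\Xcal(M)$ does not determine the approximation (the map $\Hom(X,P_M)\to\Hom(X,M)$ is only required to be surjective, not injective). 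This is precisely the step the paper avoids by projectivizing: setting $P=\bigoplus_{X\in\Xcal}X$, $E=\End(P)$, the functor $\Hom_\Lambda(P,-)$ converts $\dimhom_\Xcal$ into the ordinary dimension vector over $E$, and then the statement becomes \cref{proposition:projectives-are-basis-2}(2), proved by observing that both invariants share the basis of simple $E$-modules.

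The uniqueness half of your basis argument has a circularity: from $\dimhom_\Xcal(M^+)=\dimhom_\Xcal(M^-)$ you invoke ``injectivity of $\overline{\dimhom_\Xcal}$'' to get $[M^+]=[M^-]$ in $\Ksf(\Lambda,\Xcal)$, which is exactly the hypothesis you started from, and Krull--Schmidt inside $\add(\Xcal)$ only applies once you already know $M^+\cong M^-$. What is actually needed is that the ``Cartan matrix'' $\bigl(\dim\Hom(X',X)\bigr)_{X',X\in\Xcal}$ is nonsingular, which is where the finite-$\Xcal$-resolution hypothesis must enter (after projectivization it is equivalent to $E$ having finite global dimension, which forces an invertible Cartan matrix). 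Without this, there exist algebras with singular Cartan matrix, and the elements $\dimhom_\Xcal(X)$, $X\in\Xcal$, can be $\Zbb$-linearly dependent. The paper instead deduces uniqueness by counting: $\indproj_E$ has the same cardinality as $\indsimpl_E$, which is already known to be a basis, so a generating family of the same size must be a basis. You should either reproduce that counting argument or explicitly prove Cartan invertibility from the resolution hypothesis; as written, the step is missing.
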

\begin{proof}
    See \cref{proposition:projectives-are-basis-2}.
\end{proof}

\begin{corollary}
    \label{remark:basis-for-known-invariants}
    Let $\Pscr$ be a finite poset, and let $\spreads$, $\rects$, and $\hooks$ be the sets of spreads, segments, and hooks of $\Pscr$ (\cref{section:spreads}).
    \begin{enumerate}
        \item The set $\{\kbb_I\}_{I \in \spreads}$ is a basis for $\Rk_{\spreads}$.
        \item The set $\{\kbb_I\}_{I \in \rects}$ is a basis for $\Rk$.
        \item The set $\{\kbb_I\}_{I \in \hooks}$ is a basis for $\dimhom_\hooks \approx \Ksf_\hooks$.
        \item The set $\{\kbb_I\}_{I \in \spreads}$ is a basis for $\dimhom_\spreads \approx \Ksf_\spreads$.
    \end{enumerate}
\end{corollary}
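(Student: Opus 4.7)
The plan is to deduce all four parts from the two preceding results: \cref{proposition:I-is-basis-of-rk-I} will take care of (1) and (2), while \cref{proposition:projectives-are-basis} will take care of (3) and (4). Since both $\spreads$ and $\rects$ are sets of spreads of the finite poset $\Pscr$, parts (1) and (2) follow immediately by specializing \cref{proposition:I-is-basis-of-rk-I} to $\Ical = \spreads$ and $\Ical = \rects$ respectively, and nothing further is required.

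For parts (3) and (4), by \cref{proposition:projectives-are-basis} it suffices to verify that every finite-dimensional $\kbb\Pscr$-module admits a finite $\Xcal$-projective resolution, for $\Xcal = \spreads$ and for $\Xcal = \hooks$. For $\Xcal = \spreads$, the key observation is that the indecomposable projective $\kbb\Pscr$-modules are precisely the principal filter representations $\kbb_{[x,\infty) \cap \Pscr}$ for $x \in \Pscr$, whose supports are poset-convex and poset-connected, hence spreads. Since $\Pscr$ is finite, $\kbb\Pscr$ is a finite-dimensional incidence algebra, so every finite-dimensional module admits a finite projective resolution; as projective modules are in particular $\spreads$-projective, such a resolution is a finite $\spreads$-projective resolution. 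For $\Xcal = \hooks$, hooks are not in general projective, so the preceding strategy does not apply directly. The plan instead is two-stage: first, build a finite hook-resolution of each spread representation $\kbb_I$ by an inclusion–exclusion argument on the minima of $I$, expressing $\kbb_I$ as an alternating sum of hook representations obtained from principal filters and their sub-filters; then, splice these hook-resolutions into the $\spreads$-projective resolution of any finite-dimensional module from the first case (in the style of the horseshoe lemma) to obtain a finite $\hooks$-projective resolution.

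The main obstacle I expect is in the hook case: finding an ordinary exact resolution by hooks via inclusion–exclusion is combinatorially straightforward, but the notion of $\Xcal$-projective resolution requires $\hooks$-exactness, i.e., that $\Hom(\kbb_H, -)$ preserves exactness at each stage for every hook $H$. This is stronger than ordinary exactness and must be checked by analyzing morphism spaces between hook and spread representations, which are combinatorially tractable but require care. The needed technical machinery closely parallels the relative-projective framework of \cite{blanchette-brustle-hanson}, from which the key constructions and exactness verifications can be imported.
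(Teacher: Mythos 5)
Parts (1) and (2) match the paper's argument exactly: both deduce them from \cref{proposition:I-is-basis-of-rk-I} applied to $\Ical=\spreads$ and $\Ical=\rects$.

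For parts (3) and (4), however, your argument has a genuine gap. The decisive step you use for (3) is: since the indecomposable projective $\kbb\Pscr$-modules are spread representations, ``projective modules are in particular $\spreads$-projective, such a resolution is a finite $\spreads$-projective resolution.'' This conflates two distinct requirements in \cref{definition:relative-projective-resolution}. A finite projective resolution does have all terms $C_i$ lying in $\add(\spreads)$, as you note. But being a $\spreads$-projective resolution \emph{also} requires the sequence to be $\spreads$-exact, i.e.\ $\Hom(\kbb_I,-)$ must preserve exactness at every stage for \emph{every} spread $I$, not just for the projective ones. Ordinary exactness does not give this: if $\kbb_I$ is a non-projective spread and $P_0 \twoheadrightarrow \kbb_I$ is the start of a projective resolution, then $\Hom(\kbb_I,P_0) \to \Hom(\kbb_I,\kbb_I)$ cannot be surjective, for otherwise $\kbb_I$ would be a retract of $P_0$ and hence projective. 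So the standard projective resolution is not a $\spreads$-projective resolution, and the reduction to Hilbert's syzygy-type finiteness breaks. Since your plan for (4) builds on this flawed base (splicing hook-resolutions into what is claimed to be a spread-resolution), it inherits the same problem, on top of the $\hooks$-exactness checks you flag as nontrivial.

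What the paper actually does for (3) and (4) is cite finiteness of the global dimension of the \emph{spread exact structure} (Proposition 4.5 of Asashiba–Escolar–Nakashima–Yoshiwaki) and the \emph{rank exact structure} (Theorem 1.2 of Blanchette–Br\"ustle–Hanson), respectively. These are precisely the nontrivial inputs: they say that $\Xcal$-projective resolutions (with $\Xcal$-exactness in the relative sense) terminate. Your proposal, in effect, attempts to reprove these facts by hand, but the mechanism offered does not work for the reason above. If you want an independent argument, you would need to work within the relative exact structure from the start — e.g.\ replace the standard epimorphism $P_0 \twoheadrightarrow M$ by a $\spreads$-cover (an object of $\add(\spreads)$ mapping onto $M$ that is $\spreads$-epic) and then establish termination of the resulting relative resolution — which is exactly the content of the cited results.
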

\begin{proof}
    See \cref{section:appendix-additive-invariants-bases}.
\end{proof}

\begin{definition}
    Let $\Pscr$ be a finite poset.
    \begin{itemize}
    \item The function mapping $M \in \rep(\Pscr)$ to the coefficients of $\Rk_{\spreads}(M)$ in the basis $\{\kbb_I\}_{I \in \spreads}$ is the \emph{generalized persistence diagram} \cite{kim-memoli}%
    \footnote{A variation of the generalized persistence diagram was first introduced in \cite{kim-memoli}.
    We use the more standard definition~\cite{dey-kim-memoli,clause-kim-memoli},
    also called minimal generalized rank decomposition in \cite{botnan-oppermann-oudot}.}.
    \item The function mapping $M \in \rep(\Pscr)$ to the coefficients of $\Rk(M)$ in the basis $\{\kbb_I\}_{I \in \rects}$ is the \emph{signed barcode}
    \cite{botnan-oppermann-oudot}.
    \item
    The function mapping $M \in \rep(\Pscr)$ to the coefficients of $\dimhom_\hooks(M)$ in the basis $\{\kbb_I\}_{I \in \hooks}$ is the \emph{minimal rank decomposition by hooks} \cite{botnan-oppermann-oudot-scoccola}.
    \item The function mapping $M \in \rep(\Pscr)$ to the coefficients of $\dimhom_{\spreads}(M)$, equivalently $\Ksf_\spreads(M)$, in the basis $\{\kbb_I\}_{I \in \spreads}$ is the \emph{interval Euler characteristic} \cite{escolar-kim}%
    \footnote{This was considered in a special case in \cite{asashiba-escolar-nakashima-yoshiwaki-2} and extended to general finite posets in \cite[Example~2.39]{escolar-kim}.}.
    \end{itemize}
\end{definition}

\begin{definition}
    \label{definition:known-counts}
    Let $\Pscr$ be a finite poset.
    We define four counts $\rep(\Pscr) \to \Zbb$ as
    \begin{align*}
        \chigpd         & \coloneqq \Ncal^{\left(\Rk_\spreads,\, \{\kbb_{I}\}_{I\in\spreads}\right)}   \\
        \chirkdec       & \coloneqq \Ncal^{\left(\Rk,\, \{\kbb_{I}\}_{I\in\rects}\right)}              \\
        \chirkexact     & \coloneqq \Ncal^{\left(\dimhom_\hooks,\, \{\kbb_I\}_{I \in \hooks}\right)} = \Ncal^{\left(\Ksf_\hooks,\, \{\kbb_I\}_{I \in \hooks}\right)} \\
        \chispreadexact & \coloneqq  \Ncal^{\left(\dimhom_\spreads,\, \{\kbb_I\}_{I \in \spreads}\right)} = \Ncal^{\left(\Ksf_\spreads,\, \{\kbb_I\}_{I \in \spreads}\right)} . 
    \end{align*}
\end{definition}

We also define a family of counts using the signed invariants of \cite{asashiba-escolar-nakashima-yoshiwaki};
we do not give more details since only abstract properties of these invariants are required to prove the corresponding part of \cref{corollary:our-count-and-other-counts}.

\begin{proposition}[{\cite{asashiba-escolar-nakashima-yoshiwaki}}]
    \label{proposition:compressed-multiplicities-properties}
    We use the notation of \cite{asashiba-escolar-nakashima-yoshiwaki}, and fix $\ast = \mathrm{ss}$, $\mathrm{cc}$, or $\mathrm{tot}$.
    The compressed multiplicities $\underline{d}^*$ form an additive invariant on $\rep(\Gcal^2)$.
    This invariant is finer than the rank invariant, and admits $\{\kbb_I\}_{I \in \spreads}$ as a basis.
\end{proposition}
\begin{proof}
    Additivity is proven in \cite[Lemma~4.19]{asashiba-escolar-nakashima-yoshiwaki}.
    The fact that the invariant is finer than the rank invariant is proven in \cite[Proposition~4.16]{asashiba-escolar-nakashima-yoshiwaki}.
    The fact that spreads form a basis is implied by \cite[Theorem~5.10]{asashiba-escolar-nakashima-yoshiwaki}.
\end{proof}

\begin{definition}
    We use the notation of \cite{asashiba-escolar-nakashima-yoshiwaki}, and fix $\ast = \mathrm{ss}$, $\mathrm{cc}$, or $\mathrm{tot}$.
    We define a count $\Ncal^{\mathrm{int.dec.repl.}} \coloneqq  \Ncal^{\left(\underline{d}^*,\, \{\kbb_I\}_{I \in \spreads}\right)}$. 
\end{definition}


\begin{remark}[Interpretation of counts]
    \label{remark:interpretation-of-counts}
    Let $\Pscr$ be a finite poset.
    \begin{enumerate}
        \item
             The count $\chigpd$ equals the number of positive bars minus the number of negative bars in the generalized persistence diagram.
        \item
              The count $\chirkdec$ equals the number of positive bars minus the number of negative bars in the signed barcode.
        \item
              The count $\chirkexact$ equals the number of positive bars minus the number of negative bars in the minimal hook decomposition.
        \item
              The count $\chispreadexact$ equals the number of positive bars minus the number of negative bars in the interval Euler characteristic.
        \item
              The count $\Ncal^{\mathrm{int.dec.repl.}}$ equals the number of indecomposable summands in $\delta^\ast_+$ minus the number of indecomposable summands in $\delta^\ast_-$; see \cite{asashiba-escolar-nakashima-yoshiwaki}.
    \end{enumerate}
\end{remark}

\begin{remark}
    Although $\Rk_{\spreads}$ and $\dimhom_\spreads \approx \Ksf_\spreads$ seem very related, since, for example, they admit the same basis $\spreads$, these invariants are not equivalent for a general poset $\Pscr$ (see \cite[Section~7.3]{blanchette-brustle-hanson} and \cite[Theorem~4.1]{escolar-kim}).
    Perhaps surprisingly, the counts $\chigpd$ and $\chispreadexact$ are equal in the case where $\Pscr$ is a finite two-dimensional grid poset, thanks to \cref{corollary:our-count-and-other-counts}.
\end{remark}

The following remark also allows us to interpret $\chirkexact$ and $\chispreadexact$ as relative Euler characteristics.

\begin{remark}
    \label{remark:counts-as-euler}
    Let $\Lambda$ be a finite dimensional algebra, and let $\Xcal$ be a set of pairwise non-isomorphic, finite dimensional $\Lambda$-modules, such that every finite dimensional $\Lambda$-module admits a finite $\Xcal$ resolution.
    By \cref{proposition:projectives-are-basis}, we get counts $\Ncal^{(\dimhom_\Xcal, \Xcal)} = \Ncal^{(\Ksf_\Xcal, \Xcal)}$.
    If $M \in \mod_{\Lambda}$, these can be interpreted as the Euler characteristic of $M$ relative to $\Xcal$, in the sense that if $0 \to A_k \to \cdots \to A_0 \to M$ is an $\Xcal$-projective resolution of $M$, then the counts are equal to $\sum_{i \in \Nbb} (-1)^i \cdot \Ncaldec(A_i)$, where $\Ncaldec$ counts indecomposable summands (with multiplicity).
    In other words, the counts $\Ncal^{(\dimhom_\Xcal, \Xcal)} = \Ncal^{(\Ksf_\Xcal, \Xcal)}$ equal the alternating sum of the total multiplicities of the Betti tables relative to the $\Xcal$-exact structure in all homological degrees.
\end{remark}


\begin{remark}
    \label{remark:hilbert-decomposition-count}
    Let $\Pscr$ be a finite poset.
    For $p \in \Pscr$, let ${\uparrow}p = \{q \in \Pscr : p \leq q\} \subseteq \Pscr$ be the corresponding principal upset, which is in particular a spread.
    Let $\mathsf{p.upset}$ be the set of principal upsets of $\Pscr$.
    It is standard \cite[p.~157]{gabriel} that the indecomposable projective objects of $\rep(\Pscr)$ are precisely those of the form $\kbb_{{\uparrow}p}$.
    The \emph{minimal Hilbert decomposition} of \cite{oudot-scoccola} can be equivalently described as the function mapping $M \in \rep(\Pscr)$ to the coefficients of $\dimhom_{\mathsf{p.upset}}(M)$, or equivalently $\Ksf_{\mathsf{p.upset}}(M)$, in the basis given by $\{\kbb_{{\uparrow}p}\}_{p \in \Pscr}$ (the fact that this is indeed a basis is a direct consequence of \cref{proposition:projectives-are-basis}).
    Note that $\dimhom_{\mathsf{p.upset}}$ simply records the pointwise dimension of the representation, and is also known as the Hilbert function.
    Thus, we get a count
    \[
        \Ncal^{\mathrm{Hilb.dec.}} \coloneqq 
    \Ncal^{
        \left(\dimhom_{\mathsf{p.upset}},
              \{\kbb_{{\uparrow}p}\}_{p \in \Pscr}
        \right)}=
    \Ncal^{
        \left(\Ksf_{\mathsf{p.upset}},
              \{\kbb_{{\uparrow}p}\}_{p \in \Pscr}
        \right)},
    \]
    which equals the number of positive bars minus the number of negative bars in the minimal Hilbert decomposition.
    In view of \cref{remark:counts-as-euler}, this count can also be interpreted as the (standard, non-relative) Euler characteristic.
    This count is, however, quite weak as an invariant:
    for example, if~$\Pscr$ admits a maximum $p \in \Pscr$, then $\Ncal^{\mathrm{Hilb.dec.}}(M) = \dim M_p$ for every $M \in \rep(\Pscr)$.
    In particular, this count is not equal to $\Ncaltwo$ on $\rep(\Gcal^2)$.
\end{remark}

\section{Properties of counts}
\label{section:properties-counts}

We now prove fundamental properties of the counts introduced in \cref{section:counts}.

\subsection{The two-parameter count}

We first prove that the two-parameter count is always positive.

\begin{proposition}
    \label{proposition:positivity}
    If $M \in \repf(\Zbb^2)$, then $\dim M - \dim \xbf M - \dim \ybf M + \dim \xbf \ybf M \geq 0$.
\end{proposition}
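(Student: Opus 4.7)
The plan is to apply Sylvester's rank inequality directly, using the observation that $\xbf, \ybf \in \kbb[\xbf,\ybf]$ act on $M$ as commuting $\kbb$-linear endomorphisms of the underlying finite-dimensional $\kbb$-vector space.

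More precisely, since $M \in \repf(\Zbb^2)$ has finite support, $M = \bigoplus_{i \in \Zbb^2} M_i$ is a finite-dimensional $\kbb$-vector space. Multiplication by $\xbf$ and by $\ybf$ give $\kbb$-linear maps $\xbf, \ybf : M \to M$, and their composite is multiplication by $\xbf\ybf$. By the definition of $\xbf M$, $\ybf M$, and $\xbf\ybf M$ as images of these $\kbb$-linear maps, we have
\[
    \dim \xbf M = \rank(\xbf),\quad \dim \ybf M = \rank(\ybf),\quad \dim \xbf\ybf M = \rank(\xbf \circ \ybf).
\]
Sylvester's rank inequality applied to the composition $M \xrightarrow{\ybf} M \xrightarrow{\xbf} M$ yields
\[
    \rank(\xbf \circ \ybf) \;\geq\; \rank(\xbf) + \rank(\ybf) - \dim M,
\]
which, after rearranging, becomes exactly $\dim M - \dim \xbf M - \dim \ybf M + \dim \xbf\ybf M \geq 0$.

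There is really no obstacle here beyond recognizing which identity applies; the argument is a one-line consequence of Sylvester's inequality. The only mild subtlety is to notice that although $\xbf$ raises the bigrading by $(1,0)$ (so it is a morphism $M[-1,0] \to M$ of bigraded modules), forgetting the grading gives an honest $\kbb$-linear endomorphism of the finite-dimensional vector space $M$ whose image has dimension $\dim \xbf M$, and similarly for $\ybf$ and $\xbf\ybf$.
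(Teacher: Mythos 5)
Your proof is correct and follows essentially the same argument as the paper: both identify the underlying (ungraded) vector space of $M$ as finite dimensional, interpret multiplication by $\xbf$, $\ybf$, and $\xbf\ybf$ as linear endomorphisms whose rank equals the dimension of the corresponding submodule, and conclude by Sylvester's rank inequality. The paper phrases this in terms of $n\times n$ matrices over $\kbb$, but that is only a cosmetic difference.
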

\begin{proof}
    By identifying the ungraded vector space underlying $M$ with $\kbb^n$, the actions of $\xbf$ and $\ybf$ are given by $n\times n$ matrices $A$ and $B$.
    Then, $\dim M = n$, $\dim(\xbf M) = \rank(A)$, $\dim(\ybf M) = \rank(B)$, and $\dim(\xbf\ybf M) = \rank(AB)$.
    The result follows from the Sylvester rank inequality, which states that, if $A$ is an $m \times n$ matrix and $B$ is an $n \times k$ matrix, then $\rank(AB) \geq \rank(A) + \rank(B) - n$.
\end{proof}

The two-paramter count only depends on the end-curves of the representation, in the following sense.

\begin{lemma}
    \label{equation:count-same-as-birth-death}
    If $M \in \repf(\Zbb^2)$, then $\Ncaltwo(M) = \Ncaltwo(\cokerxy{M}) = \Ncaltwo(\kerxy{M})$.
\end{lemma}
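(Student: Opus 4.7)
The plan is to unwind both equalities by direct dimension counting, using only the definitions of $\cokerxy{M}$, $\kerxy{M}$ and the defining exact sequence
\[
0 \to \kerxy{M} \to M[-1,-1] \xrightarrow{\xbf\ybf} M \to \cokerxy{M} \to 0.
\]
Since shifts preserve total dimension, the sequence yields the single identity
\[
\dim \kerxy{M} \;=\; \dim \cokerxy{M} \;=\; \dim M - \dim(\xbf\ybf M).
\]

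For the cokernel side, I would use that $\cokerxy{M}=M/\xbf\ybf M$ and that $\xbf\ybf M \subseteq \xbf M$, $\xbf\ybf M \subseteq \ybf M$, whence
\[
\dim\xbf\cokerxy{M} = \dim\xbf M - \dim\xbf\ybf M, \qquad \dim\ybf\cokerxy{M} = \dim\ybf M - \dim\xbf\ybf M.
\]
Since $\cokerxy{M}$ is ephemeral, $\dim \xbf\ybf\,\cokerxy{M} = 0$. Substituting into the defining formula for $\Ncaltwo$ causes all $\dim\xbf\ybf M$ terms to cancel in pairs, leaving exactly $\Ncaltwo(M)$.

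For the kernel side, I would set $K = \kerxy{M}$ (ephemeral again) and compute $\dim\xbf K$ via rank-nullity applied to multiplication by $\xbf$ restricted to $\ker(\xbf\ybf) \subseteq M[-1,-1]$: its kernel is $\ker(\xbf) \subseteq \ker(\xbf\ybf)$ and its image is $\xbf K$, giving
\[
\dim\xbf K \;=\; \dim\ker(\xbf\ybf) - \dim\ker(\xbf) \;=\; \dim\xbf M - \dim\xbf\ybf M,
\]
and symmetrically for $\dim\ybf K$. The same cancellation as before then yields $\Ncaltwo(K) = \Ncaltwo(M)$.

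There is no real obstacle here: the argument is essentially a bookkeeping exercise built on the Sylvester-type identity already used in \cref{proposition:positivity}. The only mild care point is tracking which ``$\xbf\ybf M$''-dimension appears in each of the four terms so that the cancellations work out identically for both $\cokerxy{M}$ and $\kerxy{M}$; once this is done, the two equalities follow from the common value $\dim \kerxy{M} = \dim \cokerxy{M} = \dim M - \dim\xbf\ybf M$ supplied by the exact sequence.
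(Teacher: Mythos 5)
Your proof is correct and follows essentially the same route as the paper: both are rank--nullity bookkeeping. The paper packages it slightly more compactly by first rewriting $\Ncaltwo(M) = \dim\cokerx{M} + \dim\cokery{M} - \dim\cokerxy{M}$ and then invoking the isomorphisms $\cokerx{(\cokerxy{M})} \cong \cokerx{M}$, $\cokery{(\cokerxy{M})} \cong \cokery{M}$, $\cokerxy{(\cokerxy{M})} \cong \cokerxy{M}$ (with the kernel case handled ``analogously''), which is exactly the computation you carry out explicitly term by term.
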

\begin{proof}
    We prove that $\Ncaltwo(M) = \Ncaltwo(\cokerxy{M})$; the other equality is analogous.
    Note that $\cokerx{(\cokerxy{M})} \cong \cokerx{M}$,
    $\cokery{(\cokerxy{M})} \cong \cokery{M}$,
    and $\cokerxy{\left(\cokerxy{M}\right)} \cong \cokerxy{M}$.
    The result then follows from the equality
    \[
        \Ncaltwo(M) = \dim \cokerx{M} + \dim \cokery{M} - \dim \cokerxy{M},
    \]
    which is a simple application of rank-nullity.
\end{proof}


\subsection{The signed barcode count equals the inclusion-exclusion count}

\begin{proposition}
    \label{proposition:signed-barcode-equals-inclusion-exclusion}
    Let $n \geq 1 \in \Nbb$.
    We have $\chirkdec = \Ninclexcl$ on $\rep(\Gcal^n)$.
\end{proposition}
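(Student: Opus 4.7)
The plan is to invoke Proposition~\ref{lemma:basic-results-counts-basis}, which characterizes $\chirkdec$ as the unique additive invariant $\rep(\Gcal^n) \to \Zbb$ that is refined by $\Rk$ and takes the value $1$ on every segment representation $\kbb_I$, $I \in \rects$. (Here we use Corollary~\ref{remark:basis-for-known-invariants}(2): $\{\kbb_I\}_{I \in \rects}$ is a basis for $\Rk$.) Hence it suffices to verify that $\Ninclexcl$ is additive, is refined by $\Rk$, and assigns $1$ to every segment representation. Additivity is immediate from additivity of $M \mapsto \dim(\xbf_S M)$.

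For the refinement, write $\ebf_S = \sum_{i \in S} \ebf_i$. Since $(\xbf_S M)_c = \im\bigl(\phi^M_{c - \ebf_S,\, c}\bigr)$ (taken to be $0$ when $c - \ebf_S \notin \Gcal^n$), we have
\[
\dim(\xbf_S M) \;=\; \sum_{c \in \Gcal^n} \rk\bigl(\phi^M_{c - \ebf_S,\, c}\bigr) \;=\; \sum_{c \in \Gcal^n} \Rk(M)\bigl([c - \ebf_S,\, c]\bigr).
\]
Each $\dim(\xbf_S M)$, and hence $\Ninclexcl(M)$, is therefore a $\Zbb$-linear function of the entries of $\Rk(M)$, giving $\Rk \succcurlyeq \Ninclexcl$.

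For the normalization, let $[a, b]$ be a segment of $\Gcal^n$ and set $n_i \coloneqq b_i - a_i + 1 \geq 1$. A direct inspection shows that $\xbf_S \kbb_{[a,b]}$ is the submodule supported on $[a + \ebf_S,\, b]$ when this is a valid nonempty segment, and is zero otherwise; in both cases
\[
\dim\bigl(\xbf_S \kbb_{[a,b]}\bigr) \;=\; \prod_{i \notin S} n_i \;\cdot\; \prod_{i \in S}(n_i - 1),
\]
since if $n_i - 1 = 0$ for some $i \in S$ the product correctly vanishes. Substituting into the defining formula and distributing the alternating sum across the independent factors yields
\[
\Ninclexcl\bigl(\kbb_{[a,b]}\bigr) \;=\; \sum_{S \subseteq \{1,\dots,n\}} (-1)^{|S|} \prod_{i \notin S} n_i \prod_{i \in S}(n_i - 1) \;=\; \prod_{i=1}^n \bigl(n_i - (n_i - 1)\bigr) \;=\; 1,
\]
completing the identification $\Ninclexcl = \chirkdec$. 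There is no substantive obstacle: the only calculation is recognizing the alternating sum as a separable product, a standard inclusion-exclusion identity on a product of chains.
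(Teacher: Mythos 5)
Your proposal is correct and follows the same overall strategy as the paper: invoke \cref{lemma:basic-results-counts-basis} (via \cref{remark:basis-for-known-invariants}), then verify that $\Ninclexcl$ is refined by $\Rk$ and takes the value $1$ on segment representations. The refinement argument is the same as the paper's, rewriting $\dim(\xbf_S M)$ as a sum of ranks of structure morphisms. The only difference is in the normalization step: the paper expresses $\kbb_{[a,b]}$ as a tensor product and inducts on $n$, whereas you compute $\dim(\xbf_S \kbb_{[a,b]}) = \prod_{i \notin S} n_i \prod_{i \in S}(n_i - 1)$ directly and recognize the alternating sum as the product $\prod_i (n_i - (n_i - 1)) = 1$; both exploit the same separable structure, and your closed-form calculation is a slightly more direct route to the same end.
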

\begin{proof}
    Applying \cref{lemma:basic-results-counts-basis}, it is enough to check that (1) 
    we have $\Rk \succcurlyeq \Ninclexcl$ as additive invariants on $\repf(\Zbb^n)$, and that (2)
    $\Ninclexcl(\kbb_{[a,b]}) = 1$ for every segment representation $\kbb_{[a,b]}$, $a\leq b \in \Zbb^n$.

    \smallskip

    \noindent\emph{(1)}
    It is sufficient to prove that, for every $S = \{s_1, \dots, s_\ell\} \subseteq \{1, \dots, n\}$, we can compute $\dim(\xbf_S M)$ using the ranks of the morphisms of $M$, where $\xbf_S = \xbf_{s_1} \cdots \xbf_{s_\ell}$.
    For this, let $b \in \Zbb^n$ be defined by $b_i = 1$ if $\xbf_i \in S$ and $b_i = 0$, otherwise, and note that
    \[
        \dim(\xbf_S M) = \sum_{a \in \Zbb^n} \mathrm{rank}\left(\phi^M_{a,a+b} : M_a \to M_{a + b}\right).
    \]

    \smallskip

    \noindent\emph{(2)}
    We proceed by induction on $n$.
    If $n = 1$, this is clear.
    For the inductive case, let $\kbb_{[a,b]}$ be a segment representation, with $a = (a_1, \dots, a_{n+1})$ and $b = (b_1, \dots, b_{n+1})$.
    Then, $\kbb_{[a,b]} \cong \kbb_{[a_1,b_1]} \otimes_{\kbb} \cdots \otimes_{\kbb} \kbb_{[a_{n+1},b_{n+1}]} = \kbb_{[a',b']} \otimes_{\kbb} \kbb_{[a_{n+1},b_{n+1}]}$, where the action of $\xbf_i \in \kbb[\xbf_1, \dots, \xbf_{n+1}]$ on the $i$th tensor factor.
    Unfolding the definition of $\Ninclexcl$, we get
    \begin{align*}
        \Ninclexcl\left(\kbb_{[a,b]}\right)
            &= \sum_{S \subseteq \{1, \dots, n+1\}} (-1)^{|S|}\,\, \dim(\xbf_S M) \\
            &=
                \sum_{S \subseteq \{1, \dots, n\}} (-1)^{|S|}\,\, \dim\left(\xbf_S M\right)
            \;+\;
                \sum_{T \subseteq \{1, \dots, n\}} (-1)^{|T|+1}\,\, \dim\left(\xbf_{T \cup \{n+1\}} M\right)
            \\
            &= \Ninclexcl(\kbb_{[a',b']}) \cdot \dim\left(\kbb_{[a_{n+1},b_{n+1}]}\right) - \Ninclexcl(\kbb_{[a',b']}) \cdot \dim\left(\kbb_{[a_{n+1} + 1,b_{n+1}]} \right)\\
            &= 1 \cdot (b_{n+1} - a_{n+1}) - 1 \cdot (b_{n+1} - (a_{n+1} + 1)) = 1,
    \end{align*}
    where in the fourth equality we used the inductive hypothesis.
\end{proof}

\subsection{The signed barcode count and the topological Euler characteristic}
The main result in this subsection is 
\cref{theorem:count-is-euler},
which says that, over finite lattices, the count $\chirkdec$ of a spread-representation $\kbb_I$ equals the topological Euler characteristic of $\Delta(I)$, the \emph{order complex} \cite[Section~9.3]{bjorner} of the spread.
We deduce from this an interesting universal property for $\chirkdec$
(\cref{theorem:universal-property-signed-barc}).

It is relevant to note that all spreads in $\Zbb^2$ are contractible, and thus their Euler characteristic is one (\cref{lemma:spread-contractible}), but this is not true already in $\Zbb^3$, where the Euler characteristic can even be negative (\cref{figure:three-parameter-negative}).

\begin{theorem}
    \label{theorem:count-is-euler}
    If $\Pscr$ is a finite lattice and $I \subseteq \Pscr$ is a spread, then $\chirkdec(\kbb_I) = \chi(\Delta(I))$.
\end{theorem}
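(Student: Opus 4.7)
The plan is to compute $\chirkdec(\kbb_I)$ by explicit Möbius inversion and then to recognize the resulting alternating sum as $\chi(\Delta(I))$ via Philip Hall's theorem. By \cref{lemma:basic-results-counts-basis} and \cref{proposition:I-is-basis-of-rk-I}, writing $\Rk(\kbb_I) = \sum_{(a,b) \in \rects} c_{(a,b)} \cdot \Rk(\kbb_{[a,b]})$ uniquely determines integers $c_{(a,b)}$, and by definition $\chirkdec(\kbb_I) = \sum_{(a,b)} c_{(a,b)}$.

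First I would evaluate the defining equation at an arbitrary segment $(c,d) \in \rects$. Since $I$ is poset-convex, $\Rk(\kbb_I)((c,d))$ equals $1$ if $c, d \in I$ (and $0$ otherwise), while $\Rk(\kbb_{[a,b]})((c,d))$ equals $1$ if $a \leq c \leq d \leq b$ (and $0$ otherwise). Ordering segments by reverse containment, $(a,b) \preceq (c,d) \Leftrightarrow a \leq c \text{ and } d \leq b$, the equation becomes a standard zeta-convolution relation whose Möbius inverse is
\[
c_{(c,d)} = \sum_{\substack{a, b \in I \\ (a,b) \preceq (c,d)}} \mu_{\preceq}\big((a,b),(c,d)\big).
\]
A short combinatorial check shows that for $(a,b) \preceq (c,d)$ the closed interval in $(\rects, \preceq)$ agrees with the closed interval in the ambient product poset $(\Pscr \times \Pscr^{\op}, \leq)$, because any intermediate $(x,y)$ automatically satisfies $a \leq x \leq c \leq d \leq y \leq b$ and hence $x \leq y$. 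Applying the product formula for Möbius functions then yields $\mu_\preceq((a,b),(c,d)) = \mu_\Pscr(a,c) \cdot \mu_\Pscr(d,b)$.

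Summing over $(c,d) \in \rects$ and swapping the order of summation,
\[
\chirkdec(\kbb_I) = \sum_{a, b \in I} \;\sum_{\substack{c, d \in \Pscr \\ a \leq c \leq d \leq b}} \mu_\Pscr(a,c)\, \mu_\Pscr(d,b).
\]
The defining identity $\sum_{a \leq c \leq d} \mu_\Pscr(a,c) = \delta_{a,d}$ of the Möbius function collapses the inner double sum to $\mu_\Pscr(a,b)$ when $a \leq b$ (and to $0$ otherwise); by convexity of $I$, this further simplifies to $\mu_I(a,b)$. Thus $\chirkdec(\kbb_I) = \sum_{a \leq b \in I} \mu_I(a,b)$, and Philip Hall's theorem $\mu_I(a,b) = \sum_k (-1)^k \cdot \#\{\text{chains } a = x_0 < \cdots < x_k = b \text{ in } I\}$, combined with one last interchange of summation, expresses the total as $\sum_k (-1)^k f_k(\Delta(I)) = \chi(\Delta(I))$.

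The main obstacle is the Möbius-theoretic step: verifying that intervals in the sub-poset of segments $(\rects, \preceq)$ coincide with the corresponding intervals in the ambient $(\Pscr \times \Pscr^{\op}, \leq)$, so that the product formula for Möbius functions applies directly. Once this is in hand, the rest reduces to a telescoping identity together with Hall's theorem. The finite-lattice hypothesis enters only through \cref{proposition:I-is-basis-of-rk-I}, which guarantees existence and uniqueness of the signed barcode coefficients $c_{(a,b)}$.
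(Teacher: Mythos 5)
Your proof is correct, and it takes a genuinely different route from the paper's. The paper covers the spread $I$ by its maximal segments, invokes the nerve lemma for simplicial complexes to identify $\Delta(I)$ with the nerve of that cover, and then matches the Euler characteristic of the nerve with $\chirkdec(\kbb_I)$ through an inclusion-exclusion expansion of $\Rk(\kbb_I)$ over intersections of maximal segments. This is where the lattice hypothesis enters: Claim~3 in the paper's proof uses joins and meets to guarantee that any nonempty intersection of segments is again a segment, hence has a minimum and a contractible order complex. Your argument bypasses the topology entirely. You solve the defining triangular linear system for the signed-barcode coefficients by explicit Möbius inversion on $(\rects, \preceq)$, observe that the relevant intervals in $\rects$ coincide with intervals in $\Pscr \times \Pscr^{\op}$ (so the product formula $\mu_\preceq = \mu_\Pscr \cdot \mu_{\Pscr^{\op}}$ applies), telescope the resulting double sum using the defining recursion for $\mu$, and land on $\chirkdec(\kbb_I) = \sum_{a \leq b \in I} \mu_I(a,b)$, which Hall's theorem identifies with $\chi(\Delta(I))$. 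All the steps are sound: the interval comparison, the collapse $\sum_{a \le c \le d \le b} \mu(a,c)\mu(d,b) = \mu(a,b)$, the reduction $\mu_\Pscr = \mu_I$ via poset-convexity, and the exchange of summations in the Hall-theorem step. Notably, your argument nowhere uses the lattice structure — only finiteness and the basis result of \cref{proposition:I-is-basis-of-rk-I}, which already holds for arbitrary finite posets — so you have in fact proved the statement for any finite poset, not just finite lattices. That is a genuine strengthening, and it is worth pointing out that the lattice hypothesis in the paper is an artifact of the nerve-lemma method rather than of the statement itself. The paper's approach does have the virtue of producing an explicit homotopy equivalence between $\Delta(I)$ and the nerve of the segment cover, which may be useful elsewhere; your approach gives the cleaner closed-form identity $\chirkdec(\kbb_I) = \sum_{a \le b \in I}\mu_I(a,b)$ along the way.
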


\begin{proof}
    We will make use of the following easy claims;
    the first one is standard and follows from, e.g., \cite[Corollary~10.12]{bjorner},
    the second one is a consequence of \cref{lemma:any-decomposition-gives-count},
    and the third one is a straightforward check.
    \smallskip

    \noindent \emph{Claim 1.}
    If $\Pscr$ is a finite poset with a minimum, then $\Delta(\Pscr)$ is contractible.

    \smallskip

    \noindent \emph{Claim 2.}
    If $\Pscr$ be a finite poset, $M : \Pscr \to \vect$, and $A_1, \dots, A_k \subseteq \Pscr$ and $B_1, \dots, B_\ell \subseteq \Pscr$ are segments such that
    $\Rk(M) = \sum_{i = 1}^k \Rk(\kbb_{A_i}) - \sum_{j = 1}^\ell \Rk(\kbb_{B_j})$,
    then $\chirkdec(M) = k - \ell$.

    \smallskip

    \noindent \emph{Claim 3.}
    Let $\Pscr$ be a lattice, and let $[x,y],[x',y'] \subseteq \Pscr$ be segments.
    Then $[x,y] \cap [x',y']$ is either empty or the segment $[x\vee x', y\wedge y']$.

    \medskip

    Let $\Ssf$ be the set of maximal segments included in $I$.
    Every chain in $I$ must lie entirely in a segment $R \in \Ssf$, so $C \coloneqq \{\Delta(R)\}_{R \in \Ssf}$ is a cover of $\Delta(I)$ by subcomplexes.
    Let $K$ be the nerve of $C$, so that the $i$th simplices $K$ are given by the subsets $T \subseteq \Ssf$ such that $|T|=i+1$ and $\cap T \neq \emptyset$.

    If $T \subseteq \Ssf$ is such that $\cap T \neq \emptyset$, then $\cap T$ is a segment (by Claim 3), so it has a minimum, and thus $\Delta(\cap T)$ is contractible, by Claim 1.
    This implies that all intersections of elements of $C$ are either empty or contractible, and thus, by the nerve lemma for simplicial complexes \cite[Theorem~10.6]{bjorner}, we get $K \simeq \Delta(I)$, which in turn implies that $\chi(K) = \chi(\Delta(I))$.

    It is thus sufficient to show that $\chirkdec(\kbb_I) = \chi(K)$.
    We claim that
    \begin{equation*}
        \label{equation:inclusion-exclusion-rank}
        \Rk(\kbb_I) = \sum_{i \in \Nbb}\; (-1)^i\; \sum_{T \subseteq \Ssf,\, |T|=i+1,\, \cap T \neq \emptyset}\; \Rk(\kbb_{\cap T}).
    \end{equation*}
    From this equation and Claim 2, it follows that
    \[
        \chirkdec(\kbb_I) =
        \big|\big\{ T \subseteq \Ssf : |T| \text{ odd},\; \cap T \neq \emptyset\big\}\big| -
        \big|\big\{ T \subseteq \Ssf : |T| \text{ even},\; \cap T \neq \emptyset\big\}\big|
        = \chi(K),
    \]
    so let us prove the equation.
    We do this by evaluating the left- and right-hand sides on each $a \leq b \in \Pscr$.
    If either $a$ or $b$ is not in $I$, then both sides are zero, so let us assume that $a, b \in I$.
    In this case, the left-hand side is one, so we need to show that the right-hand side is also one.

    Let $\Ssf_{a,b} \subseteq \Ssf$ be the set of segments in $\Ssf$ that contain both $a$ and $b$.
    All intersections of sets in $\Ssf_{a,b}$ must also contain $a$ and $b$, so we need to prove that
    \[
        \big|\big\{ T \subseteq \Ssf_{a,b} : |T| \text{ odd}\big\}\big| - \big|\big\{ T \subseteq \Ssf_{a,b} : |T| \text{ even}\big\}\big| = 1,
    \]
    which is true for any finite set $X$ in the place of $\Ssf_{a,b}$, since it follows from the fact that the alternating sum of binomial coefficients is $0$.
\end{proof}

\begin{corollary}
    \label{lemma:signed-barcode-hooks}
    If $\Pcal$ is a finite lattice, then $\chirkdec = \chirkexact\; :\; \rep(\Pcal) \to \Zbb$.
\end{corollary}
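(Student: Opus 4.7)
The plan is to apply the uniqueness clause of \cref{lemma:basic-results-counts-basis} to $\chirkexact$. By definition, $\chirkexact$ is the unique additive invariant $\Ncal : \rep(\Pcal) \to \Zbb$ satisfying $\dimhom_\hooks \succcurlyeq \Ncal$ and $\Ncal(\kbb_H) = 1$ for every hook $H \in \hooks$. It therefore suffices to verify that $\chirkdec$ satisfies both of these conditions.

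For the value-on-basis condition, I would invoke \cref{theorem:count-is-euler} to obtain $\chirkdec(\kbb_H) = \chi(\Delta(H))$. Every hook $\langle a, b \langle$ has $a$ as its unique minimum, whether $b \in \Pcal$ or $b = \infty$, so Claim~1 in the proof of \cref{theorem:count-is-euler} forces $\Delta(H)$ to be contractible and hence $\chi(\Delta(H)) = 1$, as required.

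For the refinement $\dimhom_\hooks \succcurlyeq \chirkdec$, since $\Rk \succcurlyeq \chirkdec$ holds by construction, transitivity reduces the task to establishing $\dimhom_\hooks \succcurlyeq \Rk$. The main step would be an explicit hom computation: for $a \leq b \in \Pcal$, a morphism $\kbb_{\langle a, b \langle} \to M$ is determined by its component $m \in M_a$ at~$a$, and the only nontrivial naturality constraints are $\phi^M_{a,y}(m) = 0$ for all $y \geq a$ outside the hook, which collapse to the single condition $\phi^M_{a,b}(m) = 0$. This would give $\dim \Hom(\kbb_{\langle a, b \langle}, M) = \dim \ker \phi^M_{a,b}$. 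For the ``infinite'' hook $\langle a, \infty \langle$, no outward constraints exist, so $\dim \Hom(\kbb_{\langle a, \infty \langle}, M) = \dim M_a$. Combining, $\rank \phi^M_{a,b} = \dim \Hom(\kbb_{\langle a, \infty \langle}, M) - \dim \Hom(\kbb_{\langle a, b \langle}, M)$, so that the rank invariant is recoverable from the hook-dimhom invariant.

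The principal ingredient is the explicit hom computation, which is routine but requires some care with the degenerate case $b = \infty$. I do not anticipate any serious obstacle: once the two conditions above are verified, the uniqueness clause of \cref{lemma:basic-results-counts-basis} immediately yields $\chirkdec = \chirkexact$ on $\rep(\Pcal)$.
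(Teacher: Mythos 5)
Your proposal is correct and follows essentially the same route as the paper: characterize $\chirkexact$ via the uniqueness clause of \cref{lemma:basic-results-counts-basis}, verify $\dimhom_\hooks \succcurlyeq \Rk \succcurlyeq \chirkdec$ for condition (1), and use \cref{theorem:count-is-euler} together with contractibility of hooks (which have a minimum) for condition (2). The only difference is cosmetic: the paper cites \cite[Lemma~4.4(2)]{botnan-oppermann-oudot} for the identity $\Rk(M)(a,b) = \dimhom(\kbb_{\langle a, \infty\langle}, M) - \dimhom(\kbb_{\langle a, b\langle}, M)$, whereas you carry out the underlying hom computation explicitly, and your computation is correct (a morphism out of $\kbb_{\langle a,b\langle}$ is determined by its image in $M_a$, with the vanishing constraints collapsing to $\phi^M_{a,b}(m)=0$ because every $z \geq a$ outside the hook satisfies $z \geq b$).
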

\begin{proof}
    It is clear that $\dimhom(\kbb_{\langle a, \infty \langle}, M) = \Rk(M)(a,a)$
    and we have
    $\Rk(M)(a,b) = \dimhom(\kbb_{\langle a, \infty \langle}, M) -  \dimhom(\kbb_{\langle a, b \langle},M)$ by rank-nullity (see \cite[Lemma~4.4(2)]{botnan-oppermann-oudot}).
    Thus, $\Rk$ is equivalent to $\dimhom_{\hooks}$.
    By \cref{lemma:basic-results-counts-basis}, it is thus enough to show that $\chirkdec(\kbb_{\langle a, b \langle}) = 1$ for every hook $\langle a, b \langle$ of $\Pcal$, which is true by \cref{theorem:count-is-euler}
    since $\Delta \langle a, b \langle$ is contractible, since $\langle a, b \langle$ has a minimum.
    %
\end{proof}

\begin{corollary}
    \label{corollary:two-parameter-normalized}
    If $I \subseteq \Zbb^2$ is a finite spread, then $\Ncaltwo(I) = 1$.
\end{corollary}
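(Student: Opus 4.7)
The plan is to assemble three ingredients already available in the paper: \cref{proposition:signed-barcode-equals-inclusion-exclusion} (which equates $\chirkdec$ with $\Ninclexcl$, and hence with $\Ncaltwo$ when $n=2$), \cref{theorem:count-is-euler} (which equates $\chirkdec$ of a spread representation with the topological Euler characteristic of the order complex of the spread), and the fact that spreads in $\Zbb^2$ are contractible (\cref{lemma:spread-contractible}, referenced in the paragraph preceding \cref{theorem:count-is-euler}).

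First, since $I$ is a finite spread, I would choose a finite two-dimensional grid poset $\Gcal^2 \subseteq \Zbb^2$ containing $I$; this $\Gcal^2$ is a finite lattice, and the indicator $\kbb_I$ lies in $\rep(\Gcal^2)$. Because $\Ncaltwo$ is defined directly in terms of the bigraded $\kbb[\xbf,\ybf]$-module structure via the dimensions of $M$, $\xbf M$, $\ybf M$, and $\xbf \ybf M$, its value on $\kbb_I$ does not change under the embedding $\rep(\Gcal^2) \hookrightarrow \repf(\Zbb^2)$; the same is true for $\Ninclexcl$ and, through the signed barcode coefficients, for $\chirkdec$.

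Next, by \cref{proposition:signed-barcode-equals-inclusion-exclusion} applied with $n=2$, we have $\chirkdec(\kbb_I) = \Ninclexcl(\kbb_I)$, and by the observation just after \cref{definition:N-inclusion-exclusion}, $\Ninclexcl$ specializes to $\Ncaltwo$ in two parameters, so $\chirkdec(\kbb_I) = \Ncaltwo(\kbb_I)$. Applying \cref{theorem:count-is-euler} to the finite lattice $\Gcal^2$ and the spread $I$, we get $\chirkdec(\kbb_I) = \chi(\Delta(I))$. Finally, \cref{lemma:spread-contractible} gives that $I$ is contractible, so $\chi(\Delta(I)) = 1$, and we conclude $\Ncaltwo(\kbb_I) = 1$.

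There is no real obstacle in this argument: every piece has been established previously, and the proof is essentially bookkeeping. The only point worth stating explicitly is the reduction from $\Zbb^2$ to a finite grid, which is needed because \cref{theorem:count-is-euler} is formulated for finite lattices, and this reduction is harmless thanks to the finiteness of $I$ and the local nature of the formula defining $\Ncaltwo$.
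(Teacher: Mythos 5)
Your proof is correct and takes essentially the same route as the paper: reduce to a finite grid, use \cref{proposition:signed-barcode-equals-inclusion-exclusion} to identify $\Ncaltwo$ with $\chirkdec$, and combine \cref{theorem:count-is-euler} with \cref{lemma:spread-contractible}. The paper's version is terser (it cites only the proposition and the contractibility lemma, leaving the appeal to \cref{theorem:count-is-euler} implicit), so your writeup merely spells out a step the paper elides.
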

\begin{proof}
    By translating $I$ if necessary, we can assume that $I$ belongs to a finite grid $I \subseteq \Gcal^2 \subseteq \Zbb^2$.
    Over this finite, two-dimensional grid we have $\Ncaltwo = \chirkdec$, by \cref{proposition:signed-barcode-equals-inclusion-exclusion}, so the result follows from \cref{lemma:spread-contractible}.
\end{proof}

We conclude by deriving universal properties for $\chirkdec$ and $\Ncaltwo$.

%
%

\begin{corollary}
    \label{theorem:universal-property-signed-barc}
    Let $\Pscr$ be a finite lattice, let
    $\alpha$ be an additive invariant on $\rep(\Pscr)$, and let $\Bcal$ be a basis for $\alpha$.
    Assume that:
    \begin{enumerate}
    \item The basis $\Bcal$ only contains spread representations.
    \item As additive invariants, we have $\alpha \succcurlyeq \chirkdec$.
    \end{enumerate}
    Then $\Ncal^{(\alpha, \Bcal, f)} = \chirkdec$, where $f : \Bcal \to \Zbb$ is given by $f(\kbb_I) = \chi(\Delta(I))$.
\end{corollary}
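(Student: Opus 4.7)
The plan is to invoke the uniqueness characterization of counts derived from bases (\cref{lemma:basic-results-counts-basis}) and reduce the statement to something already proved in the paper. Concretely, \cref{lemma:basic-results-counts-basis} tells us that $\Ncal^{(\alpha, \Bcal, f)}$ is the unique additive invariant $\Ncal : \rep(\Pscr) \to \Zbb$ such that $\alpha \succcurlyeq \Ncal$ and such that $\Ncal(M) = f(M)$ for every $M \in \Bcal$. So, to prove equality, it suffices to verify that $\chirkdec$ satisfies these two properties relative to the given data $(\alpha, \Bcal, f)$.

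The first property, $\alpha \succcurlyeq \chirkdec$, is given by hypothesis~(2), so there is nothing to do. For the second property, I must check that $\chirkdec(\kbb_I) = f(\kbb_I) = \chi(\Delta(I))$ for each basis element $\kbb_I \in \Bcal$. By hypothesis~(1), every element of $\Bcal$ has the form $\kbb_I$ for some spread $I \subseteq \Pscr$. Since $\Pscr$ is a finite lattice, \cref{theorem:count-is-euler} directly yields $\chirkdec(\kbb_I) = \chi(\Delta(I))$ for every such spread, which is exactly the required equality.

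Combining the two items, $\chirkdec$ is an additive invariant on $\rep(\Pscr)$ satisfying the two defining conditions, and the uniqueness clause of \cref{lemma:basic-results-counts-basis} then forces $\chirkdec = \Ncal^{(\alpha, \Bcal, f)}$. There is no serious obstacle here: the real content sits in \cref{theorem:count-is-euler} (identifying $\chirkdec$ on a spread with the Euler characteristic of its order complex) and in \cref{lemma:basic-results-counts-basis} (the uniqueness of counts relative to a basis), both already established; this corollary just packages them together.
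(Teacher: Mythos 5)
Your proof is correct and takes essentially the same approach as the paper: verify the two conditions of \cref{lemma:basic-results-counts-basis}, getting $\alpha \succcurlyeq \chirkdec$ from hypothesis (2) and $\chirkdec(\kbb_I) = \chi(\Delta(I))$ from \cref{theorem:count-is-euler}. (Incidentally, the paper's own wording appears to swap the labels of the two conditions of \cref{lemma:basic-results-counts-basis}; your version has them in the right places.)
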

\begin{proof}
    We use \cref{lemma:basic-results-counts-basis} for the count $\Ncal^{(\alpha, \Bcal, f )}$.
    Condition (1) follows from \cref{theorem:count-is-euler} and the definition of $f$, while condition (2) is just assumption (2).
\end{proof}

\begin{corollary}
    \label{theorem:universal-property-count}
    Let $\alpha$ be an additive invariant on $\rep(\Gcal^2)$, and let $\Bcal$ be a basis for $\alpha$.
    Assume that:
    \begin{enumerate}
        \item The basis $\Bcal$ only contains spread representations.
        \item As additive invariants, we have $\alpha \succcurlyeq \Ncaltwo$.
    \end{enumerate}
    Then $\Ncal^{(\alpha, \Bcal)} = \Ncaltwo$.
\end{corollary}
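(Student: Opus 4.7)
The plan is to reduce this corollary to \cref{lemma:basic-results-counts-basis}, which characterizes $\Ncal^{(\alpha, \Bcal)}$ as the unique additive invariant $\Ncal : \rep(\Gcal^2) \to \Zbb$ satisfying $\alpha \succcurlyeq \Ncal$ and $\Ncal(M_i) = 1$ for every $M_i \in \Bcal$ (recall that $\Ncal^{(\alpha, \Bcal)}$ is, by convention, the count built with $f$ constantly equal to $1$). The task is therefore to verify that $\Ncaltwo$ itself satisfies both of these defining conditions; by uniqueness, this will force $\Ncaltwo = \Ncal^{(\alpha, \Bcal)}$.

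First I would observe that condition (1) of the characterization, namely $\alpha \succcurlyeq \Ncaltwo$, is precisely hypothesis~(2) of the corollary, so nothing needs to be done there. For condition~(2), I need to show that $\Ncaltwo(\kbb_I) = 1$ for every spread representation $\kbb_I \in \Bcal$. By hypothesis~(1), every element of $\Bcal$ has the form $\kbb_I$ for some spread $I$ of $\Gcal^2$, and since $\Gcal^2$ is a finite grid, $I$ is a finite spread of $\Zbb^2$ (after the padding-by-zeros embedding $\rep(\Gcal^2) \hookrightarrow \repf(\Zbb^2)$ from \cref{remark:persistence-modules-as-other-things}). Then \cref{corollary:two-parameter-normalized} gives $\Ncaltwo(\kbb_I) = 1$ directly.

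Putting the two verifications together, $\Ncaltwo$ satisfies both conditions that uniquely characterize $\Ncal^{(\alpha, \Bcal)}$ via \cref{lemma:basic-results-counts-basis}, hence $\Ncal^{(\alpha, \Bcal)} = \Ncaltwo$. There is no real obstacle here: the entire content of the corollary is packaged in the two inputs \cref{lemma:basic-results-counts-basis} and \cref{corollary:two-parameter-normalized}, the latter being where the two-dimensional geometry enters (through the fact that order complexes of spreads in $\Zbb^2$ are contractible, so their Euler characteristic is~$1$). The present corollary is essentially the specialization of \cref{theorem:universal-property-signed-barc} to the two-dimensional grid case, where $\chirkdec$ and $\Ncaltwo$ coincide via \cref{proposition:signed-barcode-equals-inclusion-exclusion}, and where the normalizing function $f$ collapses to the constant~$1$ because every finite spread in $\Zbb^2$ has trivial Euler characteristic.
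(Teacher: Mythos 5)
Your proof is correct and takes essentially the same approach as the paper: both verifications reduce to \cref{lemma:basic-results-counts-basis} together with \cref{corollary:two-parameter-normalized}. The only cosmetic difference is that the paper routes through \cref{theorem:universal-property-signed-barc} (whose own proof is precisely the combination of \cref{lemma:basic-results-counts-basis} and \cref{theorem:count-is-euler}), whereas you apply \cref{lemma:basic-results-counts-basis} directly to $\Ncaltwo$; you note this yourself in the final paragraph, and your version is a modest shortcut of the same argument.
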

\begin{proof}
    On $\fintwoparam$, we have $\Ncaltwo = \chirkdec$ by \cref{proposition:signed-barcode-equals-inclusion-exclusion},
    so the result is a consequence of \cref{theorem:universal-property-signed-barc,corollary:two-parameter-normalized}.
\end{proof}


\subsection{Proof of \cref{corollary:our-count-and-other-counts}}

%

\ourcountandothercounts*
\begin{proof}
    We know that $\Ncaltwo = \chirkdec$, by \cref{proposition:signed-barcode-equals-inclusion-exclusion},
    and $\chirkdec = \chirkexact$, by \cref{lemma:signed-barcode-hooks}.
    To prove that $\chirkdec$ equals the other counts, we use the universal property of $\Ncaltwo$ (\cref{theorem:universal-property-count}).


    \medskip
    \noindent\textit{The case of $\chigpd$.}
    Here $\alpha = \Rk_\spreads$, and the basis $\Bcal$ consists of all spread representations, so condition (1) of \cref{theorem:universal-property-count} is satisfied.
    Condition (2) follows from the fact that $\Rk_\spreads \succcurlyeq \Rk \succcurlyeq \chirkdec \succcurlyeq \Ncal^2$.

    \medskip
    \noindent\textit{The case of $\chispreadexact$.}
    Here we can take $\alpha = \dimhom_\spreads$ or $\alpha = \Ksf_\spreads$ and the basis $\Bcal$ to consist of all spreads.
    Since we already know that $\Rk \succcurlyeq \Ncaltwo$, it is sufficient to check that $\dimhom_\spreads \succcurlyeq \Rk$, which is standard \cite[Lemma~4.4]{botnan-oppermann-oudot}.

    \medskip
    \noindent\textit{The case of $\Ncal^{\mathrm{int.dec.repl.}}$.}
    Taking $\alpha = \underline{d}^*$, the proof of this case is analogous to that of the previous two cases, using \cref{proposition:compressed-multiplicities-properties} to satisfy the conditions.

    \medskip
    \noindent\textit{The cases of $\Ncal^\births$ and $\Ncal^\deaths$.}
    We do the case of $\Ncal^\births$; the case of $\Ncal^\deaths$ is similar.
    Here $\alpha = \births$, and the basis $\Bcal$ consists of the spread curves of $\Zbb^2$ that are contained in $\Gcal^2$.
    Condition (1) then follows from the fact that spread curves are in particular spreads, while condition (2) follows from the fact that $\deaths \succcurlyeq \Ncaltwo$, thanks to \cref{equation:count-same-as-birth-death}.
\end{proof}

\subsection{Slice-monotonicity of the two-parameter count}

If $f : \Pscr \to \Qscr$ a monotonic function between posets, and $M : \Qscr \to \Vect$, we let $f^*(M) : \Pscr \to \Vect$ be defined by pre-composition with $f$, that is $f^*(M) \coloneqq M \circ f$.
The following is standard from the representation theory of quivers of type $A$.

\begin{lemma}
    \label{lemma:indecomposable-one-parameter}
    A representation $M \in \repf(\Zbb)$ is indecomposable if and only if it is isomorphic to $\kbb_{[a,b)}$ for $a <b \in \Zbb$.
    \qed
\end{lemma}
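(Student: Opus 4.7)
The plan is to prove both directions of the bi-implication. For the ``if'' direction, I would carry out a short endomorphism-ring computation: any endomorphism of $\kbb_{[a,b)}$ is a compatible family of linear maps $\kbb \to \kbb$ at each point of the interval, and commutativity with the identity structure maps forces these to be a single common scalar. Hence $\End(\kbb_{[a,b)}) \cong \kbb$, which, being a field, has no non-trivial idempotents, so $\kbb_{[a,b)}$ is indecomposable.

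For the ``only if'' direction, I would reduce to a finite totally ordered poset. Since $M \in \repf(\Zbb)$ has finite support by definition of $\repf$, there exist $a < b \in \Zbb$ such that the support of $M$ is contained in $[a,b)$. Restricting $M$ to $[a,b) \subseteq \Zbb$ yields a finite-dimensional representation of this finite totally ordered poset, equivalently a representation of an equioriented type-$A_{b-a}$ quiver. Gabriel's theorem for type $A$ (see, e.g., \cite{assem-simson-skowronski}) then gives that every indecomposable representation of such a quiver is an interval representation $\kbb_{[a',b')}$ with $a \leq a' < b' \leq b$. Since extension by zero along $[a,b) \hookrightarrow \Zbb$ is fully faithful, preserves indecomposability, and identifies $\kbb_{[a',b')}$ in either category with the same spread representation, the claim follows.

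An alternative route I would keep in reserve is to use the equivalence between $\repf(\Zbb)$ and finite-dimensional graded $\kbb[\xbf]$-modules (from \cref{remark:persistence-modules-as-other-things}) and apply the graded version of the structure theorem for modules over the PID $\kbb[\xbf]$: every such module decomposes into cyclic torsion summands of the form $(\kbb[\xbf]/(\xbf^n))[-a]$, and each such summand corresponds precisely to $\kbb_{[a,a+n)}$.

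There is no real obstacle here, since, as the authors note, the result is standard from the representation theory of type-$A$ quivers; the substantive work is merely verifying that the finite-support hypothesis enables the reduction from $\Zbb$ to a finite subinterval, after which either classical tool applies without modification.
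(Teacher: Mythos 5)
Your proof is correct and follows exactly the route the paper alludes to when it labels the lemma ``standard from the representation theory of quivers of type $A$'' and ends the statement with a \qed rather than a proof: reduce to a finite equioriented type-$A$ quiver using the finite-support hypothesis and apply Gabriel's theorem, with the endomorphism-ring computation handling the converse. Both the main argument and the reserve argument via the graded structure theorem for $\kbb[\xbf]$-modules are sound, and the zero-extension step correctly justifies transporting indecomposables back to $\repf(\Zbb)$.
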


Recall that the simple representation at $i \in \Zbb$ is $\kbb_i \coloneqq \kbb_{[i,i+1)}$.

\begin{lemma}
    \label{lemma:one-parameter-count-as-hom}
    If $M \in \repf(\Zbb)$, then $\Ncalone(M) = \dim \hom\left(M,\bigoplus_{i \in \Zbb} \kbb_i\right) = \dim \Hom\left(\bigoplus_{i \in \Zbb} \kbb_i, M\right)$.
\end{lemma}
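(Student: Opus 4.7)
The plan is to leverage the classification of indecomposables in $\repf(\Zbb)$ from \cref{lemma:indecomposable-one-parameter}, reducing both equalities to a computation on a single interval module. Write $M \cong \bigoplus_{j=1}^{k}\kbb_{[b_j,d_j)}$, so that $\Ncalone(M) = k$ by the definition of the bar-count recalled in the introduction. Since $\Hom$ is additive in its first argument and $M$ has finitely many indecomposable summands, it suffices to show that, for every $b < d \in \Zbb$,
\[
\dim \Hom\left(\kbb_{[b,d)},\, \textstyle\bigoplus_{i\in\Zbb}\kbb_i\right) \;=\; 1 \;=\; \dim \Hom\left(\textstyle\bigoplus_{i\in\Zbb}\kbb_i,\, \kbb_{[b,d)}\right);
\]
summing over $j$ then yields $k = \Ncalone(M)$ on both sides.

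For the computation, the key observation is that every internal structure map of $\bigoplus_i \kbb_i$ vanishes, so multiplication by $\xbf$ annihilates this module. Consequently, any morphism $M \to \bigoplus_i \kbb_i$ factors through the top $M/\xbf M$, and any morphism $\bigoplus_i \kbb_i \to M$ lands inside the graded kernel $\ker(\xbf : M \to M[1])$. For the interval module $M = \kbb_{[b,d)}$, a direct calculation shows that these are the simple representations $\kbb_b$ (witnessing the birth) and $\kbb_{d-1}$ (witnessing the death), respectively. Since $\Hom(\kbb_a, \kbb_c) = \kbb$ if $a = c$ and $0$ otherwise, summing over $i$ yields a one-dimensional space in each case, as claimed.

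The main bookkeeping concern is the interchange of $\Hom$ with the infinite direct sum $\bigoplus_{i\in\Zbb}\kbb_i$, but this is immediate since $M$ has finite support: only finitely many simples $\kbb_i$ admit nonzero maps to or from $M$, so the coproduct is effectively finite in both $\Hom$ arguments. Beyond this bookkeeping, the argument is routine, and I do not anticipate any significant obstacle.
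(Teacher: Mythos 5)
Your proof is correct and follows essentially the same route as the paper's: decompose $M$ into interval summands via \cref{lemma:indecomposable-one-parameter}, use additivity to reduce to $M = \kbb_{[b,d)}$, and compute both $\Hom$ spaces to be one-dimensional. The only difference is cosmetic — you explain \emph{why} $\Hom(\kbb_{[b,d)},\bigoplus_i\kbb_i)$ and $\Hom(\bigoplus_i\kbb_i,\kbb_{[b,d)})$ reduce to a single simple (by factoring through $M/\xbf M$ and landing in $\ker(\xbf)$, respectively), while the paper simply records the resulting isomorphisms $\hom(\kbb_{[a,b)},\kbb_a)\cong\kbb$ and $\hom(\kbb_{b-1},\kbb_{[a,b)})\cong\kbb$; and you explicitly flag the interchange of $\Hom$ with the infinite coproduct, which the paper leaves implicit.
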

\begin{proof}
    By \cref{lemma:indecomposable-one-parameter} and additivity, it is sufficient to prove this for $M = \kbb_{[a,b)}$, with $a < b \in \Zbb$.
    Then $\hom\left(\kbb_{[a,b)},\bigoplus_{i \in \Zbb} \kbb_i\right) \cong \hom(\kbb_{[a,b)}, \kbb_a) \cong \kbb$, and
    $\hom\left(\bigoplus_{i \in \Zbb} \kbb_i, \kbb_{[a,b)}\right) \cong \hom(\kbb_{b-1}, \kbb_{[a,b)}) \cong \kbb$, and both have dimension $1$, as required.
\end{proof}

The following is standard, see, e.g., \cite{bauer-lesnick}.

\begin{lemma}
    \label{lemma:one-parameter-monotonicity-inclusion}
    Let $M, M' \in \repf(\Zbb)$.
    If there exists a monomorphism $M \to M'$ or an epimorphism $M' \to M$, then $\Ncalone(M) \leq \Ncalone(M')$.
\end{lemma}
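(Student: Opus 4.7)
The plan is to leverage Lemma \ref{lemma:one-parameter-count-as-hom}, which provides two dual expressions for $\Ncalone$ in terms of hom-spaces with the semisimple representation $S \coloneqq \bigoplus_{i \in \Zbb} \kbb_i$: namely $\Ncalone(M) = \dim \hom(S, M) = \dim \hom(M, S)$. Each of these expressions behaves well with respect to one of the two hypotheses (monomorphism or epimorphism), via the left exactness of hom-functors.

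For the monomorphism case, suppose $f : M \hookrightarrow M'$ is monic. Applying the left exact functor $\hom(S, -)$ to the exact sequence $0 \to M \xrightarrow{f} M'$ yields an injection
\[
    0 \to \hom(S, M) \to \hom(S, M'),
\]
so $\dim \hom(S, M) \leq \dim \hom(S, M')$, i.e., $\Ncalone(M) \leq \Ncalone(M')$.

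For the epimorphism case, suppose $g : M' \twoheadrightarrow M$ is epi. Applying the contravariant left exact functor $\hom(-, S)$ to the exact sequence $M' \xrightarrow{g} M \to 0$ yields an injection
\[
    0 \to \hom(M, S) \to \hom(M', S),
\]
so $\dim \hom(M, S) \leq \dim \hom(M', S)$, which again gives $\Ncalone(M) \leq \Ncalone(M')$.

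There is no real obstacle here: the content of the lemma is already encapsulated in Lemma \ref{lemma:one-parameter-count-as-hom}, and the monotonicity falls out from the standard left-exactness of $\hom$. The only mild verification is that the dimensions involved are finite, which is immediate since both $M$ and $M'$ are pointwise finite dimensional with finite support, so both hom-spaces collapse to finite direct sums of one-dimensional contributions indexed by the finitely many simple summands of $S$ that see the supports of $M$ and $M'$.
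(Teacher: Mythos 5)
Your proof is correct and follows essentially the same approach as the paper: both reduce the claim to the left-exactness of $\hom$ via \cref{lemma:one-parameter-count-as-hom}. The only cosmetic difference is that the paper invokes the duality of \cref{section:duality} to reduce the epimorphism case to the monomorphism case, whereas you handle the epimorphism case directly by applying the contravariant functor $\hom(-,S)$ to the second formula of that lemma — both are fine, and yours arguably dispenses with one small reduction step.
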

\begin{proof}
    By duality (\cref{section:duality}), it is enough to do the case of a monomorphism, so let $f : M \to M'$ be a monomorphism.
    By \cref{lemma:one-parameter-count-as-hom}, we need to prove that
    $\dim \hom\left(\bigoplus_{i \in \Zbb} \kbb_i, M\right) \leq \dim \hom\left(\bigoplus_{i \in \Zbb} \kbb_i, M'\right)$, which follows from the fact that the morphism
    $f_* : \hom\left(\bigoplus_{i \in \Zbb} \kbb_i, M\right) \to \dim \hom\left(\bigoplus_{i \in \Zbb} \kbb_i, M'\right)$ induced by $f$ is a monomorphism.
\end{proof}



For the rest of this section, we identify representations of $\Zbb$ with singly-graded $\kbb[\zbf]$-modules,
and if $M \in \rep(\Zbb)$, we consider the following exact sequence, which we use to define $\kerelem{M}{\zbf}$ and $\cokerelem{M}{\zbf}$:
\[
    0 \to \kerelem{M}{\zbf} \to M[-1] \xrightarrow{\,\,\zbf\,\,} M \to \cokerelem{M}{\zbf} \to 0\,.
\]

\begin{lemma}
    \label{lemma:one-parameter-count-as-dimension-birth-death}
    If $M \in \repf(\Zbb)$, then $\Ncalone(M) = \dim \cokerelem{M}{\zbf} = \dim \kerelem{M}{\zbf}$.
\end{lemma}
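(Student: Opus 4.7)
The plan is to reduce to the case of indecomposables by additivity, then compute the kernel and cokernel directly on each bar $\kbb_{[a,b)}$. First observe that all three quantities $\Ncalone$, $\dim\cokerelem{-}{\zbf}$, and $\dim\kerelem{-}{\zbf}$ are additive invariants on $\repf(\Zbb)$: additivity of $\Ncalone$ is immediate from its definition (the bar-count adds under direct sums), and additivity of the dimensions of the kernel and cokernel of multiplication by $\zbf$ follows from the fact that kernels, cokernels, and shifts all commute with finite direct sums.

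By \cref{lemma:indecomposable-one-parameter}, every $M \in \repf(\Zbb)$ decomposes as a finite direct sum of indecomposables of the form $\kbb_{[a,b)}$ with $a < b \in \Zbb$. By additivity, it therefore suffices to verify the two equalities when $M = \kbb_{[a,b)}$, for which $\Ncalone(M) = 1$.

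For this indecomposable, we have $M[-1] \cong \kbb_{[a+1,b+1)}$, and the structure morphism $M_{i-1} \to M_i$ induced by $\zbf$ is the identity of $\kbb$ whenever $i-1, i \in [a,b)$, i.e., for $a+1 \leq i \leq b-1$, and is zero otherwise. Thus the map $(-\cdot \zbf) : M[-1] \to M$ fails to be injective only in grade $b$ (where $M[-1]_b = \kbb$ but $M_b = 0$), and fails to be surjective only in grade $a$ (where $M[-1]_a = 0$ but $M_a = \kbb$). Consequently $\kerelem{M}{\zbf} \cong \kbb_b$ and $\cokerelem{M}{\zbf} \cong \kbb_a$, both of dimension~$1$. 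This exactly matches $\Ncalone(M) = 1$ and closes the argument.

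There is essentially no obstacle here: the result is a standard calculation on bars, and the only thing to be careful about is that $\repf(\Zbb)$ consists of finite dimensional (hence finitely decomposable) representations, so additivity reduces the general case to the indecomposable case without any convergence issues. Alternatively, one could equate $\dim\cokerelem{M}{\zbf}$ and $\dim\kerelem{M}{\zbf}$ via the duality of \cref{section:duality} and compute only one of them, but the direct verification above is just as short.
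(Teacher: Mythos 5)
Your proof is correct and takes essentially the same approach as the paper: reduce by additivity and \cref{lemma:indecomposable-one-parameter} to the case $M = \kbb_{[a,b)}$, then observe that $\kerelem{M}{\zbf} \cong \kbb_b$ and $\cokerelem{M}{\zbf} \cong \kbb_a$, each of dimension $1$. You simply spell out the grade-by-grade computation that the paper leaves implicit.
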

\begin{proof}
    By additivity and \cref{lemma:indecomposable-one-parameter}, it is sufficient to prove this for the case of $M = \kbb_{[a,b)}$, where we have $\cokerelem{M}{\zbf} \cong \kbb_a$ and $\kerelem{M}{\zbf} \cong \kbb_b$, both of dimension $1$.
\end{proof}

\begin{definition}
    A monotonic function between finite posets $f : \Pscr \to \Qscr$ is \emph{contiguous} if whenever $x \leq y \in \Pscr$ is a cover relation, we have that $f(x) \leq f(y) \in \Qscr$ is a cover relation as well.
\end{definition}

\begin{lemma}
    \label{lemma:contiguous-death-relation}
    Let $\ell : \Zbb \to \Zbb^2$ be a contiguous monotonic function.
    Let $M \in \repf(\Zbb^2)$, let $i \in \Zbb$ and let $x \in M_{\ell(i)} = \ell^*(M)_i$.
    If $\zbf \cdot x = 0 \in \ell^*(M)$, then $\xbf\ybf \cdot x = 0 \in M$.
    It follows that there is a monomorphism of representations of $\Zbb$:
    \[
        \kerelem{\ell^*(M)}{\zbf} \hookrightarrow \ell^*(\kerxy{M}).
    \]
\end{lemma}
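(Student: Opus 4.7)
The first assertion is a case analysis driven by contiguity. Since $\ell(i) \lessdot \ell(i+1)$ is a cover relation in $\Zbb^2$, we have $\ell(i+1) - \ell(i) \in \{\ebf_1, \ebf_2\}$. Under the identification $\ell^*(M)_i = M_{\ell(i)}$, the action of $\zbf$ on $x$ is exactly the structure morphism $\phi^M_{\ell(i), \ell(i+1)}$, which is multiplication by $\xbf$ in the first case and by $\ybf$ in the second. Hence $\zbf \cdot x = 0$ in $\ell^*(M)_{i+1}$ forces either $\xbf \cdot x = 0$ or $\ybf \cdot x = 0$ in $M_{\ell(i+1)}$, and in either case, composing with the remaining multiplication gives $\xbf \ybf \cdot x = \ybf(\xbf x) = \xbf(\ybf x) = 0$ in $M_{\ell(i) + (1,1)}$.

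For the monomorphism, I would construct the map pointwise. Fix $j \in \Zbb$. An element $y \in (\kerelem{\ell^*(M)}{\zbf})_j$ lives in $\ell^*(M)_{j-1} = M_{\ell(j-1)}$ and satisfies $\zbf \cdot y = 0$, so by the first part (applied at $i = j-1$) we have $\xbf\ybf \cdot y = 0$ in $M$. This identifies $y$ as a natural element of $\kerxy{M}$ at an appropriate position; the map $\kerelem{\ell^*(M)}{\zbf} \to \ell^*(\kerxy{M})$ is then defined at position $j$ by sending $y$ to the corresponding element of $(\ell^*(\kerxy{M}))_j = (\kerxy{M})_{\ell(j)}$, using contiguity to reconcile the index shifts on the two sides via the cover step $\ell(j) - \ell(j-1) \in \{\ebf_1, \ebf_2\}$.

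Naturality of this family in $j$ is a diagram chase: the structure morphisms of $\kerelem{\ell^*(M)}{\zbf}$ (inherited from $\ell^*(M)[-1]$) and of $\ell^*(\kerxy{M})$ (inherited from $\ell^*(M[-1,-1])$) are both restrictions of structure morphisms of $M$, and the constructed pointwise map is essentially the identity on the underlying elements of $M$, so commutativity is automatic. Injectivity at each $j$ is immediate from this pointwise description.

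The main obstacle is the careful bookkeeping of the shift conventions. The subobject $\kerelem{\ell^*(M)}{\zbf}$ of $\ell^*(M)[-1]$ and the subobject $\ell^*(\kerxy{M})$ of $\ell^*(M[-1,-1])$ are offset differently, and it is precisely contiguity of $\ell$ that lets us absorb the difference between a single cover step in $\Zbb$ and the bi-shift $(1,1)$ in $\Zbb^2$: contiguity ensures $\ell(j) \leq \ell(j-1) + (1,1)$, and the first part of the lemma supplies the algebraic reason why $y$ transported along this inequality lands in the correct kernel.
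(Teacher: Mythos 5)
Your proof of the first assertion is correct and matches the paper's argument exactly. The gap is in your construction of the monomorphism. At position $j$, an element $y \in (\kerelem{\ell^*(M)}{\zbf})_j$ lives in $M_{\ell(j-1)}$, while $(\ell^*(\kerxy{M}))_j = (\kerxy{M})_{\ell(j)}$ consists of elements of $M_{\ell(j)-(1,1)}$. Contiguity gives $\ell(j) = \ell(j-1) + \ebf_k$ for some $k \in \{1,2\}$, hence $\ell(j) - (1,1) = \ell(j-1) - \ebf_{3-k} < \ell(j-1)$; there is no structure morphism of $M$ transporting $y$ from $M_{\ell(j-1)}$ into $M_{\ell(j)-(1,1)}$, so the ``corresponding element'' you invoke does not exist. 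Your observation that $\ell(j) \leq \ell(j-1)+(1,1)$ is true but points in the wrong direction. Indeed the monomorphism as literally stated cannot exist: take $M = \kbb_{\{(0,0)\}}$ and $\ell(i) = (i,0)$; then $\kerelem{\ell^*(M)}{\zbf} \cong \kbb_1 \neq 0$, while $\kerxy{M} = \kbb_{\{(1,1)\}}$ has support disjoint from the image of $\ell$, so $\ell^*(\kerxy{M}) = 0$.

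What the first assertion actually delivers is a monomorphism $\kerelem{\ell^*(M)}{\zbf}[1] \hookrightarrow \ell^*\bigl(\kerxyc{M}\bigr)$, where $\kerxyc{M} = \kerxy{M}[1,1]$ is the closed-convention kernel: at position $j$, both sides consist of elements of $M_{\ell(j)}$, killed by $\zbf$ on the left and by $\xbf\ybf$ on the right, and the first assertion is precisely this containment; both sides are semisimple (their structure maps are zero, being restrictions of $\zbf$ resp.\ $\xbf\ybf$), so compatibility is automatic. The fix is harmless downstream, since the slice-monotonicity proof only uses the one-parameter counts, which are shift-invariant, and $\Ncaltwo(\kerxyc{M}) = \Ncaltwo(\kerxy{M})$. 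Note that the paper's own proof stops after establishing the first assertion and never constructs the monomorphism; the shift discrepancy sits in the lemma statement itself, so you inherited it rather than introducing it, but your attempted construction papers over exactly where it fails.
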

\begin{proof}
    We must have $\zbf \cdot x = \phi^{\ell^*(\kerxy{M})}_{i, i+1}(x) = 0$.
    The cover relations in $\Zbb^2$ are those of the form $(a,b) \leq (a+1, b)$ and $(a,b) \leq (a, b+1)$.
    So $\ell(i+1)$ is either $\ell(i) + (1,0)$ or $\ell(i) + (0,1)$; in either case, $\ell(i+1) \leq \ell(i) + (1,1)$.
    And thus, $\xbf\ybf \cdot x = \phi^{\kerxy{M}}_{\ell(i), \ell(i) + (1,1)}(x) = 0$, as required.
\end{proof}

\begin{lemma}
    \label{lemma:preimage-of-spread-in-linear-order}
    Let $\Zbb \to \Pscr$ be a monotonic function into a poset $\Pscr$.
    If $I \subseteq \Pscr$ is a spread, then $\Ncalone(\ell^*(\kbb_I)) \leq 1$.
\end{lemma}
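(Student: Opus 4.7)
The plan is to analyze the pullback $\ell^*(\kbb_I)$ directly and show that it is either zero or an interval representation of $\Zbb$, which would immediately give bar-count at most one.

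First, I would set $J := \ell^{-1}(I) \subseteq \Zbb$ and show that $J$ is convex as a subset of $\Zbb$ (hence an interval). Given $i \leq j \leq k$ in $\Zbb$ with $i, k \in J$, monotonicity of $\ell$ gives $\ell(i) \leq \ell(j) \leq \ell(k)$ in $\Pscr$ with $\ell(i), \ell(k) \in I$. Since $I$ is a spread, it is poset-convex, so $\ell(j) \in I$, i.e., $j \in J$. Convex subsets of $\Zbb$ are exactly intervals, so $J$ is an interval.

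Next, I would unpack the pullback pointwise: $\ell^*(\kbb_I)_i = (\kbb_I)_{\ell(i)}$, which equals $\kbb$ if $i \in J$ and $0$ otherwise. For consecutive indices $i, i+1 \in J$, both $\ell(i) \leq \ell(i+1)$ lie in $I$, and by definition $\kbb_I$ has identity structure morphisms between any two comparable elements of $I$; so the induced map $\ell^*(\kbb_I)_i \to \ell^*(\kbb_I)_{i+1}$ is the identity $\kbb \to \kbb$. Combining these two observations shows that $\ell^*(\kbb_I)$ is naturally isomorphic to $\kbb_J$ as a representation of $\Zbb$.

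Finally, if $J = \emptyset$ then $\ell^*(\kbb_I) = 0$ and $\Ncalone(\ell^*(\kbb_I)) = 0$; otherwise $\kbb_J$ is an indecomposable interval representation of $\Zbb$ (cf.\ \cref{lemma:indecomposable-one-parameter}) and so its bar-count equals $1$. Either way $\Ncalone(\ell^*(\kbb_I)) \leq 1$, as desired. The only subtlety I anticipate is the case where $J$ is unbounded, so that $\kbb_J \notin \repf(\Zbb)$; but since $\Ncalone$ is determined by the multiset of intervals in the barcode decomposition, the same conclusion holds, and this is a minor bookkeeping issue rather than a real obstacle.
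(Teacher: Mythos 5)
Your proof is correct and follows essentially the same argument as the paper: both show that $\ell^{-1}(I)$ is poset-convex in $\Zbb$ (hence empty or an interval) and conclude that $\ell^*(\kbb_I)$ is either zero or an interval representation, with bar-count at most one. Your extra remarks about the structure morphisms being identities and the possibility of $J$ being unbounded are fine but not needed beyond what the paper records.
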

\begin{proof}
    We claim that $\ell^{-1}(I) \subseteq \Zbb$ must be either empty or an interval of $\Zbb$.
    The result follows from this, since $\Ncalone$ of an interval is $1$, and $\Ncalone$ of the zero representation is $0$.

    So let us prove that, if $\ell^{-1}(I)$ is not empty, then it is an interval.
    Let $a \leq b \leq c \in \Zbb$ and assume that $a,c \in \ell^{-1}(I)$.
    Then $\ell(a) \leq \ell(b) \leq \ell(c) \in \Pscr$ and $\ell(a), \ell(c) \in I$, so $\ell(b) \in I$ and $b \in \ell^{-1}(I)$, as required.
\end{proof}

\begin{lemma}
    \label{lemma:one-parameter-monotonicity-restriction}
    Let $f : \Zbb \hookrightarrow \Zbb$ be an injective monotonic function, and let $M \in \repf(\Zbb)$.
    We have $\Ncalone(f^*(M)) \leq \Ncalone(M)$.
\end{lemma}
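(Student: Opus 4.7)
The plan is to reduce to the case where $M$ is indecomposable via additivity, and then apply \cref{lemma:preimage-of-spread-in-linear-order}.

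\medskip

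First I would use \cref{lemma:indecomposable-one-parameter} to write $M \cong \bigoplus_{i=1}^k \kbb_{[a_i, b_i)}$, so that $\Ncalone(M) = k$ by additivity and the fact that $\Ncalone$ takes value $1$ on each indecomposable interval representation. Since $f^*$ is a functor between additive categories (given by pre-composition with $f$), it preserves direct sums, so
\[
    f^*(M) \;\cong\; \bigoplus_{i=1}^k f^*(\kbb_{[a_i, b_i)}).
\]
By additivity of $\Ncalone$, it therefore suffices to show that $\Ncalone(f^*(\kbb_{[a_i, b_i)})) \leq 1$ for each $i$.

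\medskip

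Next I would observe that, for any spread $I \subseteq \Zbb$, the pullback $f^*(\kbb_I)$ is isomorphic to $\kbb_{f^{-1}(I)}$, as a spread representation (with identity structure maps where both endpoints land in the preimage). This is exactly the setup of \cref{lemma:preimage-of-spread-in-linear-order}, applied with $\Pscr = \Zbb$ and $\ell = f$; that lemma only uses monotonicity of $\ell$, not injectivity. Applying it to each interval $[a_i, b_i)$ gives $\Ncalone(f^*(\kbb_{[a_i, b_i)})) \leq 1$.

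\medskip

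Putting these together yields
\[
    \Ncalone(f^*(M)) \;=\; \sum_{i=1}^k \Ncalone(f^*(\kbb_{[a_i, b_i)})) \;\leq\; k \;=\; \Ncalone(M),
\]
which is the desired inequality. I do not anticipate a genuine obstacle: the only subtlety is noting that the hypothesis of injectivity of $f$ is not used (it will presumably be used later in the application to \cref{theorem:slice-monotonicity}, where $\ell : \Zbb \hookrightarrow \Zbb^2$ is composed with other maps), and that \cref{lemma:preimage-of-spread-in-linear-order} already handles the general monotonic case.
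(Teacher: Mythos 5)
Your proof is correct and takes essentially the same route as the paper: reduce to $M$ indecomposable via additivity of $\Ncalone$, then bound $\Ncalone$ of the pullback of an interval representation by $1$. The only cosmetic difference is that the paper re-runs the argument from \cref{lemma:preimage-of-spread-in-linear-order} inline (writing $f^*(\kbb_{[a,b)}) \cong \kbb_{[\min S, \max S]}$ when $S = f^{-1}([a,b)) \neq \emptyset$), whereas you invoke that lemma directly with $\Pscr = \Zbb$ and $\ell = f$; both are valid, and your observation that injectivity of $f$ is not actually needed is also correct.
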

\begin{proof}
    Recall that $\Ncalone$ simply counts the number of indecomposable summands, so it is sufficient to prove the result for $M$ indecomposable.
    So, using \cref{lemma:indecomposable-one-parameter}, let $M = \kbb_{[a,b)}$ for $a< b \in \Zbb$.
    Now, if $S \coloneqq f^{-1}([a,b)) = \emptyset$, then $f^*(M) = 0$, and otherwise $M \cong \kbb_{[\min S, \max S]}$, by the same argument used in the proof of \cref{lemma:preimage-of-spread-in-linear-order}.
    The result follows.
\end{proof}

\slicemonotonicity*
\begin{proof}
    If $\ell$ is not contiguous, then it factors as $\ell : \Zbb \xrightarrow{\iota} \Zbb \xrightarrow{\ell'} \Zbb^2$, with $\ell'$ contiguous and injective, and $\iota$ injective.
    Thus, by \cref{lemma:one-parameter-monotonicity-restriction}, we may assume that $\ell$ is contiguous.

    On the one hand, using \cref{lemma:one-parameter-count-as-dimension-birth-death}, we see that
    $\Ncalone(\ell^*(M)) = \Ncalone( \kerelem{\ell^*(M)}{\zbf} )$, and using \cref{lemma:contiguous-death-relation,lemma:one-parameter-monotonicity-inclusion}, we have $\Ncalone( \kerelem{\ell^*(M)}{\zbf} ) \leq \Ncalone(\ell^*(\kerxy{M}))$, so that
    $\Ncalone(\ell^*(M)) \leq \Ncalone(\ell^*(\kerxy{M}))$.
    On the other hand, we have $\Ncaltwo(\kerxy{M}) = \Ncaltwo(M)$, by \cref{equation:count-same-as-birth-death}.

    Since $\kerxy{M}$ is ephemeral, by additivity and \cref{theorem:decomposition-theorem-ephemeral}, it is sufficient to prove $\Ncalone(\ell^*(M)) \leq \Ncaltwo(M)$ for $M$ a spread curve representation.
    In this case, the right-hand side is one, by \cref{corollary:two-parameter-normalized}, and the left-hand since is at most one, by \cref{lemma:preimage-of-spread-in-linear-order}, concluding the proof.
\end{proof}

\mainpropertiescount*
\begin{proof}
    We start with (1).
    By taking any monotonic map $\ell : \Zbb \hookrightarrow \Zbb^2$ with $\ell(0) = i$, we get $\Ncaltwo(M) \geq \Ncalone(\ell^*(M)) \geq \dim(\ell^*(M)_0) = \dim M_i$ by \cref{theorem:slice-monotonicity} and the fact that the number of summands bounds the pointwise dimension for one-parameter representations.
    Statement (2) follows from (1).

    Since this is required for statements (3) and (4), let us prove: ($*$) we have $\Ncaltwo(M) \geq \Ncaldec(M)$.
    Using additivity, we can assume that $M$ is indecomposable, and in this case we have $\Ncaldec(M) = 1$ and $\Ncaltwo(M) \geq 1$, by \cref{proposition:positivity} and (2).

    Let us now prove (3).
    If $M$ is a spread representation, then $\Ncaltwo(M) = 1$, by \cref{corollary:two-parameter-normalized}.
    To prove the forward direction, assume that $\Ncaltwo(M) = 1$.
    Then, by ($\ast$), the representation $M$ must be indecomposable, and by (1), we must have $\dim M_i \in \{0,1\}$, for every $i \in \Zbb^2$.
    The result now follows from \cref{theorem:indecomposable-think-is-spread}, a restatement of \cite[Theorem~24]{asashiba-buchet-escolar-nakashima-yoshiwaki}, which says that an indecomposable two-parameter representation of pointwise dimension at most one must be a spread representation.

    We conclude by proving (4).
    The first statement is ($\ast$).
    The converse direction of the equivalence follows from additivity and (4).
    We conclude by proving the contrapositive of the forward direction.
    If $M$ is not spread-decomposable, then it admits at least one indecomposable summand in which the inequality of (2) is strict, and the rest of the indecomposable summands still need to satisfy~(2).
    Then, we have $\Ncaltwo(M) > \Ncaldec(M)$, so that $\Ncaltwo(M) \neq \Ncaldec(M)$, as required.
\end{proof}

\section{End-curves and Betti tables}
\label{section:end-curves-betti}

We need the following standard characterization of the indecomposable representation of the commutative square; see, e.g., \cite{escolar-hiraoka} for a proof.

\begin{lemma}
    \label{lemma:indecomposables-over-square}
    Let $N \in \rep(\{0 < 1\}^2)$ be indecomposable.
    Then $N$ is isomorphic to one of the following representations:
    \[
        \resizebox{\linewidth}{!}{%
            \begin{tikzpicture}
                \matrix (m) [matrix of math nodes,ampersand replacement=\&,row sep=0em,column sep=0em,minimum width=0em,nodes={text height=1.75ex,text depth=0.25ex}]
                {
                    0 \&  \& 0                      \\
                    \& {\scriptstyle (a)} \& \\
                    \kbb \&  \& 0 \\};
                \path[line width=0.75pt, ->]
                (m-1-1) edge (m-1-3)
                (m-3-1) edge (m-1-1)
                (m-3-1) edge (m-3-3)
                (m-3-3) edge (m-1-3)
                ;
            \end{tikzpicture}
            \begin{tikzpicture}
                \matrix (m) [matrix of math nodes,ampersand replacement=\&,row sep=0em,column sep=0em,minimum width=0em,nodes={text height=1.75ex,text depth=0.25ex}]
                {
                    0 \&  \& 0                      \\
                    \& {\scriptstyle (b)} \& \\
                    0 \&  \& \kbb \\};
                \path[line width=0.75pt, ->]
                (m-1-1) edge (m-1-3)
                (m-3-1) edge (m-1-1)
                (m-3-1) edge (m-3-3)
                (m-3-3) edge (m-1-3)
                ;
            \end{tikzpicture}
            \begin{tikzpicture}
                \matrix (m) [matrix of math nodes,ampersand replacement=\&,row sep=0em,column sep=0em,minimum width=0em,nodes={text height=1.75ex,text depth=0.25ex}]
                {
                    \kbb \&  \& 0                   \\
                    \& {\scriptstyle (c)} \& \\
                    0 \&  \& 0 \\};
                \path[line width=0.75pt, ->]
                (m-1-1) edge (m-1-3)
                (m-3-1) edge (m-1-1)
                (m-3-1) edge (m-3-3)
                (m-3-3) edge (m-1-3)
                ;
            \end{tikzpicture}
            \begin{tikzpicture}
                \matrix (m) [matrix of math nodes,ampersand replacement=\&,row sep=0em,column sep=0em,minimum width=0em,nodes={text height=1.75ex,text depth=0.25ex}]
                {
                    0 \&  \& \kbb                   \\
                    \& {\scriptstyle (d)} \& \\
                    0 \&  \& 0 \\};
                \path[line width=0.75pt, ->]
                (m-1-1) edge (m-1-3)
                (m-3-1) edge (m-1-1)
                (m-3-1) edge (m-3-3)
                (m-3-3) edge (m-1-3)
                ;
            \end{tikzpicture}
            \begin{tikzpicture}
                \matrix (m) [matrix of math nodes,ampersand replacement=\&,row sep=0em,column sep=0em,minimum width=0em,nodes={text height=1.75ex,text depth=0.25ex}]
                {
                    \kbb \&  \& 0                   \\
                    \& {\scriptstyle (e)} \& \\
                    \kbb \&  \& 0 \\};
                \path[line width=0.75pt, ->]
                (m-1-1) edge (m-1-3)
                (m-3-1) edge [-,double equal sign distance] (m-1-1)
                (m-3-1) edge (m-3-3)
                (m-3-3) edge (m-1-3)
                ;
            \end{tikzpicture}
            \begin{tikzpicture}
                \matrix (m) [matrix of math nodes,ampersand replacement=\&,row sep=0em,column sep=0em,minimum width=0em,nodes={text height=1.75ex,text depth=0.25ex}]
                {
                    0 \&  \& 0                      \\
                    \& {\scriptstyle (f)} \& \\
                    \kbb \&  \& \kbb \\};
                \path[line width=0.75pt, ->]
                (m-1-1) edge (m-1-3)
                (m-3-1) edge (m-1-1)
                (m-3-1) edge [-,double equal sign distance] (m-3-3)
                (m-3-3) edge (m-1-3)
                ;
            \end{tikzpicture}
            \begin{tikzpicture}
                \matrix (m) [matrix of math nodes,ampersand replacement=\&,row sep=0em,column sep=0em,minimum width=0em,nodes={text height=1.75ex,text depth=0.25ex}]
                {
                    \kbb \&  \& \kbb                \\
                    \& {\scriptstyle (g)} \& \\
                    0 \&  \& 0 \\};
                \path[line width=0.75pt, ->]
                (m-1-1) edge [-,double equal sign distance] (m-1-3)
                (m-3-1) edge (m-1-1)
                (m-3-1) edge (m-3-3)
                (m-3-3) edge (m-1-3)
                ;
            \end{tikzpicture}
            \begin{tikzpicture}
                \matrix (m) [matrix of math nodes,ampersand replacement=\&,row sep=0em,column sep=0em,minimum width=0em,nodes={text height=1.75ex,text depth=0.25ex}]
                {
                    0 \&  \& \kbb                   \\
                    \& {\scriptstyle (h)} \& \\
                    0 \&  \& \kbb \\};
                \path[line width=0.75pt, ->]
                (m-1-1) edge (m-1-3)
                (m-3-1) edge (m-1-1)
                (m-3-1) edge (m-3-3)
                (m-3-3) edge [-,double equal sign distance] (m-1-3)
                ;
            \end{tikzpicture}
            \begin{tikzpicture}
                \matrix (m) [matrix of math nodes,ampersand replacement=\&,row sep=0em,column sep=0em,minimum width=0em,nodes={text height=1.75ex,text depth=0.25ex}]
                {
                    \kbb \&  \& 0                   \\
                    \& {\scriptstyle (i)} \& \\
                    \kbb \&  \& \kbb \\};
                \path[line width=0.75pt, ->]
                (m-1-1) edge (m-1-3)
                (m-3-1) edge [-,double equal sign distance] (m-1-1)
                (m-3-1) edge [-,double equal sign distance] (m-3-3)
                (m-3-3) edge (m-1-3)
                ;
            \end{tikzpicture}
            \begin{tikzpicture}
                \matrix (m) [matrix of math nodes,ampersand replacement=\&,row sep=0em,column sep=0em,minimum width=0em,nodes={text height=1.75ex,text depth=0.25ex}]
                {
                    \kbb \&  \& \kbb                \\
                    \& {\scriptstyle (j)} \& \\
                    0 \&  \& \kbb \\};
                \path[line width=0.75pt, ->]
                (m-1-1) edge [-,double equal sign distance] (m-1-3)
                (m-3-1) edge (m-1-1)
                (m-3-1) edge (m-3-3)
                (m-3-3) edge [-,double equal sign distance] (m-1-3)
                ;
            \end{tikzpicture}
            \begin{tikzpicture}
                \matrix (m) [matrix of math nodes,ampersand replacement=\&,row sep=0em,column sep=0em,minimum width=0em,nodes={text height=1.75ex,text depth=0.25ex}]
                {
                    \kbb \&  \& \kbb                \\
                    \& {\scriptstyle (k)} \& \\
                    \kbb \&  \& \kbb \\};
                \path[line width=0.75pt, ->]
                (m-1-1) edge [-,double equal sign distance] (m-1-3)
                (m-3-1) edge [-,double equal sign distance] (m-1-1)
                (m-3-1) edge [-,double equal sign distance] (m-3-3)
                (m-3-3) edge [-,double equal sign distance] (m-1-3)
                ;
            \end{tikzpicture}}
    \]
\end{lemma}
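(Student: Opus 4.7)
The eleven listed representations coincide with the eleven spread representations $\kbb_I$ of the poset $\{0<1\}^2$. I would first enumerate the spreads of this 4-element poset: 4 singletons, 4 two-element covers in the Hasse diagram, 2 L-shapes, and the full rectangle, totalling 11. The ``anti-diagonal'' $\{(0,0),(1,1)\}$ is not poset-convex (it misses $(1,0)$ and $(0,1)$, both of which lie between the two endpoints), and $\{(1,0),(0,1)\}$ is not poset-connected, so both are excluded. Each $\kbb_I$ is indecomposable since $\End(\kbb_I) = \kbb$: any endomorphism is a scalar at each vertex of $I$, and connectivity forces these scalars to agree.

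For the converse, let $N \in \rep(\{0<1\}^2)$ be indecomposable with structure maps $\alpha: N_{00}\to N_{10}$, $\beta: N_{00}\to N_{01}$, $\gamma: N_{10}\to N_{11}$, $\delta: N_{01}\to N_{11}$ satisfying $\gamma\alpha = \delta\beta$. I would classify by cases on the support of $N$. \textbf{Case 1:} if $N_{00} = 0$ or $N_{11} = 0$, the commutativity relation is trivially satisfied and $N$ is a representation of the unbound $A_3$-quiver $N_{10} \to N_{11} \leftarrow N_{01}$ or $N_{10} \leftarrow N_{00} \to N_{01}$; Gabriel's theorem gives 6 indecomposables in each case, matching (b)--(d),(g),(h),(j) and (a)--(c),(e),(f),(i) respectively, all drawn from our list. \textbf{Case 2:} if $N_{00}, N_{11} \neq 0$ but $N_{10} = 0$ (symmetrically $N_{01} = 0$), then $\gamma\alpha = 0$ forces $\delta\beta = 0$, so $N$ becomes a module over the Nakayama algebra $\kbb A_3/(\delta\beta)$, whose indecomposables have support an interval of length at most $2$; in particular, no indecomposable has both $N_{00}$ and $N_{11}$ nonzero, a contradiction. \textbf{Case 3:} all four $N_p \neq 0$; I claim $N \cong$ (k).

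Case 3 is the main obstacle. My plan is to first observe that $\ker(\alpha) \cap \ker(\beta) \subseteq N_{00}$ must vanish, otherwise this subspace admits compatible zero retractions at the other three vertices and splits off as a copy of the simple at $(0,0)$, contradicting indecomposability; dually, the cokernel of $[\gamma,\delta]\colon N_{10} \oplus N_{01} \to N_{11}$ must vanish. Repeated splitting arguments, now against summands supported on two-element spreads such as $\{(0,0),(1,0)\}$ or $\{(0,1),(1,1)\}$, together with the observation that the image of $(\alpha,\beta)$ sits inside $\ker([\gamma,-\delta])$, should force all four $N_p$ to be $1$-dimensional and all four structure maps to be isomorphisms, yielding (k). An elegant alternative that bypasses this bookkeeping is to invoke that the commutative square algebra $\kbb[\{0<1\}^2]$ has positive definite Tits form, hence is representation-finite with indecomposables in bijection with the positive roots of its Tits form; a direct check then confirms that the 11 positive roots are precisely the dimension vectors of the 11 spread representations.
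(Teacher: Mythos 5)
The paper does not prove this lemma; it cites \cite{escolar-hiraoka}, so there is no internal argument to compare against. Your overall plan is the natural one: enumerate the eleven spreads of the $2\times 2$ grid (your count is right, and the two non-spreads you exclude are excluded for the right reasons), show each $\kbb_I$ is indecomposable via $\End(\kbb_I)=\kbb$, and run a case analysis on the support for the converse. Cases 1 and 2 are handled correctly: with $N_{00}=0$ or $N_{11}=0$ the relation is vacuous and you are in the Dynkin $A_3$ setting, and with $N_{10}=0$ (or $N_{01}=0$) but $N_{00},N_{11}\neq 0$ the Nakayama quotient $\kbb A_3/(\delta\beta)$ has no indecomposable with support meeting both ends, which is a clean contradiction.

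The gap is Case 3. Your observations that $\ker\alpha\cap\ker\beta=0$ and $\coker[\gamma,\delta]=0$ are genuine and the splitting arguments against the simples at $(0,0)$ and $(1,1)$ are correct (a complement of the kernel in $N_{00}$ carries the whole rest of the representation, so the simple really is a summand). But the sentence ``repeated splitting arguments $\dots$ should force'' is not a proof, and the analogous splittings against two-element spreads are not as routine as the simple case: a candidate subrepresentation such as $(\alpha^{-1}(K),K,0,0)$ with $K\subseteq\ker\gamma$ is a subrepresentation only if also $\alpha^{-1}(K)\subseteq\ker\beta$, and even then one must \emph{construct} a complementary subrepresentation, which requires choosing complements simultaneously at several vertices in a compatible way. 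That bookkeeping is exactly what a complete proof must supply, and it is not spelled out. The Tits-form alternative does close the gap, but the implication ``positive definite Tits form $\Rightarrow$ representation finite with indecomposables in bijection with positive roots'' is Bongartz's theorem for simply connected algebras, not a formal consequence; you would need to record that $\kbb[\{0<1\}^2]$ is simply connected and cite that theorem (or simply cite a standard reference for the classification, as the paper itself does).
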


Given $\ell \in \Zbb^2$ let $\sq(\ell) \subseteq \Zbb^2$ be the full four-element subposet spanned by $\{\ell, \ell-(0,1), \ell-(1,0), \ell-(1,1)\}$.
Given $M \in \rep(\Zbb^2)$, we can decompose the restriction $M|_{\sq(\ell)} \in \rep(\sq(\ell))$ as a direct sum of representations of the form of those in \cref{lemma:indecomposables-over-square}.
In that case, if $n \in \{a, b, \dots, j, k\}$, let $M^n_\ell$ be the number of indecomposables of type $(n)$.

\begin{lemma}
    \label{lemma:betti-tables-and-decomposition-on-square}
    Let $M \in \rep(\Zbb^2)$, and let $\ell \in \Zbb^2$.
    We have the following characterization of the Betti tables of $M$
    \[
        \beta_0^M(\ell) = M^d_\ell\, , \;\;\;\;\;\;\;\;\;
        \beta_1^M(\ell) = M^i_\ell + M^j_\ell + M^b_\ell + M^c_\ell\, , \;\;\;\;\;\;\;\;\;
        \beta_2^M(\ell) = M^a_\ell\, ,
    \]
    and the following set of equalities
    \begin{align*}
        M^a_\ell & = \dim \left\{z \in M_{\ell-(1,1)} : \xbf \cdot z = \ybf \cdot z = 0 \right\} \\
        M^b_\ell & = \dim \im\left(\,
        \left\{z \in M_{\ell-(0,1)} : \ybf \cdot z = 0\right\} \hookrightarrow M_{\ell-(0,1)} \twoheadrightarrow M_{\ell-(0,1)}/\xbf M_{\ell-(1,1)}\,
        \right)                                                                                  \\
        M^c_\ell & = \dim \im\left(\,
        \left\{z \in M_{\ell-(1,0)} : \xbf \cdot z = 0\right\} \hookrightarrow M_{\ell-(1,0)} \twoheadrightarrow M_{\ell-(1,0)}/\ybf M_{\ell-(1,1)}\,
        \right)                                                                                  \\
        M^d_\ell & = \dim M_\ell \, /\left(\xbf M_{\ell-(1,0)} + \ybf M_{\ell-(0,1)}\right)    \\
        M^i_\ell & = \dim \ker\left( M_{\ell-(1,1)} \xrightarrow{\xbf\ybf} M_{\ell} \right)\, /
        \left(
        \ker \left( M_{\ell-(1,1)} \xrightarrow{\xbf} M_{\ell - (0,1)} \right)
        +
        \ker \left( M_{\ell-(1,1)} \xrightarrow{\ybf} M_{\ell - (1,0)} \right)
        \right)
        \\
        M^j_\ell & = \dim
        \left\{(w,z) \in
        \left( M_{\ell-(1,0)}/ \ybf M_{\ell-(1,1)} \right) \oplus
        \left( M_{\ell-(0,1)}/ \xbf M_{\ell-(1,1)} \right) : \xbf \cdot w = \ybf \cdot z \in M_{\ell}/\xbf\ybf M_{\ell-(1,1)}
        \right\}
    \end{align*}
\end{lemma}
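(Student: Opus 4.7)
The plan is to observe that each quantity appearing in the statement---the Koszul homologies $\beta_i^M(\ell)$, the multiplicities $M^n_\ell$, and the right-hand side of each proposed formula---depends only on $M|_{\sq(\ell)}$ and is additive in this restriction. By decomposing $M|_{\sq(\ell)}$ into indecomposables via \cref{lemma:indecomposables-over-square}, the proof reduces to checking every claim on each of the eleven indecomposable types.

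For the Betti-table formulas, the first step is to localize the Koszul complex $K(M)$ at the grade $\ell$ to obtain the three-term sequence
\[
    0 \to M_{\ell-(1,1)} \xrightarrow{(\ybf,-\xbf)^T} M_{\ell-(1,0)} \oplus M_{\ell-(0,1)} \xrightarrow{(\xbf,\ybf)} M_\ell \to 0,
\]
whose homology computes $\beta_i^M(\ell)$ and which manifestly depends only on $M|_{\sq(\ell)}$. Computing its homology directly on each of the eleven types yields: type (a) contributes $1$ to $\beta_2$; type (d) contributes $1$ to $\beta_0$; types (b), (c), (i), (j) each contribute $1$ to $\beta_1$; and types (e), (f), (g), (h), (k) contribute nothing (their localized Koszul complexes are exact). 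Summing over the decomposition of $M|_{\sq(\ell)}$ gives the three stated expressions for $\beta_i^M(\ell)$.

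The six multiplicity formulas are handled analogously. Each right-hand side is visibly a linear combination of dimensions of kernels, cokernels, images, sums, and quotients built from the four structure morphisms of $M|_{\sq(\ell)}$, so it is additive under direct sum decompositions. It therefore suffices to check, for each formula, that it evaluates to $1$ on the intended indecomposable and to $0$ on the other ten. For example, $\dim\{z \in N_{\ell-(1,1)} : \xbf z = \ybf z = 0\}$ is $1$ precisely on type (a): the types (e), (f), (i), (k) admit a nonzero map out of $N_{\ell-(1,1)}$ (via $\xbf$ or $\ybf$), and the remaining types have $N_{\ell-(1,1)} = 0$. A similar inspection handles $M^b_\ell, M^c_\ell, M^d_\ell$.

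The main obstacle is simply the bookkeeping---roughly sixty elementary verifications---rather than any substantive difficulty. The trickiest cases are the formulas for $M^i_\ell$ and $M^j_\ell$, which involve nested kernels, images, and quotients (the $M^j_\ell$ formula includes a pullback-like condition $\xbf w = \ybf z$ in a quotient of $M_\ell$). For these I would cross-check the computation using the already-established identity $\beta_1^M(\ell) = M^b_\ell + M^c_\ell + M^i_\ell + M^j_\ell$ to pin down $M^i_\ell + M^j_\ell$ after $M^b_\ell, M^c_\ell$ are known, and then separate the two summands using the $\xbf \leftrightarrow \ybf$ symmetry exchanging types (i) and (j).
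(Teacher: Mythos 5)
Your strategy---restricting to $M|_{\sq(\ell)}$, decomposing that restriction into the eleven indecomposables of \cref{lemma:indecomposables-over-square}, and verifying each identity summand-by-summand---is exactly the paper's argument; the paper simply compresses the verification into ``This is then a routine check.'' Your computed Koszul-homology contributions (type $(a)$ to $\beta_2$; type $(d)$ to $\beta_0$; types $(b),(c),(i),(j)$ each to $\beta_1$; types $(e)$ through $(h)$ and $(k)$ contributing nothing) are correct.

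The one flaw is in your proposed cross-check: the $\xbf \leftrightarrow \ybf$ symmetry on $\Zbb^2$ (swapping the two coordinates) does \emph{not} exchange types $(i)$ and $(j)$---it fixes each of them. It swaps $(b)\leftrightarrow(c)$, $(e)\leftrightarrow(f)$, and $(g)\leftrightarrow(h)$, while fixing $(a)$, $(d)$, $(i)$, $(j)$, $(k)$. The duality that does exchange $(i)$ and $(j)$ is the contravariant one from \cref{section:duality}, $\rep(\Zbb^2) \simeq \rep(\Zbb^2)^{\op}$, which also swaps kernels with cokernels and hence the roles of the $M^i_\ell$ and $M^j_\ell$ formulas. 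This does not affect your main plan---direct verification of each formula on each indecomposable remains available and is precisely what the paper's ``routine check'' amounts to---but the particular shortcut you describe would not separate the two summands.
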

\begin{proof}
    The Koszul complex for $M$ at $\ell$, which is what we use to define $\beta_\bullet^M(\ell)$ (\cref{section:betti-tables}) only depends on $M|_{\sq(\ell)}$, that is, we have $\beta_\bullet^M(\ell) = \beta_\bullet^{M|_{\sq(\ell)}}(\ell)$.
    The second set of equalities also only depends on $M|_{\sq(\ell)}$.
    Thus, it is enough to prove the result for $M \in \rep(\{0 < 1\}^2)$, $\ell = (1,1)$, and $M$ one of the indecomposables of \cref{lemma:indecomposables-over-square}.
    This is then a routine check.
\end{proof}


\begin{definition}
    \label{definition:concave-convex-corners}
    Let $I \subseteq \Zbb^2$ be a spread curve.
    A point $i \in I$ is
    \begin{itemize}
        \item a \emph{convex corner} of $I$ if $i-(1,0) \notin I$ and $i-(0,1) \notin I$;
        \item an \emph{inner convex corner} of $I$ if $i+(1,0) \in I$ and $i+(0,1) \in I$;
        \item a \emph{concave corner} of $I$ if $i+(1,0) \notin I$ and $i+(0,1) \notin I$;
        \item an \emph{inner concave corner} of $I$ if $i-(1,0) \in I$ and $i-(0,1) \in I$.
    \end{itemize}
\end{definition}

The sets of convex, inner convex, concave, and inner concave corners of a spread curve $I \subseteq \Zbb^2$ are donted by
$\conv(I)$, $\inconv(I)$, $\conc(I)$, and $\inconc(I)$, respectively.


The proof of the following result is straightforward.

\begin{lemma}
    \label{lemma:convex-corner-algebraic}
    Let $I \subseteq \Zbb^2$ be a spread curve, let $M \coloneqq \kbb_I \in \rep(\Zbb^2)$, and let $\ell \in \Zbb^2$.
    \begin{enumerate}
        \item The point $\ell$ is a convex corner of $I$ if and only if $M^d_\ell = 1$.
        \item The point $\ell$ is an inner convex corner of $I$ if and only if $M^i_{\ell+1} = 1$.
              \qed
    \end{enumerate}
\end{lemma}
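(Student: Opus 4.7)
The plan is to do a direct case-by-case verification using the explicit formulas from \cref{lemma:betti-tables-and-decomposition-on-square}. Observe first that, because $M = \kbb_I$, for every $\ell \in \Zbb^2$ the restriction $M|_{\sq(\ell)}$ is entirely determined by the subset $S_\ell \coloneqq I \cap \sq(\ell) \subseteq \{\ell-(1,1), \ell-(1,0), \ell-(0,1), \ell\}$: each of the four vector spaces is $\kbb$ or $0$ according to whether the corresponding point lies in $I$, and every nonzero structure morphism is forced to be the identity $\kbb \to \kbb$. The spread-curve property $p \in I \Rightarrow p+(1,1) \notin I$ forbids simultaneous membership of $\ell-(1,1)$ and $\ell$, reducing the $2^4$ a priori possibilities to $12$ admissible configurations, all of which are automatically compatible with the poset-convexity of $I$.

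For (1), I would substitute $M = \kbb_I$ into the formula
\[
M^a_\ell \;=\; \dim\{z \in M_{\ell-(1,1)} : \xbf z = \ybf z = 0\}
\]
and observe that this dimension is at most one, since $\dim M_{\ell-(1,1)} \leq 1$. The condition $M^a_\ell = 1$ therefore requires $M_{\ell-(1,1)} = \kbb$ together with the vanishing $\xbf z = \ybf z = 0$, and since all nonzero structure morphisms are identities, this last condition amounts to $M_{\ell-(0,1)} = 0$ and $M_{\ell-(1,0)} = 0$. One then concludes by matching the resulting membership pattern for $S_\ell$ against the corner conditions of \cref{definition:concave-convex-corners}, using the spread-curve constraint.

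For (2), I would substitute $M = \kbb_I$ into the formula for $M^j_\ell$. For such an $M$ each of the quotients $M_{\ell-(1,0)}/\ybf M_{\ell-(1,1)}$, $M_{\ell-(0,1)}/\xbf M_{\ell-(1,1)}$, and $M_\ell/\xbf\ybf M_{\ell-(1,1)}$ is either $0$ or $\kbb$, in a way governed explicitly by $S_\ell$. Splitting on whether $\ell-(1,1) \in I$ and unfolding the compatibility equation $\xbf w = \ybf z$ reduces to a short check across the admissible configurations of $S_\ell$, which pins down exactly one configuration producing $M^j_\ell = 1$; this configuration matches the inner-convex-corner condition.

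The main (minor) obstacle is bookkeeping: one must carefully track which of the $12$ admissible configurations of $S_\ell$ arise and verify that the formulas from \cref{lemma:betti-tables-and-decomposition-on-square} yield the claimed value in each case, paying particular attention in part~(2) to the quotient spaces and to when the equation $\xbf w = \ybf z$ is forced to hold trivially. All verifications are elementary and use no structural input beyond the spread-curve property, the formulas of \cref{lemma:betti-tables-and-decomposition-on-square}, and the classification \cref{lemma:indecomposables-over-square}.
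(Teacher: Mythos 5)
The paper does not provide its own proof of this lemma; it is declared straightforward and the proof is omitted. Your method---restrict to $\sq(\ell)$, observe that for $M = \kbb_I$ everything on that square is governed by the membership pattern $S_\ell = I \cap \sq(\ell)$, and evaluate $M^a_\ell$ and $M^j_\ell$ via the formulas of \cref{lemma:betti-tables-and-decomposition-on-square}---is the right one, and your computations up to the final ``matching'' step are correct.

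The gap is in the matching itself, which you assert rather than carry out, and which fails as you state it. For (1) you correctly derive $M^a_\ell = 1$ if and only if $\ell-(1,1) \in I$, $\ell-(1,0) \notin I$, and $\ell-(0,1) \notin I$; but by \cref{definition:concave-convex-corners} this is precisely the condition that $\ell-(1,1)$ is a \emph{concave} corner of $I$ (a maximal element), not that $\ell$ is a convex corner, which would instead require $\ell \in I$ together with $\ell-(1,0), \ell-(0,1) \notin I$. Concretely, take $I = \{(0,0)\}$: then $M^a_{(1,1)} = 1$, yet $(1,1) \notin I$, so $(1,1)$ is not a convex corner of $I$. For (2) the analogous computation gives $M^j_\ell = 1$ if and only if $\ell, \ell-(1,0), \ell-(0,1) \in I$ (with $\ell-(1,1) \notin I$ automatic from the spread-curve constraint), which is the \emph{inner concave} corner condition at $\ell$; your sentence ``this configuration matches the inner-convex-corner condition'' is false (an inner convex corner requires $\ell+(1,0), \ell+(0,1) \in I$). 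What your computation in fact establishes is that $\ell$ is a convex corner of $I$ iff $M^d_\ell = 1$, and that $\ell$ is an inner concave corner of $I$ iff $M^j_\ell = 1$---these are the versions consistent with $\beta_0^M(\ell) = M^d_\ell$, $\beta_2^M(\ell) = M^a_\ell$ and with \cref{theorem:curves-betti-tables}. So the lemma as printed appears to contain a typo (and the case labels in the proof of \cref{theorem:curves-betti-tables} are affected as well); had you actually performed the matching you describe, you would have caught this, and as written the unchecked final step is a genuine gap in the proposal.
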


\begin{lemma}
    \label{lemma:coker-fixes-d-and-i}
    Let $M \in \rep(\Zbb^2)$, and let $\ell \in \Zbb^2$.
    \begin{enumerate}
        \item $\left(\cokerxy{M}\right)^d_\ell = M^d_\ell$;
        \item $\left(\kerxy{M}\right)^i_{\ell} = M^i_{\ell}$.
    \end{enumerate}
\end{lemma}
\begin{proof}
    We give the proof for claim (1); claim (2) follows from a similar argument.
    Note that $\left(\cokerxy{M}\right)^d_\ell \geq M^d_\ell$, since any summand of type $d$ in $M|_{\sq(\ell)}$ induces a summand of type $d$ in $\cokerxy{M}|_{\sq(\ell)}$.
    The converse inequality follows by checking that only summands of type $d$ in $M|_{\sq(\ell)}$ can induce a summand of type $d$ in $\cokerxy{M}|_{\sq(\ell)}$.
\end{proof}

\curvesbettitables*
\begin{proof}
    We start with the equalities computing the Betti tables.
    Let $\ell \in \Zbb^2$.
    Using \cref{lemma:betti-tables-and-decomposition-on-square} and the definitions of $\topleft{M}$ and $\botright{M}$, we get $M^b_\ell = \dim (\topleft{M})_\ell$ and
    $M^c_\ell = \dim (\botright{M})_\ell$.

    To simplify notation, if $C$ is a finite multiset of spread curves, let $\conv(C)_\ell$, $\inconv(C)_\ell$, $\conc(C)_\ell$, $\inconc(C)_\ell \in \Nbb$ be the multiplicities of $\ell$ as a convex, inner convex, concave, and inner concave corner of the elements of $C$, respectively.
    It is then enough to prove
    \begin{align*}
        \conv\left(\dec \cokerxy{M}\right)_\ell &= M^d_\ell,\\
        \inconv\left(\dec\kerxy{M}\right)_\ell &= M^i_{\ell+1},\\
        \conc\left(\dec\kerxy{M}\right)_\ell &= M^a_{\ell+1},\\
        \inconc\left(\dec\cokerxy{M}\right)_\ell &= M^j_\ell.
    \end{align*}
    We do the first two cases, that is the cases of $d$ ($\conv$) and $i$ ($\inconv$), since $a$ is dual to $d$, and $j$ is dual to $i$ (in the sense of \cref{section:duality}).

    For the case of $d$, we have
    $\conv(\dec \cokerxy{M})_\ell
        = \left(\cokerxy{M}\right)^d_\ell = M^d_\ell$,
    where in the first equality we used
    \cref{lemma:convex-corner-algebraic}(1), and in the second we used \cref{lemma:coker-fixes-d-and-i}(1).
    For the case of $i$ we have
    $\inconv(\dec \kerxy{M})_\ell
        = \left(\kerxy{M}\right)^i_{\ell+1} = M^i_{\ell+1}$,
    where in the first equality we used
    \cref{lemma:convex-corner-algebraic}(2)
    and in the second we used
    \cref{lemma:coker-fixes-d-and-i}(2).

    \smallskip

    We now prove the isomorphisms relating the corners of $M$ to the corners of $\cokerxy{M}$.
    We prove $\topleft{M} \cong \topleft{\left(\cokerxy{M}\right)}$, the other isomorphism being analogous.
    Let $i \in \Zbb^2$.
    Let $\pi : M \to \cokerxy{M}$ be the quotient morphism, and consider the following diagram
    \[
    \begin{tikzpicture}
        \matrix (m) [matrix of math nodes,row sep=2.5em,column sep=3em,minimum width=1em,nodes={text height=1.75ex,text depth=0.25ex}]
        {
            \kery{M}[0,1] &  M & \cokerx{M} \\
             \kery{\left(\cokerxy{M}\right)}[0,1] &  \cokerxy{M} & \cokerx{\left(\cokerxy{M}\right)},\\};
        \path[line width=0.75pt, ->]
        (m-1-1) edge [right hook->,above] node {$f$} (m-1-2)
        (m-1-2) edge [above] node {$g$} (m-1-3)
        (m-2-1) edge [right hook->,above] node {$f'$} (m-2-2)
        (m-2-2) edge [above] node {$g'$} (m-2-3)
        (m-1-1) edge [left] node {$\kery{\pi}[0,1]$} (m-2-1)
        (m-1-2) edge [right,->>] node {$\pi$} (m-2-2)
        (m-1-3) edge [right] node {$\cokerx{\pi}$} node [left] {$\cong$} (m-2-3)
        ;
    \end{tikzpicture}
    \]
    so that $\topleft{M} = \im(g \circ f)$ and $\topleft{\left(\cokerxy{M}\right)} = \im(g' \circ f')$,
    and we need to prove that the image of the top row is isomorphic to the image of the bottom row.

    Note that the vertical morphism $\cokerx{\pi}$ is an isomorphism, since taking the quotient by the action of $\xbf\ybf$ and then by that of $\xbf$ is equivalent to directly taking the quotient by the action of $\xbf$.
    So it is enough to show that, given $\ell \in \Zbb^2$ and $z' \in \kery{\left(\cokerxy{M}\right)}_{\ell-(0,1)}$, there exists $z \in \kery{M}_{\ell-(0,1)}$ such that $(\cokerx{\pi} \circ g \circ f)(z) = (g' \circ f')(z')$.
    By definition, the element $z'$ is an element $z' \in (\cokerxy{M})_{\ell-(0,1)}$ such that $\ybf z' = 0 \in (\cokerxy{M})_{\ell}$.
    Let $z'' \in M_{\ell-(0,1)}$ be a representative of $z'$ (i.e., $\pi(z'') = z$), so that 
    $\ybf z'' = \xbf\ybf w$ for some $w \in M_{\ell- (1,1)}$.
    Let $z \coloneqq z'' - \xbf w \in M_{\ell-(0,1)}$, and note that $\ybf z = \ybf z'' - \ybf\xbf w = \xbf\ybf w - \ybf\xbf w = 0$, so that $z \in \kery{M}_{\ell-(0,1)}$.

    To conclude, note that we have $(g' \circ f')(z') = (g' \circ \pi)(z'') = (\cokerx{\pi} \circ g)(z'')$, so it is sufficient to show that $g(z'') = g(z) \in \left(\cokerx{M}\right)_{\ell}$, which is clear since $z'' - z = \xbf w$.
\end{proof}

\curveslinearsize*
\begin{proof}
    Let us start with the first statement.
    Since $M$ is finitely generated, so is $\cokerxy{M}$, and thus so are each of the indecomposable summands of $\cokerxy{M}$, which are the birth-curves of $M$, by definition.
    It follows that each one of the birth-curves must have at least one convex corner.
    The result then follows from the first equality in \cref{theorem:curves-betti-tables}, and the fact that, for finitely generated representations of $\Zbb^2$, we defined $\Ncaltwo$ as the number of birth-curves.

    Let us now prove the second statement.
    A finite $\Zbb^2$-filtered simplicial complex $(K,f)$ consists of a finite simplicial complex $K$ and a function $f : K \to \Zbb^2$ from the simplices of $K$ to $\Zbb^2$ that is monotonic, in the sense that $\sigma \subseteq \tau \in K$ implies $f(\sigma) \leq f(\tau) \in \Zbb^2$.
    For any $i \in \Nbb$, this defines a representation $H_i(f) \in \rep(\Zbb^2)$ by letting $H_i(f)_\ell = H_i(\{ \sigma \in K : f(\sigma) \leq \ell\})$.
    Equivalently, we can define a chain complex $C_\bullet$ of projective objects of $\rep(\Zbb^2)$, and $H_i(f) = \ker( C_i \to C_{i-1} ) / \im(C_{i+1} \to C_{i})$.
    Here $C_i$ is the projective representation generated by the $i$th simplices of $K$, so that $\beta_0(C_i) = \sum_{\sigma \in K_i} \delta_{f(\sigma)}$.
    We have
    \[
        \Ncaltwo\left(H_i(f)\right)
        \leq
        \left|\beta_0^{H_i(f)}\right|
        =
        \left|\beta_0^{\ker( C_i \to C_{i-1}) / \im(C_{i+1} \to C_i)}\right|
        \leq
        \left|\beta_0^{\ker( C_i \to C_{i-1})}\right|
        \leq
        \left| \beta_0^{C_i} \right| = |K_i|,
    \]
    where in the first inequality we used the first statement,
    in the second inequality we used \cref{lemma:inequalities-betti-tables-1},
    and in the third inequality we used \cref{lemma:inequalities-betti-tables-2}.
\end{proof}

\section{Relating birth- and death-curves with the boundary}
\label{section:bands}

\subsection{The boundary}
We start by formally describing the category $\boundaries$, and explaining how the boundary (\cref{definition:boundary}) is naturally an object of this category.


\begin{definition}
    \label{definition:ephemeral-pair}
    A \emph{boundary} consists of a pair of ephemeral representations $B,D \in \rep(\Zbb^2)$ 
    together with morphisms $f : \keryc{D} \to \cokerx{B}$ and $g : \kerxc{D} \to \cokery{B}$.
    A morphism of boundaries from $(B,D,f,g)$ to $(B',D',f',g')$ consists of a pair of morphisms $\phi : B \to B'$ and $\psi : D \to D'$ such that the following diagrams commute:
    \[
    \begin{tikzpicture}
        \matrix (m) [matrix of math nodes,ampersand replacement=\&,row sep=1em,column sep=1em,minimum width=1em,nodes={text height=1.75ex,text depth=0.25ex}]
        {
            \keryc{D} \&  \& \cokerx{B}                \\
            \&     \& \\
            \keryc{D}' \&  \& \cokerx{B}' \\};
        \path[line width=0.75pt, ->]
        (m-1-1) edge [above] node {$f$} (m-1-3)
        (m-1-1) edge [left] node {$\keryc{\psi}$} (m-3-1)
        (m-3-1) edge [above] node {$f'$} (m-3-3)
        (m-1-3) edge [right] node {$\cokerx{\phi}$} (m-3-3)
        ;
    \end{tikzpicture}
    \;\;\;\;\;
    \begin{tikzpicture}
        \matrix (m) [matrix of math nodes,ampersand replacement=\&,row sep=1em,column sep=1em,minimum width=1em,nodes={text height=1.75ex,text depth=0.25ex}]
        {
            \kerxc{D} \&  \& \cokery{B}                \\
            \&     \& \\
            \kerxc{D'} \&  \& \cokery{B}' \\};
        \path[line width=0.75pt, ->]
        (m-1-1) edge [above] node {$g$} (m-1-3)
        (m-1-1) edge [left] node {$\kerxc{\psi}$} (m-3-1)
        (m-3-1) edge [above] node {$g'$} (m-3-3)
        (m-1-3) edge [right] node {$\cokery{\phi}$} (m-3-3)
        ;
    \end{tikzpicture}
    \]
    We denote the category of boundaries by $\boundaries$.
\end{definition}

\begin{construction}
    Define a functor $\partial : \rep(\Zbb^2) \to \boundaries$ by mapping $M \in \rep(\Zbb^2)$ to the boundary $(\cokerxy{M}, \kerxyc{M}, f,g)$, where $f$ and $g$ are the following composites, respectively:
    \begin{align*}
        \keryc{\left(\kerxyc{M}\right)} = \keryc{M} &\hookrightarrow M \twoheadrightarrow \cokerx{M} = \cokerx{\left(\cokerxy{M}\right)}\\
        \kerxc{\left(\kerxyc{M}\right)} = \kerxc{M} &\hookrightarrow M \twoheadrightarrow \cokery{M} = \cokery{\left(\cokerxy{M}\right)}
    \end{align*}
    The action of $\partial$ on morphisms is the evident one.
\end{construction}

\subsection{The boundary as a representation of a string algebra}
We now describe a fully faithful embedding of $\boundaries$ into the category of representations of the string algebra $R$ of \cref{figure:large-gentle-quiver}.

\begin{construction}
    Define a functor $\iota : \boundaries \to \rep(R)$ by mapping a boundary $(B,D,f,g)$ to the representation $X\in \rep(R)$ given as follows:
    \begin{itemize}
        \item On the down-left, left, and down vertices, we let $X_{d\ell_i} = B_i = \cokerxy{B_i}$, $X_{\ell_i} = \cokerx{B}_i$, and $X_{d_i} = \cokery{B}_i$.
        \item On the up-right, right, and up vertices, we let $X_{ur_i} = D_i = \kerxyc{D_i}$, $X_{r_i} = \kerxc{D}_i$, and $X_{u_i} = \keryc{D}_i$.
        \item The horizontal morphisms are induced by the natural morphisms $\keryc{D} \hookrightarrow D \xrightarrow{\xbf} \keryc{D}[1,0]$ and $\cokery{B}[-1,0] \xrightarrow{\xbf} B \twoheadrightarrow \cokery{B}$.
        \item The vertical morphisms are induced by the natural morphisms $\kerxc{D} \hookrightarrow D \xrightarrow{\ybf} \kerxc{D}[1,0]$ and $\cokerx{B}[-1,0] \xrightarrow{\ybf} B \twoheadrightarrow \cokerx{B}$.
        \item For the diagonal morphism, we let $X_{u_i,\ell_i} = f_i$ and $X_{r_i,d_i} = g_i$.
    \end{itemize}
    The action of $\iota$ on morphisms is the evident one.
\end{construction}

It is clear that, if $(B,D,f,g)$ is a boundary, then $\iota(B,D,f,g)$ contains the exact same information as $(B,D,f,g)$, that is:

\begin{lemma}
    \label{lemma:G-fully-faithful}
    The functor $\iota : \boundaries \to \rep(R)$ is fully faithful.
    \qed
\end{lemma}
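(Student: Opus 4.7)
The plan is to construct an inverse to $\iota$ on morphism sets, establishing both faithfulness and fullness.

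Faithfulness will be immediate: by construction, $X_{d\ell_i} = B_i$ and $X_{ur_i} = D_i$, so the components of $\iota(\phi, \psi)$ at the vertices $d\ell_i$ and $ur_i$ are exactly $\phi_i$ and $\psi_i$; hence the pair $(\phi, \psi)$ is recovered from $\iota(\phi, \psi)$.

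For fullness, given $h \colon \iota(B, D, f, g) \to \iota(B', D', f', g')$ in $\rep(R)$, I would set $\phi_i \coloneqq h_{d\ell_i}$ and $\psi_i \coloneqq h_{ur_i}$ and then show that $(\phi, \psi)$ is a morphism of boundaries whose image under $\iota$ is $h$. The first task is to verify that the family $(\phi_i)_i$ assembles into a morphism in $\rep(\Zbb^2)$, i.e., that it commutes with both $\xbf$ and $\ybf$. For this, observe that the two consecutive horizontal arrows $d\ell_{i-\ebf_1} \to d_{i-\ebf_1} \to d\ell_i$ of $R$ are realized in $\iota(B, D, f, g)$ by the composite $B_{i-\ebf_1} \twoheadrightarrow \cokery{B}_{i-\ebf_1} \xrightarrow{\xbf} B_i$, which is precisely multiplication by $\xbf$ (the factoring through $\cokery{B}_{i-\ebf_1}$ is well-defined because $B$ is ephemeral). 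Commutativity of $h$ with this composite yields $\phi_i(\xbf \cdot b) = \xbf \cdot \phi_{i-\ebf_1}(b)$ for every $b \in B_{i-\ebf_1}$. An analogous argument with the vertical zigzag yields compatibility with $\ybf$. Hence $\phi \colon B \to B'$ is a morphism in $\rep(\Zbb^2)$; symmetrically, $\psi \colon D \to D'$ is a morphism.

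Once $\phi$ and $\psi$ are known to be module morphisms, the remaining components of $h$ are forced: the surjectivity of the quotient $B_i \twoheadrightarrow \cokery{B}_i$ and commutativity of $h$ with this arrow force $h_{d_i} = \cokery{\phi}_i$, and likewise $h_{\ell_i} = \cokerx{\phi}_i$; dually, the injectivity of $\keryc{D}_i \hookrightarrow D_i$ and $\kerxc{D}_i \hookrightarrow D_i$ forces $h_{u_i} = \keryc{\psi}_i$ and $h_{r_i} = \kerxc{\psi}_i$. Finally, commutativity of $h$ with the diagonal arrows $u_i \to \ell_i$ and $r_i \to d_i$, realized in $\iota(B, D, f, g)$ and $\iota(B', D', f', g')$ as $f_i$, $f'_i$ and $g_i$, $g'_i$ respectively, translates precisely to the two commutative squares of \cref{definition:ephemeral-pair} characterizing a morphism of boundaries. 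Hence $(\phi, \psi)$ is a morphism of boundaries, and by construction $\iota(\phi, \psi) = h$.

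The main (mild) point to justify is that the consecutive composition of horizontal (and vertical) arrows in $R$ reproduces multiplication by $\xbf$ (and $\ybf$) on the original modules $B$ and $D$; this zigzag feature of the quiver $R$ is the reason $\iota$ captures via $h$ not only the graded components of $\phi$ and $\psi$ but also their compatibility with the $\kbb[\xbf, \ybf]$-action.
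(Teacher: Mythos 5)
Your argument is correct, and it is essentially the detailed version of the argument the paper leaves implicit — the paper simply states, just before the lemma, that ``$\iota(B,D,f,g)$ contains the exact same information as $(B,D,f,g)$'' and then records the lemma with a $\qed$. Your unpacking is faithful: faithfulness is immediate since the $d\ell_i$ and $ur_i$ components recover $(\phi,\psi)$; for fullness, you correctly identify that the composites of consecutive horizontal (resp.\ vertical) arrows through $d_i$ (resp.\ $\ell_i$) reproduce multiplication by $\xbf$ (resp.\ $\ybf$) on $B$, and similarly through $u_i$, $r_i$ on $D$ — this works because ephemerality of $B$ and $D$ is precisely what makes the factorization $B_{i-\ebf_1} \twoheadrightarrow \cokery{B}_{i-\ebf_1} \xrightarrow{\xbf} B_i$ valid — so the extracted $\phi$ and $\psi$ are $\kbb[\xbf,\ybf]$-linear; and you correctly note that the remaining components of $h$ are then forced by surjectivity of the quotient arrows $d\ell_i \twoheadrightarrow d_i, \ell_i$ and injectivity of the inclusion arrows $u_i, r_i \hookrightarrow ur_i$, while the diagonal arrows $u_i \to \ell_i$ and $r_i \to d_i$ encode exactly the two commuting squares of \cref{definition:ephemeral-pair}. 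The one spot worth being explicit about in writing, which you do address, is the order of steps: you must establish $\kbb[\xbf,\ybf]$-linearity of $\phi,\psi$ \emph{first}, so that $\cokery{\phi}$, $\cokerx{\phi}$, $\keryc{\psi}$, $\kerxc{\psi}$ are even defined, before observing that the remaining components of $h$ agree with them.
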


\begin{notation}
    By identifying $\boundaries$ with its essential image $\iota(\boundaries) \subseteq \rep(R)$, we treat $\boundaries$ as a full subcategory of $\rep(R)$.
\end{notation}

\begin{figure}
    \includegraphics[width=1\textwidth]{}
    \caption{\emph{Left.} An infinite quiver $R$ with zero relations represented by dotted lines.
    The vertices of $R$ are parametrized as follows:
    for each element $i \in \Zbb^2$, there are six associated vertices of $R$, which we denote by $r_i$ (right), $ur_i$ (up-right), $u_i$ (up), $\ell_i$ (left), and $d\ell_i$ (down-left), $d_i$ (down).
    The arrows are as in the diagram.
    Note that we only name the vertices associated with the bigrade $i \in \Zbb^2$, for simplicity.
    The checker pattern is for visualization purposes, to help group the vertices associated to each bigrade.
    \emph{Right.} A directed graph $S$.
    The vertices of $S$ are the same as those of $R$.
    The arrows are the same, except that the edges in half of the rows (those corresponding to $u$ and $ur$) and in half of the columns (those corresponding to $\ell$ and $d\ell$) have the opposite orientation.}
    \label{figure:large-gentle-quiver}
\end{figure}

\subsection{Classifying band representations of $R$}

Towards the goal of providing a classification of the band representations of $R$,
we first provide a correspondence between $\Zbb$-words in $R$ and oriented cycles in the direct graph $S$ of \cref{figure:large-gentle-quiver}.
The idea of characterizing words in a string algebra using oriented paths in an associated oriented graph that is obtained by changing some orientations appears in, e.g., \cite{bell-et-al}.

\begin{figure}
    \includegraphics[width=1\textwidth]{}
    \caption{We let $M = \kbb_I$ the spread representation in \cref{figure:boundary-of-spread}.
    \emph{Left.}
    The irreducible discrete closed curve $\gamma$ (orange) representing the (unique) boundary component of $\partial M$, and its corresponding $\Zbb$-word in $R$.
    This illustrates \cref{construction:closed-curve-to-oriented-cycle,construction:closed-curve-to-oriented-cycle}.
    Note that, as we follow $\gamma$, the $\Zbb$-word in $R$ traces the birth- and death-curves of the representation.
    This induces two canonical pairings $\partial_{u\ell}$ and $\partial_{dr}$ between the birth- and death-curves of the representation.
    \emph{Right.}
    The birth- and death-curves, corners, and bigraded Betti tables of $M$ (generators represented by circles, relations by triangles, and syzygies by squares).
    Note that, following $\gamma$, we get an action of the group $\Zbb$ on the union of all Betti tables of $M$, which characterizes $\gamma$, and, which can also be characterized by a directed graph structure on the union of the Betti tables of $M$.}
    \label{figure:drawing-boundary}
\end{figure}

\begin{definition}
    Let $G$ be an oriented graph.
    \begin{itemize}
    \item An \emph{oriented cycle} in $G$ consists of a sequence of edges $e = \{e_0, \dots, e_{k-1}\}$ of $G$ such that $\ssf(e_{i+1}) = \tsf(e_i)$ for all $i \in \Zbb/k\Zbb$.
    \item If $n \geq 2 \in \Nbb$, the concatenation of an oriented cycle $e$ with itself $n$ times is again an oriented cycle, denoted by $e^n$.
    \item An oriented cycle is \emph{irreducible} if it cannot be written as $e^n$ for some oriented cycle $e$ and $n \geq 2$.
    \item Two oriented cycles $\{e_0, \dots, e_{k-1}\}$ and $\{e'_0, \dots, e'_{k-1}\}$ are \emph{rotationally equivalent} if there exists $j \in \Zbb$ such that $e_i = e'_{i+j}$ for all $i \in \Zbb/k\Zbb$.
    \end{itemize}
\end{definition}

\begin{construction}
    \label{construction:oriented-cycle-to-periodic-word}
    Given an oriented cycle $e = \{e_0, \dots, e_{k-1}\}$ in $S$, we define a $\Zbb$-word $F(e) \in R$ by concatenating $e$ with itself countably many times, as follows
    \[
    \dots, e^{s_{k-1}}_{k-1}, e^{s_0}_0, e^{s_1}_1, \dots, e^{s_{k-2}}_{k-2}, e^{s_{k-1}}_{k-1}, e^{s_0}_0, e^{s_1}_1, \dots, e^{s_{k-2}}_{k-2}, e^{s_{k-1}}_{k-1}, e^{s_0}_0, \dots,
    \]
    where $s_i = +1$ if the edge $e_i$ has the same orientation in $S$ and in $R$, and $s_i = -1$ otherwise.
\end{construction}

The proof of the next result is then straightforward.

\begin{lemma}
    \label{lemma:oriented-cycle-to-periodic-word}
    \cref{construction:oriented-cycle-to-periodic-word} induces a bijection between the set of irreducible oriented cycles in $S$ up to rotational equivalence, and the set of equivalence classes of periodic $\Zbb$-words in $R$.
    \qed
\end{lemma}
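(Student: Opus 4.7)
The plan is to check that $F$ induces a bijection in three stages: well-definedness into valid $\Zbb$-words, surjectivity, and the matching of equivalence relations on both sides.

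First, I would verify that $F(e)$ is a $\Zbb$-word in $R$ in the sense of \cref{section:string-algebras}. Consecutive edges of an oriented cycle share endpoints, giving the source--target matching. Since $e$ is a cycle with no immediate backtracking, we have $C_i^{-1} \ne C_{i+1}$. The critical condition---that no path in $\rho$ appears as a consecutive sub-sequence of letters---is precisely what the construction of $S$ from $R$ ensures: the rows through $u$, $ur$ and the columns through $\ell$, $d\ell$ are flipped so that any oriented walk in $S$ translates into a walk in $R$ avoiding the zero relations. This amounts to a finite local check at each of the six vertex types in \cref{figure:large-gentle-quiver}. By construction $F(e)$ is periodic with period dividing $k = |e|$, and is exactly $k$ when $e$ is irreducible, since a smaller period $k' \mid k$ would yield a length-$k'$ cycle $e'$ in $S$ with $e = (e')^{k/k'}$, contradicting irreducibility.

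Next, for surjectivity onto shift-classes I would run the inverse construction: from a periodic word $C$ of minimal period $k$, read each letter as an edge of $S$ (a direct letter as the arrow of $R$ it names, a formal inverse as the corresponding flipped edge of $S$); the three word conditions translate into $(C_1, \dots, C_k)$ being a non-backtracking, oriented walk in $S$, and periodicity closes it into an irreducible oriented cycle $e$ with $F(e) = C$ up to a shift. Rotational equivalence of cycles corresponds immediately to shift equivalence of the associated words, and the above construction inverts $F$ at the level of shift-classes.

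The main obstacle is the handling of the inverse component of the equivalence $\sim$ on $\Zbb$-words. Inverting $F(e)$ reverses the order of letters and swaps direct with inverse; on the cycle side this corresponds to traversing the edges of $e$ in reverse order. Since $S$ is directed, the reversed walk is not itself an oriented cycle in $S$ in general, so $F(e)^{-1}$ need not be literally of the form $F(e'')$. The statement must therefore be understood with rotational equivalence on the cycle side extended to include traversal reversal, matching the standard identification of a band with its inverse in the string-algebra literature. I would verify the bijection under this extended equivalence via the following sign compatibility: reversing the traversal of an edge of $e$ exactly flips whether its $S$- and $R$-orientations agree, so applying the construction to the reverse cycle produces precisely (a shift of) the formal inverse of $F(e)$. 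This observation promotes the shift-level bijection to a $\sim$-level bijection, completing the proof.
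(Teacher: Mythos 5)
Your identification of the tension between the equivalence $\sim$ on words (which allows inversion) and rotational equivalence on cycles (which does not allow reversal) is the right thing to worry about, and your sign observation --- reversing a traversal swaps each step between agreeing and disagreeing with the $S$-orientation --- is the key fact. But the last paragraph does not actually close the argument, and the proposed ``extended equivalence'' is a red herring. Reversing an oriented cycle of $S$ does not yield an oriented cycle of $S$ (as you yourself note), so adding reversal to the equivalence relation on the cycle side changes nothing, and ``applying the construction to the reverse cycle'' is not well-defined for the same reason.

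What your sign observation actually gives you, once the local check is done (at each vertex, the two underlying $S$-edges of any valid two-letter transition point one into and one out of that vertex), is that every periodic word either traverses all of its underlying edges in the $S$-direction or all against it, and never both; that $F$ bijects irreducible cycles modulo rotation onto the all-agreeing periodic words modulo shift; and that inversion exchanges all-agreeing with all-disagreeing words. Consequently each $\sim$-class contains a unique shift-class of agreeing words, which recovers a unique rotation-class of cycles --- that is the bijection of the lemma, with no reinterpretation required. The fact that $F(e)^{-1}$ is never of the form $F(e'')$ is therefore exactly what makes the map \emph{injective} onto $\sim$-classes, not a defect to patch. And the statement had better not be weakened in the direction you suggest: a non-palindromic closed curve and its reversal map to genuinely different $\sim$-classes of words and hence to non-isomorphic band modules of $R$, a distinction that \cref{proposition:classification-of-bands} relies on. So keep the first two parts of your plan, but replace the last paragraph by the injectivity argument above rather than a change of statement.
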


Next, we provide a correspondence between oriented cycles in $S$ and discrete closed plane curves, in the following sense.

\begin{definition}
    \label{definition:closed-curve}
    \begin{itemize}
    \item A \emph{discrete closed plane curve} consists of a sequence $\gamma = \{\gamma_0, \dots, \gamma_{k-1}\}$ with $\gamma_i \in \Zbb^2$, such that $\|\gamma_{i+1} - \gamma_i\|_1 \leq 1$ for all $i \in \Zbb/k\Zbb$.
    \item If $n \geq 2 \in \Nbb$, the concatenation of a closed curve $\gamma$ with itself $n$ times is again a closed curve, denoted by $\gamma^n$.
    \item A closed curve is \emph{irreducible} if it cannot be written as $\gamma^n$ for some closed curve $\gamma$ and $n \geq 2$.
    \item Two closed curves $\gamma$ and $\gamma'$ are \emph{rotationally equivalent} if there exists $j \in \Zbb$ such that $\gamma_i = \gamma'_{i+j}$ for all $i \in \Zbb/k\Zbb$.
    \end{itemize}
\end{definition}

Note that, in a discrete closed plane curve, the same vertex can appear consecutively.
In order to relate discrete closed plane curves and cycles in $S$, it is convenient to define the following.

An \emph{up-right vertex} of $R$ (and of $S$) is a vertex of the form $ur_i$ for $i \in \Zbb^2$, while a \emph{down-left vertex} is a vertex of the form $d\ell_i$ for $i \in \Zbb^2$; a \emph{diagonal vertex} is either an up-right vertex or a down-left vertex.
If $v = ur_i$ or $v = d\ell_i$ is a diagonal vertex, its corresponding \emph{grade} is $\mathrm{idx}(v) \coloneqq i \in \Zbb^2$.
\emph{A full turn around} $i$ is an oriented cycle that is rotationally equivalent to $\{(ur_i, u_i), (u_i,\ell_i), (\ell_i,d\ell_i), (d\ell_i,d_i), (d_i, r_i), (r_i,ur_i)\}$ in $S$.

We observe that, if $e$ is an oriented cycle in $S$, then it must visit at least one (in fact at least two) diagonal vertices.
We also make the following simple observation.

\begin{lemma}
    If $v, w \in S$ are such that $\|\idx(v) - \idx(w)\|_1 = 1$, then there exists a unique shortest oriented path from $v$ to $w$ in $S$.
    \qed
\end{lemma}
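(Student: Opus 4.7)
The plan is to prove this by a direct case analysis, exploiting the highly constrained local structure of $S$. First I would record the six-cycle in $S$ around each grade $i \in \Zbb^2$, namely
\[
ur_i \to u_i \to \ell_i \to d\ell_i \to d_i \to r_i \to ur_i,
\]
which is the full turn around $i$. I would then enumerate the inter-grade arrows of $S$: by the specification that $S$ agrees with $R$ except for flips in the $u/ur$ rows and $\ell/d\ell$ columns, each grade $i$ has exactly one outgoing inter-grade arrow in each of the four coordinate directions, and these start/end at specific vertices among $\{r_i, u_i, \ell_i, d_i\}$ and their neighbors. The key observation to isolate at this stage is that each of the four ``sides'' of the local six-cycle has a unique arrow bridging it to the corresponding side of the neighbouring grade's six-cycle.

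Next, since $v$ and $w$ are diagonal, both lie on the local six-cycles at $\idx(v)$ and $\idx(w)$, respectively, and any oriented path from $v$ to $w$ must use at least one inter-grade arrow. I would observe that any path using two or more inter-grade arrows has length strictly greater than the path that uses exactly one, so the shortest paths are precisely those of the form: walk along the six-cycle at $\idx(v)$ from $v$ until the unique vertex admitting an inter-grade arrow toward $\idx(w)$, take that unique inter-grade arrow, then walk along the six-cycle at $\idx(w)$ until reaching $w$.

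I would then split into cases according to the type of $v$ (either $ur$ or $d\ell$), the type of $w$, and the direction $\idx(w) - \idx(v) \in \{\pm\ebf_1, \pm\ebf_2\}$; by the horizontal/vertical reflection symmetries of the construction of $S$, it suffices to check a handful of representative cases. In each representative case the ``exit vertex'' on the cycle at $\idx(v)$ and the ``entry vertex'' on the cycle at $\idx(w)$ are uniquely determined, and since every vertex on the six-cycle has out-degree at most two (the cycle arrow and possibly one inter-grade arrow), the walk along each six-cycle is forced once the endpoint is fixed. Putting these pieces together yields existence and uniqueness of the shortest path and also a formula for its length.

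The main obstacle will be bookkeeping: making the orientation flips in $S$ compared to $R$ completely explicit in order to confirm which vertex on each six-cycle carries the outgoing inter-grade arrow in each coordinate direction, and then checking in each case that taking the ``wrong'' arrow at any step either fails to reach $w$ or produces a strictly longer path. Once the orientation of the four inter-grade arrows at a generic grade is tabulated once and for all, the case analysis itself reduces to straightforward counting on the six-cycle.
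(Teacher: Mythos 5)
The paper provides no written proof for this lemma---it is stated with a \qed box and treated as immediate. Your case-analysis plan is, as far as I can tell, exactly what the authors have in mind: identify the six-cycle at each grade in $S$, note that each grade has a unique exit vertex carrying the single inter-grade arrow in each of the four coordinate directions, observe that out-degree along the cycle is at most two, and conclude that once the exit vertex is determined the shortest path is forced.

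The one place where I would push you to add substance rather than treat it as pure bookkeeping is the assertion that any path using two or more inter-grade arrows is strictly longer than the one-arrow path. This is true, but it is not a triviality you can just ``observe.'' Two points are worth isolating. First, a parity argument: each inter-grade arrow changes the grade by a unit vector, so a path using $k$ inter-grade arrows has net grade displacement of $\ell^1$-parity $k \pmod 2$; since $\|\idx(v)-\idx(w)\|_1 = 1$ is odd, $k$ must be odd, so $k=2$ is impossible and the next candidate is $k=3$. Second, for $k\ge 3$ you need to verify that the forced walks between the entry and exit vertices on each intermediate six-cycle always add enough steps to outweigh any savings; in practice this is a short check once you have tabulated, for each of the four directions, which vertex on the six-cycle is the exit (these are $ur$, $u$, $d\ell$, $d$) and which is the entry (these are $r$, $ur$, $\ell$, $d\ell$). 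With that table in hand, the reflection symmetries of $S$ reduce the enumeration to a handful of cases, as you say, and the argument closes. So the approach is sound; just do not undersell the detour-elimination step.
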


\begin{construction}
    \label{construction:closed-curve-to-oriented-cycle}
    Given a discrete closed plane curve $\gamma = \{\gamma_0, \dots, \gamma_{k-1}\}$, we construct an oriented cycle $e$ in $S$ as follows.
    \begin{itemize}
    \item If $\gamma$ is constantly $i \in \Zbb$, then let $e$ be the concatenation of $k$ full turns around $i$.
    \item Otherwise, by grouping consecutive appearances of the same grade, we can write $\gamma = i_1^{t_1+1} \cdot i_2^{t_2+1} \cdots i_{j}^{t_j+1}$, so that consecutive grades satisfy $i_m \neq i_{m+1} \in \Zbb^2$, and such that $i_1 \neq i_j \in \Zbb^2$ as well.
    \begin{itemize}
        \item Pick the unique pair of diagonal vertices $v_1, v_2$ at shortest path distance in $S$, such that $\idx(v_1) = i_1$ and $\idx(v_2) = i_2$; let $p_1$ be the unique shortest path from $v_1$ to $v_2$ in $S$.
        \item Let $p_2$ be the unique shortest oriented path from $v_2$ to the closest diagonal vertex $v_3$ that satisfies $\idx(v_3) = i_3$.
        \item Let $p_3$ be the unique shortest oriented path from $v_3$ to the closest diagonal vertex $v_4$ that satisfies $\idx(v_4) = i_4$.
        \item Etc., until we reach $v_1$ again through a shortest path $p_j$.
    \end{itemize}
    \end{itemize}
    Construct the oriented cycle $e \coloneqq p_1 \cdots p_j$.
\end{construction}

See \cref{figure:drawing-boundary} for an example illustrating \cref{construction:closed-curve-to-oriented-cycle}.

\begin{construction}
    \label{construction:oriented-cycle-to-closed-curve}
    Given $e = \{e_0, \dots, e_{k-1}\}$ an oriented cycle in $S$, we construct a discrete closed plane curve $\gamma$ as follows.
    \begin{itemize}
        \item If all that $e$ is doing is $k$ full turns around some $i \in \Zbb^2$, then we construct the discrete closed plane curve $\gamma \coloneqq i^k$ given by $i$ repeated $k$ times.
        \item Otherwise, rotating $e$ if necessary, we can assume that $e$ starts at a diagonal vertex $v$, which has a different grade than the last diagonal vertex that it visits.
        Note that $e$ behaves as follows:
        \begin{itemize}
            \item Starting from $w_1 \coloneqq v$, it first does a number $t_1 \in \Nbb$ of full turns around $\idx(w_1) \in \Zbb^2$.
            \item It then gets to the first diagonal vertex $w_2$ with the property that $\idx(w_2) \neq \idx(w_1)$, and proceeds to do some number $t_2 \in \Nbb$ of full turns around $\idx(w_2) \in \Nbb$.
            \item It then gets to the first subsequent diagonal vertex $w_3$ such that $\idx(w_3) \neq \idx(w_2)$, and proceeds to do some number $t_3$ of full turns around $\idx(w_3) \in \Nbb$.
            \item Etc., until we reach $w_v$ again.
        \end{itemize}
        From this observation, we get a sequence of grades $\idx(w_1), \dots, \idx(w_{j})$ and a sequence of numbers $t_1, \dots, t_{j}$ (which are allowed to be zero), and we construct the concatenation
        \[
            \gamma \coloneqq \idx(w_1)^{t_1+1} \cdot \idx(w_2)^{t_2+1} \cdots \idx(w_{j})^{t_{j}+1}.
        \]
    \end{itemize}
\end{construction}


The following proof is straightforward, but tedious, so we omit it.

\begin{lemma}
    \label{lemma:closed-curve-to-oriented-cycle}
    \cref{construction:closed-curve-to-oriented-cycle,construction:oriented-cycle-to-closed-curve}
    induce inverse bijections between the set of discrete closed curves up to rotational equivalence and the set of oriented cycles in $S$ up to rotational equivalence.
    The bijections respect irreducibility.
    \qed
\end{lemma}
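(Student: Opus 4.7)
The plan is to verify both constructions are well-defined on rotational-equivalence classes, check that they are mutually inverse, and finally check that they respect irreducibility.

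\textbf{Well-definedness.} A rotation of a discrete closed plane curve $\gamma = \{\gamma_0, \dots, \gamma_{k-1}\}$ either cyclically permutes the grouped blocks $i_1^{t_1+1} \cdots i_j^{t_j+1}$ appearing in \cref{construction:closed-curve-to-oriented-cycle}, or splits one block across the cyclic boundary; since the $t_m$ parametrize full turns and the shortest paths $p_m$ are determined by consecutive grades, the resulting oriented cycle is rotated by the corresponding amount. Conversely, a rotation of an oriented cycle $e$ in \cref{construction:oriented-cycle-to-closed-curve} changes only which diagonal vertex $w_1$ is declared first, leading to a cyclic rotation of the output closed curve.

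\textbf{Mutual inverses.} The constant case is clear: if $\gamma = i^k$, then \cref{construction:closed-curve-to-oriented-cycle} produces $k$ full turns around $i$, which \cref{construction:oriented-cycle-to-closed-curve} parses back to $i^k$; conversely, if $e$ is $k$ full turns around $i$, \cref{construction:oriented-cycle-to-closed-curve} outputs $i^k$, and reapplying \cref{construction:closed-curve-to-oriented-cycle} returns the original $e$. In the non-constant case, the key point is a local analysis at diagonal vertices: whenever consecutive grades $i_m, i_{m+1}$ differ, the unique shortest oriented path in $S$ between a diagonal vertex of grade $i_m$ and a diagonal vertex of grade $i_{m+1}$ contains no full turn and passes through exactly one intermediate non-diagonal vertex on each side; this is because half the rows and half the columns of $S$ have been reoriented in \cref{figure:large-gentle-quiver} in precisely such a way that, from a diagonal vertex, the unique way to reach a diagonal vertex at an adjacent grade is the direct route. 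Consequently, starting from $\gamma$ and applying \cref{construction:closed-curve-to-oriented-cycle} followed by \cref{construction:oriented-cycle-to-closed-curve}, the full turns of \cref{construction:oriented-cycle-to-closed-curve} recover exactly the repeated blocks $i_m^{t_m+1}$, and the transition paths $p_m$ recover the changes of grade, yielding $\gamma$ back up to rotation. The reverse composition is analogous: the decomposition of $e$ into ``full turns at $w_m$ followed by a shortest transition to $w_{m+1}$'' produced in \cref{construction:oriented-cycle-to-closed-curve} is forced, since a diagonal vertex can only be revisited with the same grade by completing a full turn, and can only be left toward a new grade through the unique shortest path.

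\textbf{Irreducibility.} Both constructions intertwine concatenation: if $\gamma = \gamma_0^n$, then the grade block sequence and corresponding full-turn counts are $n$-fold periodic, so the output cycle is the $n$-fold concatenation of the cycle associated to $\gamma_0$; the analogous statement holds for \cref{construction:oriented-cycle-to-closed-curve} since the data $(\idx(w_m), t_m)$ read off from $e$ is $n$-fold periodic whenever $e = e_0^n$. Hence reducibility is preserved by both maps, and by the mutual-inverse statement irreducibility is preserved as well.

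\textbf{Main obstacle.} The main technical point is the local analysis at each diagonal vertex that forces the shortest-path transitions of \cref{construction:closed-curve-to-oriented-cycle} to fit together into a valid oriented cycle in $S$, and dually forces the unique parsing of an arbitrary oriented cycle in \cref{construction:oriented-cycle-to-closed-curve} into full-turn and transition segments. This requires a finite case analysis over the six neighbors of a diagonal vertex in $S$ and over the possible grade changes $\gamma_{m+1} - \gamma_m \in \{0, \pm \ebf_1, \pm \ebf_2\}$, which we omit.
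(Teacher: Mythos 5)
The paper offers no proof to compare against: the text preceding the lemma reads ``The following proof is straightforward, but tedious, so we omit it.'' Your three-part structure (well-definedness under rotation, mutual inverses, preservation of irreducibility) is the natural one for such a statement, and your identification of the crux — the local analysis at diagonal vertices of $S$ — is apt.

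However, there is an internal tension in your mutual-inverses step that needs to be resolved. You assert that the transition paths $p_m$ produced by \cref{construction:closed-curve-to-oriented-cycle} are shortest oriented paths and therefore contain no full turns; but you also assert that applying \cref{construction:oriented-cycle-to-closed-curve} to the output of \cref{construction:closed-curve-to-oriented-cycle} recovers the repeated blocks $i_m^{t_m+1}$ by \emph{counting full turns}. As \cref{construction:closed-curve-to-oriented-cycle} is literally written, its output is the concatenation $e = p_1 \cdots p_j$ of shortest paths, which contains no full turns; \cref{construction:oriented-cycle-to-closed-curve} applied to this $e$ would therefore read $t_m = 0$ for every $m$ and output the curve $i_1 \cdot i_2 \cdots i_j$, not $\gamma = i_1^{t_1+1} \cdots i_j^{t_j+1}$, whenever some $t_m > 0$ (i.e., whenever $\gamma$ has a stationary repeat). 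For the lemma to hold, the intended reading of \cref{construction:closed-curve-to-oriented-cycle} must be that $t_m$ full turns around $i_m$ are inserted at each diagonal vertex $v_m$ before continuing with the transition path; you should state this explicitly rather than leave it implicit, since otherwise your shortest-path claim and your $t_m$-recovery claim contradict each other. Once that is made precise, the rest of your sketch goes through as you describe.
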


\begin{proposition}
    \label{proposition:classification-of-bands}
    There is a bijection between isomorphism classes of band modules over $R$ and equivalence classes of boundary components.
    The bijection is given by mapping a discrete closed plane curve to a periodic $\Zbb$-word in $R$ using \cref{construction:oriented-cycle-to-periodic-word,construction:closed-curve-to-oriented-cycle}.
\end{proposition}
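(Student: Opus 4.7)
The plan is to assemble the bijection by composing three classification results that have already been set up. First, I would invoke Theorem \ref{theorem:string-algebra-module-classification} applied to the string algebra $R$: since (finite-dimensional) band modules over a string algebra are classified up to isomorphism by pairs $([C], [V])$ where $[C]$ is an equivalence class of periodic $\Zbb$-words in $R$ and $[V]$ is an isomorphism class of finite-dimensional indecomposable $\kbb[T,T^{-1}]$-modules, it suffices to match these two data with the geometric and algebraic data of a boundary component.

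For the algebraic datum, the remark following Theorem \ref{theorem:string-algebra-module-classification} already identifies isomorphism classes of finite-dimensional indecomposable $\kbb[T,T^{-1}]$-modules with similarity classes of invertible square matrices $\Tcal$ over $\kbb$ that are not similar to a block-diagonal matrix with more than one block. This is precisely the algebraic datum in \cref{definition:boundary-component}, and the equivalence relation on boundary components agrees with similarity on the matrix factor by definition.

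For the geometric datum, I would simply compose the two bijections already proved: \cref{lemma:oriented-cycle-to-periodic-word} gives a bijection between equivalence classes of periodic $\Zbb$-words in $R$ and rotational equivalence classes of irreducible oriented cycles in $S$, while \cref{lemma:closed-curve-to-oriented-cycle} gives a bijection between rotational equivalence classes of oriented cycles in $S$ and of discrete closed plane curves, respecting irreducibility. Composing these yields a bijection between equivalence classes of periodic $\Zbb$-words in $R$ and rotational equivalence classes of irreducible discrete closed plane curves, which is the geometric datum in \cref{definition:boundary-component}. Unwinding the definitions, the composite sends an irreducible discrete closed plane curve $\gamma$ to the periodic $\Zbb$-word obtained by applying \cref{construction:closed-curve-to-oriented-cycle} followed by \cref{construction:oriented-cycle-to-periodic-word}, as stated.

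Putting the geometric and algebraic data back together, a finite-dimensional band module $M(C, V)$ corresponds bijectively to the equivalence class of the boundary component $(\gamma, \Tcal)$ with $\gamma$ the closed curve associated to $C$ and $\Tcal$ the matrix associated to $V$. There is no real obstacle in this argument: all the substantive work is done in the preceding lemmas, and what remains is a routine bookkeeping of equivalence relations. The only point that merits a brief verification is that the indecomposability/irreducibility conditions on all three factors (non-periodicity is irrelevant here since we are in the band case, while irreducibility of the closed curve corresponds to periodicity being minimal, which corresponds to the choice of $C$ being considered up to shifts and inversion rather than powers thereof) line up correctly, which follows from the ``respects irreducibility'' clause of \cref{lemma:closed-curve-to-oriented-cycle}.
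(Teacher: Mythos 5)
Your proof is correct and takes essentially the same approach as the paper: apply Theorem~\ref{theorem:string-algebra-module-classification} to reduce the classification to pairs (periodic $\Zbb$-word, indecomposable $\kbb[T,T^{-1}]$-module), then compose Lemmas~\ref{lemma:oriented-cycle-to-periodic-word} and~\ref{lemma:closed-curve-to-oriented-cycle} to handle the geometric factor, and invoke the standard matrix classification of finite-dimensional indecomposable $\kbb[T,T^{-1}]$-modules for the algebraic factor.
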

\begin{proof}
    We apply the classification result for indecomposable representations of string algebras (\cref{theorem:string-algebra-module-classification}), which is in terms of pairs consisting of an equivalence class of periodic $\Zbb$-words and an isomorphism class of indecomposable $\kbb[T,T^{-1}]$-modules.
    The correspondence between equivalence classes of periodic $\Zbb$-words and irreducible discrete closed curves is obtained using the bijections of \cref{lemma:closed-curve-to-oriented-cycle,lemma:oriented-cycle-to-periodic-word}.
    The correspondence between isomorphism classes of finite dimensional indecomposable $\kbb[T,T^{-1}]$-modules and similarity classes of matrices as in \cref{definition:boundary-component} (invertible square matrices that are not similar to a block diagonal matrix with more than one block) is standard
    and is given by mapping such a matrix $\Tcal$ to the $\kbb[T,T^{-1}]$-module $M_\Tcal$ with underlying vector space $\kbb^\ell$ and action of $T$ given by $\Tcal$.
\end{proof}

\subsection{Decomposition of the boundary}
We conclude this section by proving the decomposition theorem for the boundary.

\begin{proposition}
    \label{proposition:partial-is-band}
    If $M \in \repf(\Zbb^2)$, then $\partial M \in \rep(R)$ decomposes as a finite direct sum of band modules.
\end{proposition}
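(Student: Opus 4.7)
The plan is to invoke the classification theorem for modules over string algebras (\cref{theorem:string-algebra-module-classification}) applied to $\iota(\partial M) \in \rep(R)$, and then to rule out the two types of non-band summands. First I would verify that $\iota(\partial M)$ has finite total $\kbb$-dimension: since $M \in \repf(\Zbb^2)$ has finite support and is pointwise finite dimensional, so do each of $\cokerxy{M}$, $\kerxyc{M}$, $\cokerx{M}$, $\cokery{M}$, $\kerxc{M}$, and $\keryc{M}$, and summing their dimensions bounds the total dimension of $\iota(\partial M)$. This immediately implies both that the decomposition produced by \cref{theorem:string-algebra-module-classification} is a \emph{finite} direct sum, and that no primitive injective band module can appear as a summand (since these are infinite dimensional). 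It therefore suffices to rule out finite-dimensional string module summands.

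The key observation is that the essential image $\iota(\boundaries) \subseteq \rep(R)$ is cut out by a list of natural exactness constraints coming from the defining kernels and cokernels. For example, the arrow $X_{d\ell_i} \to X_{\ell_i}$ in any $X \in \iota(\boundaries)$ is induced by the quotient $\cokerxy{B}_i \twoheadrightarrow \cokerx{B}_i$ and is therefore surjective; the arrow $X_{u_i} \to X_{ur_i}$ is induced by the inclusion $\keryc{D}_i \hookrightarrow \kerxyc{D}_i$ and is therefore injective; and analogous surjectivity or injectivity statements hold for each of the horizontal and vertical morphisms between the six vertex types associated with a given bigrade. These conditions are preserved under direct summands, so every indecomposable summand of $\iota(\partial M)$ must satisfy the same constraints.

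A finite-dimensional string module $M(C)$ has a one-dimensional vector space at precisely the vertices visited by $C$ and zero elsewhere, so at each of the two endpoints of $C$ some arrow at that vertex is forced to be either $\kbb \to 0$ or $0 \to \kbb$. Carrying out a case analysis over the six vertex types, one sees that at every possible terminating configuration there is an adjacent vertex from which a required surjection or injection into (or out of) the endpoint vertex is violated. Hence no finite-dimensional string module can occur as a summand of $\iota(\partial M)$, and the remaining indecomposable summands are all finite-dimensional band modules, as claimed.

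The main obstacle will be the endpoint case analysis in the last paragraph, which requires writing down, at each of the six vertex types of $R$, which adjacent arrows are forced to be surjections or injections by the defining data of $\iota(\boundaries)$, and checking that each way a finite word could terminate at such a vertex violates one of these constraints. Once this case analysis is complete, the proposition follows from \cref{theorem:string-algebra-module-classification}.
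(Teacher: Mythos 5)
Your high-level structure matches the paper's: finite dimensionality of $\partial M$ rules out primitive injective band summands, so by the classification theorem for string algebras it suffices to rule out finite-dimensional string module summands. The gap is in the last step.

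The constraints you propose to exploit --- injectivity of the inclusions $X_{u_i}\hookrightarrow X_{ur_i}$ and $X_{r_i}\hookrightarrow X_{ur_i}$, surjectivity of the quotients $X_{d\ell_i}\twoheadrightarrow X_{\ell_i}$ and $X_{d\ell_i}\twoheadrightarrow X_{d_i}$, and so on --- are genuinely preserved under direct summands, but they are too weak to rule out a string endpoint at every vertex. Consider the simple string module $S_{ur_i}$ (the string corresponding to a trivial word): it is one-dimensional at $ur_i$ and zero elsewhere, so both incoming arrows $u_i\to ur_i$ and $r_i\to ur_i$ are $0\to\kbb$ (injective, no violation), and both outgoing arrows $ur_i\to u_{i+(1,0)}$ and $ur_i\to r_{i+(0,1)}$ carry no injectivity or surjectivity requirement in the first place. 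So $S_{ur_i}$ passes every condition on your list, yet it must not occur as a summand. The same issue arises dually for $d\ell_i$ (where $\kbb\to 0$ even counts as surjective), and more generally for any string ending at a diagonal vertex via an outgoing arrow. What the paper actually uses is an \emph{exactness} constraint, not just injectivity or surjectivity: for instance, that the image of $r_i\to ur_i$ (which is $\kerxc{D}_i$) equals the kernel of $ur_i\to u_{i+(1,0)}$. Knowing $r_i\to ur_i$ is injective says nothing about what its image must be. The paper packages this as the statement that any $0\neq z\in (\partial M)_c$ ``interacts with'' at least two arrows incident to $c$ --- if $\ybf z=0$ then $z$ is automatically in the image of $u_i\to ur_i$, etc. --- and then derives a contradiction from the endpoint basis vector $b_0$ of a string summand, which interacts with at most one arrow.

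A secondary imprecision: a finite-dimensional string module $M(C)$ is not one-dimensional at every vertex visited by $C$; the dimension at a vertex is the number of times $C$ visits it. The paper's element-wise ``interaction'' argument handles this uniformly, whereas your per-arrow argument implicitly assumes the module is thin. In short, the reduction to ruling out strings is right, but the way you propose to rule them out would need to be strengthened from arrow-wise injectivity/surjectivity to the exactness (kernel $=$ image) conditions, applied pointwise as in the paper.
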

\begin{proof}
    Let $N = \partial M$, and note that, since $M$ is finite dimensional, so is $N$.
    By \cref{theorem:string-algebra-module-classification}, the module $N$ decomposes as a finite direct sum of string modules, finite dimensional band modules, and primitive injective band modules.
    Since $N$ is finite dimensional, it must in fact decompose as a finite direct sum of finite dimensional string modules and finite dimensional band modules.
    So it is sufficient to show that $N$ cannot have a finite dimensional string module as a direct summand.

    The proof relies on the following notion.
    Let $X$ be a representation of $R$,
    let $c$ be a vertex of $R$, let $z \neq 0 \in X_{c}$, and let
    $e$ be an arrow of $R$ incident to $c$.
    We say that $x$ \emph{interacts with} $e$ if either the source of $e$ is $c$ and $X_e(z) \neq 0$, or the target of $e$ is $c$ and $z$ is in the image of $X_e$.

    If $X$ is a finite dimensional string representation of $R$, then, since the word associated to the representation must be finite, there must exists a vertex $c$ of $R$ and $z \neq 0 \in X_c$ such that, from all arrows incident to $c$, there is at most one that interacts with $z$.

    So it is enough to show that, for every vertex $c$ of $R$ and every $z \neq 0 \in N_c$, there are at least two arrows incident to $c$ that interact with $z$.
    By the symmetry given by swapping $\xbf$ and $\ybf$, it is enough to prove this for the vertices $c = u_i$ and $c = ur_i$,
    and for the vertices $c = \ell_i$ and $c = d\ell_i$.
    Since the proof for the second pair of vertices is analogous to that for the first pair, we only give the proof for $c = \ell_i$ and $c = d\ell_i$

    If $c = ur_i$ and the arrow $ur_i \to u_{i + (1,0)}$ does not interact with $z$, then the corresponding element $z \in M_i$ satisfies $\xbf z = 0$ and thus $z \in N_{r_i}$, so the arrow $r_i \to ur_i$ interacts with $z$.
    An analogous argument show that, if the arrow $ur_i \to r_{i + (0,1)}$ does not interact with $z$, the arrow $u_i \to ur_i$ interacts with $z$.
    Thus, the element $z$ interacts with at least two arrows adjacent to $ur_i$.

    To conclude, we do the case $c = u_i$.
    The arrow $u_i \to ur_i$ always interacts with $z$, since the morphism $N_{u_i} \to N_{ur_i}$ is injective (as it is the inclusion of the elements annihilated by $\ybf$ into the elements annihilated by $\xbf\ybf$).
    Now, suppose that the arrow $u_i \to \ell_i$ does not interact with $z$.
    This means that the corresponding element $z \in M_i$ is mapped to zero in the quotient $M_i/\xbf M_{i-(1,0)}$.
    This implies that there exists $y \in M_{i-(1,0)}$ such that $\xbf y = z$, which means that the arrow $ur_{i-(1,0)} \to u_i$ interacts with $z$, as required.
\end{proof}


\begin{proposition}
    \label{proposition:ephemeral-pair-contains-bands}
    Every band representation of $R$ belongs to $\boundaries \subseteq \rep(R)$.
\end{proposition}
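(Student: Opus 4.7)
Given a band representation $M = M(C, V)$ of $R$, my plan is to construct an explicit object $(B, D, f, g) \in \boundaries$ such that $M \cong \iota(B, D, f, g)$. I would set $B_i := M_{d\ell_i}$ and $D_i := M_{ur_i}$ for every $i \in \Zbb^2$, define the $\xbf$- and $\ybf$-actions on $B$ as the composites of quiver arrows $M_{d\ell_{i-(1,0)}} \twoheadrightarrow M_{d_{i-(1,0)}} \to M_{d\ell_i}$ and $M_{d\ell_{i-(0,1)}} \twoheadrightarrow M_{\ell_{i-(0,1)}} \to M_{d\ell_i}$ (and analogously for $D$ via the $u$- and $r$-vertices feeding into $ur_{i+(1,0)}$ and $ur_{i+(0,1)}$), and let $f$ and $g$ be the diagonal arrows $M_{u_i} \to M_{\ell_i}$ and $M_{r_i} \to M_{d_i}$ of $R$.

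To verify that $B$ is ephemeral, I would observe that the composite $\xbf\ybf : B_{i-(1,1)} \to B_i$ factors as a four-arrow path in $R$ whose middle two-arrow sub-composite $M_{d_{i-(1,1)}} \to M_{d\ell_{i-(0,1)}} \twoheadrightarrow M_{\ell_{i-(0,1)}}$ lies in the defining ideal $\rho$ of $R$, and hence vanishes on any $R$-representation. The dual arguments handle $\ybf\xbf$ on $B$ and both composites on $D$.

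The crux of the proof is to show that the natural maps induce isomorphisms $M_{\ell_i} \cong \cokerx{B}_i$, $M_{d_i} \cong \cokery{B}_i$, $M_{u_i} \cong \keryc{D}_i$, and $M_{r_i} \cong \kerxc{D}_i$. For the first identification, it suffices to prove that the arrow $q_\ell : M_{d\ell_i} \to M_{\ell_i}$ is surjective with kernel equal to the image of $h : M_{d_{i-(1,0)}} \to M_{d\ell_i}$. I would proceed by enumerating the allowed two-letter patterns $(C_j, C_{j+1})$ at each position $j$ of the periodic word $C$ with $v_j(C) = \ell_i$, using the string-algebra constraints that $C_{j+1} \neq C_j^{-1}$ and that neither $C_j C_{j+1}$ nor its formal inverse lies in $\rho$. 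The enumeration shows that in every allowed case one has $C_j = q_\ell$ or $C_{j+1} = q_\ell^{-1}$, so that the basis element $b_j$ lies in the image of $q_\ell$, yielding surjectivity. A matching enumeration of patterns at positions with $v_j(C) = d\ell_i$ then identifies precisely the basis elements annihilated by $q_\ell$ and shows that they coincide with the image of $h$, establishing the required exactness.

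The main obstacle is the bookkeeping across six vertex types, each with its own pattern enumeration and kernel-versus-image comparison. This burden is substantially alleviated by two symmetries of $R$: swapping the roles of $\xbf$ and $\ybf$, and the duality exchanging the lower-triangular block $\{d\ell, \ell, d\}$ with the upper-triangular block $\{ur, r, u\}$. Modulo these symmetries, the verification reduces to checking a single representative vertex in detail.
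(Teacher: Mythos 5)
Your proposal takes essentially the same approach as the paper's proof: extract $B_i := X_{d\ell_i}$ and $D_i := X_{ur_i}$, take $f$ and $g$ from the diagonal arrows, and verify that the remaining vertex values of $X$ are the expected kernels and cokernels so that $X \cong \iota(B,D,f,g)$. Where the paper simply asserts these identifications (and implicitly the ephemerality of $B$ and $D$) without argument, you supply the verification by analysing the allowed two-letter patterns of the underlying periodic word, which is a sound way to fill in those details.
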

\begin{proof}
    Specifically, we need to show that every band representation $R$ belongs to the essential image of $\iota : \boundaries \to \rep(R)$.
    Let $X \in \rep(R)$ be a band representation.
    We define $B, D \in \rep(\Zbb^2)$ as $B_i \coloneqq X_{d\ell_i}$ and $D_i \coloneqq X_{ur_i}$, with structure morphisms induced by those of $X$.
    Next, we define morphisms $f : \keryc{D} \to \cokerx{B}$ and $g : \kerxc{D} \to \cokery{B}$.
    Note that, if $X$ is a band representation, then $X_{u_i}$ is the kernel of the morphism $X_{ur_i} \to X_{ur_{i+(0,1)}}$, which implies that $\keryc{D}_i \cong X_{u_i}$.
    Similarly, we have $\kerxc{D}_i \cong X_{r_i}$, $\cokerx{B}_i \cong X_{\ell_i}$ and $\cokery{B}_{i} \cong X_{d_i}$.
    Then, the structure morphism $X_{u_i} \to X_{\ell_i}$ and $X_{r_i} \to X_{d_i}$ provide the morphisms
    $f : \keryc{D} \to \cokerx{B}$ and $g : \kerxc{D} \to \cokery{B}$.
    It is then straightforward to check that $\iota(B,D,f,g) \cong X$.
\end{proof}

The next definition is well-defined thanks to \cref{proposition:ephemeral-pair-contains-bands}.

\begin{definition}
    \label{remark:boundary-component-module}
    Given a boundary component $(\gamma, \Tcal)$, we define the \emph{boundary component representation} $\kbb_{(\gamma,\Tcal)} \in \boundaries$ as the isomorphism class of representations classified by $(\gamma, \Tcal)$, according to \cref{proposition:classification-of-bands}.
\end{definition}

\boundarytame*
\begin{proof}
    This follows directly from \cref{proposition:classification-of-bands,proposition:partial-is-band,proposition:ephemeral-pair-contains-bands}.
\end{proof}

\boundarycompletespreaddec*
\begin{proof}
    Let $I$ be finite a spread in $\Zbb^2$.
    Consider the discrete closed plane curve $\gamma_I$ given by first going along $\cokerxy{I}$ from its to top-left corner to its right-down corner, and then going along~$\kerxyc{I}$ from its right-down corner to its top-left corner.
    Then $\partial \kbb_I \cong \kbb_{\gamma, \Tcal}$, where $\Tcal$ is the $1\times 1$ identity matrix.
    The result then follows by additivity and the uniqueness of decompositions into boundary component representations (\cref{theorem:boundary-tame}).
\end{proof}

\begin{figure}
    \includegraphics[width=0.65\textwidth]{}
    \caption{\emph{Left.} A representation $M$ of $\Zbb^2$.
    \emph{Right.}
    The boundary $\partial M$ is isomorphic to the boundary of a direct sum of two spread representations with square support, but the rank invariant of $M$ is different from that of a direct sum of two spread representations.}
    \label{figure:band-not-rank}

    \vspace{0.5cm}

    \includegraphics[width=0.65\textwidth]{}
    \caption{\emph{Left.} A representation $M$ of $\Zbb^2$, for any chosen $\alpha \neq 1$.
    The representation takes values in subspaces of $\kbb^2$, and all structure morphisms are the inclusions between these.
    \emph{Right.} The boundary $\partial M$ consists of two boundary components with associated matrices the Jordan blocks $J_{1-\alpha,1} \in \kbb^{1 \times 1}$ and $J_{1/(1-\alpha),1} \in \kbb^{1 \times 1}$.
    }
    \label{figure:non-trivial-band}

    \vspace{0.5cm}

    \includegraphics[width=1\textwidth]{}
    \caption{Three representations of $\Zbb^2$.
    Each representation has two generators $a$ and $b$ (dots), two relations of the form $a=0$, two relations of the form $b=0$, and a relation of the form $a=b$ (crosses).
    The difference is in the location of the relation $a=b$, which moves up and to the right.
    Moving the relation makes it so that the boundary of first representation has a single component, while the boundary of the other representations have two.}
    \label{figure:instability}
\end{figure}

\begin{figure}
    \includegraphics[width=1\textwidth]{}
    \caption{A spread representation $M$ for which $\Ncaltwo(M) = 1$, and $\Ncaltwo\left(N\right) = 2$, where $N$ is the image of the morphism $\xbf\ybf : M[-1,-1] \to M$.}
    \label{figure:monotonicity-fail}

    \vspace{0.5cm}

    \reflectbox{\rotatebox[origin=c]{90}{\includegraphics[scale=0.8]{}}}
    \vspace{-0.9cm}
    \caption{A spread $I$ in a finite sublattice of $\Zbb^3$.
    The order complex of $I$ is homotopy equivalent to a wedge of two circles, and thus $\chi(\Delta(I)) = -1$; in particular, by \cref{theorem:count-is-euler,proposition:signed-barcode-equals-inclusion-exclusion,lemma:signed-barcode-hooks}, we have $\Ninclexcl(M) = \chirkdec(\kbb_I) = \chirkexact(\kbb_I) = -1$.
    On the other hand, we have $\chigpd(\kbb_I) = \chispreadexact(\kbb_I) = 1$.}
    \label{figure:three-parameter-negative}
\end{figure}

\section{Computational examples}
\label{section:examples}

\subsection{The rank does not determine the boundary and viceversa}
\label{section:rank-and-boundary}

The boundary is a complete invariant on spread-decomposable representations (\cref{corollary:boundary-complete-spread-dec}), and the rank invariant is not complete on spread-decomposable representations (see, e.g. \cite[Example~1.3]{botnan-oppermann-oudot}).
Thus, the rank invariant does not determine the boundary of a representation.
Conversely,
\cref{figure:band-not-rank} shows that the boundary does not determine the rank of a representation.
The representation in the example is taken from \cite[Example~4.6]{botnan-lesnick}.

\subsection{A boundary with non-trivial associated matrix}

\cref{theorem:boundary-tame} guarantees that the boundary $\partial M$ of a representation $M \in \repf(\Zbb^2)$ decomposes as a direct sum of boundary component representations $\kbb_{(\gamma, \Tcal)}$.
A priori, it could be that $\partial M$ always decomposes as boundary components where the matrix $\Tcal$ is the identity $\id_1 \in \kbb^{1\times 1}$.
\cref{figure:non-trivial-band} shows that this is not the case.
It is not hard to turn the example into one where the boundary components have matrices of higher dimension.

\subsection{Behavior of the boundary with respect to perturbations}
\label{section:perturbations}
\cref{figure:instability} shows that the death-curves of a representation can interact in non-trivial ways when the representation is perturbed in the interleaving distance.
The interleaving distance \cite{chazal-et-al-2,chazal-et-al,lesnick} is a distance for representations of posets such as $\Zbb^n$, and $\Rbb^n$, but the concept itself is not required to understand the example, since here the perturbation is simply given by moving the grade of one of the relations of the representation.

\subsection{Lack of monotonicity}
\label{section:monotonicity-fail}
\cref{figure:monotonicity-fail} shows that any additive count $\Ncal$ that takes the value $1$ on spreads automatically fails to be monotonic with respect to inclusions.
The representation in the example is taken from \cite[Example~9.1]{botnan-lesnick-2}.

\subsection{Higher dimensional counts}
\label{section:higher-dimensional-counts}
\cref{figure:three-parameter-negative} shows that one of the obvious candidates
for a count $\Ncal^3$ on $\rep(\Gcal^3)$ is, in general, not positive, and does not take the value $1$ on spreads.
The candidate is $\Ninclexcl = \chirkdec = \chirkexact$, where the equalities follow from \cref{proposition:signed-barcode-equals-inclusion-exclusion,lemma:signed-barcode-hooks}.
The example also shows that $\chigpd \neq \chirkdec \neq \chispreadexact$ in general.

\section{Algorithmic computation of end-curves}
\label{section:algorithm}

The goal of this section is to prove the following result, which relies mainly on \cite{lesnick-wright,bauer_et_al:LIPIcs.SoCG.2023.15}.
In this section, when $P$ is a (finitely generated) projective representation, we let $|P| = |\beta_0^P|$ represent the number of indecomposable summands (counted with multiplicity).
The \emph{size} of a projective presentation $Q \to P \to M$ is $|P| \times |Q|$, since such presentation is usually encoded by the grades of the indecomposable (projective) summands of $P$ and $Q$ together with a $|P|\times|Q|$-matrix.

\begin{proposition}
    \label{proposition:main-computation-result}
    Let $M \in \repf(\Zbb^2)$, and let $Q \to P \to M \to 0$ be a finite projective presentation.
    \begin{itemize}
    \item A projective presentation of $\cokerxy{M}$ of size $O(|P| \times |Q|)$ can be constructed in $O(|P| \times |Q|)$ time.
    \item A projective presentation of $\kerxy{M}$ of size $O(|P|\times|Q|)$ can be constructed in $O\left(|P|^3+|Q|^3\right)$ time.
    \end{itemize}
\end{proposition}

Before giving the proof of \cref{proposition:main-computation-result}, we comment on its context and implications.
\cref{proposition:main-computation-result} is relevant to applied persistence and to topological inference by means of multiparameter persistence \cite{botnan-lesnick}.
In this case, the input is typically a bifiltered simplicial complex $f : K \to \Zbb^2$.
Given such bifiltered complex, a minimal presentation of any of its homology representations $H_i(f) \in \rep(\Zbb^2)$ has size $O(|K|\times |K|)$ and can be computed in $O(|K|^3)$ time using the Lesnick--Wright algorithm \cite{lesnick-wright}.
In the special case of zero-dimensional homology ($i=0$), the computation can be done in $O(|K| \log |K|)$ time \cite{morozov-scoccola}.
Moreover, computational shortcuts exist, which can significantly reduce computational time in practice \cite{kerber2021fast,alonso2023filtration,bauer_et_al:LIPIcs.SoCG.2023.15}.
This justifies taking a presentation as input in \cref{proposition:main-computation-result} (as is standard, the finite support assumption can be ensured using coning; see, e.g., \cite[Section~4.1]{bauer_et_al:LIPIcs.SoCG.2023.15}).
The output of \cref{proposition:main-computation-result} is again a presentation, which is justified using recent work \cite{dey_et_al:LIPIcs.SoCG.2025.41}, which provides an algorithm for decomposing representations of $\Zbb^n$ into indecomposables, given a presentation as input.

\begin{lemma}
    \label{lemma:presentation-birth-curves}
    Let $M \in \rep(\Zbb^2)$, and let $Q \xrightarrow{\,\,f\,\,} P \to M \to 0$ be a projective presentation of $M$.
    Let $g : P[-1,-1] \to P$ be given by the action of $\xbf\ybf$.
    The morphism $f + g : Q \oplus P[-1,-1] \to P$ provides a presentation of $\cokerxy{M}$.
\end{lemma}
\begin{proof}
    The morphism $P \to \coker(f) = M$ is surjective, and thus $\cokerxy{M} = \coker(M[-1,-1] \to M) \cong \coker(P[-1,-1] \to M)$.
    We thus get $\coker(f + g) \cong \coker(P[-1,-1] \to M) \cong \cokerxy{M}$.
\end{proof}

Let $D = \Hom(-,\kbb) : \vect \to \vect^\op$ denote the dualization functor, and let
$D : \rep(\Zbb^n) \to \rep(\Zbb^n)$ also denote the following composite functor:
\[
    \rep(\Zbb^n) = 
    \vect^{\Zbb^n}
        \xrightarrow{D_*}
    (\vect^\op)^{\Zbb^n}
        \cong
    \vect^{(\Zbb^n)^\op}
    = \rep((\Zbb^n)^\op).
\]


The proof of the following result uses \cite{bauer_et_al:LIPIcs.SoCG.2023.15}; we do not introduce the language of that paper since it is only used here.

\begin{lemma}
    \label{lemma:computation-presentation-dual}
    Let $M \in \repf(\Zbb^2)$, and let $Q \to P \to M \to 0$ be a finite projective presentation.
    A projective presentation of $DM \in \repf(\Zbb^2)$ of size $O(|P| \times |Q|)$ can be constructed in $O\left(|P|^3+|Q|^3\right)$ time.
\end{lemma}
\begin{proof}
    By \cite[Corollary~14~(1$\Leftrightarrow$3)]{bauer_et_al:LIPIcs.SoCG.2023.15} (see also \cite[Corollary~3.27]{miller-thesis}), in order to build a projective presentation of $DM$ it is sufficient to complete the presentation $Q \to P \to M \to 0$ to a resolution $0 \to R \to Q \to P \to M \to 0$, and then transpose the matrix representing the morphism $R \to Q$.
    Completing the presentation to a resolution can be done in cubic time using \cite[Section~3]{lesnick-wright}.
    By \cref{lemma:inequalities-betti-tables-2}, we have $|R| \leq |Q|$, so the presentation of $DM$ thus constructed has size $O(|P|\times|Q|)$.
\end{proof}

\begin{proof}[Proof of \cref{proposition:main-computation-result}]
    The claim for $\cokerxy{M}$ follows directly from \cref{lemma:presentation-birth-curves}.
    For the case of $\kerxy{M}$, note that $D\kerxy{M} \cong \cokerxy{(D M[-1,-1])}$.
    A projective presentation of $DM$, and thus of $DM [-1,-1]$, can be computed in cubic time thanks to \cref{lemma:computation-presentation-dual}.
    A presentation of 
    $\cokerxy{(D M[-1,-1])} \cong D \kerxy{M}$ can then be computed in linear time using the first claim in this result.
    Since $DD \kerxy{M} \cong \kerxy{M}$, we can use \cref{lemma:computation-presentation-dual} again to obtain a presentation of $\kerxy{M}$ in cubic time, as required.
\end{proof}

%
%
%
%
%
%

\appendix

\section{Technical results about two-parameter spreads}

\begin{definition}
    If $I \subseteq \Zbb^2$ is a spread, we let $\min(I) \subseteq I$ denote the set of minimal elements of $I$.
    The \emph{$y$-ordering} on $\min(I)$ is the unique total ordering by increasing $y$-coordinate.
\end{definition}

The $y$-ordering is unique (i.e., no two elements of $\min(I)$ can have the same $y$-coordinate), since if $a,b \in \Zbb^2$ have the same $y$-coordinate, then they must be comparable, and thus they cannot both be minimal elements of $I$.
Note that the $y$-ordering on $\min(I)$ is, equivalently, the unique ordering by decreasing $x$-coordinate.

If $I$ is a finite spread, we denote the $y$-ordering of $\min(I)$ by $\{z_i\}_{i \in S}$ where $S = \{1, \dots, n\}$.

\begin{lemma}
    \label{lemma:join-of-consecutive-minima}
    Let $I \subseteq \Zbb^2$ be a finite spread, and let $\min(I) = \{z_i\}_{i \in S}$ be the $y$-ordering on its set of minimal elements.
    For every $i \in S$ such that $i+1 \in S$, the join $z_i \vee z_{i+1}$ is in $I$.
\end{lemma}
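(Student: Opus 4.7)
The plan is to argue by contradiction, refining a path that witnesses poset-connectedness into a walk in the Hasse diagram of $\Zbb^2$ and pinpointing a point on it whose existence contradicts the consecutiveness of $z_i$ and $z_{i+1}$ in the $y$-ordering.

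Write $z_i = (x_i, y_i)$ and $z_{i+1} = (x_{i+1}, y_{i+1})$. The $y$-ordering forces $y_i < y_{i+1}$, and incomparability of the two minima then forces $x_i > x_{i+1}$, so $p \coloneqq z_i \vee z_{i+1} = (x_i, y_{i+1})$. Assume for contradiction that $p \notin I$. Since $z_i \leq p$ and $I$ is poset-convex, no $q \in I$ can satisfy $q \geq p$, because otherwise $p$ would sit between $z_i$ and $q$ and would itself lie in $I$.

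Using poset-connectedness and poset-convexity, choose a walk $z_i = w_0, w_1, \ldots, w_k = z_{i+1}$ in $I$ in which every step changes exactly one coordinate by $\pm 1$; such a refinement exists because the lattice interval between any two comparable elements of $I$ is contained in $I$. Let $j^\ast$ be the smallest index with $x(w_{j^\ast}) < x_i$; by the unit-step property, $x(w_{j^\ast - 1}) = x_i$, $x(w_{j^\ast}) = x_i - 1$, and $y(w_{j^\ast - 1}) = y(w_{j^\ast}) =: b$. Minimality of $z_i$ applied to $(x_i, b) = w_{j^\ast - 1} \in I$ rules out $b < y_i$, while the observation of the previous paragraph rules out $b \geq y_{i+1}$ (else $(x_i, b) \geq p$), so $y_i \leq b < y_{i+1}$.

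Since $I$ is finite, there is a minimal element $z_j \leq w_{j^\ast} = (x_i - 1, b) \in I$, so $x_j < x_i$ and $y_j < y_{i+1}$. The indexing of the minima by strictly increasing $y$-coordinate (equivalently strictly decreasing $x$-coordinate) turns the first inequality into $j > i$ and the second into $j < i+1$, a contradiction. The only delicate step in the whole argument is the Hasse-diagram refinement of the path, which is immediate from poset-convexity; the rest is bookkeeping, and the final contradiction is precisely where the consecutiveness of $z_i$ and $z_{i+1}$ enters the argument, since it is what forbids any intermediate index $j$ from existing.
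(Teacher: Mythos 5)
Your proof is correct, and it takes a genuinely different route than the paper's. The paper's argument starts from a shortest alternating path $z_i \leq a_1 \geq a_2 \leq \cdots \geq z_{i+1}$ in $I$, canonicalizes it by replacing valleys with minimal elements and peaks with joins of consecutive valleys, and then argues that the sequence of visited minima must have strictly increasing $y$-coordinate (else the path could be shortened); since $z_i$ and $z_{i+1}$ are adjacent in the $y$-ordering, the canonical path can only be $z_i \leq (z_i\vee z_{i+1}) \geq z_{i+1}$, so the join is in $I$. Your argument instead works coordinatewise and by contradiction: you refine a connecting path to a unit-step walk in the Hasse diagram of $\Zbb^2$ (a refinement licensed by poset-convexity), locate the first step where the $x$-coordinate drops below $x_i$, and trap that step's $y$-coordinate $b$ in the window $y_i \leq b < y_{i+1}$ using minimality of $z_i$ on one side and the non-existence of elements of $I$ above $p = z_i \vee z_{i+1}$ on the other; any minimal element of $I$ below $(x_i-1,b)$ must then sit strictly between $z_i$ and $z_{i+1}$ in the ordering, which is impossible. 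What the paper's approach buys is a positive description of the reduced path, which generalizes more readily; what your approach buys is that the minimality claim is entirely local and explicit (a first-crossing argument), sidestepping the paper's slightly delicate appeal to ``a shorter path could be obtained,'' which requires some care to justify rigorously in the case where the valley $y$-coordinate decreases rather than repeats.
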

\begin{proof}
    Choose a path from $z_i$ to $z_{i+1}$ contained in $I$ of minimum length:
    \[
        z_i \leq a_1 \geq a_2 \leq a_3 \geq a_4 \leq \cdots \geq a_{\ell-1} \leq a_\ell \geq z_{i+1}.
    \]
    Since every element in this path is in $I$, we can replace the $a_j$ with $j$ even by some $z_{i_j}$, to get a path
    \[
        z_i \leq a_1 \geq z_{i_2} \leq a_3 \geq z_{i_4} \leq \cdots \geq z_{i_{\ell-1}} \leq a_\ell \geq z_{i+1}
    \]
    contained in $I$.
    Next, since $I$ is poset-convex, we can replace the $a_j$ with $j$ odd by a join, as follows
    \[
        z_i \leq (z_i \vee z_{i_1}) \geq z_{i_2} \leq (z_{i_2} \vee z_{i_4}) \geq z_{i_4} \leq \cdots \geq z_{i_{\ell-1}} \leq (z_{i_{\ell-1}} \vee z_{i+1}) \geq z_{i+1},
    \]
    to again get a path contained in $I$.

    For every even $j$, it must be the case that the $y$-coordinate of $z_{i_{j+2}}$ is strictly larger than that of~$z_{i_{j}}$.
    This is because the $y$-coordinate of $z_{i+1}$ is larger than that of $z_i$, so if the path either decreases the $y$-coordinate at some point, or it keeps it the same, one could get a path of smaller length by skipping this.
    Finally, the only possible path of that form that strictly increases the $y$-coordinate is
    $z_i \leq (z_i \vee z_{i+1}) \geq z_{i+1}$, and thus $z_i \vee z_{i+1} \in I$.
\end{proof}

\begin{definition}
    \label{definition:cokerxy-spread}
    Let $I \subseteq \Zbb^2$ be a spread.
    Define
    \begin{align*}
        \cokerxyspread{I} = \left\{a \in I : a-(1,1) \notin I \right\}
        \;\; \text{and} \;\;\;
        \kerxyspread{I} = \left\{a \notin I : a-(1,1) \in I\right\}.
    \end{align*}
\end{definition}

\begin{lemma}
    \label{lemma:boundary-of-spread-is-spread}
    If $I \subseteq \Zbb^2$ is a finite spread, then $\cokerxyspread{I}$ and $\kerxyspread{I}$ are poset-connected.
\end{lemma}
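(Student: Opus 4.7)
The plan is to prove poset-connectedness of $\cokerxyspread{I}$; the case of $\kerxyspread{I}$ then follows by applying the same argument to $-I\subseteq\Zbb^2$ and using the order-reversing bijection $(x,y)\mapsto(-x,-y)$ (cf.~\cref{section:duality}), which, up to a shift by $(1,1)$, carries $\cokerxyspread{-I}$ bijectively to $\kerxyspread{I}$. The first step is to show that every $a\in\cokerxyspread{I}$ is connected in $\cokerxyspread{I}$ to some minimum of $I$. The key observation is that if $a=(x,y)\in\cokerxyspread{I}$ is not minimal in $I$, then at least one of $(x-1,y)$ or $(x,y-1)$ lies in $I$: if every $b<a$ in $I$ had both coordinates strictly smaller than $a$, then $b\leq a-(1,1)$ and poset-convexity would force $a-(1,1)\in I$, contradicting $a\in\cokerxyspread{I}$. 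A further poset-convexity argument (again using $a-(1,1)\notin I$) shows the chosen neighbor lies in $\cokerxyspread{I}$; iterating descends inside $\cokerxyspread{I}$ to a minimum of $I$.

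The second step is to show that consecutive minima $z_i=(x_i,y_i)$ and $z_{i+1}=(x_{i+1},y_{i+1})$ in the $y$-ordering of $\min(I)$ (so $x_{i+1}<x_i$ and $y_i<y_{i+1}$) lie in the same component of $\cokerxyspread{I}$. By \cref{lemma:join-of-consecutive-minima}, $z_i\vee z_{i+1}=(x_i,y_{i+1})\in I$. For each $x\in[x_{i+1},x_i]$, define $\alpha_x=\min\{y\in[y_i,y_{i+1}]:(x,y)\in I\}$. The $y$-ordering of minima forces $y_j<y_i\Rightarrow x_j>x_i$, so no element of $I$ in any column $x\leq x_i$ has $y$-coordinate below $y_i$; combining this with poset-convexity yields $\alpha_{x_i}=y_i$, $\alpha_x>y_i$ for $x<x_i$, and $\alpha$ non-decreasing as $x$ decreases. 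One then builds the connecting path by concatenating, for $x$ descending from $x_i$ to $x_{i+1}+1$, the column segments $\{x\}\times[\alpha_x,\alpha_{x-1}]$, and concluding with an ascent in column $x_{i+1}$ from $(x_{i+1},\alpha_{x_{i+1}})$ up to $z_{i+1}$; each segment lies in $\cokerxyspread{I}$ by iterated poset-convexity arguments.

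The main obstacle is verifying the final ascent in column $x_{i+1}$: the naive column segment can leave $\cokerxyspread{I}$ if column $x_{i+1}-1$ contains a block of elements of $I$ with $y$-coordinates in the interval $(\alpha_{x_{i+1}}-1,\, y_{i+1}-1)$. The key input for handling this is that $z_{i+1}\in\cokerxyspread{I}$ forces $(x_{i+1}-1,y_{i+1}-1)\notin I$; together with the $y$-ordering constraints and repeated poset-convexity, this restricts the structure of columns to the left of $x_{i+1}$ enough that one can detour the path one or more columns further left and re-enter column $x_{i+1}$ above any obstructing block, thereby reaching $z_{i+1}$ within $\cokerxyspread{I}$ and completing the proof.
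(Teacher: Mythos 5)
Your high-level plan — connect every element of $\cokerxyspread{I}$ to a minimum of $I$, then connect consecutive minima — matches the paper's. However, the execution is substantially more complicated than needed, and the second step contains a genuine confusion.

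The key thing you overlooked is that ``poset-connected'' is defined via paths $x_1, \dots, x_k$ in which consecutive elements are merely \emph{comparable}, not Hasse-neighbors. This makes both steps nearly immediate. For the first step: if $p \in \cokerxyspread{I}$ then since $p \in I$ some $z_i \leq p$, and the two-element chain $z_i \leq p$ is already a path in $\cokerxyspread{I}$ (as $\min(I) \subseteq \cokerxyspread{I}$) — no descent argument is needed. For the second step, the paper simply uses the three-element zigzag $z_k \leq z_k \vee z_{k+1} \geq z_{k+1}$, so the whole problem reduces to one check: $(z_k \vee z_{k+1}) - (1,1) \notin I$, which follows quickly from the $y$-ordering (any $z_\ell \leq (z_k \vee z_{k+1}) - (1,1)$ would need $\ell < k$ or $\ell > k+1$, and either way the coordinates are wrong). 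Your staircase construction proves the equivalent (and stronger-looking) statement of Hasse-connectedness, which is fine in principle since poset-convexity upgrades one to the other, but it is a lot of extra work.

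More importantly, your step two contains a miscalculation that both manufactures the ``main obstacle'' and leaves it unresolved. With $z_i=(x_i,y_i)$, $z_{i+1}=(x_{i+1},y_{i+1})$, one can check directly that $\alpha_x = y_{i+1}$ for \emph{every} $x$ with $x_{i+1} \leq x < x_i$: if $(x,y)\in I$ with $y_i \leq y < y_{i+1}$, then $(x,y)$ dominates some $z_\ell$; $y < y_{i+1}$ forces $\ell \leq i$, while $x < x_i$ forces $\ell > i$, a contradiction. So $\alpha$ is not a genuinely varying staircase — it equals $y_i$ at $x_i$ and jumps immediately to $y_{i+1}$. Your path is therefore just the L-shape from $z_i$ straight up to $z_i \vee z_{i+1}$ and then straight left to $z_{i+1}$. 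In particular $\alpha_{x_{i+1}} = y_{i+1}$, so the ``final ascent in column $x_{i+1}$'' has length zero, and the interval $(\alpha_{x_{i+1}}-1,\,y_{i+1}-1)$ that you worry about is empty: there is no obstructing block. The ``detour one or more columns further left and re-enter above any obstructing block'' you invoke is thus both unnecessary and, as written, not a proof — it does not specify the detour nor verify that it stays inside $\cokerxyspread{I}$. As a result the proposal, read as written, has a gap at exactly the point you flagged as the main difficulty.
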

\begin{proof}
    By duality (\cref{section:duality}), it is sufficient to prove that $\cokerxyspread{I}$ is poset-connected.

    Let $\min(I) = \{z_i\}_{i \in S}$ be the $y$-ordering on the set of minimal elements of $I$.
    Clearly, we have $\min(I) \subseteq \cokerxyspread{I}$.
    Now, for every $p \in \cokerxyspread{I}$, there exists a path from $p$ to some $z_i \in \min(I)$ in $\cokerxyspread{I}$: indeed, since $p \in I$, there exists $i \in S$ such that $z_i \leq p$.
    Thus, we need to prove that, for every $i < j \in S$ there exists a path from $z_i$ to $z_j$ in $\cokerxyspread{I}$.
    We claim that the following is a path from $z_i$ to $z_j$ in $\cokerxyspread{I}$:
    \[
        z_i \leq (z_i \vee z_{i+1}) \geq z_{i+1} \leq (z_{i+1} \vee z_{i+2}) \geq z_{i+2} \leq \dots \geq z_{j-1} \leq (z_{j-1} \vee z_j) \geq z_j.
    \]

    To prove the above, it suffices to show that, for every $k \in S$ such that $k+1 \in S$, we have that $z_k \vee z_{k+1}$ is in $\cokerxyspread{I}$.
    By \cref{lemma:join-of-consecutive-minima}, we know that $z_k \vee z_{k+1} \in I$, so we need to prove that $(z_k \vee z_{k+1}) - (1,1) \notin I$.
    If $(z_k \vee z_{k+1}) - (1,1)$ were in $I$, then there would exist $z_\ell \leq (z_k \vee z_{k+1}) - (1,1)$.
    But then either $\ell < k$ or $\ell > k+1$.
    In the first case, the $x$-coordinate of $z_\ell$ would be strictly larger than that of $z_k$, which contradicts $z_\ell \leq (z_k \vee z_{k+1}) - (1,1)$, since the $x$-coordinate of $z_k \vee z_{k+1}$ is that of $z_k$.
    In the second case, the $y$-coordinate of $z_\ell$ would be strictly larger than that of $z_{k+1}$, which also contradicts $z_\ell \leq (z_k \vee z_{k+1}) - (1,1)$, since the $y$-coordinate of $z_k \vee z_{k+1}$ is that of $z_{k+1}$.
    So $(z_k \vee z_{k+1}) - (1,1) \notin I$, concluding the proof.
\end{proof}

\begin{lemma}
    \label{lemma:birth-death-of-spread}
    If $I \subseteq \Zbb^2$ is a finite spread, then
    $\births(\kbb_I) = \{\cokerxyspread{I}\}$ and
    $\deaths(\kbb_I) = \{\kerxyspread{I}\}$.
    Equivalently, we have
    $\cokerxy{\kbb_I} = \kbb_{\cokerxyspread{I}}$ and $\kerxy{\kbb_I} = \kbb_{\kerxyspread{I}}$.
\end{lemma}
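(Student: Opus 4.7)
The plan is to compute both sides of each claimed equality directly at the level of pointwise dimensions and structure morphisms, and then invoke the fact that spread curve representations are characterized by their support.

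First I would unfold the definitions at each point $p \in \Zbb^2$. For $M = \kbb_I$, the structure morphism $M_{p-(1,1)} \to M_p$ is the identity when both $p-(1,1) \in I$ and $p \in I$ (using poset-convexity of $I$), and zero otherwise. So
\[
\cokerxy{\kbb_I}_p \;=\; (\kbb_I)_p \,\big/\, \xbf\ybf\cdot (\kbb_I)_{p-(1,1)} \;=\; \begin{cases} \kbb & \text{if } p \in I \text{ and } p - (1,1) \notin I, \\ 0 & \text{otherwise,}\end{cases}
\]
so the support of $\cokerxy{\kbb_I}$ is exactly $\cokerxyspread{I}$. An analogous computation shows $\kerxy{\kbb_I}$ is supported on $\kerxyspread{I}$ with pointwise dimension one there.

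Second, I would verify that $\cokerxyspread{I}$ is a spread curve. Thinness is immediate: if $p \in \cokerxyspread{I}$, then $p \in I$, and $p+(1,1)$ fails the condition $p \notin I$, so $p+(1,1) \notin \cokerxyspread{I}$. Poset-connectedness is exactly \cref{lemma:boundary-of-spread-is-spread}. For poset-convexity, suppose $p \leq q \leq r$ with $p, r \in \cokerxyspread{I}$; then $q \in I$ by convexity of $I$, and if $q - (1,1) \in I$ we would have $q-(1,1) \leq r-(1,1) \leq r$, so convexity of $I$ forces $r-(1,1) \in I$, contradicting $r \in \cokerxyspread{I}$. Hence $\cokerxyspread{I}$ is a spread curve; the argument for $\kerxyspread{I}$ is formally dual (or, by the self-duality discussed in \cref{section:duality}, reduces to the $\cokerxy{}$ case applied to the dual of $\kbb_I$).

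Third, I would identify the structure morphisms. For any cover relation $p \leq q$ with both $p, q \in \cokerxyspread{I}$, the structure morphism $\cokerxy{\kbb_I}_p \to \cokerxy{\kbb_I}_q$ is induced by the identity map $(\kbb_I)_p \to (\kbb_I)_q$, and since both quotients are one-dimensional and nonzero, the induced map is the identity $\kbb \to \kbb$. Combined with the previous step, this shows $\cokerxy{\kbb_I}$ is the unique representation (up to isomorphism) with support $\cokerxyspread{I}$ and pointwise dimension one, namely $\kbb_{\cokerxyspread{I}}$. The same verification applies to $\kerxy{\kbb_I}$ on $\kerxyspread{I}$, yielding $\kerxy{\kbb_I} \cong \kbb_{\kerxyspread{I}}$. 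Finally, the equality $\births(\kbb_I) = \{\cokerxyspread{I}\}$ and $\deaths(\kbb_I) = \{\kerxyspread{I}\}$ follows from \cref{definition:births-deaths}.

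No step here is genuinely hard, and in fact the delicate point is simply the convexity argument for $\cokerxyspread{I}$ and $\kerxyspread{I}$: it is tempting but wrong to try to derive $p - (1,1) \in I$ from $q - (1,1) \in I$; the correct move is to push upward to $r$ using $q \leq r$, where the hypothesis on the endpoint is actually usable. Once one spots this, the rest is bookkeeping.
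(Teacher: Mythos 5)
Your proof is correct and tracks the paper's overall strategy: compute the support of the cokernel and kernel, invoke \cref{lemma:boundary-of-spread-is-spread} for poset-connectedness, and conclude. The genuine difference is how the remaining spread-curve conditions are obtained. You establish poset-convexity of $\cokerxyspread{I}$ and $\kerxyspread{I}$ by a direct argument, and you correctly identify the subtlety (push the hypothesis from $q$ up to $r$, not down to $p$). The paper instead routes through \cref{lemma:submodule-of-spread-is-spread}: since $\kerxy{\kbb_I}$ is a subrepresentation of the spread representation $\kbb_I[-1,-1]$, that lemma gives spread-decomposability, hence convexity of the support, for free; \cref{lemma:boundary-of-spread-is-spread} then forces a single summand; and the self-duality of $\Zbb^2$ disposes of the cokernel case. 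Your approach buys a hands-on proof that avoids the subrepresentation lemma; the paper's buys brevity and handles only one of the two cases. One small loose end: the sentence claiming $\cokerxy{\kbb_I}$ is ``the unique representation with support $\cokerxyspread{I}$ and pointwise dimension one'' is overstated as phrased, since such representations are not unique without a constraint on the structure maps. But this is cosmetic: your preceding check that every cover-relation morphism on the support is the identity already yields the identification with $\kbb_{\cokerxyspread{I}}$ directly, and one could alternatively cite \cref{theorem:decomposition-theorem-ephemeral} together with connectedness of the support to get a single spread-curve summand.
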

\begin{proof}
    By duality (\cref{section:duality}), it is sufficient to prove $\deaths(\kbb_I) = \{\kerxyspread{I}\}$.
    The support of $\kerxy{\kbb_I} = \ker\left(\kbb_I[-1,-1] \xrightarrow{\xbf\ybf} \kbb_I\right) \subseteq \kbb_I[-1,-1]$
    consists of those $a \notin I$ such that $a-(1,1) \in I$, which is equal to $\kerxyspread{I}$, by definition.
    Since any subrepresentation of a spread representation is spread-decomposable (\cref{lemma:submodule-of-spread-is-spread}), we have that $\births(\kbb_I)$ is equal to $\{\kbb_C\}$ with $C$ ranging over the poset-connected components of $\kerxyspread{I}$.
    To conclude the proof, we use \cref{lemma:boundary-of-spread-is-spread}, which implies that  $\kerxyspread{I}$ is poset-connected.
\end{proof}

\begin{lemma}
    \label{lemma:unique-minimum-below}
    Let $I \subseteq \Zbb^2$ be a finite spread curve, and let $\min(I) = \{z_i\}_{i \in S}$ be the $y$-ordering on the set of minimal elements of $I$.
    Suppose that $p \in I$ and $p \neq z_i \vee z_{i+1}$ for all $i \in S$ such that $i+1 \in S$.
    Then, there exists a unique $i \in S$ such that $z_i \leq p$.
\end{lemma}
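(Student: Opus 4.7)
The plan is to handle existence by a trivial finiteness argument, and uniqueness by exploiting the defining property of a spread curve to force two minima $\leq p$ to pin down $p$ as their join.

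For \emph{existence}, note that $\{q \in I : q \leq p\}$ is a non-empty (it contains $p$) finite subset of $I$, so it has a minimal element $q$. Any $q' \in I$ with $q' \leq q$ lies in this set and hence equals $q$, so $q \in \min(I)$.

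For \emph{uniqueness}, the key preliminary observation is the following: if $q \leq q' \in I$ with both $q, q' \in I$ and $I$ a spread curve, then $q' - q \not\geq (1,1)$, i.e.\ either $q_x = q'_x$ or $q_y = q'_y$. Indeed, if $q' \geq q + (1,1)$, then $q \leq q+(1,1) \leq q'$ forces $q+(1,1) \in I$ by poset-convexity, contradicting the spread curve condition applied to $q$.

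Now suppose $z_i \leq p$ and $z_j \leq p$ with $i < j$ in the $y$-ordering, so $z_{i,x} > z_{j,x}$ and $z_{i,y} < z_{j,y}$. Applying the observation to both pairs, each of the equalities $p_x = z_{i,x}$ or $p_y = z_{i,y}$ holds, and similarly one of $p_x = z_{j,x}$ or $p_y = z_{j,y}$ holds. Three of the four resulting cases are ruled out:
\begin{itemize}
    \item $p_x = z_{i,x}$ and $p_x = z_{j,x}$ is impossible since $z_{i,x} \neq z_{j,x}$;
    \item $p_y = z_{i,y}$ and $p_y = z_{j,y}$ is impossible for the analogous reason;
    \item $p_y = z_{i,y}$ and $p_x = z_{j,x}$ forces $p_y = z_{i,y} < z_{j,y}$, contradicting $z_j \leq p$.
\end{itemize}
The remaining case gives $p = (z_{i,x}, z_{j,y}) = z_i \vee z_j$.

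To conclude, I would argue that in fact $p = z_k \vee z_{k+1}$ for some consecutive pair, contradicting the hypothesis on $p$. If $j = i+1$ this is immediate. If $j > i+1$, consider $z_{i+1}$: since $z_{i+1,x} < z_{i,x} = p_x$ and $z_{i+1,y} < z_{j,y} = p_y$, we have $z_{i+1} \leq p$, so the argument of the previous paragraph applied to the pair $(z_i, z_{i+1})$ gives $p = z_i \vee z_{i+1}$, contradicting the hypothesis. The main (though very mild) subtlety is simply keeping track of the fact that the $y$-ordering on $\min(I)$ coincides with the reverse $x$-ordering, which is what drives the case analysis above.
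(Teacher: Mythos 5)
Your proof is correct and rests on the same central idea as the paper's: the spread-curve ``thinness'' condition forces $p$ to equal a join of two minima, and a second pass shows that this join must be of two \emph{consecutive} minima, contradicting the hypothesis. The organization differs slightly. The paper first shows $z_{i+1}\leq p$ (so it may assume $j=i+1$ from the outset), then observes $z_i\vee z_{i+1}\leq p$, and finally invokes thinness by exhibiting two concrete points $a$ and $a-(1,1)$ that would both lie in $I$ if $p\neq z_i\vee z_{i+1}$. You instead isolate a clean reformulation of thinness — for comparable $q\leq q'$ in a spread curve one has $q'_x=q_x$ or $q'_y=q_y$ — and apply it to the pairs $(z_i,p)$ and $(z_j,p)$, getting $p=z_i\vee z_j$ directly by a four-case coordinate analysis, and only afterwards reduce to consecutive indices. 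Your version has the advantage of making the use of the spread-curve hypothesis completely transparent (it enters once, as a reusable observation), at the cost of a somewhat longer case analysis; the paper's version is more compact but buries the thinness argument inside the construction of $a$. Both are valid; one small thing worth double-checking in your write-up is that in the final step you apply the case analysis to $(z_i,z_{i+1})$ after verifying $z_{i+1}\leq p$, which you do correctly from $z_{i+1,x}<z_{i,x}=p_x$ and $z_{i+1,y}<z_{j,y}=p_y$.
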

\begin{proof}
    Since $p \in I$, there must exist at least one $i \in S$ such that $z_i \leq p$.
    Assume that there also exists $j \in S$ such that $z_j \leq p$, with $i \neq j$, and assume, without loss of generality, that $i < j$.
    Then, we must also have $z_{i+1} \leq p$, since the $x$-coordinate of $z_{i+1}$ is at most that of $z_i$, which is at most that of $p$; and the $y$-coordinate of $z_{i+1}$ is at most that of $z_j$, which is at most that of $p$.
    This implies that $(z_i \vee z_{i+1}) \leq p$.
    All elements between $z_i \vee z_{i+1}$ and $p$ must be in $I$, since $I$ poset-convex.
    Towards a contradiction, assume that $p \neq (z_i \vee z_{i+1})$.
    Then either $(z_i \vee z_{i+1}) + (1,0) \in I$ or
    $(z_i \vee z_{i+1}) + (0,1) \in I$.
    Let us assume $a = (z_i \vee z_{i+1}) + (1,0) \in I$ (the other case is analogous).
    Then, we have that $z_i \leq a - (1,1) \leq a$, so both $a$ and $a - (1,1)$ are in $I$, which violates the fact that $I$ is a spread curve.
    This means that $p = (z_i \vee z_{i+1})$, concluding the proof.
\end{proof}

\begin{lemma}
    \label{lemma:nested-union-contractible}
    Let $K = \bigcup_{n > 1 \in \Nbb} K_n$ be a nested, countable union of contractible simplicial complexes.
    Then $K$ is contractible.
\end{lemma}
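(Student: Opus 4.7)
The plan is to deduce contractibility from Whitehead's theorem: a CW complex with trivial homotopy groups is contractible, and a simplicial complex with its standard (weak) topology is naturally a CW complex, so it suffices to show that $\pi_k(K) = 0$ for every $k \geq 0$.

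First I would check path-connectedness. Given any two points $x,y \in K$, each lies in some simplex, hence in some $K_n$ (by nestedness, both lie in a common $K_n$). Since $K_n$ is contractible it is path-connected, and the resulting path in $K_n$ is a path in $K$. For $k \geq 1$, I would argue that any continuous map $f : S^k \to K$ is null-homotopic. The image $f(S^k)$ is compact, and a compact subset of a CW complex meets only finitely many open cells, hence lies in a finite subcomplex of $K$. Since this finite subcomplex has only finitely many simplices and each simplex lies in some $K_n$, by the nesting the whole finite subcomplex lies in some $K_N$. As $K_N$ is contractible, $f$ factors through a null-homotopy $H : S^k \times [0,1] \to K_N \hookrightarrow K$, so $[f] = 0$ in $\pi_k(K)$. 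Whitehead's theorem then yields that $K$ is contractible.

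The only subtle point will be the compact-containment argument: one needs that $K$, equipped with the colimit topology coming from the $K_n$, coincides with $K$ equipped with its standard CW topology. This is immediate since a nested exhaustion of a simplicial complex by subcomplexes gives the same weak topology as the original one, so no genuine obstacle arises. Everything else is a direct application of elementary CW methods, and no further results from the paper are required beyond the contractibility hypothesis on each $K_n$.
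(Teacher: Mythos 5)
Your proof is correct and follows essentially the same route as the paper's: the paper also observes that $\pi_k(K)\cong\colim_n\pi_k(K_n)$ by compactness of spheres and the interval, and then invokes Whitehead's theorem. You have simply unpacked the compactness argument in more detail, which is fine.
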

\begin{proof}
    It is standard that $\pi_k(K) \cong \colim_{n} \pi_k(K_n)$ for all $k \in \Nbb$ (since spheres and the unit interval are compact), and the right-hand side is trivial by assumption.
    Then $K$ is contractible by Whitehead theorem.
\end{proof}

The next lemma has a somewhat tedious proof, although the statement is easy to believe.

\begin{lemma}
    \label{lemma:spread-contractible}
    If $I \subseteq \Zbb^2$ is a spread, then $\Delta(I)$ is contractible.
\end{lemma}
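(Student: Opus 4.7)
The plan is to reduce to the finite case via Lemma~\ref{lemma:nested-union-contractible} and then apply the nerve lemma to a natural cover by up-sets of minimal elements.

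\emph{Reduction to finite subspreads.} I would enumerate $I = \{p_1, p_2, \ldots\}$ and inductively build $I_1 \subseteq I_2 \subseteq \cdots$ with $\bigcup_n I_n = I$: set $I_1 = \{p_1\}$, and given $I_n$, pick a finite path in $I$ (exists by poset-connectedness) from some element of $I_n$ to $p_{n+1}$, and let $I_{n+1}$ be the poset-convex hull in $\Zbb^2$ of $I_n$ together with this path. Each $I_n$ is poset-convex by construction, finite since the poset-convex hull of a finite set in $\Zbb^2$ lies inside a finite bounding box, contained in $I$ by poset-convexity of $I$, and poset-connected since every element of a poset-convex hull of a poset-connected set is comparable to an element of that set. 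Then $\Delta(I) = \bigcup_n \Delta(I_n)$ is a nested union of simplicial complexes, so Lemma~\ref{lemma:nested-union-contractible} reduces the problem to the case of finite $I$.

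\emph{Nerve lemma cover.} Assume $I$ is finite, let $\min(I) = \{z_1, \ldots, z_n\}$ be the $y$-ordering, and set $J_i \coloneqq \{x \in I : x \geq z_i\}$. The $J_i$ cover $I$, each has minimum $z_i$ so $\Delta(J_i)$ is a cone hence contractible, and for any non-empty intersection $\bigcap_{i \in S} J_i$, any $x$ in the intersection satisfies $x \geq \bigvee_{i \in S} z_i$; poset-convexity of $I$ applied to $z_{\min S} \leq \bigvee_{i \in S} z_i \leq x$ then gives $\bigvee_{i \in S} z_i \in I$, so this is the minimum of the intersection and the intersection is contractible. The nerve lemma yields $\Delta(I) \simeq \mathcal{N}$, where $\mathcal{N}$ is the nerve of the cover.

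\emph{Contractibility of $\mathcal{N}$.} The $y$-ordering is strict with $x$-coordinates decreasing, so $z_{i_j} \leq z_{i_1} \vee z_{i_\ell}$ for $1 \leq j \leq \ell$, and hence $\{i_1 < \cdots < i_\ell\} \in \mathcal{N}$ iff $z_{i_1} \vee z_{i_\ell} \in I$. Moreover, if $i \leq i' \leq j' \leq j$ and $z_i \vee z_j \in I$, a direct coordinate calculation gives $z_{i'} \vee z_{j'} \leq z_i \vee z_j$, so poset-convexity yields $z_{i'} \vee z_{j'} \in I$. Setting $j_i \coloneqq \max\{k : z_i \vee z_k \in I\}$ and $F_i \coloneqq \{i, i+1, \ldots, j_i\}$, this monotonicity shows each $F_i$ is a simplex, every simplex of $\mathcal{N}$ is contained in some $F_i$, and thus $\mathcal{N} = F_1 \cup \cdots \cup F_n$. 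By Lemma~\ref{lemma:join-of-consecutive-minima}, $j_i \geq i+1$ for $i < n$, so for $i \geq 2$ the intersection $F_i \cap (F_1 \cup \cdots \cup F_{i-1})$ equals $\{i, i+1, \ldots, M\}$ for some $i \leq M \leq j_i$ (namely $M = \min(j_i, \max_{k < i} j_k)$), a non-empty subsimplex of $\mathcal{N}$, hence contractible. Since attaching a contractible simplex along a non-empty contractible subcomplex preserves contractibility (by Mayer--Vietoris and Van Kampen, together with Whitehead's theorem), induction on $i$ gives that $\mathcal{N}$ is contractible.

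The main obstacle is the combinatorial structure analysis in the last paragraph: the fact that $\mathcal{N}$ has this clean ``linearly-overlapping interval'' structure depends essentially on the planarity of $\Zbb^2$, through the $y$-ordering of minima and the resulting monotonicity of the predicate ``$z_i \vee z_j \in I$''. This fails in $\Zbb^n$ for $n \geq 3$, consistent with Figure~\ref{figure:three-parameter-negative} where a spread in $\Zbb^3$ has the homotopy type of a wedge of two circles.
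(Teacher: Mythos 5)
Your proof is correct, and it takes a genuinely different route from the paper's. Both proofs reduce to the finite case (you via poset-convex hulls of paths, the paper via intersections $I \cap [-(n,n),(n,n)]$; both work, though your reduction avoids the paper's unjustified-but-true assertion that the box-intersection is poset-connected) and both then exploit the $y$-ordering of $\min(I)$ together with \cref{lemma:join-of-consecutive-minima} and the nerve lemma. The difference is in how the nerve lemma is deployed: the paper inducts on $|\min(I)|$, at each step covering $\Delta(I)$ by just two subcomplexes $\Delta(J)$ and $\Delta(J')$ (where $J$ is the up-set of $z_1$ and $J'$ is the union of the other up-sets), and showing $J$, $J'$, $J\cap J'$ are themselves spreads with fewer minima; you instead apply the nerve lemma once, using the full cover $\{J_i\}$ by up-sets of all minima, and then analyze the nerve $\mathcal{N}$ directly, showing it is a union of ``interval'' simplices $F_i = \{i,\ldots,j_i\}$ glued along non-empty faces in a monotone way. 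The paper's recursion stays at the level of sub-spreads and has a slightly shorter inductive step; your version makes the combinatorial structure of the nerve completely explicit (a linearly-overlapping chain of simplices) and so surfaces more clearly exactly what planarity is buying you — namely the monotonicity of the predicate $z_i \vee z_j \in I$ in the $y$-ordering — which is a nice additional insight consistent with the failure in $\Zbb^3$ shown in \cref{figure:three-parameter-negative}.
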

\begin{proof}
    For $n > 1 \in \Nbb$, let $I_n \coloneqq I \cap [-(n,n),(n,n)]$, which is easily seen to be either empty or a finite spread.
    Every chain in $I$ must lie in $I_n$ for some $n$, so $\Delta(I) = \bigcup_{n>1 \in \Nbb} \Delta(I_n)$ is a nested, countable union of simplicial complexes.
    By \cref{lemma:nested-union-contractible}, it is then sufficient to prove that $\Delta(I_n)$ is contractible for sufficiently large $n$.
    Thus, without loss of generality, we assume that $I$ is a finite spread, and we prove that $\Delta(I)$ is contractible.

    We proceed by induction on the cardinality of $\min(I)$.
    If $|\min(I)| = 1$, then $I$ has minimum, and thus $\Delta(I)$ is contractible by Claim 1 in the proof of \cref{theorem:count-is-euler}.
    Otherwise, let $Z \coloneqq \min(I) = \{z_1, \dots, z_k\}$ be the $y$-ordering of the minimal elements of $I$.

    Let $Z' \coloneqq Z \setminus \{z_1\}$.
    Let $J = \{a \in I : z_1 \leq a\}$ and $J' = \{a \in I : \exists z' \in Z', z' \leq a\}$, so that $I = J \cup J'$.
    Every chain in $I$ is contained in either $J$ or $J'$ (or both), so $\Delta(J) \cup \Delta(J') = \Delta(I)$.
    Let us assume for the moment that $J$, $J'$, and $J \cap J'$ are spreads with fewer than $k$ minimal elements.
    If this is the case, then $\Delta(J)$, $\Delta(J')$, and $\Delta(J \cap J')$ are contractible by the inductive hypothesis, and, by the nerve lemma for simplicial complexes \cite[Theorem~10.6]{bjorner}, we get that $\Delta(I)$ is homotopy equivalent to a simplicial interval, which is contractible.
    So we need to prove that $J$, $J'$, and $J \cap J'$ are spreads with fewer than $k$ minimal elements.
    It is easy to see that they are poset-convex, so we first prove that they are poset-connected.

    Every element of $J$ can be connected to $z_1$ via a path that is monotonically decreasing and thus included in $J$, so $J$ is poset-connected.

    We now prove that $J'$ is poset-connected.
    If $a,b \in J'$, then $a \geq z_i$ and $b \geq z_j$, for some $i,j \in \{2, \dots, k\}$.
    The element $a$ (resp.~$b$) can be connected to $z_i$ (resp.~$z_j$) via a path that is monotonically decreasing, and thus included in $J'$.
    So we must show that $z_i$ and $z_j$ can be connected through a path included in $J'$.
    Without loss of generality, we may assume that $i \leq j$, and it is thus sufficient to show that, for every $2 \leq \ell < k$, the element $z_\ell$ can be connected to $z_{\ell + 1}$ through a path included in $J'$.
    We use the path $z_\ell \leq (z_{\ell} \vee z_{\ell+1}) \geq z_{\ell+1}$; this works since $z_{\ell} \vee z_{\ell+1} \in I$, by \cref{lemma:join-of-consecutive-minima}.

    We now prove that $J \cap J'$ is poset-connected, and, by an argument analogous to the one for the case of $J$, it is sufficient to prove that $J \cap J' = \{a \in I : z_1 \vee z_2 \leq a\}$.
    The inclusion $(\supseteq)$ is clear, so let us prove the other inclusion.
    If $a \in J \cap J'$, then $a \in J$ so that $a \geq z_1$.
    So we must prove that, if $a \in J \cap J'$, then $a \geq z_2$.
    The $x$-coordinate of $z_2$ must be strictly smaller than that of $z_1$, since they are incomparable.
    So, to prove that $a \geq z_2$, it is sufficient to prove that the $y$-component of $a$ is larger than that of $z_2$.
    This is clear, since if $a \in J'$, then $a \geq z_j$ for some $2 \leq j \leq k$, and the $y$-coordinates of the elements $z_2, \dots, z_k$ are monotonically increasing.

    Finally, note that $\min(J) = \{z_1\}$, $\min(J') = \{z_2, \dots, z_k\}$, and $\min(J \cap J') = \{z_1 \vee z_2\}$, by the last paragraph, so the spreads have fewer than $k$ minimal elements, as required.
\end{proof}

\section{Proofs of known results}
\label{section:proofs-known-results}

\subsection{Additive invariants and bases}
\label{section:appendix-additive-invariants-bases}

\begin{lemma}
    \label{lemma:additivity-finer-invariant}
    Let $\alpha \succcurlyeq \alpha'$ be additive invariants on an additive category $\Acal$.
    Let $\{M_i \in \Acal\}_{i \in I}$ and $\{c_i \in \Zbb\}_{i \in I}$ be such that $c_i \neq 0$ for finitely many $i \in I$, and $\sum_{i \in I} c_i \cdot \alpha(M_i) = 0$.
    Then, $\sum_{i \in I} c_i \cdot \alpha'(M_i) = 0$.
\end{lemma}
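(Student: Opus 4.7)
The plan is to reduce the statement about $\Zbb$-linear combinations to an equality of the form $\alpha(P) = \alpha(N)$ for two honest objects $P, N \in \Acal$, so that the refinement hypothesis $\alpha \succcurlyeq \alpha'$ can actually be applied (it is stated object-wise, not on formal combinations).

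Concretely, I would split the index set according to the sign of $c_i$. Let $I^+ = \{i \in I : c_i > 0\}$ and $I^- = \{i \in I : c_i < 0\}$; both are finite since only finitely many $c_i$ are nonzero. The hypothesis $\sum_{i \in I} c_i \cdot \alpha(M_i) = 0$ can then be rewritten as
\[
\sum_{i \in I^+} c_i \cdot \alpha(M_i) \;=\; \sum_{i \in I^-} (-c_i) \cdot \alpha(M_i),
\]
where all coefficients on both sides are strictly positive integers. Now define
\[
P \;\coloneqq\; \bigoplus_{i \in I^+} M_i^{\oplus c_i}, \qquad N \;\coloneqq\; \bigoplus_{i \in I^-} M_i^{\oplus (-c_i)}.
\]
These are objects of $\Acal$ because $\Acal$ is additive and only finitely many summands are involved. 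By additivity of $\alpha$, the previous equation says exactly $\alpha(P) = \alpha(N)$.

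At this point the refinement hypothesis applies verbatim: $\alpha(P) = \alpha(N)$ implies $\alpha'(P) = \alpha'(N)$. Expanding both sides by additivity of $\alpha'$ yields
\[
\sum_{i \in I^+} c_i \cdot \alpha'(M_i) \;=\; \sum_{i \in I^-} (-c_i) \cdot \alpha'(M_i),
\]
which rearranges to $\sum_{i \in I} c_i \cdot \alpha'(M_i) = 0$, as desired. There is no real obstacle here; the only conceptual point worth flagging is that one cannot apply $\alpha \succcurlyeq \alpha'$ term-by-term (since $\alpha$-values of individual $M_i$ may all differ), and the sign-splitting trick above is precisely what is needed to land in the setting where the hypothesis is meaningful.
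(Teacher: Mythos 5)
Your proof is correct and is essentially the paper's own argument: both split the index set by the sign of $c_i$, move the negative part to the other side to form two genuine direct sums $P$ and $N$ in $\Acal$ with $\alpha(P)=\alpha(N)$, apply $\alpha \succcurlyeq \alpha'$ to these objects, and then unwind by additivity of $\alpha'$. The only cosmetic difference is that the paper writes the partition as $I = A \sqcup B \sqcup C$ (explicitly naming the zero-coefficient part), whereas you omit the irrelevant $c_i = 0$ indices from the start.
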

\begin{proof}
    Let $I = A \sqcup B \sqcup C$ be such that $c_i = 0$ if $i \in A$, $c_i > 0$ if $i \in B$, and $c_i < 0$ if $i \in C$.
    Then, by additivity of $\alpha$, we have
    \[
        \alpha\left( \bigoplus_{i \in B} M_i^{c_i} \right)
        =
        \alpha\left( \bigoplus_{i \in C} M_i^{-c_i} \right),
    \]
    which implies, by the fact that $\alpha \succcurlyeq \alpha'$, that
    \[
        \alpha'\left( \bigoplus_{i \in B} M_i^{c_i} \right)
        =
        \alpha'\left( \bigoplus_{i \in C} M_i^{-c_i} \right).
    \]
    By additivity of $\alpha'$, it follows that $\sum_{i \in I} c_i \cdot \alpha'(M_i) = 0$, as required.
\end{proof}

\begin{lemma}
    \label{lemma:any-decomposition-gives-count}
    Let $\alpha$ be an additive invariant on an additive category $\Acal$, let $\Bcal$ be a basis for $\alpha$, and let $f : \Bcal \to \Zbb$.
    If $M \in \Acal$, and $A_1, \dots, A_k, B_1, \dots, B_\ell \in \Bcal$ are such that
    \[
        \alpha(M) = \sum_{i = 1}^k \alpha(A_i) - \sum_{j = 1}^\ell \alpha(B_j),
    \]
    then $\Ncal^{(\alpha,\Bcal,f)}(M) = \sum_{i = 1}^k f(A_i) - \sum_{j = 1}^\ell f(B_j)$, and in particular $\Ncal^{(\alpha,\Bcal)}(M) = k - \ell$.
    \qed
\end{lemma}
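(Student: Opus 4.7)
The plan is to use the uniqueness clause in the definition of basis to identify the coefficients of an arbitrary expression with the canonical ones $\{c^M_i\}_{i \in I}$, and then evaluate against $f$.

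First, I would regroup the given expression over the index set $I$ of the basis $\Bcal = \{M_i\}_{i \in I}$. For each $i \in I$, set
\[
d_i \coloneqq \#\{s \in \{1,\dots,k\} : A_s = M_i\} \;-\; \#\{t \in \{1,\dots,\ell\} : B_t = M_i\} \;\in\; \Zbb.
\]
Only finitely many $d_i$ are nonzero, and by construction,
\[
\sum_{i \in I} d_i \cdot \alpha(M_i) \;=\; \sum_{s=1}^k \alpha(A_s) - \sum_{t=1}^\ell \alpha(B_t) \;=\; \alpha(M).
\]

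Next, I would invoke the uniqueness part of the basis definition: the collection $\{c^M_i\}_{i \in I}$ is the \emph{only} finitely-supported integer collection expressing $\alpha(M)$ in terms of the $\alpha(M_i)$. Hence $d_i = c^M_i$ for all $i \in I$.

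Finally, I would evaluate the count against $f$. By \cref{definition:counting-function-from-basis},
\[
\Ncal^{(\alpha,\Bcal,f)}(M) \;=\; \sum_{i \in I} c^M_i \cdot f(M_i) \;=\; \sum_{i \in I} d_i \cdot f(M_i) \;=\; \sum_{s=1}^k f(A_s) - \sum_{t=1}^\ell f(B_t),
\]
where the last equality uses the definition of $d_i$ and exchanges the order of summation. Setting $f \equiv 1$ recovers $\Ncal^{(\alpha,\Bcal)}(M) = k - \ell$. The argument is essentially bookkeeping; the only point that requires care is ensuring the regrouping is over the index set $I$ (so that uniqueness applies), rather than over isomorphism classes, which is handled automatically once one writes the sum as above. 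There is no real obstacle.
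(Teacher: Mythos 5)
Your proof is correct. The difference from the paper is the route taken: the paper cites \cref{lemma:basic-results-counts-basis}, using that $\Ncal^{(\alpha,\Bcal,f)}$ is an additive invariant satisfying $\alpha \succcurlyeq \Ncal^{(\alpha,\Bcal,f)}$ and agreeing with $f$ on $\Bcal$, and then pushes the relation $\alpha(M \oplus \bigoplus_j B_j) = \alpha(\bigoplus_i A_i)$ through these two facts. You instead go directly from the uniqueness clause in the definition of a basis: regroup the given signed sum into coefficients $d_i$ indexed by $I$, observe that uniqueness forces $d_i = c^M_i$, and unwind the definition of the count. The two arguments have the same essential content (both are really just ``the coefficients are well-defined and the count is linear in them''), but yours is marginally more elementary since it avoids invoking the characterization lemma. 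One point worth being aware of, which you gesture at in your final remark: the regrouping via $\#\{s : A_s = M_i\}$ is unambiguous precisely because a basis cannot contain repeated objects (uniqueness of basis coefficients forces the $\alpha(M_i)$, hence the $M_i$, to be pairwise distinct), so each $A_s$ matches exactly one index $i$. This is implicit but harmless to leave unsaid; there is no gap.
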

\begin{proof}
    This is an immediate consequence of additivity and \cref{lemma:basic-results-counts-basis}(2).
\end{proof}

\begin{proof}[Proof of \cref{proposition:I-is-basis-of-rk-I}]
    Consider the partial order on $\Ical$ given by inclusion.
    If $I \in \Ical$, then the support of $\Rk_\Ical(\kbb_I) \in \Zbb^{\Ical}$ is the principal upset of $\Ical$ corresponding to $I$, where it is constantly $1$.
    By M\"obius inversion \cite[Chapter~3.7]{stanley}, the set $\{\Rk_{\Ical}(\kbb_{I})\}_{I \in \Ical}$ forms a basis of $\Zbb^{\Ical}$, and the result follows.
    Alternatively, one could use \cref{proposition:projectives-are-basis}(3), but we omit the details.
\end{proof}

\begin{definition}
    \label{definition:relative-projective-resolution}
    Let $\Lambda$ be a finite dimensional $\kbb$-algebra and let $\Xcal$ be a set of pairwise non-isomorphic finite dimensional, indecomposable $\Lambda$-modules.
    A \emph{$\Xcal$-projective resolution} of a $\Lambda$-module $M$ is an $\Xcal$-exact sequence
    $\cdots \to C_1 \to C_0 \to M \to 0$ such that $C_i$ is $\Xcal$-decomposable for every $i \geq 0$.
\end{definition}

In the following proposition, all modules are assumed to be finite dimensional.

\begin{proposition}
    \label{proposition:projectives-are-basis-2}
    Let $\Lambda$ be a finite dimensional $\kbb$-algebra.
    \begin{enumerate}
        \item The set $\indsimpl_\Lambda$ is a basis for $\dimhom_{\indproj_\Lambda}$ and $\Ksf_{\indproj_\Lambda}$.
        \item The additive invariants $\dimhom_{\indproj_\Lambda}$ and $\Ksf_{\indproj_\Lambda}$ are equivalent.
        \item If $\Lambda$ has finite global dimension, then $\indproj_\Lambda$ is a basis for $\dimhom_{\indproj_\Lambda}$.
        \item If $\Xcal$ is a set of pairwise non-isomorphic indecomposable $\Lambda$-modules, then $\dimhom_{\Xcal} \approx \Ksf_{\Xcal}$.
        \item Let $\Xcal$ be a set of pairwise non-isomorphic indecomposable $\Lambda$-modules.
              If every $\Lambda$-module admits a finite $\Xcal$-projective resolution,
              then $\Xcal$ is a basis for $\dimhom_{\Xcal} \approx \Ksf_\Xcal$.
    \end{enumerate}
\end{proposition}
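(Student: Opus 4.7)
My plan is to prove the five parts in order, with parts (1)--(3) handling the case $\Xcal = \indproj_\Lambda$ using classical Jordan-Hölder and Cartan-matrix arguments, and parts (4)--(5) generalizing to arbitrary $\Xcal$, where (5) combines (4) with the existence hypothesis on $\Xcal$-projective resolutions.

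First I would establish (1) via Jordan-Hölder. The key identity is $\dim \hom_\Lambda(P_i, M) = m_i(M) \cdot \dim \End_\Lambda(S_i)$, where $m_i(M)$ is the Jordan-Hölder multiplicity of $S_i$ in $M$; this gives the expansion $\dimhom_{\indproj_\Lambda}(M) = \sum_i m_i(M) \cdot \dimhom_{\indproj_\Lambda}(S_i)$, with uniqueness coming from the diagonal matrix $[\dim \hom(P_j, S_i)]_{i,j}$ whose diagonal entries $\dim \End(S_i)$ are nonzero. For $\Ksf_{\indproj_\Lambda}$, since $\hom(P,-)$ is exact for projective $P$, every short exact sequence is $\indproj_\Lambda$-exact, so this invariant coincides with the classical $K_0(\mod_\Lambda)$, known to be free abelian on $\indsimpl_\Lambda$ by Jordan-Hölder. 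For (2), the direction $\Ksf_{\indproj_\Lambda} \succcurlyeq \dimhom_{\indproj_\Lambda}$ is immediate from exactness of $\hom(P,-)$, and the converse follows from (1) since both invariants have the same basis with coefficients equal to the Jordan-Hölder multiplicities. For (3), a finite projective resolution of $M$ yields $\dimhom_{\indproj_\Lambda}(M) = \sum_k (-1)^k \dimhom_{\indproj_\Lambda}(P_{(k)})$ by additivity (justified via (2)); uniqueness of the coefficients comes from the classical invertibility over $\Qbb$ of the Cartan matrix $[\dim \hom(P_j, P_i)]_{i,j}$ under the finite global dimension assumption.

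For (4), the direction $\Ksf_\Xcal \succcurlyeq \dimhom_\Xcal$ follows directly from the definition of $\Xcal$-exactness: $\dim \hom(X,-)$ is additive on short $\Xcal$-exact sequences, and hence descends to $\Ksf_\Xcal$. The converse direction $\dimhom_\Xcal \succcurlyeq \Ksf_\Xcal$ is where I expect the main obstacle; the natural approach is to use $\Xcal$-approximations to translate an equality of $\dimhom_\Xcal$-values into a chain of $\Xcal$-exact relations witnessing equality in $\Ksf_\Xcal$, and I anticipate needing to revisit the precise formulation if this fails in full generality. Finally, for (5), a finite $\Xcal$-projective resolution $0 \to A_n \to \cdots \to A_0 \to M \to 0$ gives $\dimhom_\Xcal(M) = \sum_i (-1)^i \dimhom_\Xcal(A_i)$ via (4) and additivity, so the values $\{\dimhom_\Xcal(X)\}_{X \in \Xcal}$ span; uniqueness of the coefficients again follows from a Cartan-style non-degeneracy argument applied to $[\dim \hom(X_j, X_i)]_{X_i, X_j \in \Xcal}$, using that each $X \in \Xcal$ itself admits a finite $\Xcal$-projective resolution.
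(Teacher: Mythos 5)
Your treatment of parts (1)--(3) is sound and close to the paper's; in (3) you use Eilenberg's theorem that the Cartan matrix of an algebra of finite global dimension is invertible, where the paper instead counts: a spanning set of the same cardinality as the basis $\indsimpl_\Lambda$ of the free $\Zbb$-module is automatically a basis. Both work.

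The real gap is in (4) and (5), and you have noticed it yourself in (4) but not resolved it. The tool you are missing is \emph{projectivization}: set $P \coloneqq \bigoplus_{X \in \Xcal} X$ and $E \coloneqq \End_\Lambda(P)$, and use the functor $(-)^P = \hom_\Lambda(P,-) : \mod_\Lambda \to \mod_E$. This restricts to an equivalence $\add(\Xcal) \xrightarrow{\sim} \proj(E)$ and, by definition of $\Xcal$-exactness, carries short $\Xcal$-exact sequences in $\mod_\Lambda$ to short exact sequences in $\mod_E$. Under this correspondence, $\dimhom_\Xcal$ becomes $\dimhom_{\indproj_E}$ and $\Ksf_\Xcal$ becomes $\Ksf_{\indproj_E}$, so (4) reduces to (2) and (5) reduces to (3) for the algebra $E$. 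Without this reduction, your proposal for the direction $\dimhom_\Xcal \succcurlyeq \Ksf_\Xcal$ in (4) via ``$\Xcal$-approximations'' is not carried out, and in (5) your claim that $[\dim\hom(X_j, X_i)]$ is invertible is precisely the nontrivial content: it is equivalent to $E$ having finite global dimension, which is not automatic and is what the paper actually proves. (Your remark that ``each $X \in \Xcal$ admits a finite $\Xcal$-projective resolution'' is trivially true and does not help; the substance is that \emph{every} $\Lambda$-module does.) The paper shows $E$ has finite global dimension by splicing: given an $E$-projective presentation $A \to B \to M^P$, pull back $\ker(G(A)\to G(B))$ to $\mod_\Lambda$ (where $G$ is the inverse equivalence on projectives), take a finite $\Xcal$-projective resolution of it, and push forward via $(-)^P$, using left-exactness of $\hom(P,-)$ and the fact that the global dimension of a finite dimensional algebra is attained on simples. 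This is the step your proposal does not supply.
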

\begin{proof}
    Let $\indproj_\Lambda = \{P_i\}_{i \in I}$ and $\indsimpl_\Lambda = \{S_i\}_{i \in I}$ with $S_i = P_i/\mathrm{rad}(P_i)$.

    \medskip

    \noindent
    \emph{(1)}.
    The set $\{\dimhom_{\indproj_\Lambda}(S_i)\}_{i \in I}$ is linearly independent since $\dim \hom(P_i, S_j)\neq 0$ if and only if $i=j$.
    Let $M \in \mod_\Lambda$ and let $0 = J_0 \subset \cdots \subset J_k = M$ be a composition series of $M$.
    Then $\dimhom_{\indproj_\Lambda}(M) = \sum_{i = 1}^{k-1}\, \dimhom_{\indproj_\Lambda}(J_{i+1}/J_i)$ exhibits the dimension vector of $M$ as a sum of dimension vectors of simple modules, so
    $\{\dimhom_{\indproj_\Lambda}(S_i)\}_{i \in I}$ generates $\dimhom_{\indproj_\Lambda}$.

    Simples form a basis for $\Ksf_{\indproj}$, by \cite[Theorem~1.7]{auslander-reiten-smalo}.

    \medskip

    \noindent
    \emph{(2)}.
    If $0 \to A \to B \to C \to 0$ is a short exact sequence of $\Lambda$-modules, then $\dimhom_{\indproj_\Lambda}(B)
        = \dimhom_{\indproj_\Lambda}(A)
        + \dimhom_{\indproj_\Lambda}(C)$, so the invariant $\dimhom_{\indproj_\Lambda}$ factors through $\Ksf_{\indproj_\Lambda}$, so that $\Ksf_{\indproj_\Lambda} \succcurlyeq \dimhom_{\indproj_\Lambda}$.
    Since the two invariants admit the same basis, by (1), they are equivalent.

    \medskip

    \noindent
    \emph{(3)}.
    Any two bases of $\dimhom_{\indproj_\Lambda}$ have the same number of elements.
    Since $\indproj_\Lambda$ has the same number of elements as $\indsimpl_\Lambda$, it is sufficient, by (1), to show that $\indproj_\Lambda$ generates $\dimhom_{\indproj_\Lambda}$.
    If $M \in \mod_\Lambda$, let $0 \to C_k \to \cdots \to C_0 \to M$ be a finite $\Lambda$-projective resolution of $M$.
    Then, by the exactness of $\hom(P_i,-)$ for every $1 \leq i \leq k$, we have $\dimhom_{\indproj_\Lambda}(M) = \sum_{i = 1}^k (-1)^i\, \dimhom_{\indproj_\Lambda}(C_i)$, so $\indproj_\Lambda$ generates $\dimhom_{\indproj_\Lambda}$.

    \bigskip

    In (4) and (5) we use the following construction, known as \emph{projectivization}, which allows us to reduce the case of arbitrary $\Xcal$ to the case where $\Xcal$ consists of indecomposable projectives.
    If $P \coloneqq \bigoplus_{X \in \Xcal} X$ and $E \coloneqq \End(P)$, the functor $(-)^P : \mod_{\Lambda} \to \mod_E$ restricts to an equivalence of categories $\add(\Xcal) \to \proj(E)$ \cite[Chapter~II,~Section~2]{auslander-reiten-smalo}; let $G : \proj(E) \to \add(\Xcal)$ be its inverse.
    This gives us a bijection $f : \Xcal \to \indproj_E$ such that $\dimhom_{\indproj_E}(M^P) \circ f = \dimhom_{\Xcal}(M) : \Xcal \to \Zbb$, for every $M \in \mod_\Lambda$.

    \medskip
    \noindent
    \emph{(4)}.
    This follows from projectivization and (2).

    \medskip
    \noindent
    \emph{(5)}.
    By projectivization, it is enough to prove that $\indproj_E$ is a basis for $\dimhom_{E} : \mod_E \to \Zbb^{\indproj_E}$.
    By (3), it is enough to show that $E$ has finite global dimension, which is equivalent to every $E$-module $M \in \mod_E$ admitting a finite $E$-projective resolution (this is because every $E$-module is an iterated extension by simple modules, of which there are finitely many isomorphism types, so the global dimension of $E$ is attained as the projective dimension of a simple \cite[Corollary~11]{auslander}).
    To build a finite projective resolution of any $M \in \mod_\Lambda$ we proceed as follows.
    Let $A \to B \to M$ be a finite $E$-projective presentation of $M$.
    Let $0 \to C_k \to \cdots \to C_0 \to \ker(G(A) \to G(B))$ be a finite $\Xcal$-projective resolution of $\ker(G(A) \to G(B)) \in \mod_\Lambda$.
    We have
    \[
        \ker\left(G(A) \to G(B)\right)^P
        \cong
        \ker\left(G(A)^P \to G(B)^P\right)
        \cong
        \ker(A \to B),
    \]
    by left-exactness of $(-)^P$.
    This allows us to splice the exact sequence of $E$-projectives
    $0 \to C_k^P \to \cdots \to C_0^P$ with $A \to B \to M$ to get a finite $E$-projective resolution $0 \to C_k^P \to \cdots \to C_0^P \to A \to B \to M$, as required.
\end{proof}

\begin{proof}[Proof of \cref{remark:basis-for-known-invariants}]
    The first and second statements follow from \cref{proposition:I-is-basis-of-rk-I}.
    The third statement follows from \cref{proposition:projectives-are-basis} and the fact that the \emph{rank exact structure} (i.e., the exact structure whose indecomposable projectives are the hook representations) on the category of representations of a finite poset has finite global dimension \cite[Theorem~1.2]{blanchette-brustle-hanson}.
    The fourth statement follows from \cref{proposition:projectives-are-basis} and the fact that the \emph{spread exact structure} (i.e., the exact structure whose indecomposable projectives are the spread representations) on the category of representations of a finite poset has finite global dimension \cite[Proposition~4.5]{asashiba-escolar-nakashima-yoshiwaki-2}.
\end{proof}

\subsection{Spread representations}

\begin{lemma}[{cf. \cite[Lemma~4.4]{asashiba-escolar-nakashima-yoshiwaki-2}}]
    \label{lemma:submodule-of-spread-is-spread}
    Let $\Pscr$ be a poset, let $I \subseteq \Pscr$ be a spread, and let $M \subseteq \kbb_I$ be a subrepresentation.
    Then $M$ is spread-decomposable.
\end{lemma}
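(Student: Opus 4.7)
The plan is to explicitly identify the spread decomposition of $M$ by looking at the connected components of its support.

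First, since $(\kbb_I)_p$ is $\kbb$ for $p \in I$ and $0$ otherwise, and $M_p \subseteq (\kbb_I)_p$ is a subspace, we have $M_p \in \{0,\kbb\}$ for $p \in I$ and $M_p = 0$ for $p \notin I$. Let $J \coloneqq \{p \in \Pscr : M_p \neq 0\} \subseteq I$. The key observation is that $J$ is upward closed within $I$: if $p \in J$ and $p \leq q \in I$, then the structure morphism $\phi^{\kbb_I}_{p,q} : \kbb \to \kbb$ is the identity, and since $\phi^M_{p,q}$ is its restriction to the subspace $M_p = \kbb$, we must have $M_q \supseteq \phi^M_{p,q}(\kbb) = \kbb$, so $q \in J$.

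Next, I would decompose $J$ into its poset-connected components $\{C_\alpha\}_{\alpha \in A}$ (viewing $J$ as a subposet of $\Pscr$) and claim that each $C_\alpha$ is a spread of $\Pscr$. Connectedness holds by construction. For poset-convexity, suppose $x \leq y \leq z \in \Pscr$ with $x, z \in C_\alpha$. Since $x, z \in I$ and $I$ is poset-convex, $y \in I$; since $x \in J$, $x \leq y$, and $J$ is upward closed in $I$, we have $y \in J$; and since $x \leq y$ with both in $J$, the point $y$ lies in the same connected component as $x$, namely $C_\alpha$.

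Finally, I would verify the isomorphism $M \cong \bigoplus_{\alpha \in A} \kbb_{C_\alpha}$. Pointwise this is immediate because the $C_\alpha$ partition $J$. For the structure maps, the only thing to check is that if $p \leq q \in J$ lie in different components $C_\alpha \neq C_\beta$, then $\phi^M_{p,q} = 0$ would be needed to match the direct sum; but in fact no such pair exists, since $p \leq q$ with $p, q \in J$ forces them to be in the same connected component. Thus on comparable pairs in $J$, both $M$ and $\bigoplus_\alpha \kbb_{C_\alpha}$ have the identity map $\kbb \to \kbb$, and on all other pairs both are zero.

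I do not anticipate any real obstacle; the only place requiring care is the convexity argument in the second step, where one must use both that $I$ itself is convex in $\Pscr$ and that $J$ is upward closed in $I$, rather than trying to argue convexity from connectedness alone.
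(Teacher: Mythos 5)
Your proof is correct and follows essentially the same route as the paper's: observe that the pointwise spaces of $M$ are $0$ or $\kbb$, show the support is poset-convex, and decompose into poset-connected components. The paper establishes convexity directly from the factorization $\phi^M_{i,k}=\phi^M_{j,k}\circ\phi^M_{i,j}$ being the identity, whereas you reach the same conclusion via the intermediate observation that the support is upward closed within $I$; these are equivalent, and your version makes the final verification of $M\cong\bigoplus_\alpha\kbb_{C_\alpha}$ a bit more explicit than the paper does.
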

\begin{proof}
    Let $i \in \Pscr$.
    Note that, if $M_i \neq 0$, then $M_i = \kbb$.
    So it is sufficient to show that the support of $M$ is a disjoint union of spreads.
    We prove the stronger fact that the support of $M$ is poset-convex.
    Let $i \leq j \leq k \in \Pscr$, with both $i$ and $k$ in the support of $M$.
    The morphism $\phi^{M}_{i,k} : M_i \to M_k$ is the identity $\kbb \to \kbb$, so, since $\phi^{M}_{i,k} = \phi^M_{j,k} \circ \phi^M_{i,j}$, we have $M_j \neq 0$, and thus $j$ is in the support of $M$, as required.
\end{proof}

\begin{theorem}[{\cite[Theorem~24]{asashiba-buchet-escolar-nakashima-yoshiwaki}}]
    \label{theorem:indecomposable-think-is-spread}
    Let $M \in \repf(\Zbb^2)$.
    If $M$ is indecomposable and $\dim M_i \in \{0,1\}$ for every $i \in \Zbb^2$, then $M$ is a spread representation.
\end{theorem}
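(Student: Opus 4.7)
Proof plan.

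Let $M \in \repf(\Zbb^2)$ be indecomposable with $\dim M_i \in \{0,1\}$ for every $i$, and set $S \coloneqq \{i \in \Zbb^2 : M_i \neq 0\}$; since $M$ has finite support, $S$ is finite. The goal is to show $S$ is a spread and $M \cong \kbb_S$. My plan is to capture indecomposability as a connectedness condition on a combinatorial graph, and then to use commutativity of unit squares to force the geometry of $S$.

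I would begin by choosing a basis vector $e_i \in M_i$ for each $i \in S$, identifying each structure morphism $\phi^M_{i,j}$ with a scalar $c_{i,j} \in \kbb$. Because $\phi^M_{i,k}$ factors through $M_{i'}$ for any $i \leq i' \leq k$, one has $c_{i,k} = 0$ whenever $[i,k]_{\Zbb^2} \not\subseteq S$. Let $A(M)$ be the graph on $S$ with edges $\{i,j\}$ whenever $i$ and $j$ are comparable in $\Zbb^2$ and $c_{i,j} \neq 0$. A natural transformation $M \to M$ is equivalent to a tuple $(f_i)_{i \in S} \in \kbb^S$ satisfying $f_i = f_j$ whenever $\{i,j\}$ is an edge of $A(M)$, so $\End(M)$ is isomorphic to the ring of $\kbb$-valued functions constant on connected components of $A(M)$. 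Indecomposability forces $A(M)$ to be connected, which in particular makes $S$ poset-connected since any path in $A(M)$ is a zig-zag of comparable elements of $S$.

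The heart of the proof is poset-convexity of $S$. If $S$ is not poset-convex, then following a monotone path through a witness of non-convexity yields $p \in S$ and $v \in \{e_1,e_2\}$ with $p+v \notin S$ but some $q \in S$ satisfying $q \geq p+v$. Up to the symmetry $\xbf \leftrightarrow \ybf$, take $v = e_1$. Commutativity around the unit square at $p$ reads
\[
\phi^M_{p+e_1,\, p+e_1+e_2} \circ \phi^M_{p,\, p+e_1} \;=\; \phi^M_{p+e_2,\, p+e_1+e_2} \circ \phi^M_{p,\, p+e_2},
\]
and if $p+e_1+e_2 \in S$ the left-hand side vanishes because $M_{p+e_1} = 0$, so $c_{p,\, p+e_2}\, c_{p+e_2,\, p+e_1+e_2} = 0$, forcing an inactive cover edge adjacent to the gap. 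The plan is to walk along the boundary of the maximal gap region containing $p+e_1$, reapplying this identity to each boundary unit square. Since $\Zbb^2$ is simply-connected and every closed walk in the grid is a composite of unit-square plaquettes, the inactive cover edges so produced align into a genuine cut of $A(M)$ separating the component containing $p$ from the component containing $q$, contradicting Step~2.

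Once $S$ is known to be a spread, every cover edge inside $S$ must be active, for otherwise the same square-commutativity argument would disconnect $A(M)$. The nonzero scalars $c_{i,j}$ on cover edges of $S$ then form a $1$-cocycle on the planar region $S$; because $\Delta(S)$ is contractible by \cref{lemma:spread-contractible}, this cocycle is a coboundary, and rescaling the basis $\{e_i\}_{i \in S}$ accordingly makes every cover scalar equal to $1$, exhibiting $M \cong \kbb_S$. The main obstacle is the propagation step in the third paragraph: each instance of the commutativity identity only forces one of two candidate edges to be inactive, so one must iterate around the gap and exploit the absence of non-trivial cycles in the grid to ensure that the forced inactive edges truly cut $A(M)$ rather than admit a local detour.
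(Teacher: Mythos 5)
Your proof is a direct combinatorial argument, whereas the paper does not prove this statement at all: its proof is a two-line reduction (translate the finite support into a finite grid $\Gcal^2$ and zero-pad) followed by a citation of \cite[Theorem~24]{asashiba-buchet-escolar-nakashima-yoshiwaki}. So you are effectively attempting to reprove the cited result from scratch, which is a genuinely different and substantially more ambitious route.

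Several pieces of your plan are sound. The reduction to the graph $A(M)$, the identification of $\End(M)$ with functions constant on its components, and the deduction from indecomposability that $A(M)$ is connected are correct. The finishing step is also clean: once $S$ is a spread with all cover scalars nonzero, those scalars form a multiplicative $1$-cocycle whose class dies because $\Delta(S)$ is contractible (\cref{lemma:spread-contractible}), so a rescaling gives $M \cong \kbb_S$. A helpful tightening: if $c_{i,k} \neq 0$ then functoriality forces the whole segment $[i,k]$ to lie in $S$ and forces $c_{i,k}$ to factor as a product of cover scalars along any chain in $[i,k]$, so a cut in the cover-edge graph of $S$ is automatically a cut in $A(M)$; you need not worry about long comparable pairs giving shortcuts.

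The convexity step, however, is a genuine gap, and you correctly flag it as the main obstacle without closing it. Each unit-square relation produces only a disjunction (one of two cover scalars vanishes), and the branch taken can vary from square to square around the hole. The assertion that ``the inactive cover edges so produced align into a genuine cut'' is not a one-line consequence of $\Zbb^2$ being simply connected: you must show that \emph{every} consistent resolution of the disjunctions around the entire gap still separates $p$ from $q$ in $A(M)$. That requires a real argument (a planar-duality or case-analysis argument tracking which side of the gap each forced zero falls on), and it is essentially the content of the cited theorem. As written, the plan identifies the difficulty but does not overcome it.
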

\begin{proof}
    By translating $M$ if necessary, we may assume that its support is contained in some finite grid $\Gcal^2$, and by the fully faithful embedding $\rep(\Gcal^2) \hookrightarrow \repf(\Zbb^2)$ induced by the inclusion $\Gcal^2 \hookrightarrow \Zbb^2$ and zero-padding, we may assume that $M \in \rep(\Gcal^2)$.
    The result then follows from \cite[Theorem~24]{asashiba-buchet-escolar-nakashima-yoshiwaki}, which states that, over a finite, two-dimensional grid, an indecomposable of pointwise dimension at most one is necessarily a spread.
\end{proof}

\subsection{Betti tables}

\begin{lemma}
    \label{lemma:inequalities-betti-tables-1}
    Let $\Pscr$ be a poset and let $M, N \in \rep(\Pscr)$ be finitely generated.
    If there exists a surjection $N \twoheadrightarrow M$, then $|\beta_0^M| \leq |\beta_0^N|$.
\end{lemma}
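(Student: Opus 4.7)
My plan is to reduce the statement to the observation that $|\beta_0^M|$ is the total dimension of the ``generators module'' $M/(\xbf M + \ybf M)$, and that surjections descend to this quotient.

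First I would unpack the Koszul definition. Reading $H_0$ of $K(M)$ at each bigrade $\ell \in \Zbb^2$ gives
\[
H_0(K(M))_\ell \;\cong\; M_\ell \big/ \bigl(\xbf\,M_{\ell-(1,0)} + \ybf\,M_{\ell-(0,1)}\bigr),
\]
since $H_0(K(M))$ is by construction the cokernel of $[\xbf,\ybf] : M[-1,0]\oplus M[0,-1] \to M$. Summing over $\ell$ gives the identification
\[
|\beta_0^M| \;=\; \dim_\kbb \bigl(M/(\xbf M + \ybf M)\bigr),
\]
where the quotient is taken in bigraded vector spaces. (Equivalently, under the equivalence $\rep(\Zbb^2) \simeq \grmod{2}_{\kbb[\xbf,\ybf]}$, this is the module of minimal generators of $M$, cf.\ Nakayama's lemma; this is also why the two standard definitions of $\beta_0$ via the Koszul complex and via minimal projective resolutions agree.)

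Next I would let $\pi : N \twoheadrightarrow M$ be the given surjection. Because $\pi$ is $\kbb[\xbf,\ybf]$-linear, it maps $\xbf N + \ybf N$ into $\xbf M + \ybf M$, and hence induces a morphism of bigraded vector spaces
\[
\bar\pi \,:\, N/(\xbf N + \ybf N) \;\longrightarrow\; M/(\xbf M + \ybf M).
\]
Surjectivity of $\pi$ immediately implies surjectivity of $\bar\pi$: every class $[m] \in M/(\xbf M+\ybf M)$ lifts to some $m \in M$, which lifts to $n \in N$ via $\pi$, and then $\bar\pi([n]) = [m]$. Comparing dimensions yields $|\beta_0^M| \leq |\beta_0^N|$, as desired.

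The argument is essentially formal once the identification of $|\beta_0^M|$ with $\dim(M/(\xbf M+\ybf M))$ is in place, so I do not expect any real obstacle. For a general poset $\Pscr$ (should one want the statement as written) one replaces this identification with the standard interpretation of $\beta_0^M$ as the number of summands in a minimal projective cover $P_M \twoheadrightarrow M$: composing a minimal cover $P_N \twoheadrightarrow N$ with $\pi$ produces a (not necessarily minimal) surjection $P_N \twoheadrightarrow M$, so minimality gives $|\beta_0^M| \leq |\beta_0^{P_N}| = |\beta_0^N|$.
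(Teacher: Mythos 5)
Your proposal is correct, and your second paragraph (lifting through projective covers) is essentially identical to the paper's proof: take a projective cover $P \twoheadrightarrow N$, compose with $N \twoheadrightarrow M$, and compare the number of summands. Your first paragraph, identifying $|\beta_0^M|$ with $\dim\bigl(M/(\xbf M + \ybf M)\bigr)$ and noting that surjections descend, is a valid and more explicit version of the same idea specialized to $\rep(\Zbb^2)$, but since the lemma is stated for a general poset $\Pscr$, the projective cover formulation is the one that actually proves the claim as written, and that is the one the paper uses.
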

\begin{proof}
    Let $P \to N$ be a projective cover, so that $P$ is projective with $|\beta_0^N|$-many summands.
    Then $P \to N \to M$ is surjective, so that $|\beta_0^M| \leq |\beta_0^N|$.
\end{proof}

\begin{lemma}
    \label{lemma:inequalities-betti-tables-2}
    If $M,N \in \rep(\Zbb^2)$ are finitely generated, projective representations, and $\psi : M \hookrightarrow N$ is a morphism, then $\ker(\psi)$ is finitely generated and projective, and $|\beta_0^{\ker(\psi)}| \leq |\beta_0^M|$.
\end{lemma}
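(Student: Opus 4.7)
The statement as written is vacuous: a monomorphism has trivial kernel, so $\ker(\psi) = 0$ is automatically finitely generated, projective, and satisfies $|\beta_0^{\ker(\psi)}| = 0 \leq |\beta_0^M|$. This reading, however, is useless for the intended application in the proof of \cref{corollary:curves-linear-size}, where the morphism $C_i \to C_{i-1}$ is not monic. I therefore take the intended hypothesis to be that $\psi : M \to N$ is a morphism (not necessarily monic) between finitely generated projective representations of $\Zbb^2$, and sketch a proof under that reading.

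Under the equivalence $\rep(\Zbb^2) \simeq \grmod{2}_{\kbb[\xbf,\ybf]}$, the hypothesis means that $M$ and $N$ are finitely generated bigraded free modules over $\kbb[\xbf,\ybf]$, of ranks $|\beta_0^M|$ and $|\beta_0^N|$ respectively. The plan is to show that $\ker(\psi)$ is itself free, which combined with the inclusion $\ker(\psi) \hookrightarrow M$ will yield the desired bound by a rank comparison (localize at the generic point of $\kbb[\xbf,\ybf]$ to get $\rank(\ker(\psi)) \leq \rank(M)$, and use that for free modules rank equals $|\beta_0|$).

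The heart of the argument is to prove $\ker(\psi)$ is projective. First I would observe that $\im(\psi)$ is a bigraded submodule of the free module $N$, so, applying $\gldim \kbb[\xbf,\ybf] = 2$ to the short exact sequence $0 \to \im(\psi) \to N \to N/\im(\psi) \to 0$ (with $\pdim(N)=0$ and $\pdim(N/\im(\psi)) \leq 2$) forces $\pdim(\im(\psi)) \leq 1$. Then, applying $\Tor(-, \kbb)$ with $\kbb = \kbb[\xbf,\ybf]/(\xbf,\ybf)$ to the short exact sequence $0 \to \ker(\psi) \to M \to \im(\psi) \to 0$, the segment
\[
\Tor_2(\im(\psi),\kbb) \longrightarrow \Tor_1(\ker(\psi), \kbb) \longrightarrow \Tor_1(M, \kbb)
\]
is sandwiched between zeros since $\pdim(M) = 0$ and $\pdim(\im(\psi)) \leq 1$. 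Hence $\Tor_1(\ker(\psi), \kbb) = 0$, so $\ker(\psi)$ is bigraded projective, and therefore free (finitely generated bigraded projective modules over $\kbb[\xbf,\ybf]$ are free). The principal obstacle is establishing the projective dimension bound $\pdim(\im(\psi)) \leq 1$: this is where $\gldim \kbb[\xbf,\ybf] = 2$ enters essentially, and the analogue would fail without further hypotheses for polynomial rings in $\geq 3$ variables.
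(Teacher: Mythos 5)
You correctly spotted that the hypothesis as printed is vacuous and that the intended reading (which the paper's own proof also implicitly adopts, since it invokes Hilbert's syzygy theorem to conclude projectivity of $\ker(\psi)$, which would be pointless if $\ker(\psi)=0$) is that $\psi$ is an arbitrary morphism of finitely generated projectives. Your proof of projectivity is correct and takes essentially the paper's approach but supplies the detail it elides: the paper simply asserts that $\gldim = 2$ implies $\ker(\psi)$ is projective, which is really the observation that $0 \to \ker(\psi) \to M \to N$ exhibits $\ker(\psi)$ as a second syzygy of $\coker(\psi)$, hence projective since $\pdim \coker(\psi) \leq 2$; your two-step argument via $\pdim\im(\psi)\leq 1$ and then $\Tor_1(\ker(\psi),\kbb)=0$ unwinds exactly this. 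For the rank bound, the paper evaluates the pointwise dimension at $(x,y)$ with $x,y\gg 0$ (which for a finitely generated graded free module equals the number of free generators) and uses monotonicity of dimension along the inclusion $\ker(\psi)\hookrightarrow M$; your localization at the generic point accomplishes the same thing. The one small thing you gloss is that $\ker(\psi)$ is finitely generated, which is immediate from Noetherianity of $\kbb[\xbf,\ybf]$ since $\ker(\psi)\subseteq M$.
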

\begin{proof}
    The global dimension of the category of finitely generated, bigraded $\kbb[\xbf,\ybf]$-modules is two, due to Hilbert's syzygy theorem.
    This implies that $\ker(\psi)$ is finitely generated and projective.
    Moreover, the representation $\ker(\psi)$ must have at most as many indecomposable summands as $M$, since we have a monomorphism $\ker(\psi) \hookrightarrow M$ and the number of indecomposable summands of any of these representations can be computed by evaluating at $(x,y) \in \Zbb^2$ with $x,y \gg 0$, since $\ker(\psi)$ and $M$ are finitely generated and projective.
\end{proof}

\bibliographystyle{plain}
\bibliography{references}

\end{document}